\documentclass[reqno,11pt]{amsart}
\usepackage{amssymb, amsmath,latexsym,amsfonts,amsbsy, amsthm}
\usepackage{color}

\allowdisplaybreaks[4]

\setlength{\oddsidemargin}{0mm} \setlength{\evensidemargin}{0mm}
\setlength{\topmargin}{0mm} \setlength{\textheight}{230mm} \setlength{\textwidth}{165mm}


\let\e=\varepsilon

\let\p=\psi



\def\p{\partial}

\newcommand{\beq}{\begin{equation}}
\newcommand{\eeq}{\end{equation}}
\newcommand{\ben}{\begin{eqnarray}}
\newcommand{\een}{\end{eqnarray}}
\newcommand{\beno}{\begin{eqnarray*}}
\newcommand{\eeno}{\end{eqnarray*}}


\renewcommand{\theequation}{\thesection.\arabic{equation}}


\newtheorem{theorem}{Theorem}[section]

\newtheorem{lemma}[theorem]{Lemma}
\newtheorem{proposition}[theorem]{Proposition}

\newtheorem{Theorem}{Theorem}[section]

\newtheorem{Lemma}[Theorem]{Lemma}

\newtheorem{Remark}[Theorem]{Remark}




\begin{document}
\title[Prandtl-Batchelor in annulus]
{Prandtl-Batchelor flows on an annulus}

%
\author{Mingwen Fei}
\address{School of  Mathematics and Statistics, Anhui Normal University, Wuhu 241002, China}
\email{mwfei@ahnu.edu.cn}

\author{Chen Gao}
\address{The Institute of Mathematical Sciences, The Chinese University of Hong Kong, Shatin, N.T., Hong Kong}
\email{gaochen@amss.ac.cn}

\author{Zhiwu Lin}
\address{School of Mathematics, Georgia Institute of Technology, 30332, Atlanta, GA, USA}
\email{zlin@math.gatech.edu}

\author{Tao Tao}
\address{School of  Mathematics, Shandong University, jinan, Shandong, 250100, China}
\email{taotao@sdu.edu.cn}

\date{\today}
\maketitle

\renewcommand{\theequation}{\thesection.\arabic{equation}}
\setcounter{equation}{0}
\begin{abstract}
For steady two-dimensional Navier-Stokes flows with a single eddy (i.e. nested closed streamlines) in a simply connected domain, Prandtl (1905) and Batchelor (1956) found that in the inviscid limit, the vorticity is constant inside the eddy. In this paper, we consider the generalized Prandtl-Batchelor theory for the forced steady Navier-Stokes equations on an annulus. First, we observe that in the limit of infinite Reynolds number, if forced steady Navier-Stokes solutions has nested closed streamlines on an annulus, then the inviscid limit is a rotating shear flow uniquely determined by the external force and boundary conditions. We call solutions of steady Navier-Stokes equations with the above property Prandtl-Batchelor flows.  Then, by constructing higher order approximate solutions of the forced steady Navier-Stokes equations and establishing the validity of Prandtl boundary layer expansion, we give a rigorous proof of the existence of Prandtl-Batchelor flows on an annulus with the wall velocities slightly different from the rigid-rotations along the same direction.
\end{abstract}

\numberwithin{equation}{section}

\indent

\section{Introduction}

\indent

The study of steady Navier-Stokes equations has a long history, starting with the classical paper of J.Leary \cite{Leray}, in which Leray proved that the nonhomogeneous boundary problem of steady Navier-Stokes equations in a two-dimensional bounded domain $\Omega$ with $C^2$ smooth boundary $\partial \Omega =\bigcup_{j=1}^N\Gamma_j$ has a solution under the assumption $\int_{\Gamma_j}\mathbf{u}\cdot \mathbf{n}dl=0, j=1,\cdot\cdot\cdot, N$. V.Korobkov, K.Pileckas and R.Russo in \cite{KKR} obtained the existence theorem under the assumption $\sum_{j=1}^N\int_{\Gamma_j}\mathbf{u}\cdot \mathbf{n}dl=0$. However, the problem of inviscid limit of steady Navier-Stokes equations is still largely open. One of the main difficulties is that there are infinitely many solutions for the steady Euler equations in a closed region, which is very different from the time-dependent inviscid limit where the leading Euler flow is uniquely determined by the initial and boundary conditions. Thus, a selection principle must be found to obtain the physical solution. It is unknown how to choose the Euler flow except for some special case, for example,
Prandtl in \cite{prandtl} considered the steady motion of slightly viscous incompressible fluid in a simply-connected region. By integrating the steady Navier-Stokes equations along a closed streamline and letting the Reynolds number converge to infinity, Prandtl formally found that the vorticity of steady Euler flow must be constant in a region of nested closed streamlines (i.e. a single eddy).  However, Prandtl didn't give an approach to determine this constant. This important property
was rediscovered later by Batchelor (1956) in \cite{B}. This class of result is now usually referred to as Prandtl-Batchelor theory and such laminar flows are called Prandtl-Batchelor flows in the
literature. Furthermore, for Prandtl-Batchelor flows on a circular disk, the Euler flow must be Couette flows $(ar,0)$ and the
 constant $a$ was given in \cite{B,W} by exactly solving steady the nonlinear Prandtl equations in the Von Mises transformation.
The formula to determine $a$ is usually referred to as the Batchelor-Wood formula. Similar results were given by Feymann and Lagerstrom in \cite{feymann-lagerstrom}. However, for Prandtl-Batchelor flows on a more
general domain, it's difficult to determine the limiting vorticity
constant and only approximate results were obtained in \cite{edwards,feymann-lagerstrom, kim-formula,riley81,van wijngarden,W}. In addition, the Prandtl-Batchelor theory for forced steady Navier-Stokes equations has been considered by H.Okamoto in \cite{OK} on torus when the external force is proportional to viscous force, and derived a necessary condition for the limiting Euler flow. H.Okamoto in \cite{OK1} studied the stationary Kolmogolov flows on two-dimensional torus by numerical experiments and found that under a certain condition, the Navier-Stokes flows are ``nearly singular"(nonsmooth) for large Reynolds numbers.

 The Prandtl-Batchelor theory has important applications in many studies involving
laminar flows with small viscosity, for example, the nonlinear critical
layer theory near shear flows in \cite{maslowe}. The Prandtl-Batchelor theory can also be applied to general two-dimensional advection
diffusion equation of a passive scalar field $\theta(x,y)$
\begin{align*}
u\cdot\nabla\theta-R^{-1}\Delta\theta=0,
\end{align*}
where $u(x,y)$ is a given steady incompressible flow and
$R^{-1}$ is the diffusion coefficient. For example,
homogenization of potential vorticity in the ocean circulation theory
\cite{ocean-pedlosky,rhines-young83,rhines-young82} and flux expulsion in the
self-excited dynamo theory \cite{dormy-moffatt-19,moffatt78,weiss66}.

Despite the importance of Prandtl-Batchelor theory and its wide applications, there
is relatively few mathematical works on this problem. Kim initiated a mathematical study of Prandtl-Batchelor flows on a
disk in a series of works
\cite{K1998,K2000,K2003,kim-childress,kim-free boundary,kim-thesis,kim-formula}. In particular, when the boundary velocity is slightly different from a
constant, the well-posedness of the Prandtl equations under the Batchelor-Wood
condition was shown in \cite{K2000} and some asymptotic study of the boundary
layer expansion was given in \cite{K1998}. However, he didn't prove
 the convergence of the boundary layer expansion in the limit of infinite Reynolds number. Recently, by constructing higher order approximate solutions of the
Navier-Stokes equations and establishing the validity of Prandtl boundary layer expansion, we give a rigorous proof of the
existence of Prandtl-Batchelor flows on a disk with the wall velocity slightly
different from the rigid-rotation in \cite{FGLT}.

In this paper, we extend the Prandtl-Batchelor theory to the forced steady Navier-Stokes equations on an annulus. The limiting Euler flow in this case is selected in the following way. Under some suitable assumptions, if the solution of steady Navier-Stokes equations with nested closed streamlines converges to steady Euler flows, then the limiting Euler flow must be a rotating shear flow $(u_e(r),0)$ satisfying (\ref{second ODE for leading Euler flow}). In fact, F.Hamel and N.Nadirashvili in \cite{HN2} proved that if steady Euler flow with rigid wall boundary conditions in two-dimensional annulus domain has no stagnation point, then the flow is circular in the sense that the streamlines are concentric. By integrating the steady Navier-Stokes equations along a circle and letting the Reynolds number converge to infinity, we can show that the velocity $u_e(r)$ and the external force $F_u$ must satisfy the ODE in (\ref{second ODE for leading Euler flow}). And the Batchelor-Wood formula determines the boundary conditions for the velocity $u_e(r)$ in (\ref{second ODE for leading Euler flow}). Combining the ODE and the boundary conditions, we obtain the system (\ref{second ODE for leading Euler flow}) for the Euler flow. We call the above selection the Prandtl-Batchelor theory since it describes which Euler flow is selected in the limit $\varepsilon\rightarrow 0$. In the Appendix, we will give a proof of the above property under the following assumptions: for
sufficiently small viscosity i) the steady flows of Navier-Stokes equations have nested closed streamlines; ii) any
interior domain of the annulus is separated from the boundary layer uniformly for vanishing viscosity; iii) inside
the interior domain, steady Navier-Stokes solutions converge to a steady Euler solution in
$C^{2}$. However, given a domain, boundary data and external force, it is hard
to understand the streamline structures of the steady
solutions of Navier-Stokes equations and control the boundary layer in a domain, hence it seems difficult to verify the above assumptions
 rigorously. Following the argument of \cite{FGLT}, by constructing higher order approximate solutions and establishing the convergence of Prandtl boundary layer expansion, we construct a class of Prandtl-Batchelor flows to the steady Navier-Stokes equations on an annulus with both the wall velocities slightly different from the rigid-rotations along the same direction.

More precisely, we consider the forced steady Navier-Stokes equations in a two-dimensional annulus centered on zero $A:=B_1(0)\backslash B_{r_0}(0)$
\begin{equation}
\left \{
\begin {array}{ll}
\mathbf{u}^\varepsilon\cdot\nabla\mathbf{u}^\varepsilon+\nabla p^\varepsilon-\varepsilon^2\Delta\mathbf{u}^\varepsilon=\varepsilon^2\mathbf{F},\\[5pt]
\nabla\cdot \mathbf{u}^\varepsilon=0,
\end{array}
\right.\label{ns}
\end{equation}
with rotating boundary conditions
\begin{align}\label{rotating boundary condition}
\mathbf{u}^\varepsilon\big|_{\partial B_1}=(\alpha+\eta f(\theta))\mathbf{t},\
\mathbf{u}^\varepsilon\big|_{\partial B_{r_0}}= (\beta+\eta g(\theta))\mathbf{t},
\end{align}
%
where  $\varepsilon^2>0$ is reciprocal to Reynolds number,  $0<r_0<1$, $\alpha>0, \beta>0$, $\mathbf{u}^\varepsilon$ is the velocity, $p^\varepsilon$ is the pressure, $\mathbf{F}$ is the external force, $\eta$ is a  small number, $\mathbf{t}$ is the unit tangential vector to $\partial B_1$ or $\partial B_{r_0}$, $f(\theta),g(\theta)$ are $2\pi$-periodic smooth functions.

 Let $\mathbf{n}$ be the unit normal to $\partial B_1$ and $$\mathbf{u}^\varepsilon=u^\varepsilon(\theta,r)\mathbf{t}+v^\varepsilon(\theta,r)\mathbf{n}, \ \mathbf{F}=F_u(\theta,r)\mathbf{t}+F_v(\theta,r)\mathbf{n},$$
then  (\ref{ns}) reads
\begin{eqnarray}
\left \{
\begin {array}{lll}
u^\varepsilon u^\varepsilon_\theta+rv^\varepsilon u^\varepsilon_r+u^\varepsilon v^\varepsilon+p^\varepsilon_\theta-\varepsilon^2\big(\frac{u^\varepsilon_{\theta\theta}}{r} + ru^\varepsilon_{rr}
 + u^\varepsilon_{r}+\frac{2}{r} v^\varepsilon_\theta-\frac{ u^\varepsilon}{r}\big)=\varepsilon^2rF_u,\\[5pt]
u^\varepsilon v^\varepsilon_\theta+rv^\varepsilon v^\varepsilon_r-(u^\varepsilon)^2+rp^\varepsilon_r-\varepsilon^2\big(\frac{ v^\varepsilon_{\theta\theta}}{r}+ rv^\varepsilon_{rr}
 +v^\varepsilon_{r}-\frac{2}{r} u^\varepsilon_\theta-\frac{ v^\varepsilon}{r}\big)=\varepsilon^2rF_v,\\[5pt]
 u^\varepsilon_\theta+(rv^\varepsilon)_r=0,\\[5pt]
 u^\varepsilon(\theta,1)=\alpha+\eta f(\theta),~~ v^\varepsilon(\theta,1)=0,\\[5pt]
 u^\varepsilon(\theta,r_0)=\beta+\eta g(\theta),~~ v^\varepsilon(\theta,r_0)=0,\label{NS-curvilnear}
\end{array}
\right.
\end{eqnarray}
where $(\theta,r)\in\Omega:=[0,2\pi]\times [r_0,1]$. Formally, as $\varepsilon\rightarrow 0$,
we obtain the steady Euler equations for $(u_e,v_e)$
\begin{align}\label{equ:Euler}
\left\{
\begin{aligned}
& u_e\partial_\theta u_e+v_er\partial_ru_e+u_ev_e+\partial_\theta p_e=0,\\
&u_e\partial_\theta v_e+v_er\partial_rv_e-(u_e)^2+r\partial_rp_e =0,\\
&\partial_\theta u_e+\partial_r(rv_e)=0.
\end{aligned}
\right.
\end{align}
We will show the existence of solution $(u^\varepsilon, v^\varepsilon)$ to (\ref{NS-curvilnear}) which converges to  a solution of steady Euler equations (\ref{equ:Euler}) satisfying (\ref{second ODE for leading Euler flow}). Following the argument of \cite{FGLT}, we first choose a steady Euler flow $(u_e(r),0)$ with $u_e(r)$ satisfying (\ref{second ODE for leading Euler flow}), then construct a class of Prandtl-Batchelor flows $(u^\varepsilon, v^\varepsilon)$ to (\ref{NS-curvilnear}) by perturbing this steady Euler flow.
Generally, there is a mismatch between the tangential velocities of the Euler flow $u_e$  and the prescribed Navier-Stokes flows $u^\varepsilon$. Due to the mismatch on the boundary, Prandtl in 1904 formally introduced the boundary layer theory to correct this mismatch. However, the justification of this formal boundary expansion is a challenging problem.  The studies on steady Prandtl equations and linearized steady Prandtl equations can be found in \cite{DM, GI3,OS, SWZ, WZ} and the validity of Prandtl boundary layer expansion has also some important progresses, see  \cite{GZ,GZ2,GN,GI1,Iyer1,Iyer2,Iyer3,IM2}. Moreover, the stability in Sobolev space for some class shear flow of Prandtl type has been studied in \cite{CWZ,GMM}.

In the current work, we assume that
\begin{align}\label{assumption on boundary}
\int_0^{2\pi}f(\theta)d\theta=\int_0^{2\pi}g(\theta)d\theta=0,
\end{align}
and take the leading order Euler flow $(u_e(\theta,r),v_e(\theta,r))$ to be the following shear flow
\begin{align}\label{Taylor-Couette flow}
u_e(\theta,r)=u_e(r), \ v_e(\theta,r)=0,
\end{align}
where $u_e(r)$ satisfies the second order ordinary differential equations
\begin{align}\label{second ODE for leading Euler flow}
\left\{
\begin{aligned}
&u''_e(r)+\frac{u'_e(r)}{r}-\frac{u_e(r)}{r^2}=-\bar{F}_u(r),\\
&u_e(1)=\Big(\alpha^2+\frac{\eta^2}{2\pi}\int_0^{2\pi}f^2(\theta)d\theta\Big)^{\frac12},\\ &u_e(r_0)=\Big(\beta^2+\frac{\eta^2}{2\pi}\int_0^{2\pi}g^2(\theta)d\theta\Big)^{\frac12},
\end{aligned}
\right.
\end{align}
where $\bar{F}_u(r)=\frac{1}{2\pi}\int_0^{2\pi}F_u(\theta,r)d\theta, r\in [r_0,1]$.
There is an important basis for this choice---the generalized Prandtl-Batchelor theory, see Appendix C. This theory shows that if the Euler flow is rotating shear $u_e(r) \vec{e}_\theta$ in the vanishing viscosity limit, then $u_e(r)$ must satisfy the second order ordinary differential equation in (\ref{second ODE for leading Euler flow}), and the Batchelor-Wood formula in Lemma \ref{BW} gives the boundary conditions for $u_e(r)$ in (\ref{second ODE for leading Euler flow}).
Moreover, the solvability of linearized second order Euler equations in the matched asymptotic expansion also makes $u_e(r)$ necessary to be this form, see Remark 2.1.
\begin{Remark}
If the external force $\mathbf{F}=\mathbf{0}$, then $u_e(r)=ar+\frac{b}{r}$($a,b$ can be uniquely determined by the boundary condition in (\ref{second ODE for leading Euler flow})) is the Couette-Taylor flow and the associated vorticity is constant $2a$. If the external force $\mathbf{F}=\frac{2c}{r}\mathbf{e}_\theta$, then $u_e(r)=ar+\frac{b}{r}+cr\ln r$ and the associated vorticity is $2a+c+2c\ln r$($a,b$ can be uniquely determined by the boundary condition in (\ref{second ODE for leading Euler flow})).
\end{Remark}

Next we introduce the steady Prandtl equations near $r=1$
\begin{align}
\left \{
\begin {array}{ll}
\big(u_e(1)+u_p^{(0)}\big)\partial_\theta u_p^{(0)}+\big(v_p^{(1)}-v_p^{(1)}(\theta,0)\big)\partial_Yu_p^{(0)}-\partial_{YY}u_p^{(0)}=0,\\[5pt]
\partial_\theta u_p^{(0)}+\partial_Yv_p^{(1)}=0,\\[5pt]
u_p^{(0)}(\theta,Y)=u_p^{(0)}(\theta+2\pi,Y),\ v_p^{(0)}(\theta,Y)=v_p^{(0)}(\theta+2\pi,Y),\\[5pt]
u_p^{(0)}\big|_{Y=0}=\alpha+\eta f(\theta)-u_e(1),\ \lim\limits_{Y\rightarrow -\infty}u_p^{(0)}=\lim\limits_{Y\rightarrow -\infty}v_p^{(1)}=0
\label{prandtl problem near 1}
\end{array}
\right.
\end{align}
and the steady Prandtl equations near $r=r_0$
\begin{eqnarray}
\left \{
\begin {array}{ll}
\big(u_e(r_0)+\hat{u}_p^{(0)}\big)\partial_\theta \hat{u}_p^{(0)}+\big( \hat{v}_p^{(1)}-\hat{v}_p^{(1)}(\theta,0)\big)r_0\partial_Z\hat{u}_p^{(0)}-r_0\partial_{ZZ}\hat{u}_p^{(0)}=0,\\[5pt]
\partial_\theta \hat{u}_p^{(0)}+r_0\partial_Z\hat{v}_p^{(1)}=0,\\[5pt]
\hat{u}_p^{(0)}(\theta,Z)=\hat{u}_p^{(0)}(\theta+2\pi,Z),\ \hat{v}_p^{(0)}(\theta,Z)=\hat{v}_p^{(0)}(\theta+2\pi,Z),\\[5pt]
\hat{u}_p^{(0)}\big|_{Z=0}=\beta+\eta g(\theta)-u_e(r_0),\  \lim\limits_{Z\rightarrow +\infty}\hat{u}_p^{(0)}=\lim\limits_{Z\rightarrow +\infty}\hat{v}_p^{(1)}=0,
\label{Prandtl problem another boundary}
\end{array}
\right.
\end{eqnarray}
where $Y=\frac{r-1}{\e}$ and $Z=\frac{r-r_0}{\e}$.
The above steady Prandtl equations will be derived by matched asymptotic expansions and their solvability will be studied in the next section.

Now our main theorem is stated as follows.

\begin{Theorem}\label{main theorem}
Assume that $f(\theta),g(\theta), F_u(\theta,r), F_v(\theta,r)$ are smooth functions satisfying (\ref{assumption on boundary}) and the solution $u_e(r)$ of (\ref{second ODE for leading Euler flow}) has a positive lower bound, then there exist $\varepsilon_0>0, \eta_0>0,C>0$ such that for any $\varepsilon\in (0,\varepsilon_0), \eta\in (0,\eta_0)$, the Navier-Stokes equations (\ref{NS-curvilnear}) have a solution $(u^\varepsilon(\theta,r), v^\varepsilon(\theta,r))$ which satisfies
\begin{align*}
\Big\|u^\varepsilon(\theta,r)-u_e(r)-u_p^{(0)}\Big(\theta,\frac{r-1}{\varepsilon}\Big)
-\hat{u}_p^{(0)}\Big(\theta,\frac{r-r_0}{\varepsilon}\Big)\Big\|_{L^\infty(\Omega)}\leq C\varepsilon, \\
\|v^\varepsilon\|_{L^\infty(\Omega)}\leq C\varepsilon,
\end{align*}
where $(u_e(r),0)$ is the shear flow satisfying (\ref{second ODE for leading Euler flow}), $(u_p^{(0)}, v_p^{(1)})$ is the solution of steady Prandtl equations  (\ref{prandtl problem near 1}) and $(\hat{u}_p^{(0)},\hat{v}_p^{(1)})$ is the solution of steady Prandtl equations (\ref{Prandtl problem another boundary}).

Moreover, for any $r_0<r_1<r_2<1$, there holds
\begin{align}
\lim_{\varepsilon\rightarrow 0}\|w^\varepsilon-w_e(r)\|_{L^\infty(A_{r_1,r_2})}=0, \nonumber
\end{align}
where $w^\varepsilon(\theta,r)$ is the vorticity of $(u^\varepsilon(\theta,r), v^\varepsilon(\theta,r))$, $w_e(r)$ is the vorticity of $(u_e(r), 0)$ and $A_{r_1,r_2}=B_{r_2}(0)\backslash B_{r_1}(0)$.
\end{Theorem}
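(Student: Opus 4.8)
The plan is to follow the matched asymptotic expansion strategy of \cite{FGLT}, adapted to the two boundary layers present on the annulus. First I would construct an approximate solution $(u^a,v^a,p^a)$ to high order in $\varepsilon$, written as the sum of an outer (Euler) expansion $\sum_k \varepsilon^k(u_e^{(k)},v_e^{(k)})$ and two inner (Prandtl) expansions, one near $r=1$ in the stretched variable $Y=(r-1)/\varepsilon$ and one near $r=r_0$ in $Z=(r-r_0)/\varepsilon$. The leading terms are fixed by the data: the outer leading term is the shear flow $u_{e,\delta}(r)$ of \eqref{Taylor-Couette flow}, while the two leading Prandtl profiles solve \eqref{prandtl problem near 1} and \eqref{Prandtl problem another boundary}, whose solvability is supplied by the next section. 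The higher-order outer correctors solve linearized Euler equations around the shear flow; their solvability is precisely what forces the form $ar+b/r+cr\ln r$ and pins down $(\tilde a_0,\tilde b_0,\tilde c_0)$ subject to the Batchelor--Wood constraints of Lemma~\ref{BW} (cf. \eqref{outer-2 order equation-2}). The higher-order Prandtl correctors solve linearized Prandtl equations around the leading profiles, with boundary data chosen to cancel the traces of the outer terms. Cutting off each inner expansion away from its own wall and taking the truncation order $N$ large, the resulting $(u^a,v^a)$ would satisfy \eqref{NS-curvilnear} exactly on the boundary and up to a forcing term of size $O(\varepsilon^{N})$ in a suitable weighted norm.

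Next I would introduce the rescaled remainder $(u^\varepsilon-u^a,v^\varepsilon-v^a)$ and derive the error system: a steady linearized Navier--Stokes operator around the approximate profile, plus quadratic nonlinear terms, driven by the small residual. Existence of $(u^\varepsilon,v^\varepsilon)$ then reduces to solving this system by a fixed-point argument. The crucial ingredient is a linear stability (coercivity) estimate for the linearized operator. Working in a stream-function / good-unknown formulation and exploiting the positivity of the two composite tangential boundary-layer profiles $u_{e,\delta}(1)+u_p^{(0)}$ and $u_{e,\delta}(r_0)+\hat u_p^{(0)}$ (guaranteed by $\tilde u_e\ge 0$ together with the monotone Prandtl solutions from the next section), one obtains an energy estimate with a loss of a fixed negative power of $\varepsilon$. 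Provided $N$ exceeds this loss, the residual is small in the right norm and the nonlinearity is absorbed by the linear gain, so a contraction argument yields a solution whose remainder is $O(\varepsilon)$ in a norm embedding into $L^\infty(\Omega)$, giving the first two displayed estimates.

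The hardest step is this linear stability estimate. Unlike the disk, the annulus carries two independent boundary layers sitting on an outer shear with a genuine $\ln r$ vorticity, so the positivity and monotonicity structure must be established for both profiles simultaneously, and the energy functional must be adapted to the annular geometry and the anisotropic $(\theta,r)$ scaling. Controlling the degenerate viscous term at each wall, and absorbing the cross terms coupling the two layers through the interior, is where the genuine difficulty lies.

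Finally, for the vorticity convergence I would use that both Prandtl correctors decay exponentially in their stretched variables, so on any fixed annulus $A_{r_1,r_2}$ with $r_0<r_1<r_2<1$ the contributions of $u_p^{(0)}\big(\theta,(r-1)/\varepsilon\big)$ and $\hat u_p^{(0)}\big(\theta,(r-r_0)/\varepsilon\big)$, together with their derivatives, are $O(\varepsilon^\infty)$. Combined with the remainder estimate, which via interior elliptic regularity for the error system also controls first derivatives away from the walls, the interior vorticity $w^\varepsilon=u^\varepsilon_r+u^\varepsilon/r-v^\varepsilon_\theta/r$ converges uniformly to the vorticity of the Euler shear flow $u_{e,\delta}$. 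A direct computation gives $\frac{1}{r}\partial_r\big(r\,u_{e,\delta}\big)=2a_0+2\delta\tilde a_0+\delta\tilde c_0+2\delta\tilde c_0\ln r$, which is exactly the claimed limit, completing the proof.
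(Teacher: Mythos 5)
Your proposal is correct and follows essentially the same route as the paper: a higher-order matched asymptotic expansion with Euler terms and two Prandtl layers, error equations closed by a linear stability (positivity) estimate plus a Stokes/$L^\infty$ estimate and a contraction mapping, and the interior vorticity limit obtained from the fast decay of the layer profiles together with the computation $\frac{1}{r}\partial_r\big(r\,u_{e,\delta}\big)=2a_0+2\delta\tilde a_0+\delta\tilde c_0+2\delta\tilde c_0\ln r$. The paper's concrete realization of your coercivity step is the multiplier $\big(-(\tfrac{r^2v}{u^a})_r,\ (\tfrac{rv}{u^a})_\theta\big)$ applied to the linearized system, with the $\ln r$-induced term $\int \frac{1}{u_e}\big(ru_e''+u_e'-\tfrac{u_e}{r}\big)(rv)^2$ — which vanishes only in the Taylor--Couette case — absorbed by a Poincar\'{e} inequality after the substitution $g=rv/u_e$, which is exactly the positivity mechanism you describe.
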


\begin{Remark}
If $\bar{F}_u(r)\geq 0, \ \forall r\in [r_0,1]$, then the solution $u_e(r)$ of (\ref{second ODE for leading Euler flow}) has a positive lower bound by maximum principle.
\end{Remark}

\begin{Remark}
We note that the positivity assumption of $u_e(r)$ in Theorem \ref{main theorem} is a sharp condition for structural stability of shear flow $(u_e(r),0)$. The structural stability of rotating shear flow $(u_e(r),0)$ means that\begin{itemize}
                                                        \item  The rotating shear flow $(u_e(r),0)$ is a solution to (\ref{NS-curvilnear}) with external force $(F_u,F_v)=(u''_e(r)+\frac{u'_e(r)}{r}-\frac{u_e(r)}{r^2},0)$ and constant boundary conditions.
                                                        \item Under the small perturbation on external force and boundary conditions, the solution to (\ref{NS-curvilnear}) is still nearly rotating shear flow(``rotating shear flow''+``small error term'').
                                                      \end{itemize}
  On the one hand, Theorem \ref{main theorem} shows that if $u_e(r)$ has a positive lower bound, then the shear flow $(u_e(r),0)$ is structurally stable.  On the other hand, if $u_e(r)$ has a degenerate point, small perturbations on boundary and force will cause hyperbolic singularities for the Euler flow,  then the ``cat eye" structure may appear, thus we can't expect structural stability.  Hence, in this sense, the positivity of $u_e(r)$ is a sharp condition for structural stability of shear flow $(u_e(r),0)$ and also for the existence of PB flow.
\end{Remark}

\begin{Remark}
 We write $\Omega(r)=\frac{u_e(r)}{r}$ as the angular velocity of the Euler flow. From the Rayleigh equation for rotating shear flow
$$(\Omega-c)\Big(D_\ast D-\frac{\alpha^2}{r^2}\Big)\phi-\frac{(rD^2\Omega+3D\Omega)\phi}{r}=0,~~ D=\frac{d}{dr},D_\ast=\frac{d}{dr}+\frac{1}{r},$$
it is easy to deduce that if the shear flow $(u_e(r),0)$ is dynamically unstable for the Euler equations, $(u_e(r)+Cr,0)$ is also dynamically unstable(Lyapunov unstable) for any constant $C$. Furthermore, it is expected that if the shear flow $(u_e(r),0)$ is dynamically unstable for the Euler equations, it is also dynamically unstable for the Navier-Stokes equations with small viscosity. For any dynamically unstable shear flow $(u_e(r),0)$, we choose $C$ such that $u_e(r)+Cr$ has positive below bound. Thus the shear flow $(u_e(r)+Cr,0)$ is dynamically unstable, but structurally stable from Theorem \ref{main theorem}. Hence, dynamical stability and structural stability are totally separate.
\end{Remark}

\begin{Remark}
The condition $\int_0^{2\pi}f(\theta)d\theta=\int_0^{2\pi}g(\theta)d\theta=0$ can be dropped due to the fact
$$\alpha+\eta f(\theta)=\alpha+ \frac{\eta}{2\pi}\int_0^{2\pi}f(\theta)d\theta +\eta \tilde{f}(\theta), $$
where $\int_0^{2\pi} \tilde{f}(\theta) d\theta=0.$ Moreover, the smoothness of $f(\theta),g(\theta), F_u(\theta,r), F_v(\theta,r)$ can be relaxed, but we don't pursue this issue in this manuscript.
\end{Remark}

\begin{Remark}
We believe that our current method can be applicable to the channel flow. More precisely, we consider the two-dimensional forced steady Navier-Stokes equations
\begin{equation}
\left \{
\begin {array}{ll}
\mathbf{u}^\varepsilon\cdot\nabla\mathbf{u}^\varepsilon+\nabla p^\varepsilon-\varepsilon^2\Delta\mathbf{u}^\varepsilon=\varepsilon^2 \mathbf{F}, ~~ (x,y)\in [0,2\pi]\times [0,1]\\[5pt]
\nabla\cdot \mathbf{u}^\varepsilon=0,~~ (x,y)\in [0,2\pi]\times [0,1]\\[5pt]
\mathbf{u}^\varepsilon(x+2\pi,y)=\mathbf{u}^\varepsilon(x,y), ~~ y\in [0,1]\\[5pt]
\mathbf{u}^\varepsilon(x,0)=(\alpha+\eta f(x))\vec{\mathbf{e}}_1, \quad \mathbf{u}^\varepsilon(x,0)=(\beta+\eta g(x))\vec{\mathbf{e}}_1,
\end{array}
\right.\label{ns in channel}
\end{equation}
where $\alpha>0, \beta >0, \vec{\mathbf{e}}_1=(1,0)^T $, $f(x),g(x)$ are $2\pi$-periodic smooth functions and $\eta$ is a small number. In Appendix C, we proved that under some assumption the inviscid limit of the solutions $\mathbf{u}^\varepsilon$ to (\ref{ns in channel}) must satisfy $u''_e(y)=\frac{1}{2\pi}\int_0^{2\pi}F_1(x,y)dx, \ \forall y\in [0,1]$, where $F_1$ is the first component of $\mathbf{F}$. We believe that following the argument of the current paper, when the boundary conditions are slightly different with uniform motion, we can construct solutions $\mathbf{u}^\varepsilon$ to (\ref{ns in channel}) which convergence to the shear flow $(u_e(y),0)^T$ when $\varepsilon\rightarrow 0$.
\end{Remark}

Now we present a sketch of the proof and some key ideas.

{\bf Step 1: Construction of the approximate solution.} We construct an approximate solution $(u^a,v^a)$ by matched asymptotic expansion. The leading order of $(u^a,v^a)$ is
\begin{align*}
\Big(u_e(r)+u_p^{(0)}\Big(\theta,\frac{r-1}{\varepsilon}\Big)+\hat{u}_p^{(0)}\Big(\theta,\frac{r-r_0}{\varepsilon}\Big),0\Big).
\end{align*}
The details of constructing the approximate solution will be given in Section 2. After the construction of the approximate solution, we derive the  equations \eqref{error equation} for the error $(u,v):=(u^\varepsilon-u^a,v^\varepsilon-v^a)$.
Notice that the nonlinear term can be easily handled by higher order approximation, hence we only need to consider the linearized error equations
 \begin{align}\label{linearized error equation}
\left\{
\begin{array}{lll}
u^au_\theta+uu^a_\theta+v^aru_r+vru^a_r+v^au+vu^a+p_\theta-\varepsilon^2\big( ru_{rr}
+\frac{u_{\theta\theta}}{r}+2\frac{v_{\theta}}{r}+u_r-\frac{u}{r}\big)=F_1,\\[5pt]
u^av_\theta+uv^a_\theta+v^arv_r+vrv^a_r-2u u^a+rp_r-\varepsilon^2 \big( rv_{rr}+\frac{v_{\theta\theta}}{r}-2\frac{u_{\theta}}{r}+v_r-\frac{v}{r}\big)=F_2,\\[5pt]
u_\theta+(rv)_r=0,  \\[5pt]
u(\theta+2\pi,r)=u(\theta,r), \ v(\theta+2\pi,r)=v(\theta,r), \\[5pt]
u(\theta,1)=0,\ v(\theta,1)=0, \\[5pt]
 u(\theta,r_0)=0,\ v(\theta,r_0)=0.
\end{array}
\right.
\end{align}

{\bf Step 2: Linear stability estimate for (\ref{linearized error equation}).}
Equations (\ref{linearized error equation}) are the linearized Navier-Stokes equations around the approximate solution $(u^a,v^a)$. For simply, we assume $(u^a,v^a)\approx\Big(u_e(r)+u_p^{(0)}\big(\theta,\frac{r-1}{\varepsilon}\big),0\Big)$ because we can deal with the boundary layer profile near $r=r_0$  in the similar way. Since $|u^a_\theta|=|\partial_\theta u^{(0)}_{p}|\lesssim \eta$, the leading order of the above system can be simplified as
\begin{align}\label{simplied equation}
\left\{
\begin{array}{lll}
u^au_\theta+vru^a_r+vu^a+p_\theta-\varepsilon^2\big( ru_{rr}
+\frac{u_{\theta\theta}}{r}+2\frac{v_{\theta}}{r}+u_r-\frac{u}{r}\big)=F_1,\\[5pt]
u^av_\theta-2u u^a+rp_r-\varepsilon^2 \big( rv_{rr}+\frac{v_{\theta\theta}}{r}-2\frac{u_{\theta}}{r}+v_r-\frac{v}{r}\big)=F_2,\\[5pt]
u_\theta+(rv)_r=0,  \\[5pt]
u(\theta+2\pi,r)=u(\theta,r), \ v(\theta+2\pi,r)=v(\theta,r), \\[5pt]
u(\theta,1)=0,\ v(\theta,1)=0, \\[5pt]
 u(\theta,r_0)=0,\ v(\theta,r_0)=0,
 \end{array}
 \right.
\end{align}
where $u^a$ can be regarded as $u_e(r)+u_p^{(0)}\big(\theta,\frac{r-1}{\varepsilon}\big)$.

The linear stability estimate consists of a basic energy estimate and positivity estimate. In fact, it is easy to know that the basic energy estimate is not good enough for closing the estimate because $\e$ is small. The key point is the following observation which gives the important positivity estimate: $u^a$ is strictly positive, we should make use of the terms $u^au_\theta$ and $u^av_\theta$ to obtain a positive quantity from the convective term. To do this, we choose $\big(-\big(\frac{r^2v}{u^a}\big)_r,\big(\frac{r v}{u^a}\big)_\theta\big)$ as the multiplier, the pressure is eliminated due to the divergence-free condition and the diffusion term is dominated easily due to the small viscosity $\e^2$. Here we only give a sketch of deriving the key positive quantity as follows. Firstly one has
\begin{align*}
&-\int_{r_0}^{1}\int_0^{2\pi}\big(u^au_\theta+vru^a_r+vu^a\big)\Big(\frac{r^2v}{u^a}\Big)_rd\theta dr\\
&\quad \quad +\int_{r_0}^{1}\int_0^{2\pi}\big(u^av_\theta-2uu^a\big)\Big(\frac{rv}{u^a}\Big)_\theta d\theta dr\\
=&\int_{r_0}^{1}\int_0^{2\pi}\big(u^au_\theta+vru^a_r+vu^a\big)\Big(\frac{ru_\theta}{u^a}-rv\Big(\frac{r}{u^a}\Big)_r\Big)d\theta dr\\
&\quad \quad +\int_{r_0}^{1}\int_0^{2\pi}\big(u^av_\theta-2uu^a\big)\Big(\frac{rv_\theta}{u^a}+rv\Big(\frac{1}{u^a}\Big)_\theta\Big) d\theta dr\\
\approx&\int_{r_0}^{1}\int_0^{2\pi}\big(u^au_\theta+vru^a_r+vu^a\big)\Big(\frac{ru_\theta}{u^a}-rv\Big(\frac{r}{u^a}\Big)_r\Big)d\theta dr\\
&\quad \quad +\int_{r_0}^{1}\int_0^{2\pi}\big(u^av_\theta-2uu^a\big)\Big(\frac{rv_\theta}{u^a}\Big) d\theta dr\\
\approx&\int_{r_0}^{1}\int_0^{2\pi}\big(ru^2_\theta +rv^2_\theta \big)d\theta dr+\int_{r_0}^{1}\int_0^{2\pi}\frac{1}{u^a}\Big(ru^a_{rr}+u^a_r-\frac{u^a}{r}\Big)(rv)^2d\theta dr,
\end{align*}
here the details of obtaining the last step will be given in subsection \ref{section:Linear stability estimate}.

Since $u^a\approx u_e(r)+u^{(0)}_p(\theta,\frac{r-1}{\e})$, the boundary layer profile $u^{(0)}_p(\theta,Y)$ is small in the Prandtl variable $Y=\frac{r-1}{\e}$, we deduce that
\begin{align*}
&\int_{r_0}^{1}\int_0^{2\pi}\frac{1}{u^a}\Big(r\p^2_r u^{(0)}_p+\p_r u^{(0)}_p-\frac{u^{(0)}_p}{r}\Big)(rv)^2d\theta dr\\
=&\int_{r_0}^{1}\int_0^{2\pi}\frac{1}{u^a}\Big(rY^2\p^2_Y u^{(0)}_p+\e Y^2\p_Y u^{(0)}_p-\frac{(\e Y)^2u^{(0)}_p}{r}\Big)\Big(\frac{rv}{r-1}\Big)^2d\theta dr\\
\lesssim &\eta \int_{r_0}^{1}\int_0^{2\pi}\Big(\frac{rv}{r-1}\Big)^2d\theta dr\lesssim \eta \int_{r_0}^{1}\int_0^{2\pi}\big((rv)_r\big)^2 d\theta dr \lesssim \eta \int_{r_0}^{1}\int_0^{2\pi}u_\theta^2 d\theta dr,
\end{align*}
where we have used the Hardy inequality.

 Moreover, if the Euler flow is Taylor-Coutte flow, that is $u_e(r)=a r+\frac{b}{r}$, then $ru''_e+u'_e-\frac{u_e}{r}=0$. So
\begin{align*}
&\int_{r_0}^{1}\int_0^{2\pi}\big(ru^2_\theta +rv^2_\theta \big)d\theta dr+\int_{r_0}^{1}\int_0^{2\pi}\frac{1}{u^a}\Big(ru^a_{rr}+u^a_r-\frac{u^a}{r}\Big)(rv)^2d\theta dr\\
&\geq(1-C\eta)\int_{r_0}^{1}\int_0^{2\pi}\big(ru^2_\theta +rv^2_\theta \big)d\theta dr,
\end{align*}
where the constant $C$ is independent of $\eta$. So we obtain a positive quantity $\|(u_\theta,v_\theta)\|_2$. When $u_e$ is strictly positive, we can also obtain the same estimate which will be given in the text.

 However, the positive quantity $\|(u_\theta,v_\theta)\|_2$ don't contain zero-frequency of $(u,v)$ which is needed to obtain the $L^\infty$ estimate for the error $(u,v)$. Notice that $\int_0^{2\pi}v(\theta,r) d\theta=0$ because of the divergence-free condition and the boundary conditions, we can dominate $\|v\|_2$ by $\|v_\theta\|_2$ using the  Poincar\'{e} inequality. However $u_0(r):=\frac{1}{2\pi}\int_0^{2\pi}u(\theta,r) d\theta\neq 0$, we need to obtain the estimate of $|u_0(r)|$ from the basic energy estimate.

 Now we give a sketch of the basic energy estimate.  Choose $u_0$ as a multiplier to the first equation in (\ref{simplied equation}). The diffusion term is
\begin{align*}
&\int_{r_0}^{1}\int_0^{2\pi}-\e^2\Big( ru_{rr}
+\frac{u_{\theta\theta}}{r}+2\frac{v_{\theta}}{r}+u_r-\frac{u}{r}\Big)u_0(r)d\theta dr=\e^2\int_{r_0}^{1}\int_0^{2\pi}\Big(r|u'_0|^2+\frac{u_0^2}{r}\Big)d\theta dr.
\end{align*}
We decompose the approximate solution as $u^a\approx u_e(r)+u^{(0)}_p(\theta,\frac{r-1}{\e})$.  Due to the Euler flow is radial, we deduce that the following quantity vanishes
\begin{align*}
&\int_{r_0}^{1}\int_0^{2\pi}\big( u_eu_\theta+vru_{er}+vu_e+p_\theta\big)u_0d\theta dr=0,
\end{align*}
where we used $\int_0^{2\pi}v(\theta,r) d\theta=0$. Moreover, the Prandtl part can be handled by the Hardy inequality as above
\begin{align*}
&\int_{r_0}^{1}\int_0^{2\pi}\big( u_p^{(0)}u_\theta+vr\p_r u_{p}^{(0)}+vu_p^{(0)}\big)u_0d\theta dr\\
=&\int_{r_0}^{1}\int_0^{2\pi}\e\big( Yu_p^{(0)}u_\theta+\frac{vr}{r-1} Y^2\p_Y u_{p}^{(0)}+v Yu_p^{(0)}\big)\frac{u_0}{r-1}d\theta dr
\lesssim\eta\e\|(u_\theta,v_\theta)\|_2\|u'_0\|_2.
\end{align*}
Thus, we obtain that $\varepsilon^2\|u'_0\|_2\lesssim \|(u_\theta,v_\theta)\|_2$. One can see Lemma \ref{basic energy estimate} for the basic energy estimate.

 Combining the positivity estimate and basic energy estimate, we obtain the linear stability of equations (\ref{simplied equation}).

{\bf Step 3: $L^\infty$ estimate for (\ref{linearized error equation}).} To close the nonlinearity, we need the $L^\infty$ estimate. Using the anisotropic Sobolev embedding, we only need to estimate $\|(u_{r\theta},v_{r\theta})\|_2$ which can be obtained by studying the Stokes equations.

Finally, combining the linear stability estimate and  $L^\infty$ estimate, we establish the well-posedness of the error equations by contraction mapping theorem.\\

The paper is organized as follows. In Section 2, we construct the approximate solution by the matched asymptotic expansion method. In Section 3, we derive the error equations and establish the linear stability estimate which consists of the basic energy estimate and positivity estimate. In Section 4, we firstly establish the ``$H^2$ estimate" for a Stokes system, then obtain $L^\infty$ estimate by the anisotropic Sobolev embedding, finally complete the proof of Theorem \ref{main theorem} by combining the linear stability estimate and  $L^\infty$ estimate.

\smallskip


\section{Construction of approximate solutions}
\indent

In this section, we construct an approximate solution of the Navier-Stokes equations (\ref{NS-curvilnear}) by the matched asymptotic expansion.

\subsection{Euler expansions}

\indent

Away from the boundary, we make the following formal expansions
\begin{align*}
&u^{\varepsilon}(\theta,r)=u_e^{(0)}(\theta,r)+\varepsilon u_e^{(1)}(\theta,r)+\text{h.o.t.},\\[5pt]
&v^{\varepsilon}(\theta,r)=v_e^{(0)}(\theta,r)+\varepsilon v_e^{(1)}(\theta,r)+\text{h.o.t.},\\[5pt]
&p^{\varepsilon}(\theta,r)=p_e^{(0)}(\theta,r)+\varepsilon p_e^{(1)}(\theta,r)+\text{h.o.t.},
\end{align*}
here and in what follows, ``h.o.t." means higher order terms.

\subsubsection{Equations for $(u_e^{(0)},v_e^{(0)},p_e^{(0)})$}

\indent

By substituting the above expansions into (\ref{NS-curvilnear}) and collecting the $\varepsilon$-zeroth order terms, we deduce that $(u_e^{(0)},v_e^{(0)},p_e^{(0)})$ satisfies the following steady nonlinear Euler equations
\begin{eqnarray}
\left \{
\begin {array}{ll}
u_e^{(0)} \partial_\theta u_e^{(0)}+rv_e^{(0)} \partial_ru_e^{(0)}+u_e^{(0)} v_e^{(0)}+\partial_\theta p_e^{(0)}=0,\\[7pt]
 u_e^{(0)} \partial_\theta v_e^{(0)}+rv_e^{(0)} \partial_rv_e^{(0)}-(u_e^{(0)})^2+r\partial_rp_e^{(0)}=0,\\[7pt]
 \partial_\theta u_e^{(0)}+\partial_r(rv_e^{(0)})=0.\label{outer-leading order equation}
\end{array}
\right.
\end{eqnarray}

In our current work, we assume that the leading order Euler flow $(u_e^{(0)},v_e^{(0)})$ is the shear flow in (\ref{Taylor-Couette flow}). Then equations (\ref{outer-leading order equation}) lead to
 \begin{align}
\partial_\theta p_e^{(0)}(\theta,r)=0,\  \partial_rp_e^{(0)}(\theta,r)=\frac{1}{r}u_e^2.\nonumber
 \end{align}
 Thus we can write
 \begin{align}
 p_e^{(0)}(\theta,r)=p_e(r),\  p'_e(r)=\frac{1}{r}u_e^2(r).\label{outer-leading order pressure}
 \end{align}

\subsubsection{Equations for $(u_e^{(1)},v_e^{(1)},p_e^{(1)})$}
\indent

By collecting the $\varepsilon$-order terms, we deduce that $(u_e^{(1)},v_e^{(1)},p_e^{(1)})$ satisfies the following linearized Euler equations in $\Omega$
\begin{eqnarray}
\left \{
\begin {array}{ll}
u_e(r) \partial_\theta u_e^{(1)}+rv_e^{(1)} u_e'(r)+u_e(r) v_e^{(1)}+\partial_\theta p_e^{(1)}=0,\\[5pt]
u_e(r) \partial_\theta v_e^{(1)}-2u_e(r)u_e^{(1)}+r\partial_rp_e^{(1)}=0,\\[5pt]
\partial_\theta u_e^{(1)}+\partial_r(rv_e^{(1)})=0,\label{outer-1 order equation}
\end{array}
\right.
\end{eqnarray}
which are equipped with the boundary conditions
\begin{align}
 v_e^{(1)}|_{r=1}=-v_p^{(1)}|_{Y=0},\ v_e^{(1)}|_{r=r_0}=-\hat{v}_p^{(1)}|_{Z=0},\ v_e^{(1)}(\theta,r)=v_e^{(1)}(\theta+2\pi,r) ,\label{outer-1 order-bc}
\end{align}
where $v_p^{(1)}, \hat{v}_p^{(1)}$ is defined in \eqref{prandtl problem 1} and  \eqref{prandtl problem another boundary} respectively in next subsection.

\subsection{Prandtl expansions near $\partial B_1$}
\indent

We  introduce the scaled variable  $Y=\frac{r-1}{\varepsilon}\in (-\infty,0]$ and make the following Prandtl expansions near $\partial B_1$
\begin{align}\label{expansion near boundary}
\begin{aligned}
&u^\varepsilon=u_e^{(0)}(\theta,r)+u_p^{(0)}(\theta,Y)+\varepsilon\big[u_e^{(1)}(\theta,r)+u_p^{(1)}(\theta,Y)\big]
+\varepsilon^2\big[u_e^{(2)}(\theta,r)+u_p^{(2)}(\theta,Y)\big]+\text{h.o.t.},\\[5pt]
&v^\varepsilon=v_e^{(0)}(\theta,r)+v_p^{(0)}(\theta,Y)+\varepsilon\big[v_e^{(1)}(\theta,r)+v_p^{(1)}(\theta,Y)\big]
+\varepsilon^2\big[v_e^{(2)}(\theta,r)+v_p^{(2)}(\theta,Y)\big]+\text{h.o.t.},\\[5pt]
&p^\varepsilon=p_e^{(0)}(\theta,r)+p_p^{(0)}(\theta,Y)+\varepsilon\big[p_e^{(1)}(\theta,r)+p_p^{(1)}(\theta,Y)\big]
+\varepsilon\big[p_e^{(2)}(\theta,r)+p_p^{(2)}(\theta,Y)\big]+\text{h.o.t.},
\end{aligned}
\end{align}
where as $Y\rightarrow -\infty$
\begin{align}
 \partial_\theta^l\partial_Y^mv_p^{(i)}(\theta,Y)\rightarrow 0, \ \partial_\theta^l\partial_Y^mp_p^{(i)}(\theta,Y)\rightarrow 0,\label{matching condition}
\end{align}
here $l,m\geq 0, i=0,1,\cdots,$ and satisfy the following boundary conditions
\begin{align*}
&u_e^{(0)}(\theta,1)+u_p^{(0)}(\theta,0)=\alpha+\eta f(\theta), \ u_e^{(i)}(\theta,1)+u_p^{(i)}(\theta,0)=0,\ i\geq 1,\\
&v_e^{(i)}(\theta,1)+v_p^{(i)}(\theta,0)=0, i\geq 0, \ \lim_{Y\rightarrow -\infty}(u_p^{(0)}(\theta,Y),\partial_Yu_p^{(1)}(\theta,Y))=(0,0).
\end{align*}

\subsubsection{Equations for $(v_p^{(0)}, p_p^{(0)})$}

\indent

By substituting the above expansions into (\ref{NS-curvilnear}) and collecting the $\frac{1}{\varepsilon}$ order terms, we get
\begin{align*}
\partial_Yv_p^{(0)}(\theta,Y)=0,\  \partial_Yp_p^{(0)}(\theta,Y)=0,
\end{align*}
which together with (\ref{matching condition}) implies
\begin{align*}
v_p^{(0)}=0, \ p_p^{(0)}=0.
\end{align*}

\subsubsection{Equations for $(u_p^{(0)},v_p^{(1)},p_p^{(1)})$}

\indent

By substituting the above expansions into (\ref{NS-curvilnear}) and collecting the $\varepsilon$-zeroth order terms, we obtain
 the following steady Prandtl equations for $(u_p^{(0)},v_p^{(1)})$
\begin{eqnarray}
\left \{
\begin {array}{ll}
\big(u_e(1)+u_p^{(0)}\big)\partial_\theta u_p^{(0)}+\big( v_p^{(1)}- v_p^{(1)}(\theta,0)\big)\partial_Yu_p^{(0)}-\partial_{YY}u_p^{(0)}=0,\\[5pt]
\partial_\theta u_p^{(0)}+\partial_Yv_p^{(1)}=0,\\[5pt]
u_p^{(0)}(\theta,Y)=u_p^{(0)}(\theta+2\pi,Y),\ v_p^{(0)}(\theta,Y)=v_p^{(0)}(\theta+2\pi,Y),\\[5pt]
u_p^{(0)}\big|_{Y=0}=\alpha+\eta f(\theta)-u_e(1),\  \lim\limits_{Y\rightarrow -\infty}u_p^{(0)}=\lim\limits_{Y\rightarrow -\infty}v_p^{(1)}=0
\label{prandtl problem 1}
\end{array}
\right.
\end{eqnarray}
and the pressure $p_p^{(1)}$ satisfies
\begin{align}\label{equation of first pressure}
\partial_Yp_p^{(1)}(\theta, Y)=(u_p^{(0)})^2(\theta,Y)+2u_e(1)u_p^{(0)}(\theta,Y), \ \lim_{Y\rightarrow -\infty}p_p^{(1)}(\theta,Y)=0.
\end{align}

\subsubsection{Equations for $(u_p^{(1)},v_p^{(2)})$}

\indent

By substituting the above expansions into the first and third equation of (\ref{NS-curvilnear}) and collecting the $\varepsilon$-order terms, we obtain
 the following linearized steady Prandtl equations for $(u_p^{(1)},v_p^{(2)})$
\begin{eqnarray}
\left \{
\begin {array}{ll}
\big(u_e(1)+u_p^{(0)}\big)\partial_\theta u_p^{(1)}+\big(v_e^{(1)}(\theta,1)+ v_p^{(1)}\big)\partial_Yu_p^{(1)}+(v_p^{(2)}-v_p^{(2)}(\theta,0))\partial_{Y}u_p^{(0)}\\[5pt]
\quad \quad \quad \quad +u_p^{(1)}\partial_\theta u_p^{(0)}-\partial_{YY}u_p^{(1)}=f_1(\theta,Y),\\[5pt]
\partial_\theta u_p^{(1)}+\partial_Yv_p^{(2)}+\partial_Y(Yv_p^{(1)})=0,\\[5pt]
u_p^{(1)}(\theta,Y)=u_p^{(1)}(\theta+2\pi,Y),\ v_p^{(2)}(\theta,Y)=v_p^{(2)}(\theta+2\pi,Y),\\[5pt]
u_p^{(1)}\big|_{Y=0}=-u_e^{(1)}\big|_{r=1},\ \lim\limits_{Y\rightarrow -\infty}\partial_Yu_p^{(1)}=\lim\limits_{Y\rightarrow -\infty}v_p^{(2)}=0
\label{first linearized prandtl problem near 1}
\end{array}
\right.
\end{eqnarray}
 and
\begin{align*}
f_1(\theta,Y)=&-\partial_\theta p_p^{(1)}+Y\partial_{YY}u_p^{(0)}+\partial_Yu_p^{(0)}\\[5pt]
&-u_p^{(0)}\big(\partial_\theta u_e^{(1)}(\theta,1)+v_e^{(1)}(\theta,1)+v_p^{(1)}\big)-(u'_e(1)Y+u_e^{(1)}(\theta,1))\partial_\theta u_p^{(0)}\\[5pt]
&-(\partial_rv_e^{(1)}(\theta,1)+v_e^{(1)}(\theta,1))Y\partial_Y u_p^{(0)}-(u_e'(1)+Y\partial_Yu_p^{(0)}+u_e(1))v_p^{(1)}.
\end{align*}
\subsection{Prandtl expansions near $\partial B_{r_0}$}

\indent

Similarly as above, we introduce the scaled variable  $Z=\frac{r-r_0}{\varepsilon}\in[0,+\infty)$ and make the following Prandtl expansions near $\partial B_{r_0}$
\begin{align*}
&u^\varepsilon=u_e^{(0)}(\theta,r)+\widehat{u}_p^{(0)}(\theta,Z)+\varepsilon\big[u_e^{(1)}(\theta,r)+\widehat{u}_p^{(1)}(\theta,Z)\big]
+\varepsilon^2\big[u_e^{(2)}(\theta,r)+\widehat{u}_p^{(2)}(\theta,Z)\big]+\text{h.o.t.},\\[5pt]
&v^\varepsilon=v_e^{(0)}(\theta,r)+\widehat{v}_p^{(0)}(\theta,Z)+\varepsilon\big[v_e^{(1)}(\theta,r)+\widehat{v}_p^{(1)}(\theta,Z)\big]+
\varepsilon^2\big[v_e^{(2)}(\theta,r)+\widehat{v}_p^{(2)}(\theta,Z)\big]+\text{h.o.t.},\\[5pt]
&p^\varepsilon=p_e^{(0)}(\theta,r)+\widehat{p}_p^{(0)}(\theta,Z)+\varepsilon\big[p_e^{(1)}(\theta,r)+\widehat{p}_p^{(1)}(\theta,Z)\big]+
\varepsilon^2\big[p_e^{(1)}(\theta,r)+\widehat{p}_p^{(1)}(\theta,Z)\big]+\text{h.o.t.},
\end{align*}
where as $Z\rightarrow +\infty$
\begin{align*}
 \partial_\theta^l\partial_Z^m\widehat{v}_p^{(i)}(\theta,Z)\rightarrow 0, \ \partial_\theta^l\partial_Z^m\widehat{p}_p^{(i)}(\theta,Z)\rightarrow 0,
\end{align*}
here $l,m\geq 0, i=0,1,\cdots,$ and  there hold the following boundary conditions
\begin{align*}
&u_e^{(0)}(\theta,r_0)+\hat{u}_p^{(0)}(\theta,0)=\beta+\eta g(\theta), \ u_e^{(i)}(\theta,r_0)+\hat{u}_p^{(i)}(\theta,0)=0,\ i\geq 1,\\
&v_e^{(i)}(\theta,r_0)+\hat{v}_p^{(i)}(\theta,0)=0, i\geq 0, \ \lim_{Z\rightarrow +\infty}(\hat{u}_p^{(0)}(\theta,Z),\partial_Z\hat{u}_p^{(1)}(\theta,Z))=(0,0).
\end{align*}

\subsubsection{Equations for $(\hat{v}_p^{(0)}, \hat{p}_p^{(0)})$}

\indent

By substituting the above expansions into (\ref{NS-curvilnear}) and collecting the $\frac{1}{\varepsilon}$ order terms, we get
\begin{align*}
\partial_Z\hat{v}_p^{(0)}(\theta,Z)=0,\  \partial_Z\hat{p}_p^{(0)}(\theta,Z)=0,
\end{align*}
which together with (\ref{matching condition}) implies
\begin{align*}
\hat{v}_p^{(0)}=0,\  \hat{p}_p^{(0)}=0.
\end{align*}

\subsubsection{Equations for $(u_p^{(0)},v_p^{(1)},p_p^{(1)})$}

\indent

By substituting the above expansions into (\ref{NS-curvilnear}) and collecting the $\varepsilon$-zeroth order terms, we obtain
 the following steady Prandtl equations for $(\hat{u}_p^{(0)},\hat{v}_p^{(1)})$
\begin{eqnarray}
\left \{
\begin {array}{ll}
\big(u_e(r_0)+\hat{u}_p^{(0)}\big)\partial_\theta \hat{u}_p^{(0)}+\big(\hat{v}_p^{(1)}-\hat{v}_p^{(1)}(\theta,0)\big)r_0\partial_Z\hat{u}_p^{(0)}-r_0\partial_{ZZ}\hat{u}_p^{(0)}=0,\\[5pt]
\partial_\theta \hat{u}_p^{(0)}+r_0\partial_Z\hat{v}_p^{(1)}=0,\\[5pt]
\hat{u}_p^{(0)}(\theta,Z)=\hat{u}_p^{(0)}(\theta+2\pi,Z),\ \hat{v}_p^{(0)}(\theta,Z)=\hat{v}_p^{(0)}(\theta+2\pi,Z),\\[5pt]
\hat{u}_p^{(0)}\big|_{Z=0}=\beta+\eta g(\theta)-u_e(r_0),\  \lim\limits_{Z\rightarrow +\infty}\hat{u}_p^{(0)}=\lim\limits_{Z\rightarrow +\infty}\hat{v}_p^{(1)}=0
\label{prandtl problem another boundary}
\end{array}
\right.
\end{eqnarray}
and the pressure $\hat{p}_p^{(1)}$ satisfies
\begin{align}\label{equation of first pressure another}
r_0\partial_Z\hat{p}_p^{(1)}(\theta, Z)=(\hat{u}_p^{(0)})^2(\theta,Z)+2u_e(r_0)\hat{u}_p^{(0)}(\theta,Z), \ \lim_{Z\rightarrow +\infty}\hat{p}_p^{(1)}(\theta,Z)=0.
\end{align}

\subsubsection{Equations for $(\hat{u}_p^{(1)},\hat{v}_p^{(2)})$}

\indent

By substituting the above expansions into the first and third equation of (\ref{NS-curvilnear}) and collecting the $\varepsilon$-order terms, we obtain
 the following linearized steady Prandtl equations for $(\hat{u}_p^{(1)},\hat{v}_p^{(2)})$
\begin{eqnarray}
\left \{
\begin {array}{ll}
\big(u_e(r_0)+\hat{u}_p^{(0)}\big)\partial_\theta \hat{u}_p^{(1)}+\big(v_e^{(1)}(\theta,r_0)+ \hat{v}_p^{(1)}\big)r_0\partial_Z\hat{u}_p^{(1)}+\hat{u}_p^{(1)}\partial_\theta \hat{u}_p^{(0)}\\[5pt]
\quad \quad \quad \quad \quad \quad \quad \quad \quad +(\hat{v}_p^{(2)}-\hat{v}_p^{(2)}(\theta,0))r_0\partial_Z\hat{u}_p^{(0)}-r_0\partial_{ZZ}\hat{u}_p^{(1)}=\hat{f}_1(\theta,Z),\\[5pt]
\partial_\theta \hat{u}_p^{(1)}+r_0\partial_Z\hat{v}_p^{(2)}+\partial_Z(Z\hat{v}_p^{(1)})=0,\\[5pt]
\hat{u}_p^{(1)}(\theta,Z)=\hat{u}_p^{(1)}(\theta+2\pi,Z),\ \hat{v}_p^{(2)}(\theta,Z)=\hat{v}_p^{(2)}(\theta+2\pi,Z),\\[5pt]
\hat{u}_p^{(1)}\big|_{Z=0}=-u_e^{(1)}\big|_{r=r_0},\  \lim\limits_{Z\rightarrow +\infty}\partial_Z\hat{u}_p^{(1)}=\lim\limits_{Z\rightarrow +\infty}\hat{v}_p^{(2)}=0,
\label{first linearized prandtl problem another boundary}
\end{array}
\right.
\end{eqnarray}
where
\begin{align*}
\hat{f}_1(\theta,Y)=&-\partial_\theta \hat{p}_p^{(1)}+Z\partial_{ZZ}\hat{u}_p^{(0)}+\partial_Z\hat{u}_p^{(0)}\\[5pt]
&-\hat{u}_p^{(0)}(\partial_\theta u_e^{(1)}(\theta,r_0)+v_e^{(1)}(\theta,r_0)+\hat{v}_p^{(1)})-(u'_e(r_0)Z+u_e^{(1)}(\theta,r_0))\partial_\theta \hat{u}_p^{(0)}\\[5pt]
&-(r_0\partial_rv_e^{(1)}(\theta,r_0)+v_e^{(1)}(\theta,r_0)+\hat{v}_p^{(1)})Z\partial_Z u_p^{(0)}-[u_e(r_0)+r_0u'_e(r_0)]\hat{v}_p^{(1)}.
\end{align*}

Before performing the higher order expansions we then aim to solve the above Euler equations and Prandtl equations.
\subsection{Solvabilities of Euler equations and Prandtl equations}
\indent

The order in which we solve the equation is as follows
\begin{align*}
(u_e(r),0)\rightarrow (u_p^{(0)},v_p^{(1)})/(\hat{u}_p^{(0)},\hat{v}_p^{(1)})\rightarrow (u_e^{(1)},v_e^{(1)})\rightarrow(u_p^{(1)},v_p^{(2)})/(\hat{u}_p^{(1)},\hat{v}_p^{(2)}).
\end{align*}

\subsubsection{Prandtl system and its solvability}
\indent

We first derive some necessary conditions for the solvability of Prandtl equations, one can also refer to Lemma 2.1 in \cite{FGLT}.
\begin{lemma}\label{BW}(Batchelor-Wood formula \cite{K1998}\cite{K2000}) Let $\int_0^{2\pi}f(\theta)d\theta=\int_0^{2\pi}g(\theta)d\theta=0$. If the nonlinear Prandtle equations (\ref{prandtl problem 1}) has a solution $(u_p^{(0)}, v_p^{(1)})$ and (\ref{prandtl problem another boundary}) have a solution $(\hat{u}_p^{(0)}, \hat{v}_p^{(1)})$ which satisfy
\beno
&&u_e(1)+u_p^{(0)}(\theta,Y)>0, \ \forall Y\leq 0; \quad u_e(r_0)+\hat{u}_p^{(0)}(\theta,Z)>0, \ \forall Z\geq 0, \\[5pt]
&&\|u_p^{(0)}\|_\infty+\|\hat{u}_p^{(0)}\|_\infty\leq M,
\eeno
where $M>0$ is a constant,  then there hold
\begin{align}
u_{e}^2(1)=\alpha^2+\frac{\eta^2}{2\pi}\int_0^{2\pi}f^2(\theta)d\theta,\label{BW formula}
\end{align}
and
\begin{align}
u_{e}^2(r_0)=\beta^2+\frac{\eta^2}{2\pi}\int_0^{2\pi}g^2(\theta)d\theta.\label{BW formula-1}
\end{align}
\end{lemma}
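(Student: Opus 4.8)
The plan is to prove both identities by a von Mises (stream-function) change of variables applied to the leading-order Prandtl systems; this converts the quadratic nonlinearity into a pure transport term and forces the $\theta$-average of $U^2$ to be an \emph{affine} function of the stream variable, which the boundedness hypothesis then pins down to a constant. First I would recast (\ref{prandtl problem 1}) in terms of the total tangential velocity $U(\theta,Y):=u_e(1)+u_p^{(0)}(\theta,Y)$ and the shifted normal velocity $V(\theta,Y):=v_p^{(1)}(\theta,Y)-v_p^{(1)}(\theta,0)$. Since $u_e(1)$ is constant, the system collapses to the clean Prandtl pair
\[
U\partial_\theta U+V\partial_Y U=\partial_{YY}U,\qquad \partial_\theta U+\partial_Y V=0,
\]
with $U(\theta,0)=\alpha+\eta f(\theta)$, $U(\theta,Y)\to u_e(1)$ as $Y\to-\infty$, and, crucially, $V(\theta,0)=0$. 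Throughout I will use the positivity hypothesis $U>0$ and the bound $\|u_p^{(0)}\|_\infty\le M$.

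Next, because $U>0$, I would introduce the stream function $\psi(\theta,Y):=\int_0^Y U(\theta,s)\,ds-\int_0^\theta V(\theta',0)\,d\theta'$, which satisfies $\partial_Y\psi=U$ and $\partial_\theta\psi=-V$ (the second using $\partial_\theta U=-\partial_Y V$). For each fixed $\theta$ the map $Y\mapsto\psi(\theta,Y)$ is strictly increasing, and since $V(\theta,0)=0$ we get $\psi(\theta,0)\equiv\psi_0$ independent of $\theta$; moreover $\psi$ is $2\pi$-periodic in $\theta$. Hence $(\theta,Y)\mapsto(\theta,\psi)$ is a diffeomorphism of $[0,2\pi]\times(-\infty,0]$ onto the rectangle $[0,2\pi]\times(-\infty,\psi_0]$. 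In these coordinates the convective operator becomes $U\partial_\theta|_\psi$ while $\partial_{YY}U=U\partial_\psi(U\partial_\psi U)=\tfrac12 U\,\partial_{\psi\psi}(U^2)$, so dividing the momentum equation by $U$ yields
\[
\partial_\theta U\big|_\psi=\tfrac12\,\partial_{\psi\psi}\big(U^2\big).
\]

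Then I would integrate this identity over $\theta\in[0,2\pi]$ at fixed $\psi$: periodicity annihilates the left-hand side, giving $\partial_{\psi\psi}\int_0^{2\pi}U^2\,d\theta=0$, so $\int_0^{2\pi}U^2\,d\theta=A\psi+B$ is affine in $\psi$. The bound $\|u_p^{(0)}\|_\infty\le M$ keeps $\int_0^{2\pi}U^2\,d\theta$ uniformly bounded as $\psi\to-\infty$, which forces $A=0$; passing to the limit $Y\to-\infty$ (where $u_p^{(0)}\to0$) identifies $B=2\pi u_e^2(1)$. Evaluating the same constant at $\psi=\psi_0$, i.e.\ $Y=0$, gives $\int_0^{2\pi}(\alpha+\eta f)^2\,d\theta=2\pi u_e^2(1)$, and since $\int_0^{2\pi}f\,d\theta=0$ this is exactly (\ref{BW formula}). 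The estimate (\ref{BW formula-1}) follows verbatim from (\ref{prandtl problem another boundary}) with $\hat U:=u_e(r_0)+\hat u_p^{(0)}$, $\hat V:=r_0\big(\hat v_p^{(1)}-\hat v_p^{(1)}(\theta,0)\big)$ and a stream variable $\hat\psi$ (now $Z\to+\infty$), the factor $r_0$ merely rescaling the diffusion and hence the affine conclusion.

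The main obstacle I expect is the rigorous justification of the von Mises change of variables and its limits rather than the algebra: one must verify that the solutions are regular enough for $\psi$ to be a genuine diffeomorphism onto the rectangle, that the unboundedness of the strip in $Y$ is controlled by the decay of $u_p^{(0)}$ and $v_p^{(1)}$ as $Y\to-\infty$ (so that both the affine-in-$\psi$ statement and the limit interchange identifying $B$ are legitimate), and that the subtraction defining $V$ really renders $\{Y=0\}$ a streamline so the image is a rectangle. Once these structural points are secured, every remaining step is a one-line computation.
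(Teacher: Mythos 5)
Your proposal is correct and follows essentially the same route as the paper: the von Mises change of variables $\psi=\int_0^Y\big(u_e(1)+u_p^{(0)}\big)dz$ reducing the Prandtl system to $2\,\partial_\theta\mathcal{U}^{(0)}=\partial_{\psi\psi}\big((\mathcal{U}^{(0)})^2\big)$, integration in $\theta$ using periodicity, and the boundedness of $\mathcal{U}^{(0)}$ as $\psi\to-\infty$ to kill the linear-in-$\psi$ term before matching the boundary values at $\psi=0$ and $\psi\to-\infty$. The paper's proof is word-for-word the same argument (phrased as $\partial_\psi\int_0^{2\pi}(\mathcal{U}^{(0)})^2\,d\theta=0$ rather than your "affine with $A=0$"), with the $r_0$ case handled by symmetry exactly as you indicate.
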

\begin{proof} We only need to prove $(\ref{BW formula})$ since $(\ref{BW formula-1})$ can be derived similarly.
We introduce the von Mises variable
\begin{align*}
\psi=\int_0^Y\big(u_e(1)+u_p^{(0)}(\theta,z)\big)dz,\  \mathcal{U}^{(0)}(\theta,\psi)=u_e(1)+u_p^{(0)}(\theta,Y).
\end{align*}
Then from (\ref{prandtl problem 1}), we deduce that $\mathcal{U}^{(0)}$ satisfies
\begin{eqnarray}
\left \{
\begin {array}{ll}
2\mathcal{U}^{(0)}_\theta=\big((\mathcal{U}^{(0)})^2\big)_{\psi\psi},\\[5pt]
\mathcal{U}^{(0)}(\theta,\psi)=\mathcal{U}^{(0)}(\theta+2\pi,\psi),\\[5pt]
\mathcal{U}^{(0)}\big|_{\psi=0}=\alpha+\eta f(\theta),\ \ \lim_{\psi\rightarrow-\infty}\mathcal{U}^{(0)}=u_{e}(1).\label{modified prandtl equation}
\end{array}
\right.
\end{eqnarray}
Here we have used the facts:
\begin{align*}
\partial_\theta u_p^{(0)}&=\mathcal{U}^{(0)}_\theta+\mathcal{U}^{(0)}_\psi\int_0^Y\partial_\theta u_p^{(0)}(\theta,z)\big)dz
\\[5pt]&=\mathcal{U}^{(0)}_\theta+\mathcal{U}^{(0)}_\psi\big(v_p^{(1)}(\theta,0)-v_p^{(1)}(\theta,Y)\big)
\\[5pt]&=\mathcal{U}^{(0)}_\theta-\mathcal{U}^{(0)}_\psi\big(v^{(1)}(\theta,1)+v_p^{(1)}(\theta,Y)\big),
\\[5pt]\partial_Yu_p^{(0)}&=\mathcal{U}^{(0)}_\psi\mathcal{U}^{(0)}.
\end{align*}

Integrating the first equation in (\ref{modified prandtl equation}) from $0$ to $2\pi$ about $\theta$ leads to
\begin{align*}
\frac{\partial^2}{\partial\psi^2}\int_0^{2\pi}(\mathcal{U}^{(0)})^2(\theta,\psi)d\theta=0.
\end{align*}
Notice that $\mathcal{U}^{(0)}$ is bounded at $\psi\rightarrow-\infty$, we deduce that
\begin{align*}
\frac{\partial}{\partial\psi}\int_0^{2\pi}(\mathcal{U}^{(0)})^2(\theta,\psi)d\theta=0.
\end{align*}
Therefore combining the boundary condition in (\ref{modified prandtl equation}), we deduce that
\begin{align*}
u_{e}^2(1)=\frac{1}{2\pi}\int_0^{2\pi}\big(\alpha+\eta f(\theta)\big)^2d\theta&=\alpha^2+\frac{\alpha\eta}{\pi}\int_0^{2\pi}f(\theta)d\theta+\frac{\eta^2}{2\pi}\int_0^{2\pi}f^2(\theta)d\theta
\\&=\alpha^2+\frac{\eta^2}{2\pi}\int_0^{2\pi}f^2(\theta)d\theta.
\end{align*}
Thus, we complete the proof of this lemma.
\end{proof}


Next we aim to solve the steady Prandtl equations (\ref{prandtl problem 1}), one can refer to Corollary 2.2 in \cite{FGLT}.
\begin{proposition}\label{decay estimates} There exists $\eta_0>0$ such that for any $\eta\in (0,\eta_0)$ and any $j,k,l\in \mathbb{N}\cup \{0\}$, the equations (\ref{modified prandtl equation}) have a unique solution $\mathcal{U}^{(0)}$ which satisfies
\begin{align*}
\int_{-\infty}^0\int_0^{2\pi}\Big|\partial_\theta^j\partial_\psi^k \big(\mathcal{U}^{(0)}-u_e(1)\big)\Big|^2\big<\psi\big>^{2l}d\theta d\psi\leq C(j,k,l)\eta^2,
\end{align*}
here $\big<\psi\big>=\sqrt{1+\psi^2}$.

\end{proposition}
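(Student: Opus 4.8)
The plan is to solve the von Mises problem (\ref{modified prandtl equation}) perturbatively around the constant state $u_e(1)$, exploiting the fact that the Batchelor--Wood relation (\ref{BW formula}) makes the boundary data of size $O(\eta)$. Setting $w:=\mathcal{U}^{(0)}-u_e(1)$, the first equation in (\ref{modified prandtl equation}) becomes the semilinear heat-type equation
\begin{align*}
\partial_\theta w = u_e(1)\,\partial_\psi^2 w + \tfrac12\,\partial_\psi^2(w^2),\qquad (\theta,\psi)\in[0,2\pi]\times(-\infty,0],
\end{align*}
in which $\theta$ plays the role of a periodic time and $\psi$ that of a spatial variable on the half-line, with Dirichlet datum $w(\theta,0)=b(\theta):=\alpha+\eta f(\theta)-u_e(1)$ and the decay condition $w\to0$ as $\psi\to-\infty$. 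Since $\int_0^{2\pi}f\,d\theta=0$, (\ref{BW formula}) gives $u_e(1)=\sqrt{\alpha^2+O(\eta^2)}=\alpha+O(\eta^2)$, so $b(\theta)=\eta f(\theta)+O(\eta^2)$ is smooth and $O(\eta)$ in every Sobolev norm. I would seek $w$ in the weighted space $X$ with norm $\sum_{j+k\le N}\big(\int\!\!\int|\partial_\theta^j\partial_\psi^k w|^2\langle\psi\rangle^{2l}\,d\theta\,d\psi\big)^{1/2}$ and prove the estimate with right-hand side $O(\eta)$ there, which is exactly the claimed bound $C(j,k,l)\eta^2$ once each weighted derivative norm is squared.

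For the linear theory I would pass to the Fourier series in $\theta$. Expanding $w=\sum_k\hat w_k(\psi)e^{ik\theta}$, each nonzero mode of the linearized operator $\partial_\theta-u_e(1)\partial_\psi^2$ reduces to the ODE $u_e(1)\hat w_k''=ik\,\hat w_k$, whose solution decaying as $\psi\to-\infty$ is $\hat w_k(\psi)=\hat w_k(0)\,e^{\mu_k\psi}$ with $\mu_k=\sqrt{ik/u_e(1)}$ and $\mathrm{Re}\,\mu_k=\sqrt{|k|/(2u_e(1))}>0$. This yields, for the inhomogeneous problem with a double-divergence source $\partial_\psi^2 h$ coming from the nonlinearity, weighted resolvent bounds: because each profile decays like $e^{-c\sqrt{|k|}\,|\psi|}$, multiplication by any polynomial weight $\langle\psi\rangle^l$ is harmless and one controls $\partial_\theta^j\partial_\psi^k$ in weighted $L^2$ with constants summable in $k$. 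The zero mode is not governed by this ODE; instead I would fix it through the conservation law established in the proof of Lemma \ref{BW}, namely $\partial_\psi^2\int_0^{2\pi}(\mathcal{U}^{(0)})^2\,d\theta=0$ together with boundedness at $-\infty$, which forces $\int_0^{2\pi}(\mathcal{U}^{(0)})^2\,d\theta\equiv 2\pi u_e(1)^2$ and hence the slaving relation $\hat w_0(\psi)=-\frac{1}{4\pi u_e(1)}\int_0^{2\pi}w^2\,d\theta$. A short computation using (\ref{BW formula}) shows this is consistent with $\hat w_0(0)=\alpha-u_e(1)$, so no solvability obstruction arises at the mean; moreover $\hat w_0$ is quadratic in $w$ and therefore inherits exponential decay from the nonzero modes.

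With the linear solution operator $\mathcal{S}$ in hand (boundary response plus the resolvent of the double-divergence source, with the zero mode fixed as above), I would recast (\ref{modified prandtl equation}) as the fixed-point problem $w=\mathcal{S}\big[b,\tfrac12\partial_\psi^2(w^2)\big]$ and show that the map preserves a ball of radius $C\eta$ in $X$ and is a contraction there. Preservation uses $\|b\|\lesssim\eta$ together with the quadratic estimate $\|\partial_\psi^2(w^2)\|\lesssim\|w\|_X^2\lesssim\eta^2$; contraction follows from the bilinearity of $w\mapsto w^2$ and the same weighted product estimates. For $\eta$ small this produces a unique fixed point $w\in X$ with $\|w\|_X\lesssim\eta$, which is the desired $\mathcal{U}^{(0)}$; uniqueness in the class of small solutions is immediate from the contraction (or, alternatively, from a weighted energy estimate for the difference of two solutions).

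The main obstacle I anticipate is the interplay between the zero Fourier mode and the boundary: the naive linear operator is not invertible on the mean, and only the quadratic conservation law---compatible with the data precisely because of Batchelor--Wood---closes the system, so this compatibility must be tracked carefully at every order of the weight. A secondary technical point is propagating the polynomial weights $\langle\psi\rangle^{2l}$ through the product estimates and the trace at $\psi=0$; this is where the exponential (rather than merely integrable) decay of every mode is essential, since it dominates any algebraic weight. Since the structure here differs from the disk case of \cite{FGLT} only through the explicit value of the constant $u_e(1)$ and the presence of a second (inner) boundary treated identically via (\ref{prandtl problem another boundary}), I would then follow the argument of Corollary 2.2 in \cite{FGLT} to fill in the routine weighted estimates.
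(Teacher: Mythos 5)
Your proposal is correct in outline but takes a genuinely different route from the paper. The paper never perturbs $\mathcal{U}^{(0)}$ itself: it works with the squared quantity $Q=(\mathcal{U}^{(0)})^2-u_e^2(1)$, for which the von Mises equation reads $Q_\theta=\sqrt{Q+u_e^2(1)}\,Q_{\psi\psi}$, and then rewrites this so that the \emph{entire} nonlinearity sits under a $\theta$-derivative, $Q_\theta-u_e(1)Q_{\psi\psi}=(\mathcal{H}(Q))_\theta$ with $\mathcal{H}$ quadratic, before running a contraction argument. This buys two things simultaneously: the Batchelor--Wood relation says exactly that the boundary datum of $Q$ has zero $\theta$-mean, so the troublesome zero mode never appears at all (the mean of $Q$ solves $\partial_\psi^2\int_0^{2\pi}Q\,d\theta=0$ and vanishes identically), and energy estimates for the linear operator with source $\Lambda_\theta$ require integration by parts only in the periodic variable $\theta$, so the polynomial weights in $\psi$ and the boundary $\psi=0$ are never touched. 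Your formulation in $w=\mathcal{U}^{(0)}-u_e(1)$ confronts both issues head-on: the zero mode must be slaved through the conservation law (your relation $\hat w_0=-\frac{1}{4\pi u_e(1)}\int_0^{2\pi}w^2\,d\theta$ is indeed equivalent to the zero-mode equation plus decay, and your observation that Batchelor--Wood is precisely the compatibility of this slaving with the boundary datum $\hat w_0(0)=\alpha-u_e(1)$ is correct, and is a good way to see where the formula enters), while the source $\frac12\partial_\psi^2(w^2)$ must be handled by a resolvent gaining two derivatives, with nonvanishing boundary contributions at $\psi=0$ since $w^2$ equals $b^2\neq 0$ there.

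Two caveats on the technical claims you defer. First, the assertion that polynomial weights are harmless because ``every mode decays exponentially'' is not justified: only the boundary-response part of the solution decays exponentially, whereas the part driven by the source $w^2$ inherits merely the polynomial decay of $w^2$, so the weighted bounds must be closed by commuting weights through the solution operator (as the paper does in its Step 2), not by appealing to exponential decay. Second, the cleanest repair of both the boundary-term and the derivative-loss issues is to absorb the quadratic term into the unknown: setting $z=w+\frac{w^2}{2u_e(1)}$ turns your equation into $z_\theta-u_e(1)z_{\psi\psi}=\frac{1}{2u_e(1)}\partial_\theta(w^2)$, and $z=\frac{1}{2u_e(1)}\big((\mathcal{U}^{(0)})^2-u_e^2(1)\big)$ is exactly the paper's unknown up to a constant factor. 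So your route, once its gaps are filled in the natural way, collapses onto the paper's; carried out as literally stated, it remains viable but demands uniform-in-$k$ weighted resolvent estimates for double-divergence sources that the paper's change of variables avoids entirely.
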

\begin{proof}

We will use the contraction mapping theorem  to  prove the desired conclusions and divide the proof into five steps.

{\bf Step 1:} {\it Derivation of equivalent equations.}

Let $Q(\theta,\psi):=(\mathcal{U}^{(0)})^2(\theta,\psi)-u_{e}^2(1)$ and we rewrite (\ref{modified prandtl equation}) as
\begin{eqnarray}
\left \{
\begin {array}{ll}
Q_\theta=\mathcal{U}^{(0)}Q_{\psi\psi},\\[7pt]
Q(\theta,\psi)=Q(\theta+2\pi,\psi),\\[5pt]
Q\big|_{\psi=0}=\alpha^2+2\alpha\eta f(\theta)+\eta^2 f^2(\theta)-u_{e}^2(1),\  Q\big|_{\psi\rightarrow-\infty}=0.\label{modified prandtl equation-1}
\end{array}
\right.
\end{eqnarray}

Defining
\begin{align*}
\mathcal{G}(Q)=Q-2u_{e}(1)\sqrt{Q+u_e^2(1)}+2u_{e}^2(1),
\end{align*}
then (\ref{modified prandtl equation-1}) is equivalent to
\begin{eqnarray}
\left \{
\begin {array}{ll}
Q_\theta-u_{e}(1)Q_{\psi\psi}=(\mathcal{G}(Q))_\theta,\\[7pt]
Q(\theta,\psi)=Q(\theta+2\pi,\psi),\\[5pt]
Q\big|_{\psi=0}=\alpha^2+2\alpha\eta f(\theta)+\eta^2 f^2(\theta)-u_{e}^2(1),\  Q\big|_{\psi\rightarrow-\infty}=0.\label{modified prandtl equation-2}
\end{array}
\right.
\end{eqnarray}

Let $Q_0$ be the solution to
\begin{eqnarray}
\left \{
\begin {array}{ll}
(Q_0)_\theta=u_{e}(1)(Q_0)_{\psi\psi},\\[7pt]
Q_0(\theta,\psi)=Q_0(\theta+2\pi,\psi),\\[5pt]
Q_0\big|_{\psi=0}=\alpha^2+2\alpha\eta f(\theta)+\eta^2 f^2(\theta)-u_{e}^2(1),\  Q_0\big|_{\psi\rightarrow-\infty}=0,\label{Q0}
\end{array}
\right.
\end{eqnarray}
which will be solved in Appendix A,  then (\ref{modified prandtl equation-2}) is equivalent to
\begin{eqnarray}
\left \{
\begin {array}{ll}
Q_\theta-u_{e}(1)Q_{\psi\psi}=(\mathcal{H}(Q))_\theta,\\[7pt]
Q(\theta,\psi)=Q(\theta+2\pi,\psi),\\[5pt]
Q\big|_{\psi=0}=0,\  Q\big|_{\psi\rightarrow-\infty}=0,\label{modified prandtl equation-3}
\end{array}
\right.
\end{eqnarray}
where
\begin{align*}
\mathcal{H}(Q)=Q+Q_0-2u_{e}(1)\sqrt{Q+Q_0+u_e^2(1)}+2u_{e}^2(1).
\end{align*}

Defining the linear operator $\mathcal{L}:\Lambda\longmapsto \Phi$ such that
\begin{eqnarray}
\Phi=\mathcal{L} \Lambda\Longleftrightarrow\left \{
\begin {array}{ll}
\Phi_\theta-u_{e}(1)\Phi_{\psi\psi}=\Lambda_\theta,\\[7pt]
\Phi(\theta,\psi)=\Phi(\theta+2\pi,\psi),\\[5pt]
\Phi\big|_{\psi=0}=0,\  \Phi\big|_{\psi\rightarrow-\infty}=0,\label{definition L}
\end{array}
\right.
\end{eqnarray}
then (\ref{modified prandtl equation-3}) is equivalent to
\begin{eqnarray}
\left \{
\begin {array}{ll}
Q=\mathcal{L} (\mathcal{H}(Q)),\\[7pt]
Q(\theta,\psi)=Q(\theta+2\pi,\psi),\\[5pt]
Q\big|_{\psi=0}=0,\  Q\big|_{\psi=-\infty}=0.\label{operator form}
\end{array}
\right.
\end{eqnarray}

Defining the function space $X$ as follows
\begin{align*}
X=\bigg\{&Q:Q(\theta,\psi)=Q(\theta+2\pi,\psi),\ Q\big|_{\psi=0}=Q\big|_{\psi\rightarrow-\infty}=0,\\&
\quad \quad  \|Q\|_X^2:=\sum\limits_{j+k\leq m, l\geq 0}\int_{-\infty}^0\int_0^{2\pi}\big|\partial_\theta^j\partial_\psi^k Q\big|^2\big<\psi\big>^ld\theta d\psi<+\infty\bigg\}
\end{align*}
and a ball $B_0$ in $X$
\begin{align*}
B_0=\big\{Q\in X: \|Q\|_X\leq \nu\big\},
\end{align*}
here $m$ is a positive integer and  $\nu$ is a small number which will be determined later.

 In the next three steps  we aim to  verify that $\mathcal{L}\circ\mathcal{H}$ is a contraction map from $B_0$ to $B_0$ with suitable small $\nu$.

{\bf Step 2:}{ \it  Boundedness of $\mathcal{L}$ in $X$}. In this step, we prove that for any $m\geq 0, l\geq 0$, there holds
\begin{align}\label{estimate for $L$}
\sum_{j+k\leq m}\big\|\partial^j_\theta\partial^k_\psi\Phi\big<\psi\big>^l\big\|_2\leq C(m,l)\sum_{j+k\leq m, q\leq l}\big\|\partial^j_\theta\partial^k_\psi\Lambda\big<\psi\big>^q\big\|_2.
\end{align}

First, we prove (\ref{estimate for $L$}) for $l=0$. Multiplying   the equation in (\ref{definition L}) by $\Phi$  and integrating with respect to $(\theta,\psi)\in(0,2\pi)\times(-\infty,0)$, we obtain for any $\lambda>0$
\begin{align}
u_{e}(1)\|\Phi_\psi\|_2\leq C(\lambda)\|\Lambda\|_2+\lambda\|\Phi_\theta\|_2.\label{iorder-derivative-1}
\end{align}
Then, multiplying   the equation in (\ref{definition L}) by $\Phi_\theta$  and integrating with respect to $(\theta,\psi)\in(0,2\pi)\times(-\infty,0)$, one has
\begin{align}
\|\Phi_\theta\|_2\leq C(\lambda)\|\Lambda_\theta\|_2+\lambda\|\Phi_\theta\|_2.\label{iorder-derivative-2}
\end{align}
Combining (\ref{iorder-derivative-1})-(\ref{iorder-derivative-2}) and choosing small $\lambda>0$ we get
\begin{align}
\|(\Phi_\psi,\Phi_\theta)\|_2\leq C\|(\Lambda,\Lambda_\theta)\|_2.\label{iorder-derivative}
\end{align}

Integrating (\ref{definition L}) with respect to $\theta\in(0,2\pi)$ gives
\begin{align*}
\frac{d^2}{d\psi^2}\int_0^{2\pi}\Phi (\theta,\psi)d\theta=0,
\end{align*}
which and $\Phi|_{\psi=0}=\Phi|_{\psi\rightarrow-\infty}=0$ imply
\begin{align}
\int_0^{2\pi}\Phi (\theta,\psi)d\theta=0 .\label{zero average}
\end{align}
Due to  (\ref{zero average}) and the Poincar\'{e} inequality we have
\begin{align}
\|\Phi\|_2\leq C\|\Phi_\theta\|_2\leq C\|(\Lambda,\Lambda_\theta)\|_2.\label{L2}
\end{align}

For any $j\geq 0, k\geq 2$, from the equation (\ref{definition L}), we deduce that
\begin{align}
\|\partial_\theta^j\partial_\psi^k\Phi\|_2\leq C (\|\partial_\theta^j\partial_\psi^{k-2}\Phi_\theta\|_2+\|\partial_\theta^j\partial_\psi^{k-2}\Lambda_\theta\|_2).\label{higher-order-derivative}
\end{align}
For any $j\geq 0$, applying $\partial^j_\theta$ to the equation in (\ref{definition L}), multiplying the resultant equation by   $\partial^j_\theta\Phi$, integrating with respect to $(\theta,\psi)\in(0,2\pi)\times(-\infty,0)$ and using the Young inequality  one has
\begin{align}
\|\partial^j_\theta\partial_\psi\Phi\|_2\leq C\|\partial^{j+1}_\theta\Lambda\|_2+C\|\partial^j_\theta\Phi\|_2.\label{2order-derivative-3}
\end{align}
For any $j\geq 0$, multiplying the equation in (\ref{definition L}) by   $\partial^{2j-1}_\theta\Phi$, integrating with respect to $(\theta,\psi)\in(0,2\pi)\times(-\infty,0)$, one can get
\begin{align}
\|\partial^j_\theta\Phi\|^2_2=\int_{-\infty}^0\int_0^{2\pi}\partial^{2j-1}_\theta\Phi\Lambda_\theta d\theta d\psi\leq C\|\partial^j_\theta\Phi\|_2\|\partial^j_\theta\Lambda\|_2.\label{2order-derivative-2}
\end{align}

Based on (\ref{higher-order-derivative}), (\ref{2order-derivative-3}) and (\ref{2order-derivative-2}), we obtain that for any $m\geq 2$, there holds
\begin{align}
\sum_{j+k=m}\|\partial^j_\theta\partial^k_\psi\Phi\|_2\leq C\sum_{j+k\leq m-1}\|\partial^j_\theta\partial^k_\psi\Phi\|_2+C\sum_{j+k\leq m}\|\partial^j_\theta\partial^k_\psi\Lambda\|_2.\nonumber
\end{align}
Thus, by induction and (\ref{iorder-derivative}), (\ref{L2}), we deduce that for any $m\geq 0$, there holds
\begin{align}\label{estimate without weight}
\sum_{j+k\leq m}\|\partial^j_\theta\partial^k_\psi\Phi\|_2\leq C(m)\sum_{j+k\leq m}\|\partial^j_\theta\partial^k_\psi\Lambda\|_2.
\end{align}

Next, we prove (\ref{estimate for $L$}) for $m\leq1, l\geq 1$.
Multiplying the equation in (\ref{definition L}) by $\Phi\psi^{2l}$, integrating with respect to $(\theta,\psi)\in(0,2\pi)\times(-\infty,0)$, we have
\begin{align}
\int_{-\infty}^0\int_0^{2\pi}\Phi_\psi^2\psi^{2l}d\theta d\psi&\leq C\int_{-\infty}^0\int_0^{2\pi}\Phi^2\psi^{2(l-1)}d\theta d\psi
\nonumber\\&\quad+ C(\lambda)\int_{-\infty}^0\int_0^{2\pi}\Lambda^2\psi^{2l}d\theta d\psi+\lambda\int_{-\infty}^0\int_0^{2\pi}\Phi_\theta^2\psi^{2l}d\theta d\psi
.\label{iorder-high-weight-derivative-1}
\end{align}
Multiplying   the equation in (\ref{definition L}) by $\Phi_\theta\psi^{2l}$, integrating with respect to $(\theta,\psi)\in(0,2\pi)\times(-\infty,0)$, we obtain
\begin{align}
\int_{-\infty}^0\int_0^{2\pi}\Phi_\theta^2\psi^{2l}d\theta d\psi&\leq C\int_{-\infty}^0\int_0^{2\pi}\Phi_\psi^{2}\psi^{2(l-1)}d\theta d\psi
+C\int_{-\infty}^0\int_0^{2\pi}\Lambda_\theta^2\psi^{2l}d\theta d\psi.\label{iorder-high-weight-derivative-2}
\end{align}
Combining (\ref{iorder-high-weight-derivative-1})-(\ref{iorder-high-weight-derivative-2}), using the Poincar\'{e} inequality and choosing small $\lambda>0$ we get
\begin{align*}
&\int_{-\infty}^0\int_0^{2\pi}\big(\Phi^2+\Phi_\psi^2+\Phi_\theta^2\big)\psi^{2l}d\theta d\psi\nonumber\\
\leq&\int_{-\infty}^0\int_0^{2\pi}\big(\Phi^2+\Phi_\psi^2+\Phi_\theta^2\big)\psi^{2(l-1)}d\theta d\psi+ C\int_{-\infty}^0\int_0^{2\pi}\big(\Lambda^2\psi^{2l}+\Lambda_\theta^2\psi^{2l}\big)d\theta d\psi.
\end{align*}
By induction on $l$ and using (\ref{estimate without weight}), we deduce that for any $l\geq 1$, there holds
\begin{align}\label{estimate on lower order derivative with weight}
\big\|\big(\Phi_\psi,\Phi_\theta,\Phi\big)\psi^{l}\big\|_2\leq C(l)\sum_{q\leq l}\big\|\big(\Lambda,\Lambda_\theta\big)\psi^q\big\|_2.
\end{align}

Finally, we prove (\ref{estimate for $L$}) any $m\geq 2, l\geq 1$. When $j+k=m, k\geq 2$, from the equation (\ref{definition L}), we deduce that
\begin{align}
\Big\|\partial_\theta^j\partial_\psi^k\Phi \psi^l\Big\|_2\leq C \Big(\Big\|\partial_\theta^j\partial_\psi^{k-2}\partial_\theta\Phi\psi^l\Big\|_2
+\Big\|\partial_\theta^j\partial_\psi^{k-2}\partial_\theta\Lambda\psi^l\Big\|_2\Big).\label{higher-order-derivative-weight}
\end{align}
Applying $\partial^{m-1}_\theta$ to the equation in (\ref{definition L}), multiplying the resultant equation by   $\partial^{m-1}_\theta\Phi \psi^{2l}$, integrating with respect to $(\theta,\psi)\in(0,2\pi)\times(-\infty,0)$ and using the Young inequality  one has
\begin{align}
\Big\|\partial_\theta^{m-1}\partial_\psi\Phi \psi^l\Big\|_2\leq C\Big\|\partial_\theta^m\Lambda \psi^l\Big\|_2+C\Big\|\partial_\theta^{m-1}\Phi \psi^l\Big\|_2+C(l)\Big\|\partial_\theta^{m-1}\Phi \psi^{l-1}\Big\|_2.\label{2order-derivative-3-weight}
\end{align}
Multiplying the equation in (\ref{definition L}) by   $\partial^{2m-1}_\theta\Phi \psi^{2l}$, integrating with respect to $(\theta,\psi)\in(0,2\pi)\times(-\infty,0)$, one can get
\begin{align}
\Big\|\partial^m_\theta\Phi \psi^l\Big\|_2\leq C\Big\|\partial^m_\theta\Lambda  \psi^l \Big\|_2+C(l)\Big\|\partial_\theta^{m-1}\partial_\psi\Phi \psi^{l-1}\Big\|_2.\label{2order-derivative-2-weight}
\end{align}

Thanks to (\ref{higher-order-derivative-weight}), (\ref{2order-derivative-3-weight}) and (\ref{2order-derivative-2-weight}), we obtain that for any $m\geq 2, l\geq 1$, there holds
\begin{align}
\sum_{j+k=m}\Big\|\partial^j_\theta\partial^k_\psi\Phi \psi^l\Big\|_2\leq C(m,l)\sum_{j+k\leq m-1, q\leq l}\Big\|\partial^j_\theta\partial^k_\psi\Phi \psi^q\Big\|_2+C(m,l)\sum_{j+k\leq m}\Big\|\partial^j_\theta\partial^k_\psi\Lambda \psi^l\Big\|_2.\nonumber
\end{align}
Thus, by induction $m$ and using (\ref{estimate on lower order derivative with weight}), we deduce that for any $m\geq 2, l\geq 1$, there holds
\begin{align}\label{estimate with weight}
\sum_{j+k\leq m}\Big\|\partial^j_\theta\partial^k_\psi\Phi \psi^l\Big\|_2\leq C(m,l)\sum_{j+k\leq m,q\leq l}\Big\|\partial^j_\theta\partial^k_\psi\Lambda \psi^q\Big\|_2.
\end{align}
This complete the proof of (\ref{estimate for $L$}).
Thus, we obtain
\begin{align}
\|\mathcal{L}\Lambda\|_{X}\leq C \|\Lambda\|_{X},\ \forall \Lambda\in X.\label{boundedness}
\end{align}

{\bf Step 3:}{ \it $\mathcal{L}\circ\mathcal{H}$ is a continuous map from $B_0$ to $B_0$.} In this section, we first prove that for any $m\geq 2, l\geq 0$, there holds
\begin{align}\label{nonlinear estimate on H}
\sum_{j+k\leq m}\Big\|\partial_\theta^j\partial_\psi^k \mathcal{H}(Q)\big<\psi\big>^l\Big\|_2\leq C(m,l)\Big(\sum_{j+k\leq m}\Big\|\partial_\theta^j\partial_\psi^k Q\big<\psi\big>^l\Big\|_2+\sum_{j+k\leq m}\Big\|\partial_\theta^j\partial_\psi^k Q_0\big<\psi\big>^l\Big\|_2\Big)^2.
\end{align}

Set
\beno
\tilde{H}(x)=x-2u_e(1)\sqrt{x+u_e^2(1)}+2u^2_e(1),\ |x|\ll u_e^2(1),
\eeno
then $\mathcal{H}(Q)=\tilde{H}(Q+Q_0).$ Direct computation gives
\begin{align}\label{estimate on H}
|\tilde{H}'(x)|\leq C|x|, \ |\tilde{H}^{(k)}(x)|\leq C,\ k\geq 2.
\end{align}
Since
\begin{align*}
\mathcal{H}(Q)=\big(\sqrt{Q+Q_0+u_e^2(1)}-u_e(1)\big)^2=\bigg(\frac{Q+Q_0}{\sqrt{Q+Q_0+u_e^2(1)}+u_e(1)}\bigg)^2,
\end{align*}
it's easy to get
\begin{align*}
\mathcal{H}^2(Q)\big<\psi\big>^l\leq C\|Q+Q_0\|_{L^\infty}^2\big(Q^2\big<\psi\big>^l
+Q_0^2\big<\psi\big>^l\big), \ l\geq 0.
\end{align*}
Using (\ref{estimate on H}), we deduce that
\begin{align*}
\big(\partial_\psi\mathcal{H}(Q)\big)^2\big<\psi\big>^l\leq& C\|Q+Q_0\|_{L^\infty}^2
\big(Q_\psi^2\big<\psi\big>^l+(Q_0)_\psi^2\big<\psi\big>^l\big),\ l\geq 0,\\
\big(\partial_\theta\mathcal{H}(Q)\big)^2\big<\psi\big>^l\leq& C\|Q+Q_0\|_{L^\infty}^2
\big(Q_\theta^2\big<\psi\big>^l+(Q_0)_\theta^2\big<\psi\big>^l\big),\ l\geq 0.
\end{align*}
Thus, we obtain
\begin{align}\label{lower order estimate on H}
&\sum_{j+k\leq 1}\Big\|\partial_\theta^j\partial_\psi^k \mathcal{H}(Q)\big<\psi\big>^l\Big\|_2\nonumber\\
\leq& C\|Q+Q_0\|_{L^\infty}\Big(\sum_{j+k\leq 1}\Big\|\partial_\theta^j\partial_\psi^k Q\big<\psi\big>^l\Big\|_2+\sum_{j+k\leq 1}\Big\|\partial_\theta^j\partial_\psi^k Q_0\big<\psi\big>^l\Big\|_2\Big).
\end{align}

Since
\begin{align*}
\partial_{\psi\theta}\mathcal{H}(Q)=\tilde{H}'(Q+Q_0)(\partial_{\psi\theta}Q+\partial_{\psi\theta}Q_0)
+\tilde{H}''(Q+Q_0)(\partial_{\theta}Q+\partial_{\theta}Q_0)(\partial_{\psi}Q+\partial_{\psi}Q_0),
\end{align*}
thus, using (\ref{estimate on H}), we obtain
\begin{align*}
\Big\|\partial_{\psi\theta}\mathcal{H}(Q)\big<\psi\big>^l\Big\|_{L^2}&\leq C\|Q+Q_0\|_{L^\infty}\Big(\Big\|Q_{\psi\theta}\big<\psi\big>^l\Big\|_{L^2}+\Big\|(Q_0)_{\psi\theta}\big<\psi\big>^l\Big\|_{L^2}\Big)
\nonumber\\&\quad+C\big\|Q_{\psi}
+(Q_0)_{\psi}\big\|_{L^4}\big\|(Q_{\theta}+(Q_0)_{\theta})\big<\psi\big>^l\big\|_{L^4}
\nonumber\\&\leq C\|Q+Q_0\|_{L^\infty}\Big(\Big\|Q_{\psi\theta}\big<\psi\big>^l\Big\|_{L^2}+\Big\|(Q_0)_{\psi\theta}\big<\psi\big>^l\Big\|_{L^2}\Big)
\nonumber\\&\quad+C\big\|Q_{\psi}
+(Q_0)_{\psi}\big\|_{H^1}\big\|(Q_{\theta}+(Q_0)_{\theta})\big<\psi\big>^l\big\|_{H^1}.
\end{align*}
Hence by Sobolev imbedding, we deduce that
\begin{align*}
\Big\|\partial_{\psi\theta}\mathcal{H}(Q)\big<\psi\big>^l\Big\|_2\leq C\Big(\sum_{j+k\leq 2}\Big\|\partial_\theta^j\partial_\psi^k Q\big<\psi\big>^l\Big\|_2+\sum_{j+k\leq 2}\Big\|\partial_\theta^j\partial_\psi^k Q_0\big<\psi\big>^l\Big\|_2\Big)^2.
\end{align*}

Same estimates hold for $\Big\|\partial_{\theta\theta}\mathcal{H}(Q)\big<\psi\big>^l\Big\|_2$ and $\Big\|\partial_{\psi\psi}\mathcal{H}(Q)\big<\psi\big>^l\Big\|_2$. Thus, combing the estimate (\ref{lower order estimate on H}), we arrive at
\begin{align*}
\sum_{j+k\leq 2}\Big\|\partial_\theta^j\partial_\psi^k \mathcal{H}(Q)\big<\psi\big>^l\Big\|_2
\leq C\Big(\sum_{j+k\leq 2}\Big\|\partial_\theta^j\partial_\psi^k Q\big<\psi\big>^l\Big\|_2+\sum_{j+k\leq 2}\Big\|\partial_\theta^j\partial_\psi^k Q_0\big<\psi\big>^l\Big\|_2\Big)^2.
\end{align*}
Using (\ref{estimate on H}) and repeating the above arguments, we obtain (\ref{nonlinear estimate on H}).

Consequently,  if we take $\nu=\|Q_0\|_{X}$ and $\eta$ small enough, then
\begin{align*}
\|\mathcal{L}\mathcal{H}(Q)\|_{X}\leq C\|\mathcal{H}(Q)\|_{X}\leq 4C \nu^2\leq 4C\eta \nu\leq \nu,\  \forall Q\in B_0,
\end{align*}
here we have used (\ref{norm of q0}). Thus, $\mathcal{L}\mathcal{H}$ is a continuous map from $B_0$ to $B_0$ for small $\eta$.

{\bf Step 4:}{ \it $\mathcal{L}\circ\mathcal{H}$ is a contraction map in $B_0$.}

Noting firstly that
\begin{align*}
\mathcal{H}(Q_1)-\mathcal{H}(Q_2)&=(Q_1-Q_2)\bigg(1-\frac{2u_e(1)}{\sqrt{Q_1+Q_0+u_e^2(1)}+\sqrt{Q_2+Q_0+u_e^2(1)}}\bigg)
\end{align*}
and
\begin{align*}
&1-\frac{2u_e(1)}{\sqrt{Q_1+Q_0+u_e^2(1)}+\sqrt{Q_2+Q_0+u_e^2(1)}}
\nonumber\\&=\frac{Q_1+Q_0}{\big(\sqrt{Q_1+Q_0+u_e^2(1)}+\sqrt{Q_2+Q_0+u_e^2(1)}\big)\big(\sqrt{Q_1+Q_0+u_e^2(1)}+u_e(1)\big)}
\nonumber\\&\quad+\frac{Q_2+Q_0}{\big(\sqrt{Q_1+Q_0+u_e^2(1)}+\sqrt{Q_2+Q_0+u_e^2(1)}\big)\big(\sqrt{Q_2+Q_0+u_e^2(1)}+u_e(1)\big)}.
\end{align*}

With the help of the similar arguments in Step 3, we can obtain that there exist $\eta_0>0$ such that for any $\eta\in (0,\eta_0)$, there holds
\begin{align*}
\big\|\mathcal{L}\mathcal{H}(Q_1)-\mathcal{L}\mathcal{H}(Q_2)\big\|_X
&\leq C\big\|\mathcal{H}(Q_1)-\mathcal{H}(Q_2)\big\|_X
\\[5pt]&\leq C\big\|Q_1-Q_2\big\|_X\big(\|Q_0\|_X+\|Q_1\|_X+\|Q_2\|_X\big)
\\[5pt]&\leq C\|Q_0\|_X\big\|Q_1-Q_2\big\|_X
\\[5pt]&\leq\frac{1}{2}\big\|Q_1-Q_2\big\|_X,
\end{align*}
that is, $\mathcal{L}\mathcal{H}$ is a contraction map in $B_0$.

{\bf Step 5:} { \it Existence and uniqueness of the system (\ref{modified prandtl equation}).} By the standard contraction mapping principle, we know that there exists $\eta_0>0$ such that for any $\eta\in (0,\eta_0)$ and any $j,k,l\in \mathbb{N}\cup \{0\}$, the equation (\ref{modified prandtl equation}) has a unique solution $\mathcal{U}^{(0)}$ which satisfies
\begin{align*}
\int_{-\infty}^0\int_0^{2\pi}\Big|\partial_\theta^j\partial_\psi^k \big(\mathcal{U}^{(0)}-u_e(1)\big)\Big|^2\big<\psi\big>^{2l}d\theta d\psi\leq C(j,k,l)\eta^2,
\end{align*}
this completes the proof of this proposition.
\end{proof}
Return to the equations (\ref{prandtl problem 1}), we have the following result.
\begin{proposition} There exists $\eta_0>0$ such that for any $\eta\in(0,\eta_0)$, the equations (\ref{prandtl problem 1}) have a unique solution $(u_p^{(0)},v_p^{(1)})$ which satisfies
\begin{align}\label{decay behavior-prandtl}
\begin{aligned}
&\sum_{j+k\leq m}\int_{-\infty}^0\int_0^{2\pi}\Big|\partial_\theta^j\partial_Y^k (u_p^{(0)},v_p^{(1)})\Big|^2\big<Y\big>^{2l}d\theta dY\leq C(m,l)\eta^2, \ \forall m,l \geq 0, \\
&\int_0^{2\pi}v_p^{(1)}(\theta,Y)d\theta=0,\ \forall Y\leq 0.
\end{aligned}
\end{align}
\end{proposition}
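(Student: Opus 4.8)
The plan is to transport the solution $\mathcal{U}^{(0)}$ of the von Mises system (\ref{modified prandtl equation}), already produced by Proposition \ref{decay estimates}, back to the physical variables $(\theta,Y)$. The first point to settle is positivity: from the weighted bounds on $\mathcal{U}^{(0)}-u_e(1)$ and a Sobolev embedding $H^2\hookrightarrow L^\infty$ on $[0,2\pi]\times(-\infty,0]$ one gets $\|\mathcal{U}^{(0)}-u_e(1)\|_{L^\infty}\leq C\eta$, so shrinking $\eta_0$ guarantees $\mathcal{U}^{(0)}\geq \tfrac{1}{2}u_e(1)>0$. This is exactly what makes the von Mises change of variables invertible.

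Next I would invert the transform. Since $\partial_Y\psi=u_e(1)+u_p^{(0)}=\mathcal{U}^{(0)}$, for each fixed $\theta$ the map $Y\mapsto\psi$ has positive derivative, hence is a diffeomorphism whose inverse is $Y(\theta,\psi)=\int_0^\psi \frac{ds}{\mathcal{U}^{(0)}(\theta,s)}$; because $\mathcal{U}^{(0)}\to u_e(1)>0$ as $\psi\to-\infty$, the integral diverges and $Y$ ranges over $(-\infty,0]$ as $\psi$ does. I then set $u_p^{(0)}(\theta,Y):=\mathcal{U}^{(0)}(\theta,\psi(\theta,Y))-u_e(1)$ and recover $v_p^{(1)}$ from the continuity equation together with the decay at $-\infty$, namely $v_p^{(1)}(\theta,Y):=-\int_{-\infty}^Y\partial_\theta u_p^{(0)}(\theta,z)\,dz$. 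Reversing the computation in the proof of Lemma \ref{BW} — using $\partial_Y u_p^{(0)}=\mathcal{U}^{(0)}_\psi\mathcal{U}^{(0)}$ and $\partial_\theta u_p^{(0)}=\mathcal{U}^{(0)}_\theta-\mathcal{U}^{(0)}_\psi\big(v_p^{(1)}-v_p^{(1)}(\theta,0)\big)$ — shows that $(u_p^{(0)},v_p^{(1)})$ solves (\ref{prandtl problem 1}), and the boundary data match by construction. The zero-average identity is then immediate: by $2\pi$-periodicity $\int_0^{2\pi}\partial_\theta u_p^{(0)}\,d\theta=0$, so $\int_0^{2\pi}v_p^{(1)}(\theta,Y)\,d\theta=-\int_{-\infty}^Y\int_0^{2\pi}\partial_\theta u_p^{(0)}\,d\theta\,dz=0$. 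Uniqueness transfers from that of $\mathcal{U}^{(0)}$ through the invertibility of the change of variables.

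The main work — and the step I expect to be the real obstacle — is the weighted decay estimate (\ref{decay behavior-prandtl}). The key elementary fact is that the weights are comparable: since $\psi=u_e(1)Y+\int_0^Y u_p^{(0)}\,dz$ and $u_p^{(0)}$ is integrable, $\psi=u_e(1)Y+O(1)$ uniformly in $\theta$, hence $\langle\psi\rangle\sim\langle Y\rangle$; this lets me swap the $\langle\psi\rangle^{2l}$ weights for $\langle Y\rangle^{2l}$ weights at the cost of harmless constants. The derivatives then must be pushed through the chain rule $\partial_Y=\mathcal{U}^{(0)}\partial_\psi$ and $\partial_\theta|_Y=\partial_\theta|_\psi+(\partial_\theta\psi)\partial_\psi$, where the nonlocal coefficient is $\partial_\theta\psi=\int_0^Y\partial_\theta u_p^{(0)}\,dz=v_p^{(1)}(\theta,0)-v_p^{(1)}(\theta,Y)$. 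Because $\mathcal{U}^{(0)}$ and $\partial_\theta\psi$ are bounded, each differentiation produces only products of quantities that already enjoy the weighted bounds of Proposition \ref{decay estimates}, and an induction on $m$ yields the estimates for $u_p^{(0)}$; the representation $v_p^{(1)}(\theta,Y)=-\int_{-\infty}^Y\partial_\theta u_p^{(0)}\,dz$ together with a weighted one-dimensional Hardy/integration bound upgrades them to $v_p^{(1)}$. The delicate bookkeeping is to control $\partial_\theta\psi$ and its higher $\theta$-derivatives so that the weighted norms genuinely close rather than losing a power of $\langle Y\rangle$ at each differentiation.
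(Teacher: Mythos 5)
Your proposal is correct and follows exactly the route the paper intends: the paper states this proposition without proof as a direct corollary of Proposition \ref{decay estimates}, the understood argument being precisely the inversion of the von Mises transformation that you carry out (positivity of $\mathcal{U}^{(0)}$ for small $\eta$, reconstruction of $u_p^{(0)}$ and of $v_p^{(1)}$ from the continuity equation, equivalence of the weights $\big<\psi\big>\sim\big<Y\big>$, and transfer of the weighted bounds through the chain rule). Your identification of the only delicate point --- closing the weighted estimates despite the nonlocal coefficient $\partial_\theta\psi=\int_0^Y\partial_\theta u_p^{(0)}\,dz$, which is handled by absorption since it always enters multiplied by the $O(\eta)$ factor $\mathcal{U}^{(0)}_\psi$ --- is accurate and consistent with the paper's framework.
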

Finally, solving (\ref{equation of first pressure}), we obtain $p_p^{(1)}(\theta,Y)$  which decay very fast as $Y\rightarrow -\infty$.

By the same argument, we can obtain the well-posedness of equations (\ref{prandtl problem another boundary}).
\begin{proposition}There exists $\eta_0>0$ such that for any $\eta\in(0,\eta_0)$, the equations (\ref{prandtl problem another boundary}) have a unique solution $(\hat{u}_p^{(0)},\hat{v}_p^{(1)})$ which satisfies
\begin{align}\label{decay behavior-prandtl-another}
\begin{aligned}
&\sum_{j+k\leq m}\int_{-\infty}^0\int_0^{2\pi}\Big|\partial_\theta^j\partial_Y^k (\hat{u}_p^{(0)},\hat{v}_p^{(1)})\Big|^2\big<Y\big>^{2l}d\theta dY\leq C(m,l)\eta^2, \ m,l \geq 0, \\
&\int_0^{2\pi}\hat{v}_p^{(1)}(\theta,Z)d\theta=0,\ \forall Z\geq 0.
\end{aligned}
\end{align}
\end{proposition}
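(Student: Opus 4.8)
The plan is to reduce the inner-boundary Prandtl system \eqref{prandtl problem another boundary} to a scalar quasilinear equation of \emph{exactly} the same structure as \eqref{modified prandtl equation}, and then to invoke verbatim the contraction-mapping scheme already carried out in Proposition \ref{decay estimates}. The point is that the only genuine differences between the two boundary layers are a fixed positive constant in the principal coefficient, the boundary datum, and the orientation of the half-line; none of these affects the functional-analytic estimates. Accordingly I would first perform the von Mises reduction, then quote the a priori bounds, and finally transform back.

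Concretely, in analogy with the outer case I would set
\begin{align*}
\psi=\int_0^Z\big(u_e(r_0)+\hat{u}_p^{(0)}(\theta,z)\big)\,dz,\qquad \hat{\mathcal{U}}^{(0)}(\theta,\psi)=u_e(r_0)+\hat{u}_p^{(0)}(\theta,Z).
\end{align*}
Using the divergence-free relation $\partial_\theta\hat{u}_p^{(0)}+r_0\partial_Z\hat{v}_p^{(1)}=0$ to express $\partial_\theta\psi$ in terms of $\hat{v}_p^{(1)}-\hat{v}_p^{(1)}(\theta,0)$, the transport term $(\hat{v}_p^{(1)}-\hat{v}_p^{(1)}(\theta,0))r_0\partial_Z\hat{u}_p^{(0)}$ cancels exactly against the advective contribution, and the diffusion term becomes $\tfrac{r_0}{2}\hat{\mathcal{U}}^{(0)}\big((\hat{\mathcal{U}}^{(0)})^2\big)_{\psi\psi}$. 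Dividing by the positive factor $\hat{\mathcal{U}}^{(0)}$ leaves
\begin{align*}
2\,\partial_\theta\hat{\mathcal{U}}^{(0)}=r_0\,\partial_{\psi\psi}\big((\hat{\mathcal{U}}^{(0)})^2\big),\qquad \psi\in[0,+\infty),
\end{align*}
with $\hat{\mathcal{U}}^{(0)}\big|_{\psi=0}=\beta+\eta g(\theta)$ and $\lim_{\psi\to+\infty}\hat{\mathcal{U}}^{(0)}=u_e(r_0)$. This is identical to \eqref{modified prandtl equation} after the replacements $u_e(1)\mapsto r_0\,u_e(r_0)$ in the principal coefficient, $\alpha+\eta f\mapsto\beta+\eta g$ in the datum, and $(-\infty,0]\mapsto[0,+\infty)$ in the domain.

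From here I would run Steps 1--5 of Proposition \ref{decay estimates} without change: set $\hat{Q}=(\hat{\mathcal{U}}^{(0)})^2-u_e^2(r_0)$, subtract off the corresponding linear heat profile $\hat{Q}_0$, write the problem in the fixed-point form $\hat{Q}=\mathcal{L}(\mathcal{H}(\hat{Q}))$, and verify that $\mathcal{L}\circ\mathcal{H}$ is a contraction on a small ball of the weighted space $X$ for $\eta$ small. The solvability of the zeroth frequency here — obtained by integrating in $\theta$ and using boundedness as $\psi\to+\infty$ — is precisely the Batchelor--Wood identity \eqref{BW formula-1}, which fixes $u_e^2(r_0)$ and makes the constant state $u_e(r_0)$ the correct limiting value. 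Inverting the von Mises map then yields $(\hat{u}_p^{(0)},\hat{v}_p^{(1)})$ together with the weighted estimates \eqref{decay behavior-prandtl-another}; the identity $\int_0^{2\pi}\hat{v}_p^{(1)}(\theta,Z)\,d\theta=0$ follows by integrating $\partial_\theta\hat{u}_p^{(0)}+r_0\partial_Z\hat{v}_p^{(1)}=0$ in $\theta$, which shows $\int_0^{2\pi}\hat{v}_p^{(1)}\,d\theta$ is independent of $Z$, and then letting $Z\to+\infty$. The only point requiring care is the self-consistency of the change of variables: one must know a priori that $u_e(r_0)+\hat{u}_p^{(0)}>0$ so that $\psi$ is a genuine diffeomorphism of $[0,+\infty)$ onto itself, which is guaranteed by the smallness $\|\hat{u}_p^{(0)}\|_\infty=O(\eta)$ coming from the estimates; tracking the fixed constant $r_0$ through the boundedness bound for $\mathcal{L}$ only alters the constants $C(m,l)$ and is otherwise harmless.
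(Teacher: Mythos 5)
Your proposal is correct and follows essentially the same route as the paper, whose proof of this proposition consists precisely of the remark that the argument for \eqref{prandtl problem 1} (von Mises reduction to \eqref{modified prandtl equation}, then the contraction-mapping scheme of Proposition \ref{decay estimates}) applies verbatim near $r=r_0$. Your working out of the details — the exact cancellation of the transport term, the principal coefficient becoming $r_0\,u_e(r_0)$, the reflected half-line $\psi\in[0,+\infty)$, the role of the Batchelor--Wood identity \eqref{BW formula-1} in making the zero mode solvable, and the derivation of $\int_0^{2\pi}\hat{v}_p^{(1)}\,d\theta=0$ from the divergence-free relation — is accurate and fills in exactly what the paper leaves implicit.
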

Similarly, we can obtain $\hat{p}_p^{(1)}$ by solving \eqref{equation of first pressure another} and $\hat{p}_p^{(1)}$ decay very fast as $Z\rightarrow +\infty$.\\

Here we give a general strategy for constructing high order approximate solution. We first construct $(u_e^{(1)}, v_e^{(1)}, p_e^{(1)})$ by solving the linearized Euler equations (\ref{outer-1 order equation}) and $(u_p^{(1)}, v_p^{(2)})$ by solving the linearized Prandtl equations (\ref{first linearized prandtl problem near 1}). Then, because $A_{1\infty}:=\lim_{Y\rightarrow-\infty}u_p^{(1)}$ is a nonzero constant, we need to change it to $\tilde{u}_p^{(1)}=u_p^{(1)}-A_{1\infty}$. Finally, we modify $u_e^{(1)}$ into $\tilde{u}_e^{(1)}$ by adding a radial function, see (\ref{modify Euler}) for the details. Notice the structure of the linearized Euler equations (\ref{outer-1 order equation}) and the fact that only the value of $u_e^{(1)}$ at $r=1$ appear in the equation (\ref{first linearized prandtl problem near 1}), we can easily deduce that the modified $(\tilde{u}_e^{(1)}, \tilde{v}_e^{(1)}, \tilde{p}_e^{(1)})$ and $(\tilde{u}_p^{(1)}, \tilde{v}_p^{(2)})$ still satisfies the equation (\ref{outer-1 order equation}) and (\ref{first linearized prandtl problem near 1}). The higher order approximate solutions $(\tilde{u}_e^{(i)}, \tilde{v}_e^{(i)}, \tilde{p}_e^{(i)})(i\geq 2)$ and $(\tilde{u}_p^{(i)}, \tilde{v}_p^{(i+1)})(i\geq2)$ can be constructed by the same approach.

\subsubsection{Linearized Euler equations for $(u_e^{(1)}, v_e^{(1)}, p_e^{(1)})$ and their solvability}
\begin{proposition}
There exists $\eta_0>0$ such that for any $\eta\in(0,\eta_0)$, the linearized Euler equations (\ref{outer-1 order equation}) have a solution $(u_e^{(1)}, v_e^{(1)}, p_e^{(1)})$ which satisfies
\begin{align}\label{Estimate of first linearized Euler equation}
\|\partial^k_\theta\partial^j_r(u_e^{(1)},v_e^{(1)})\|_2\leq C(k,j)\eta, \ \forall j,k\geq 0.
\end{align}
\end{proposition}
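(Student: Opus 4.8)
The plan is to solve the linear system (\ref{outer-1 order equation})--(\ref{outer-1 order-bc}) by separating variables in $\theta$, reducing it to a family of ordinary-differential boundary value problems indexed by the Fourier frequency $n$, and then summing the resulting estimates. Writing $u_e^{(1)}=\sum_n u_n(r)e^{in\theta}$, $v_e^{(1)}=\sum_n v_n(r)e^{in\theta}$, $p_e^{(1)}=\sum_n p_n(r)e^{in\theta}$, the zero mode is immediate: the divergence relation gives $(rv_0)'=0$, and since $\int_0^{2\pi}v_p^{(1)}(\theta,0)\,d\theta=\int_0^{2\pi}\hat v_p^{(1)}(\theta,0)\,d\theta=0$ (the second lines of (\ref{decay behavior-prandtl}) and (\ref{decay behavior-prandtl-another})), both boundary values of $v_0$ vanish, forcing $v_0\equiv0$; the mean tangential component $u_0$ is then left undetermined by (\ref{outer-1 order equation}), so I would simply set $u_0\equiv0$ and recover $p_0$ from the second equation. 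This choice contributes nothing to the estimate.

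For $n\neq0$ I would eliminate the pressure by cross-differentiating the two momentum equations and introduce the stream function $\phi_n$ through $u_n=-\phi_n'$, $v_n=\tfrac{in}{r}\phi_n$, which automatically satisfies the divergence equation. A direct computation turns (\ref{outer-1 order equation}) into the Rayleigh-type equation
\begin{align*}
\phi_n''+\frac1r\phi_n'-\Big(\frac{n^2}{r^2}+\frac{\Omega'(r)}{u_e(r)}\Big)\phi_n=0,
\end{align*}
where $\Omega=u_e'+u_e/r$ is the vorticity of the base shear flow, so that $\Omega'=2\delta\tilde c_0/r$. The inhomogeneous Dirichlet data $\phi_n(1)=v_n(1)/(in)$ and $\phi_n(r_0)=r_0v_n(r_0)/(in)$ are read off from (\ref{outer-1 order-bc}); by the decay estimates (\ref{decay behavior-prandtl}) and (\ref{decay behavior-prandtl-another}) their Fourier coefficients satisfy $|\phi_n(1)|+|\phi_n(r_0)|\le C_N\,\eta\,\langle n\rangle^{-N}$ for every $N$.

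The main point, and the step I expect to be the real obstacle, is the solvability of this boundary value problem: one must show the homogeneous Dirichlet problem has only the trivial solution for every $n\neq0$ and every $\delta\in[0,1]$, even though $\Omega'$ need not have a favorable sign (in fact $\tilde c_0\le0$ here). The key observation I would exploit is that for $n^2=1$ the base velocity $u_e$ itself solves the homogeneous Rayleigh equation, since $u_e''+\tfrac1r u_e'-\tfrac{1}{r^2}u_e-\Omega'=0$ by the definition $\Omega=u_e'+u_e/r$. Because $u_e>0$ on $[r_0,1]$, the ground-state substitution $\phi=u_e w$ gives, for $\phi\in H_0^1(r_0,1)$,
\begin{align*}
\int_{r_0}^1\Big(r|\phi'|^2+\big(\tfrac1r+\tfrac{r\Omega'}{u_e}\big)|\phi|^2\Big)\,dr=\int_{r_0}^1 r\,u_e^2\,|w'|^2\,dr,
\end{align*}
the boundary terms dropping out because $w=\phi/u_e$ vanishes at the endpoints. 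Hence the quadratic form $Q_1$ of the $n^2=1$ operator is positive definite and coercive on $H_0^1$. For $|n|\ge1$ the associated form differs from $Q_1$ only by the manifestly nonnegative term $\int (n^2-1)r^{-1}|\phi|^2\,dr$, so $Q_n\ge Q_1$ is coercive for all $n\neq0$, with coercivity constant bounded below by a fixed $\lambda_1>0$ and growing like $n^2$. Lax--Milgram then produces a unique $\phi_n$ attaining the prescribed Dirichlet data.

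Finally I would assemble the estimates. Splitting $\phi_n$ into an explicit extension of its boundary data plus an $H_0^1$ correction and combining the coercivity with the rapid decay $|\phi_n(1)|+|\phi_n(r_0)|\lesssim\eta\langle n\rangle^{-N}$, I obtain $\|u_n\|_{H^j(dr)}+\|v_n\|_{H^j(dr)}\lesssim\eta\langle n\rangle^{-N'}$; higher $r$-derivatives are controlled by bootstrapping directly on the ODE (solving for $\phi_n''$ and differentiating, using $u_e\ge m_0>0$ and the smoothness of $u_e$ and $\Omega'$). Summing the squares over $n$ yields $\|\partial_\theta^k\partial_r^j(u_e^{(1)},v_e^{(1)})\|_{L^2}\le C(\delta,k,j)\eta$, the constant depending on $\delta$ through $u_e$ (via its positive lower bound, the coercivity constant $\lambda_1$, and $\|\Omega'/u_e\|_\infty$). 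The pressure $p_e^{(1)}$ is recovered mode-by-mode from the momentum equations and inherits the same bounds, which completes the proof of (\ref{Estimate of first linearized Euler equation}).
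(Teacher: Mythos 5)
Your proposal is correct and is essentially the paper's own argument recast in Fourier-mode form: the paper also eliminates the pressure to obtain $-\triangle(rv_e^{(1)})+\frac{1}{u_e}\big(u_e''+\frac{u_e'}{r}-\frac{u_e}{r^2}\big)(rv_e^{(1)})=0$ (your Rayleigh equation, since $\Omega'=u_e''+\frac{u_e'}{r}-\frac{u_e}{r^2}$ and $rv_n=in\phi_n$), establishes positivity via the same ground-state substitution $g=f/u_e$ (your $\phi=u_e w$) combined with the Poincar\'e inequality in $\theta$ on zero-average functions (your $n^2\ge 1$), and then reconstructs $u_e^{(1)}$ from the divergence-free condition and $p_e^{(1)}$ from the momentum equations. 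The differences (explicit Fourier decomposition, the explicit choice $u_0\equiv 0$ for the undetermined zero mode, and summing per-mode estimates) are presentational rather than substantive.
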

\begin{proof}
 Eliminating pressure $p_e^{(1)}$ in the equation (\ref{outer-1 order equation}), we obtain
\begin{align*}
-u_e(r)\Big(\partial_{rr}v_e^{(1)}+\frac{\partial_{\theta\theta} v_e^{(1)}}{r^2}+\frac{3\partial_rv_e^{(1)}}{r}\Big)+v_e^{(1)}\Big(u''_e(r)+\frac{u'_e(r)}{r}-\frac{2u_e(r)}{r^2}\Big)=0.
\end{align*}
Notice that $\triangle =\partial_{rr}+\frac{\partial_{\theta\theta}}{r^2}+\frac{\partial_r}{r}$, we deduce that
\begin{align*}
-u_e(r)\triangle(rv_e^{(1)})+\Big(u''_e(r)+\frac{u'_e(r)}{r}-\frac{u_e(r)}{r^2}\Big)(rv_e^{(1)})=0,
\end{align*}
hence we obtain the following equations for $rv_e^{(1)}$ in $\Omega$
\begin{eqnarray}\label{Euler equation normal}
\left \{
\begin {array}{ll}
-\triangle(rv_e^{(1)})+\frac{1}{u_e(r)}\Big(u''_e(r)+\frac{u'_e(r)}{r}-\frac{u_e(r)}{r^2}\Big)(rv_e^{(1)})=0,\\[5pt]
rv_e^{(1)}|_{r=1}=-v_p^{(1)}|_{Y=0},\quad rv_e^{(1)}|_{r=r_0}=-r_0\hat{v}_p^{(1)}|_{Z=0}.
\end{array}
\right.
\end{eqnarray}

Firstly, we deduce that equations (\ref{Euler equation normal}) a unique solution $rv_e^{(1)}$.
In fact, we assume that $f(\theta,r)$ solving the equations
\begin{eqnarray}
\left \{
\begin {array}{ll}
-\triangle f+\frac{1}{u_e(r)}\Big(u''_e(r)+\frac{u'_e(r)}{r}-\frac{u_e(r)}{r^2}\Big)f=0,\\[5pt]
f|_{r=1}=0,\ f|_{r=r_0}=0,\nonumber
\end{array}
\right.
\end{eqnarray}
then multiple $rf$ and integrating in $\Omega$, we obtain
\begin{align*}
\int_0^{2\pi}\int_{r_0}^1\Big(r(\partial_rf)^2+(\partial_\theta f)^2+\frac{1}{u_e(r)}\Big(ru''_e(r)+u'_e(r)-\frac{u_e(r)}{r}\Big)f^2\Big)d\theta dr=0.
\end{align*}
Let $g=\frac{f}{u_e}$, we arrive at
\begin{align*}
\int_0^{2\pi}\int_{r_0}^1\Big(ru^2_e(r)(\partial_rg)^2+\frac{u_e^2(r)g^2_\theta}{r}-\frac{u_e^2(r)g^2}{r}\Big)d\theta dr=0.
\end{align*}
Notice that $\int_0^{2\pi}g(\theta,r)d\theta=0, \ \forall r\in [r_0,1]$, by Poincar\'{e} inequality, we deduce that $g=0$, hence $f=0$. Thus, equation (\ref{Euler equation normal}) has a unique solution.

Then, notice (\ref{decay behavior-prandtl}) and (\ref{decay behavior-prandtl-another}), we deduce that
\begin{align*}
\|\partial^k_\theta\partial^j_rv_e^{(1)}\|_2\leq C(k,j)\eta, \ \forall k,j\geq 0.
\end{align*}
Moreover, due to $\int_0^{2\pi}v_p^{(1)}(\theta,0)d\theta=\int_0^{2\pi}\hat{v}_p^{(1)}(\theta,0)d\theta=0$ and the divergence-free condition, we deduce that
\begin{align}
\int_0^{2\pi}v_e^{(1)}(\theta,r)d\theta=0,\ \forall r\in [r_0,1].\label{ve1zeroav}
\end{align}

Then we can construct $u_e^{(1)}$ by solving the following equation
\begin{eqnarray}
\left \{
\begin {array}{ll}
\partial_\theta u_e^{(1)}+\partial_r(rv_e^{(1)})=0,\\[5pt]
u_e^{(1)}(\theta,r)=u_e^{(1)}(\theta+2\pi,r).\nonumber
\end{array}
\right.
\end{eqnarray}
After obtaining $(u_e^{(1)}, v_e^{(1)})$, we construct $p_e^{(1)}$ as following
\begin{align*}
p_e^{(1)}(\theta,r):=\phi(r)-\int_0^\theta[u_e(r)\partial_{\theta'} u_e^{(1)}+ru'_e v_e^{(1)}+u_ev_e^{(1)}](\theta',r)d\theta',
\end{align*}
where $\phi(r)$ is a function which satisfies
\begin{align*}
r\partial_r\phi(r)+u_e(r)\partial_\theta v_e^{(1)}(0,r)-2u_e(r)u_e^{(1)}(0,r)=0.
\end{align*}
Combining the equations of $(u_e^{(1)}, v_e^{(1)})$, it's direct to obtain
\begin{align*}
u_e \partial_\theta v_e^{(1)}-2u_eu_e^{(1)}+r\partial_rp_e^{(1)}=0.
\end{align*}
Hence, $(u_e^{(1)}, v_e^{(1)},p_e^{(1)})$ solves the equation (\ref{outer-1 order equation}) and satisfies (\ref{Estimate of first linearized Euler equation}).
\end{proof}

\subsubsection{Linearized Prandtl equations for  $(u_p^{(1)},v_p^{(2)})$, $(\hat{u}_p^{(1)},\hat{v}_p^{(2)})$ and their solvabilities}
\indent

In this subsubsection, we consider the solvabilities of \eqref{first linearized prandtl problem near 1} and \eqref{first linearized prandtl problem another boundary}. One can also refer to Proposition 2.5 in \cite{FGLT}.

\begin{proposition}\label{decay estimates of linearized Prandtl} There exists $\eta_0>0$ such that for any $\eta\in(0,\eta_0)$, the equations (\ref{first linearized prandtl problem near 1}) have a unique solution $(u_p^{(1)},v_p^{(2)})$ which satisfies
\begin{align} \label{decay behavior-prandtl-1}
\begin{aligned}
&\sum_{j+k\leq m}\int_{-\infty}^0\int_0^{2\pi}\big|\partial_\theta^j\partial_Y^k \big(u_p^{(1)}-A_{1\infty},v_p^{(2)}\big)\big|^2\big<Y\big>^{2l}d\theta dY\leq C(m,l)\eta^2, \ \forall m,l\geq 0,\\
&\int_0^{2\pi}v_p^{(2)}(\theta,Y)d\theta=0, \ \forall Y\leq 0,
\end{aligned}
\end{align}
where
$A_{1\infty}:=\lim\limits_{Y\rightarrow -\infty}u_p^{(1)}(\theta,Y)$ is a constant which satisfies $|A_{1\infty}|\leq C\eta.$
\end{proposition}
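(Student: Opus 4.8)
The plan is to treat \eqref{first linearized prandtl problem near 1} as a \emph{linear} problem and recycle the machinery of Proposition \ref{decay estimates}. All coefficients and the forcing $f_1$ are built from the already-constructed functions $u_p^{(0)},v_p^{(1)}$ (bounded in every weighted norm by \eqref{decay behavior-prandtl}), $u_e^{(1)},v_e^{(1)}$ (bounded by \eqref{Estimate of first linearized Euler equation}), and $p_p^{(1)}$ from \eqref{equation of first pressure}; in particular each of them, and hence $f_1$, is of size $O(\eta)$. First I would use the divergence relation $\partial_\theta u_p^{(1)}+\partial_Y v_p^{(2)}+\partial_Y(Yv_p^{(1)})=0$ to solve for $v_p^{(2)}$, namely $v_p^{(2)}(\theta,Y)=-\int_{-\infty}^{Y}\partial_\theta u_p^{(1)}(\theta,Y')\,dY'-Yv_p^{(1)}(\theta,Y)$; this builds in $\lim_{Y\to-\infty}v_p^{(2)}=0$ and, using $\int_0^{2\pi}v_p^{(1)}d\theta=0$ from \eqref{decay behavior-prandtl} together with periodicity in $\theta$, also gives $\int_0^{2\pi}v_p^{(2)}d\theta=0$. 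Substituting this into the momentum equation turns \eqref{first linearized prandtl problem near 1} into a single scalar convection--diffusion equation for $u_p^{(1)}$ whose principal part is $(u_e(1)+u_p^{(0)})\partial_\theta u_p^{(1)}-\partial_{YY}u_p^{(1)}$. After the von Mises change of variable $\psi=\int_0^Y(u_e(1)+u_p^{(0)})\,dz$ used in Proposition \ref{decay estimates}, this principal part becomes the constant-coefficient heat operator $\partial_\theta-u_e(1)\partial_{\psi\psi}$ inverted by the operator $\mathcal{L}$ of \eqref{definition L}.

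The genuinely new feature is that $u_p^{(1)}$ does not decay but tends to the constant $A_{1\infty}$ at $Y=-\infty$, since only $\partial_Y u_p^{(1)}\to0$ is imposed. To isolate it I would exploit the algebraic identity $(u_e(1)+u_p^{(0)})\partial_\theta u_p^{(1)}+u_p^{(1)}\partial_\theta u_p^{(0)}=\partial_\theta\big[(u_e(1)+u_p^{(0)})u_p^{(1)}\big]$, so that averaging the equation over $\theta\in[0,2\pi]$ annihilates the whole convective block and leaves a second-order ODE in $Y$ for the mean $\bar u_p^{(1)}(Y)$ whose remaining terms are all $O(\eta)$. Integrating this ODE twice, imposing $\partial_Y\bar u_p^{(1)}\to0$ at $-\infty$ and the Dirichlet datum $\bar u_p^{(1)}(0)=\tfrac{1}{2\pi}\int_0^{2\pi}(-u_e^{(1)}|_{r=1})\,d\theta$, determines $A_{1\infty}=\lim_{Y\to-\infty}\bar u_p^{(1)}$ together with the bound $|A_{1\infty}|\le C(\delta)\eta$ coming from the $O(\eta)$ size of the datum and of the averaged forcing. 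Writing $w:=u_p^{(1)}-A_{1\infty}$ (with modified, still $O(\eta)$, boundary value $-u_e^{(1)}|_{r=1}-A_{1\infty}$) reduces everything to a problem for $w$ with $w\to0$ and $\partial_Y w\to0$ as $Y\to-\infty$, which lives in the space $X$ of Proposition \ref{decay estimates}.

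I would then close the existence and uniqueness for $w$ by the fixed-point scheme $w=\mathcal{L}\big(\text{forcing}+R[w]\big)$, where $R[w]$ collects the variable-coefficient and nonlocal remainders $u_p^{(0)}\partial_\theta w$, $(v_e^{(1)}+v_p^{(1)})\partial_Y w$, $w\,\partial_\theta u_p^{(0)}$ and $(v_p^{(2)}-v_p^{(2)}(\theta,0))\partial_Y u_p^{(0)}$, each carrying a factor $O(\eta)$ from its coefficient (here $v_p^{(2)}$ is the linear functional of $w$ obtained above). The boundedness \eqref{boundedness} of $\mathcal{L}$ on $X$ together with this smallness makes $w\mapsto\mathcal{L}(\cdots)$ a contraction on a small ball of $X$ for $\eta<\eta_0$, which also yields uniqueness. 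The weighted bounds \eqref{decay behavior-prandtl-1} then follow by the same induction on the weight exponent $l$ and the derivative order $m$ carried out in Step 2 of Proposition \ref{decay estimates}, provided one first checks that $f_1$ decays faster than any polynomial in $Y$: this is inherited from the rapid decay of $u_p^{(0)},v_p^{(1)},p_p^{(1)}$, the explicit $Y$-weights in $f_1$ (such as $Y\partial_{YY}u_p^{(0)}$ and $Y\partial_Y u_p^{(0)}$) being absorbed by those decaying factors.

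The main obstacle is the rigorous treatment of the far-field constant $A_{1\infty}$: one must show that the $\theta$-dependent part of $u_p^{(1)}$ genuinely decays (so the limit is $\theta$-independent), that the compatibility condition selecting $A_{1\infty}$ is exactly the vanishing of the $\theta$-mean flux at $-\infty$, and that $|A_{1\infty}|\le C(\delta)\eta$. This is entangled with the correct handling of the nonlocal term $v_p^{(2)}(\theta,0)$ and the divergence constraint, where one must verify that eliminating $v_p^{(2)}$ preserves both $\int_0^{2\pi}v_p^{(2)}\,d\theta=0$ and the decay at $-\infty$, so that the reduced scalar problem is self-consistent and the estimates close. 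The companion system \eqref{first linearized prandtl problem another boundary} near $\partial B_{r_0}$ is handled verbatim, with $Z=\tfrac{r-r_0}{\varepsilon}$, $r_0\partial_{ZZ}$ replacing $\partial_{YY}$, and the far field taken at $Z\to+\infty$.
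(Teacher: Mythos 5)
Your overall strategy (treat the problem as linear, reduce $v_p^{(2)}$ through the divergence relation, pass to von Mises variables, and isolate a far-field constant) points in the right direction, and your formula for $v_p^{(2)}$ together with the mean-zero property and the identity $(u_e(1)+u_p^{(0)})\partial_\theta u_p^{(1)}+u_p^{(1)}\partial_\theta u_p^{(0)}=\partial_\theta\big[(u_e(1)+u_p^{(0)})u_p^{(1)}\big]$ are correct. However, the two load-bearing steps of your construction have genuine gaps. First, the $\theta$-average of \eqref{first linearized prandtl problem near 1} is \emph{not} a closed ODE for $\bar u_p^{(1)}(Y)$: after the convective block drops out you still retain the correlation terms $\overline{\big(v_e^{(1)}(\theta,1)+v_p^{(1)}\big)\partial_Y u_p^{(1)}}$ and $\overline{\big(v_p^{(2)}-v_p^{(2)}(\theta,0)\big)\partial_Y u_p^{(0)}}$, which involve the unknown solution (and $v_p^{(2)}$ is itself a nonlocal functional of $u_p^{(1)}$). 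Hence $A_{1\infty}$ cannot be computed \emph{before} $u_p^{(1)}$ is constructed, and your sequential scheme (determine $A_{1\infty}$, then run a fixed point for $w=u_p^{(1)}-A_{1\infty}$ in the decaying space $X$) is circular. Second, the proposed contraction $w=\mathcal{L}(\mathrm{forcing}+R[w])$ does not typecheck: the operator $\mathcal{L}$ of \eqref{definition L} inverts $\partial_\theta-u_e(1)\partial_{\psi\psi}$ only against forcings of the form $\Lambda_\theta$, and its output necessarily has zero $\theta$-mean. Neither $f_1$ nor the terms you collect in $R[w]$ are $\theta$-derivatives, and $w$ has a nontrivial mean; for a general forcing the zero mode in $X$ is over-determined (a second-order ODE in $\psi$ with three conditions), which is precisely why the constant $A_{1\infty}$ must appear. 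So the contraction as written cannot produce the mean part of the solution, which is the part where all the difficulty sits.

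For comparison, the paper's proof avoids both issues by a different order of operations: it first homogenizes the boundary data with a cutoff corrector (leading to \eqref{new linearized Prandtl equation}), then proves existence by the elliptic regularization \eqref{appro linear prandtl} (adding $-\gamma\partial_{\theta\theta}$ and replacing $v$ by $\int_Y^0\partial_\theta u^{\gamma}\,dz$), closing energy and positivity estimates in $\dot H^1_0$ --- a space that only encodes decay of \emph{derivatives}, so no far-field constant need be known in advance --- and passes to the limit $\gamma\to0$. Only afterwards does it use the von Mises form \eqref{linear prandtl in new variable} to prove weighted decay, treating the zero mode $u_0(\psi)$ separately through $\tilde u_{\neq}=\tilde u-u_0(\psi)$, and finally defines $A_{1\infty}:=\lim_{Y\to-\infty}u$ and bounds $\|Y^{l-2}(u-A_{1\infty})\|_2$ by Hardy's inequality. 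If you wish to salvage your scheme, you must either adopt this order (solve first in a derivative-based space, identify $A_{1\infty}$ last), or enlarge the fixed-point unknown to the pair (fluctuating part, mean profile), solving the mean equation as a two-point ODE with the Neumann-type condition $\partial_Y\bar u_p^{(1)}\to0$ at $-\infty$ inside each iteration; as written, the proposal does not close.
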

\begin{proof}
Let $\eta\in C_c^\infty ((-\infty,0])$ satisfy
\begin{align*}
\eta(0)=1,\ \int_0^{+\infty}\eta(y)dy=0.
\end{align*}
For simplicity, we set
\begin{align*}
\bar{u}:&=u_e(1)+u_p^{(0)}, \ \bar{v}:=v_e^{(1)}(\theta,1)+v_p^{(1)},\\
u:&=u_p^{(1)}+u_e^{(1)}(\theta,1)\eta(Y), \ v:=v_p^{(2)}-v_p^{(2)}(\theta,0)+Y v_p^{(1)}-\partial_\theta u_e^{(1)}(\theta,1)\int_0^Y\eta(z)dz.
\end{align*}
Then, the equations (\ref{first linearized prandtl problem near 1}) reduce to
\begin{eqnarray}\label{new linearized Prandtl equation}
\left \{
\begin {array}{ll}
\bar{u}\partial_\theta u+\bar{v}\partial_Yu+u\partial_\theta \bar{u}+v\partial_Y\bar{u}-\partial_{YY}u=\tilde{f},\\[7pt]
\partial_\theta u+\partial_Yv=0,\\[5pt]
u(\theta,Y)=u(\theta+2\pi,Y),\ v(\theta,Y)=v(\theta+2\pi,Y)\\[5pt]
u|_{Y=0}=v|_{Y=0}=0,\  \lim\limits_{Y\rightarrow -\infty}\partial_Yu=0,
\end{array}
\right.
\end{eqnarray}
where $\tilde{f}(\theta,Y)$ is $2\pi$-periodic function and decays fast as $Y\rightarrow -\infty$.

We can solve the equations (\ref{new linearized Prandtl equation}) by considering the following approximate system.
Let $\gamma>0$ be a constant, we consider the following elliptic equation
\begin{eqnarray}\label{appro linear prandtl}
\left \{
\begin {array}{ll}
\bar{u}\partial_\theta u^{\gamma}+\bar{v}\partial_Yu^{\gamma}+\big[\int_Y^0\p_{\theta}u^{\gamma}(\theta,z)dz \big]\partial_Y\bar{u}+u^{\gamma}\partial_\theta \bar{u}-\partial_{YY}u^{\gamma}-\gamma \p_{\theta\theta} u^{\gamma}=\tilde{f},\\[7pt]
u^{\gamma}(\theta,Y)=u^{\gamma}(\theta+2\pi,Y),\\[5pt]
u^{\gamma}|_{Y=0}=0.
\end{array}
\right.
\end{eqnarray}

We expect the solution of this equation is in $\dot{H}^1_0=\{u|\p_{\theta}u\in L^2,\p_Y u\in L^2, u|_{Y=0}=0\}$ rather than $H^1_0=\{u|u\in L^2,\p_{\theta}u\in L^2,\p_Y u\in L^2, u|_{Y=0}=0\}$. Now we establish apriori estimate of equation (\ref{appro linear prandtl}).
Multiplying the first equation in (\ref{appro linear prandtl}) by $u^{\gamma}$ and integrating in $(\theta,r)\in(0,2\pi)\times(-\infty,0)$, we obtain that
\begin{align*}
&\int_{-\infty}^{0}\int_0^{2\pi}\Big[\bar{u}\partial_\theta u^{\gamma}+\bar{v}\partial_Yu^{\gamma}+\Big(\int_Y^0\p_{\theta}u^{\gamma}(\theta,z)dz \Big)\partial_Y\bar{u}+u^{\gamma}\partial_\theta \bar{u}-\partial_{YY}u^{\gamma}-\gamma \p_{\theta\theta} u^{\gamma}\Big]u^{\gamma} d\theta dY\nonumber\\
=&\int_{-\infty}^{0}\int_0^{2\pi}\tilde{f}u^{\gamma} d\theta dY.
\end{align*}

It's easy to get
\beno
\int_{-\infty}^{0}\int_0^{2\pi}\big[-\partial_{YY}u^{\gamma}-\gamma \p_{\theta\theta} u^{\gamma}\big]u^{\gamma} d\theta dY=\|\partial_Yu^{\gamma}\|_2^2 +\gamma \|\p_{\theta} u^{\gamma}\|_2^2.
\eeno
Recall the estimates (\ref{decay behavior-prandtl}) and (\ref{Estimate of first linearized Euler equation}), we have
\begin{align*}
 &\big|\p^j_{\theta}\p^k_{Y}(\bar{u}-u_e(1))\big<Y\big>^l\big|\leq C(j,k,l)\eta,\\[5pt] &\big|\p^j_{\theta}\p^k_{Y}(\bar{v}-v^{(1)}_e(\theta,1))\big<Y\big>^l\big|\leq C(j,k,l)\eta,\
\big|\p^j_\theta v^{(1)}_e(\theta,1)\big|\leq C(j)\eta,
\end{align*}
thus we can deduce that
\begin{align*}
&-\int_{-\infty}^{0}\int_0^{2\pi}\Big[\bar{u}\partial_\theta u^{\gamma}+\bar{v}\partial_Yu^{\gamma}+\Big(\int_Y^0\p_{\theta}u^{\gamma}(\theta,z)dz \Big)\partial_Y\bar{u}+u^{\gamma}\partial_\theta \bar{u}\Big]u^{\gamma} d\theta dY+\int_{-\infty}^{0}\int_0^{2\pi}\tilde{f}u^{\gamma} d\theta dY\\
\leq&\int_{-\infty}^{0}\int_0^{2\pi}\frac{1}{2}\big[\p_\theta\bar{u}+\p_Y\bar{v}\big]\big(u^{\gamma}\big)^2 d\theta dr+\|Y^2\bar{u}_Y\|_{\infty}\Big\|\frac{\int_Y^0\p_{\theta}u^{\gamma} dz}{Y}\Big\|_2\Big\|\frac{u^{\gamma}}{Y}\Big\|_2\\[3pt]
    &+\|Y^2\bar{u}_\theta\|_{\infty}\Big\|\frac{u^{\gamma}}{Y}\Big\|_2^2+\|Y\tilde{f}\|_2\Big\|\frac{u^{\gamma}}{Y}\Big\|_2\\[5pt]
\leq& C\eta \big[\|\p_{\theta}u^{\gamma}\|_2^2+\|\p_Yu^{\gamma}\|_2^2\big]+C\|Y\tilde{f}\|_2\|\p_Y u^{\gamma}\|_2,
\end{align*}
where we use $\bar{u}_\theta+\bar{v}_Y=0$ and the Hardy inequality
$$
\Big\|\frac{\int_Y^0\p_{\theta}u^{\gamma} dz}{Y}\Big\|_2\leq C\|\p_{\theta}u^{\gamma}\|_2,\ \Big\|\frac{u^{\gamma}}{Y}\Big\|_2\leq C\|\p_Yu^{\gamma}\|_2.
$$

By collecting the above estimates, we obtain
\begin{align*}
&\|\partial_Yu^{\gamma}\|_2^2 +\gamma \|\p_{\theta} u^{\gamma}\|_2^2\leq C\eta \big[\|\p_{\theta}u^{\gamma}\|_2^2+\|\p_Yu^{\gamma}\|_2^2\big]+C\|Y\tilde{f}\|_2\|\p_Y u^{\gamma}\|_2,
\end{align*}
where $C$ is independent on $\eta$ and $\gamma$. If $\eta$ is small enough, we deduce that
\begin{align}\label{energy estimate prandtl}
&\|\partial_Yu^{\gamma}\|_2^2 +\gamma \|\p_{\theta} u^{\gamma}\|_2^2\leq C\eta \|\p_{\theta}u^{\gamma}\|_2^2+C\|Y\tilde{f}\|_2^2.
\end{align}

Next we multiply the first equation in (\ref{appro linear prandtl}) by $\p_{\theta}u^{\gamma}$ and integrate in $(\theta,r)\in(0,2\pi)\times(-\infty,0)$, we arrive at
\begin{align*}
&\int_{-\infty}^{0}\int_0^{2\pi}\Big[\bar{u}\partial_\theta u^{\gamma}+\bar{v}\partial_Yu^{\gamma}+\Big(\int_Y^0\p_{\theta}u^{\gamma}(\theta,z)dz \Big)\partial_Y\bar{u}+u^{\gamma}\partial_\theta \bar{u}-\partial_{YY}u^{\gamma}-\gamma \p_{\theta\theta} u^{\gamma}\Big]\p_{\theta}u^{\gamma} d\theta dY\nonumber\\
=&\int_{-\infty}^{0}\int_0^{2\pi}\tilde{f}\p_{\theta}u^{\gamma} d\theta dY.
\end{align*}
It's direct to obtain
\beno
\int_{-\infty}^{0}\int_0^{2\pi}\bar{u}\partial_\theta u^{\gamma} \p_{\theta}u^{\gamma} d\theta dY= \int_{-\infty}^{0}\int_0^{2\pi}\bar{u}\big|\partial_\theta u^{\gamma}\big|^2d\theta dY\geq(\alpha-C\eta)\|\p_\theta u^{\gamma}\|^2_2.
\eeno
The diffusion term can be computed as follows
\begin{align*}
\int_{-\infty}^{0}\int_0^{2\pi}\big[-\partial_{YY}u^{\gamma}-\gamma \p_{\theta\theta} u^{\gamma}\big]\p_\theta u^{\gamma} d\theta dY=\int_{-\infty}^{0}\int_0^{2\pi}\Big(\frac{1}{2}\p_{\theta}\big(\p_{Y}u^{\gamma}\big)^2-\frac{\gamma}{2}\p_{\theta}\big(\p_\theta u^{\gamma}\big)^2\Big)d\theta dY=0.
\end{align*}

Moreover, there holds
\begin{align*}
&-\int_{-\infty}^{0}\int_0^{2\pi}\Big[\bar{v}\partial_Yu^{\gamma}+\Big(\int_Y^0\p_{\theta}u^{\gamma}(\theta,z)dz \Big)\partial_Y\bar{u}+u^{\gamma}\partial_\theta \bar{u}\Big]\p_{\theta}u^{\gamma} d\theta dY+\int_{-\infty}^{0}\int_0^{2\pi}\tilde{f}\p_{\theta}u^{\gamma} d\theta dY\\
\leq& \|\bar{v}\|_{\infty}\|\p_Y u^{\gamma}\|_2\|\p_{\theta}u^{\gamma}\|_2+\|Y \bar{u}_Y\|_{\infty}\Big\|\frac{\int_Y^0\p_{\theta}u^{\gamma} dz}{Y}\Big\|_2\|\p_{\theta} u^{\gamma}\|_2\\[5pt]
&+\|Y\p_{\theta}\bar{u}\|_{\infty}\Big\|\frac{u^{\gamma}}{Y}\Big\|_2\|\p_{\theta}u^{\gamma}\|_2+\|\tilde{f}\|_2\|\p_{\theta}u^{\gamma}\|_2\\[5pt]
\leq&C\eta\big[\|\p_Yu^{\gamma}\|^2_2+\|\p_\theta u^{\gamma}\|^2_2\big]+\|\tilde{f}\|_2\|\p_{\theta}u^{\gamma}\|_2.
\end{align*}
Thus, we obtain
\begin{align*}
&\alpha\|\p_{\theta}u^{\gamma}\|_2^2\leq C \eta\big[\|\p_\theta u^{\gamma}\|_2^2+\|\p_Yu^{\gamma}\|_2^2\big]+\|\tilde{f}\|_2\|\p_{\theta}u^{\gamma}\|_2.
\end{align*}
We then choose $\eta_0$ small enough such that for any $\eta\in (0,\eta_0)$, there holds
\begin{align}\label{positive estimate prandtl}
&\alpha\|\p_{\theta}u^{\gamma}\|_2^2\leq C \eta\big\|\p_Yu^{\gamma}\|_2^2+C\|\tilde{f}\|_2^2.
\end{align}
It follows from (\ref{energy estimate prandtl}) and (\ref{positive estimate prandtl}) that  we have
\begin{align}\label{closed prandtl}
&\alpha\|\p_{\theta}u^{\gamma}\|_2^2+\|\partial_Yu^{\gamma}\|_2^2 +\gamma \|\p_{\theta} u^{\gamma}\|_2^2\leq C\|Y\tilde{f}\|_2^2+C\|\tilde{f}\|_2^2.
\end{align}

According to the first equation in (\ref{appro linear prandtl}), we deduce
\begin{align}\label{second derivative estimate}
&\|\p_{YY}u^{\gamma}\|^2_2+2\gamma\|\p_{\theta Y}u^{\gamma}\|^2_2+\gamma^2\|\p_{\theta \theta}u^{\gamma}\|^2_2=\Big\|\partial_{YY}u^{\gamma}+\gamma \p_{\theta\theta} u^{\gamma}\Big\|_2^2\nonumber\\
=&\bigg\|\bar{u}\partial_\theta u^{\gamma}+\bar{v}\partial_Yu^{\gamma}+\Big(\int_Y^0\p_{\theta}u^{\gamma}(\theta,z)dz \Big)\partial_Y\bar{u}+u^{\gamma}\partial_\theta \bar{u}-\tilde{f}\bigg\|_2^2\nonumber\\[5pt]
\leq &C\big[\|\tilde{f}\|^2_2+\|\p_{\theta}u^{\gamma}\|_2^2+\|\partial_Yu^{\gamma}\|_2^2\big]\leq C\|Y\tilde{f}\|_2^2+C\|\tilde{f}\|_2^2.
\end{align}

Collecting the estimates (\ref{closed prandtl}) and (\ref{second derivative estimate}), we obtain
\begin{align*}
&\alpha\|\p_{\theta}u^{\gamma}\|_2^2+\|\partial_Yu^{\gamma}\|_2^2 +\|\p_{YY}u^{\gamma}\|^2_2+\gamma \|\p_{\theta} u^{\gamma}\|_2^2+2\gamma\|\p_{\theta Y}u^{\gamma}\|^2_2+\gamma^2\|\p_{\theta \theta}u^{\gamma}\|^2_2\leq C\|\big<Y\big>\tilde{f}\|_2^2
\end{align*}
which shows the existence and uniqueness of solution for system (\ref{appro linear prandtl}) for any $\gamma>0$ in $\dot{H}^1_0$. Moreover the solution is smooth if $\tilde{f}$ is smooth. Set
\begin{align*}
u:=\lim_{\gamma\rightarrow0} u^{\gamma},\ v:=\int_{Y}^0\p_{\theta}u(\theta,d)dz,
\end{align*}
then
\begin{align}\label{new linear prandtl estimate}
&\alpha\|\p_{\theta}u\|_2^2+\|\partial_Y u\|_2^2 +\|\p_{YY} u\|^2_2\leq C\|\big<Y\big>\tilde{f}\|^2_2
\end{align}
and $(u,v)$ solves the system (\ref{new linearized Prandtl equation}) except the boundary condition $\lim\limits_{Y\rightarrow -\infty}\partial_Yu=0$.

Finally, we show that the derivatives of $u,v$ decay fast as $Y\rightarrow -\infty$. Let $\psi=\int_0^Y \bar{u}(\theta,z)dz$ and
\begin{align*}
\tilde{u}(\theta,\psi)=u(\theta, Y(\theta,\psi)), \ \tilde{v}(\theta,\psi)=v(\theta, Y(\theta,\psi)), \
F(\theta,\psi)=\frac{\tilde{f}(\theta, Y(\theta,\psi))}{\bar{u}(\theta, Y(\theta,\psi))},
\end{align*}
then there holds
\begin{eqnarray}\label{linear prandtl in new variable}
\left \{
\begin {array}{ll}
\partial_\theta \tilde{u}-\partial_\psi(a(\theta,\psi)\partial_\psi \tilde{u})+b(\theta,\psi)\tilde{u}+c(\theta,\psi)\tilde{v}=F(\theta,\psi),\\[7pt]
\tilde{u}(\theta+2\pi,\psi)=\tilde{u}(\theta,\psi),\\[5pt]
\tilde{u}(\theta, 0)=0, \ \lim_{\psi\rightarrow -\infty}\partial_\psi\tilde{u}(\theta, \psi)=0,
\end{array}
\right.
\end{eqnarray}
where
\begin{align*}
a(\theta,\psi)=\bar{u}(\theta, Y(\theta,\psi)),
 \ b(\theta,\psi)=\frac{\partial_\theta\bar{u}(\theta, Y(\theta,\psi))}{\bar{u}(\theta, Y(\theta,\psi))},
 \ c(\theta,\psi)=\frac{\partial_Y\bar{u}(\theta, Y(\theta,\psi))}{\bar{u}(\theta, Y(\theta,\psi))}.
\end{align*}
Notice that there exist $\eta_0>0$ such that for any $\eta\in (0,\eta_0)$, there holds
\begin{align*}
\frac{\alpha}{2}\leq \bar{u}(\theta,Y)\leq\alpha, \ \forall (\theta, Y)\in [0,2\pi]\times (-\infty,0].
\end{align*}
 Thus, we deduce that $\frac{\alpha}{2}\leq \frac{|\psi|}{|Y|}\leq \alpha$.

 \begin{align}\label{weight estimate in new variable}
 \|\partial_\theta \tilde{u} \psi^l\|_2+\|\partial_\psi \tilde{u}_{\neq}\psi^l\|_2+\|\partial_{\psi\psi} \tilde{u} \psi^l\|_2\leq C(l)\|F \big<\psi\big>^{l+1}\|_2,
 \end{align}
 where
 $$\tilde{u}_{\neq}=\tilde{u}-u_0(\psi), \ u_0(\psi)=\frac{1}{2\pi}\int_0^{2\pi}\tilde{u}(\theta,\psi)d\theta.$$
 From (\ref{new linear prandtl estimate}), we deduce that
 \begin{align*}
 \|\partial_\theta \tilde{u} \|_2+\|\partial_\psi \tilde{u}\|_2+\|\partial_{\psi\psi} \tilde{u} \|_2\leq C\|F \big<\psi\big>\|_2,
 \end{align*}
 thus (\ref{weight estimate in new variable}) holds for $l=0$.

 For any $l\geq 1$, multiplying $\tilde{u}_{\neq}\psi^{2l}$ in (\ref{linear prandtl in new variable}) and integrating in $[0,2\pi]\times (-\infty,0]$, we obtain
 \begin{align*}
 &\underbrace{\int_0^{2\pi}\int_{-\infty}^0\partial_\theta \tilde{u}\tilde{u}_{\neq}\psi^{2l}d\psi d\theta}_{I_1} -\underbrace{\int_0^{2\pi}\int_{-\infty}^0\partial_\psi(a(\theta,\psi)\partial_\psi \tilde{u})\tilde{u}_{\neq}\psi^{2l}d\psi d\theta}_{I_2}\\
 =&\underbrace{\int_0^{2\pi}\int_{-\infty}^0[F(\theta,\psi)-b(\theta,\psi)\tilde{u}-c(\theta,\psi)\tilde{v}]\tilde{u}_{\neq}\psi^{2l}d\psi d\theta}_{I_3}.
 \end{align*}
Obviously, $I_1=0$. Due to the fast decay of $b(\theta,\psi), c(\theta,\psi)$ as $\psi\rightarrow -\infty$, we deduce that
\begin{align*}
|I_3|\leq& C\|F \big<\psi\big>^{l+1}\|_2\|\partial_\theta\tilde{u}_{\neq}\psi^{l-1}\|_2+C(\|\partial_\theta \tilde{u} \|^2_2+\|\partial_\psi \tilde{u}\|^2_2)\\[5pt]
\leq &C\|F \big<\psi\big>^{l+1}\|_2^2+C\|\partial_\theta\tilde{u}\psi^{l-1}\|_2^2.
\end{align*}
Moreover, there holds
\begin{align*}
I_2=\underbrace{\int_0^{2\pi}\int_{-\infty}^0a(\theta,\psi)\partial_\psi \tilde{u}\partial_\psi\tilde{u}_{\neq}\psi^{2l}d\psi d\theta}_{I_{21}}
+\underbrace{2l\int_0^{2\pi}\int_{-\infty}^0a(\theta,\psi)\partial_\psi \tilde{u}\tilde{u}_{\neq}\psi^{2l-1}d\psi d\theta}_{I_{22}}.
\end{align*}
Notice that $a(\theta,\psi)=u_e(1)+u_p^{(0)}(\theta,Y(\theta,\psi))$, we deduce that
\begin{align*}
I_{21}=&u_e(1)\int_0^{2\pi}\int_{-\infty}^0\partial_\psi \tilde{u}\partial_\psi\tilde{u}_{\neq}\psi^{2l}d\psi d\theta+\int_0^{2\pi}\int_{-\infty}^0u_p^{(0)}(\theta,Y(\theta,\psi))\partial_\psi \tilde{u}\partial_\psi\tilde{u}_{\neq}\psi^{2l}d\psi d\theta\\
=&u_e(1)\int_0^{2\pi}\int_{-\infty}^0\partial_\psi \tilde{u}_{\neq}\partial_\psi\tilde{u}_{\neq}\psi^{2l}d\psi d\theta+\int_0^{2\pi}\int_{-\infty}^0u_p^{(0)}(\theta,Y(\theta,\psi))\partial_\psi \tilde{u}\partial_\psi\tilde{u}_{\neq}\psi^{2l}d\psi d\theta\\[5pt]
\geq& \frac{\alpha}{2}\|\partial_\psi \tilde{u}_{\neq}\psi^l\|^2_2-C\|\partial_\psi \tilde{u}\|^2_2.
\end{align*}
Moreover, by the H\"{o}lder inequality and the Poincar\'{e} inequality,  there holds
\begin{align*}
|I_{22}|\leq C(l)\|\partial_\psi \tilde{u}_{\neq}\psi^l\|_2\|\partial_\theta \tilde{u}\psi^{l-1}\|_2.
\end{align*}
Thus, we obtain
\begin{align}\label{first order norm estimate}
\|\partial_\psi \tilde{u}_{\neq}\psi^l\|_2\leq C(l)\|\partial_\theta \tilde{u}\psi^{l-1}\|_2+C\|F \big<\psi\big>^{l+1}\|_2.
\end{align}

Noticing
\begin{align*}
\partial_\theta \tilde{u}-a(\theta,\psi)\partial_{\psi\psi }\tilde{u}=F(\theta,\psi)+\partial_\psi a(\theta,\psi)\partial_\psi \tilde{u}-b(\theta,\psi)\tilde{u}-c(\theta,\psi)\tilde{v},
\end{align*}
we deduce that
\begin{align*}
\|[\partial_\theta \tilde{u}-a(\theta,\psi)\partial_{\psi\psi }\tilde{u}]\psi^l\|_2^2=\|[F(\theta,\psi)+\partial_\psi a(\theta,\psi)\partial_\psi \tilde{u}-b(\theta,\psi)\tilde{u}-c(\theta,\psi)\tilde{v}]\psi^l\|_2^2.
\end{align*}
The right side can be controlled by
\begin{align*}
C\|F \big<\psi\big>^{l}\|_2^2+C(\|\partial_\theta \tilde{u} \|^2_2+\|\partial_\psi \tilde{u}\|^2_2).
\end{align*}
Moreover, there holds
\begin{align*}
&\|[\partial_\theta \tilde{u}-a(\theta,\psi)\partial_{\psi\psi }\tilde{u}]\psi^l\|_2^2\\
=&\|\partial_\theta \tilde{u} \psi^l\|_2^2+\| a(\theta,\psi)\partial_{\psi\psi }\tilde{u}\psi^l\|_2^2-2\int_0^{2\pi}\int_{-\infty}^0a(\theta,\psi)\partial_\theta \tilde{u}\partial_{\psi\psi }\tilde{u}\psi^{2l}d\psi d\theta\\
\geq &\|\partial_\theta \tilde{u} \psi^l\|_2^2+\frac{\alpha}{2}\|\partial_{\psi\psi }\tilde{u}\psi^l\|_2^2\underbrace{-2\int_0^{2\pi}\int_{-\infty}^0a(\theta,\psi)\partial_\theta \tilde{u}\partial_{\psi\psi}\tilde{u}\psi^{2l}d\psi d\theta}_{I}.
\end{align*}
Integrating by parts, we deduce that
\begin{align*}
I=&\underbrace{2\int_0^{2\pi}\int_{-\infty}^0\partial_\psi a(\theta,\psi)\partial_\theta\tilde{u}\partial_{\psi}\tilde{u}\psi^{2l}d\psi d\theta}_{I_1}+\underbrace{4l\int_0^{2\pi}\int_{-\infty}^0a(\theta,\psi)\partial_\theta\tilde{u}\partial_{\psi}\tilde{u}\psi^{2l-1}d\psi d\theta}_{I_2}\\
&+\underbrace{2\int_0^{2\pi}\int_{-\infty}^0a(\theta,\psi)\partial_{\theta \psi}\tilde{u}\partial_{\psi}\tilde{u}\psi^{2l}d\psi d\theta}_{I_3}.
\end{align*}
Obviously, there holds
\begin{align*}
|I_1|+|I_3|\leq C(\|\partial_\theta \tilde{u} \|^2_2+\|\partial_\psi \tilde{u}\|^2_2).
\end{align*}
Moreover,
\begin{align*}
|I_2|=&\Big|4lu_e(1)\int_0^{2\pi}\int_{-\infty}^0\partial_\theta\tilde{u}\psi^{2l-1}\partial_{\psi}\tilde{u}d\psi d\theta+4l\int_0^{2\pi}\int_{-\infty}^0u_p^{(0)}(\theta,Y(\theta,\psi))\partial_\theta\tilde{u}\psi^{2l-1}\partial_{\psi}\tilde{u}d\psi d\theta\Big|\\
=&\Big|4lu_e(1)\int_0^{2\pi}\int_{-\infty}^0\partial_\theta\tilde{u}\psi^{2l-1}\partial_{\psi}\tilde{u}_{\neq}d\psi d\theta+4l\int_0^{2\pi}\int_{-\infty}^0u_p^{(0)}(\theta,Y(\theta,\psi))\partial_\theta\tilde{u}\psi^{2l-1}\partial_{\psi}\tilde{u}d\psi d\theta\Big|\\[5pt]
\leq & C(l)\|\partial_\theta \tilde{u}\psi^l \|_2\|\partial_\psi \tilde{u}_{\neq}\psi^{l-1}\|_2+C(l)(\|\partial_\theta \tilde{u} \|^2_2+\|\partial_\psi \tilde{u}\|^2_2).
\end{align*}
Thus, we obtain
\begin{align}\label{second estimate on norm}
\|\partial_\theta \tilde{u} \psi^l\|_2+\|\partial_{\psi\psi }\tilde{u}\psi^l\|_2\leq C(l)\|\partial_\psi \tilde{u}_{\neq}\psi^{l-1}\|_2+C(l)\|F \big<\psi\big>^{l+1}\|_2.
\end{align}
Combining the estimates (\ref{first order norm estimate}) and (\ref{second estimate on norm}), we obtain that for any $l\geq 1$, there holds
\begin{align*}
\|\partial_\theta \tilde{u} \psi^l\|_2+\|\partial_\psi \tilde{u}_{\neq}\psi^l\|_2+\|\partial_{\psi\psi} \tilde{u} \psi^l\|_2\leq C(l)(\|\partial_\psi \tilde{u}_{\neq}\psi^{l-1}\|_2+\|\partial_\theta \tilde{u}\psi^{l-1}\|_2)+C(l)\|F \big<\psi\big>^{l}\|_2.
\end{align*}
Thus, by induction, we obtain (\ref{weight estimate in new variable}).

Furthermore, by the Hardy inequality, we have
\begin{align*}
\|\partial_\psi \tilde{u}\psi^{l-1}\|_2\leq C\|\partial_{\psi\psi} \tilde{u} \psi^{l}\|_2,
\end{align*}
hence there holds
\begin{align*}
 \|\partial_\theta \tilde{u} \psi^l\|_2+\|\partial_\psi \tilde{u}\psi^{l-1}\|_2+\|\partial_{\psi\psi} \tilde{u} \psi^l\|_2\leq C\|F \big<\psi\big>^{l+1}\|_2.
 \end{align*}
Returning to the original variable, we obtain that for any $l\geq 1$, there holds
\begin{align*}
\big\|\big<Y\big>^{l}\p_{YY}u\big\|_2^2+\|\big<Y\big>^{l-1}\p_{\theta}u\|^2_2+\|\big<Y\big>^{l-1}\p_{Y}u\|^2_2\leq C( l)\|\big<Y\big>^{l+1}\tilde{f}\|^2_2.
\end{align*}

Furthermore, by induction, we obtain that for any $m\in \mathbb{N}_+, l\in \mathbb{N}$, there holds
\begin{align*}
&\sum_{j+k\leq m}\Big(\big\|\big<Y\big>^{l}\p^j_{\theta}\p^k_Y\p_{YY}u\big\|_2^2+\big\|\big<Y\big>^{l-1}\p^j_{\theta}\p^k_Y \p_\theta u\big\|_2^2+\big\|\big<Y\big>^{l-1}\p^j_{\theta}\p^k_Y \p_Y u\big\|_2^2\Big)\\
 &\leq C(m,l)\sum\limits_{j+k\leq m}\big\|\big<Y\big>^{l+1}\p^{j}_{\theta}\p^{k}_Y\tilde{f}\big\|^2_2\leq C(m,l)\eta^2.
\end{align*}

Noticing that  $\lim\limits_{Y\rightarrow -\infty}(u_\theta,u_Y)=0$ and $A_{1\infty}:=\lim\limits_{Y\rightarrow -\infty}u(\theta,Y)$ is a constant independent on $\theta$,  then by the Hardy inequality  we have for any $l\geq2$
\begin{align*}
\|Y^{l-2} (u-A_{1\infty})\|_2\leq C(l) \|Y^{l-1} \partial_{Y}u\|_2\leq C(l)\eta^2.
\end{align*}
This completes the proof of this proposition.
\end{proof}
Next, we construct the pressure $p_p^{(2)}(\theta,Y)$. Consider the equation
\begin{align}\label{equation for second pressure}
\partial_Yp_p^{(2)}(\theta, Y)=g_1(\theta,Y), \quad \lim_{Y\rightarrow -\infty}p_p^{(2)}(\theta,Y)=0,
\end{align}
where
\begin{align*}
g_1(\theta,Y)=&-Y\partial_Yp_p^{(1)}+\partial_{YY}v_p^{(1)}-u_e(1)\partial_\theta v_p^{(1)}-u_p^{(0)} (\partial_\theta v_e^{(1)}(\theta,1)+\partial_\theta v_p^{(1)})\\[5pt]
&-\partial_Yv_p^{(1)}(v_e^{(1)}(\theta,1)+v_p^{(1)})-2Yu'_e(1)u_p^{(0)}+u_e(1)\tilde{u}_p^{(1)}+[u_e^{(1)}(\theta,1)+A_1]u_p^{(0)}+u_p^{(0)}\tilde{u}_p^{(1)},
\end{align*}
here and below,
$$\tilde{u}_p^{(1)}=u_p^{(1)}-A_{1\infty}.$$
$g_1(\theta,Y)$ can be obtained by replacing $u_p^{(1)}$ by $\tilde{u}_p^{(1)}$ in the expansion (\ref{expansion near boundary}) and putting the new expansion into the second equation of (\ref{NS-curvilnear}), then collecting the $\varepsilon^1$-order terms together.
 Notice that $g_1(\theta,Y)$ decay fast as $Y\rightarrow -\infty$, we can get $p_p^{(2)}(\theta,Y)$ by solving (\ref{expansion near boundary}) and deduce that $p_p^{(2)}(\theta,Y)$ decay fast as $Y\rightarrow -\infty$.

By the same argument as above, we can obtain the well-posedness of  equations (\ref{first linearized prandtl problem another boundary}).
\begin{proposition} There exists $\eta_0>0$ such that for any $\eta\in(0,\eta_0)$, the equations (\ref{first linearized prandtl problem another boundary}) have a unique solution $(\hat{u}_p^{(1)},\hat{v}_p^{(2)})$ which satisfies
\begin{align}
\begin{aligned}
&\sum_{j+k\leq m}\int_0^{+\infty}\int_0^{2\pi}\big|\partial_\theta^j\partial_Z^k \big(\hat{u}_p^{(1)}-\hat{A}_{1\infty},v_p^{(2)}\big)\big|^2\big<Z\big>^{2l}d\theta dZ\leq C(m,l)\eta^2, \ \forall m,l\geq 0, \\
&\int_0^{2\pi}\hat{v}_p^{(2)}(\theta,Z)d\theta=0, \ \forall Z\geq 0, \label{decay behavior-prandtl-1-another}
\end{aligned}
\end{align}
where
$\hat{A}_{1\infty}:=\lim\limits_{Z \rightarrow +\infty}\hat{u}_p^{(1)}(\theta,Z)$ is a constant which satisfies $|\hat{A}_{1\infty}|\leq C\eta.$
\end{proposition}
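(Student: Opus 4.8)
The plan is to mirror the proof of Proposition \ref{decay estimates of linearized Prandtl} essentially verbatim, the only changes being geometric: the normal variable is now $Z=\frac{r-r_0}{\e}\in[0,+\infty)$, the base tangential velocity is $\bar u:=u_e(r_0)+\hat u_p^{(0)}$ (which is close to $\beta>0$ rather than $\alpha$), and the curvature of $\partial B_{r_0}$ inserts an overall factor $r_0$ in the diffusion and transport terms. First I would reduce (\ref{first linearized prandtl problem another boundary}) to a problem with homogeneous boundary data: using a fixed cutoff $\chi\in C_c^\infty([0,+\infty))$ with $\chi(0)=1$ and $\int_0^{+\infty}\chi\,dZ=0$, I would set $u:=\hat u_p^{(1)}+u_e^{(1)}(\theta,r_0)\chi(Z)$ and define $v$ by the corresponding divergence-free lift, so that $(u,v)$ solves a system of the same form as (\ref{new linearized Prandtl equation}) with homogeneous conditions $u|_{Z=0}=v|_{Z=0}=0$ and $\lim_{Z\to+\infty}\p_Z u=0$, and with a forcing $\hat{\tilde f}$ that is $2\pi$-periodic and decays rapidly as $Z\to+\infty$ by virtue of (\ref{decay behavior-prandtl-another}) and (\ref{Estimate of first linearized Euler equation}).

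Next I would solve the regularized elliptic system obtained by adding $-\gamma\,\p_{\theta\theta}u^\gamma$ (the exact analogue of (\ref{appro linear prandtl})) and establish the two basic a priori bounds. Testing against $u^\gamma$ and invoking the Hardy inequality together with the weighted decay of $\bar u-u_e(r_0)$ and $\bar v-v_e^{(1)}(\theta,r_0)$ gives the energy estimate $\|\p_Z u^\gamma\|_2^2+\gamma\|\p_\theta u^\gamma\|_2^2\le C\eta\|\p_\theta u^\gamma\|_2^2+C\|Z\hat{\tilde f}\|_2^2$, as in (\ref{energy estimate prandtl}). Testing against $\p_\theta u^\gamma$ and using the strict positivity $\bar u\ge \beta/2>0$, valid for $\eta$ small by (\ref{decay behavior-prandtl-another}), produces the positivity estimate $\beta\|\p_\theta u^\gamma\|_2^2\le C\eta\|\p_Z u^\gamma\|_2^2+C\|\hat{\tilde f}\|_2^2$, the analogue of (\ref{positive estimate prandtl}); here the diffusion contribution $\int(-r_0\p_{ZZ}u^\gamma-\gamma\p_{\theta\theta}u^\gamma)\p_\theta u^\gamma$ integrates to zero by periodicity. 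Combining the two bounds for $\eta$ sufficiently small closes the first-order estimate as in (\ref{closed prandtl}), and reading the second normal derivative off the equation gives the $\p_{ZZ}$ bound as in (\ref{second derivative estimate}); letting $\gamma\to0$ yields a unique solution of the reduced system in $\dot H^1_0$.

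Finally, to upgrade to the weighted estimates (\ref{decay behavior-prandtl-1-another}) and to extract the constant limit $\hat A_{1\infty}$, I would pass to the von Mises variable $\psi=\int_0^Z\bar u(\theta,z)\,dz\in[0,+\infty)$, under which the reduced equation becomes a uniformly parabolic-in-$\theta$ equation of the same shape as (\ref{linear prandtl in new variable}) with coefficients decaying rapidly as $\psi\to+\infty$. An induction on the weight power $l$, testing against $\tilde u_{\neq}\psi^{2l}$ and exploiting the coercivity coming from $\bar u\ge\beta/2$, reproduces (\ref{weight estimate in new variable}); returning to $Z$ and using $\tfrac{\beta}{2}\le|\psi|/|Z|\le\|\bar u\|_\infty$ converts these into the stated weighted bounds for $(\hat u_p^{(1)}-\hat A_{1\infty},\hat v_p^{(2)})$. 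Since $\p_\theta u$ and $\p_Z u$ vanish as $Z\to+\infty$, the limit $\hat A_{1\infty}:=\lim_{Z\to+\infty}\hat u_p^{(1)}$ is a $\theta$-independent constant, and a last application of the Hardy inequality gives $|\hat A_{1\infty}|\le C(\delta)\eta$. I do not anticipate a genuinely new obstacle: every step is the mirror image of the near-$\partial B_1$ analysis, and the only points demanding care are the bookkeeping of the $r_0$ factors in the curvature terms and the fact that the unbounded direction is now $Z\to+\infty$; the essential coercive structure survives unchanged precisely because $u_e(r_0)>0$.
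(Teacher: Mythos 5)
Your proposal is correct and coincides with the paper's own treatment: the paper proves this proposition simply by invoking ``the same argument as above,'' i.e.\ the proof of Proposition \ref{decay estimates of linearized Prandtl} for the outer boundary, which you reproduce faithfully with exactly the right modifications (homogeneous lift via a cutoff, the $\gamma$-regularized system, energy plus positivity estimates using $\bar u\ge \beta/2>0$, von Mises variables and weighted induction, then Hardy's inequality for $\hat A_{1\infty}$). The bookkeeping points you flag — the $r_0$ curvature factors and the orientation $Z\to+\infty$ — are indeed the only genuine differences, and they do not affect the coercive structure.
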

Similarly, we can also construct $\hat{p}_p^{(2)}(\theta,Z)$ which decays fast as $Z\rightarrow +\infty$.

\subsubsection{Linearized Euler system for $(u_e^{(2)}, v_e^{(2)}, p_e^{(2)})$ and its solvability}

\indent

Let $r_1=\frac{1+2r_0}{3}, r_2=\frac{2+r_0}{3}$ and $\chi(r)\in C^\infty([r_0,1])$ be an increasing smooth function such that
\begin{align*}
\chi(r)=
\left\{
\begin{array}{lll}
0, \ r\in [r_0,r_1], \\[5pt]
1, \ r\in [r_2,1]
\end{array}
\right.
\end{align*}
and
\begin{align*}
\bar{u}_e^{(1)}(\theta,r):&=u_e^{(1)}(\theta,r)+\chi(r)A_{1\infty}+(1-\chi(r))\hat{A}_{1\infty}, \ \bar{v}_e^{(1)}(\theta,r)=v_e^{(1)}(\theta,r),\\
\bar{p}_e^{(1)}(\theta,r):&=p_e^{(1)}(\theta,r)+\int_{r_0}^r\frac{2u_e(s)}{s}[\chi(s)A_{1\infty}+(1-\chi(s))\hat{A}_{1\infty}]ds,
\end{align*}
then $(\bar{u}_e^{(1)},\bar{v}_e^{(1)},\bar{p}_e^{(1)})$ also satisfies the linearized Euler equations (\ref{outer-1 order equation})
with the boundary condition (\ref{outer-1 order-bc}).

Putting
\begin{align*}
&u^{\varepsilon}(\theta,r)=u_e(r)+\varepsilon \bar{u}_e^{(1)}(\theta,r)+\varepsilon^2 u_e^{(2)}(\theta,r)+\text{h.o.t.},\\[5pt]
&v^{\varepsilon}(\theta,r)=\varepsilon \bar{v}_e^{(1)}(\theta,r)+\varepsilon^2 v_e^{(2)}(\theta,r)+\text{h.o.t.},\\[5pt]
&p^{\varepsilon}(\theta,r)=p_e(r)+\varepsilon \bar{p}_e^{(1)}(\theta,r)+\varepsilon^2 p_e^{(2)}(\theta,r)+\text{h.o.t.}
\end{align*}
into the Navier-Stokes equations (\ref{NS-curvilnear}), we obtain the following linearized Euler equations for $(u_e^{(2)},v_e^{(2)}, p_e^{(2)})$
\begin{eqnarray}
\left \{
\begin {array}{ll}
u_e(r) \partial_\theta u_e^{(2)}+rv_e^{(2)}u'_e(r)+u_e(r)v_e^{(2)}+\partial_\theta p_e^{(2)}+\bar{u}_e^{(1)}\partial_\theta \bar{u}_e^{(1)}\\[5pt]
\quad \quad \quad \quad +\bar{v}_e^{(1)}r\partial_r\bar{u}_e^{(1)}+\bar{u}_e^{(1)}\bar{v}_e^{(1)}=ru''_e(r)+u'_e(r)-\frac{u_e(r)}{r}+rF_u,\\[5pt]
u_e(r) \partial_\theta v_e^{(2)}-2u_e(r)u_e^{(2)}+r\partial_rp_e^{(2)}+\bar{u}_e^{(1)}\partial_\theta \bar{v}_e^{(1)}+\bar{v}_e^{(1)}r\partial_r \bar{v}_e^{(1)}-(\bar{u}_e^{(1)})^2=rF_v,\\[7pt]
\partial_\theta u_e^{(2)}+\partial_r(rv_e^{(2)})=0,\label{outer-2 order equation}
\end{array}
\right.
\end{eqnarray}
 with the boundary conditions
\begin{align}\label{boundary condition for second Euler}
 v_e^{(2)}|_{r=1}=-v_p^{(2)}|_{Y=0},\ v_e^{(2)}|_{r=r_0}=-\hat{v}_p^{(2)}|_{Z=0}, \ v_e^{(2)}(\theta,r)=v_e^{(2)}(\theta+2\pi,r).
\end{align}

\begin{proposition}\label{solvability of second Euler equation}
The linearized Euler system  (\ref{outer-2 order equation}) has a solution $(u_e^{(2)}, v_e^{(2)}, p_e^{(2)})$ which satisfies
\begin{align}\label{Estimate of second linearized Euler equation}
\|\partial^k_\theta\partial^j_r(u_e^{(2)},v_e^{(2)})\|_2\leq C(j,k),  \ \forall j,k\geq 0.
\end{align}
\end{proposition}
\begin{proof}
First, we show that there holds
\begin{align}
\int_0^{2\pi}[\bar{u}_e^{(1)}\partial_\theta \bar{u}_e^{(1)}+\bar{v}_e^{(1)}r\partial_r\bar{u}_e^{(1)}+\bar{u}_e^{(1)}\bar{v}_e^{(1)}]d\theta=0, \ \forall r\in [r_0,1].
\label{necessary condition for the solvability-2out-modi}
\end{align}

We set
\beno
\bar{v}_e^{(1)}(\theta,r)=\sum_{n\neq 0}V_n(r)e^{in\theta}, \ \bar{u}_e^{(1)}(\theta,r)=a(r)-\sum_{n\neq 0}\frac{(rV_n)'(r)}{in}e^{in\theta},
\eeno
here we have used \eqref{ve1zeroav} and the third equation of \eqref{outer-1 order equation}.

Then, it's easy to obtain that
\begin{align}
&\bar{v}_e^{(1)}r\partial_r\bar{u}_e^{(1)}(\theta,r)+\bar{u}_e^{(1)}\bar{v}_e^{(1)}(\theta,r)\nonumber \\[5pt]
&=(ra'(r)+a(r))\bar{v}_e^{(1)}(\theta,r)-\bar{v}_e^{(1)}(\theta,r)\sum_{n\neq 0}\frac{e^{in\theta}}{in}[r(rV_n)''+(rV_n)'].\label{Estimate of second linearized Euler equation-1}
\end{align}
Moreover, due to $-\triangle(r\bar{v}_e^{(1)})+U_e(r)(r\bar{v}_e^{(1)})=0$, where $U_e(r)=\frac{1}{u_e(r)}\Big(u''_e(r)+\frac{u'_e(r)}{r}-\frac{u_e(r)}{r^2}\Big),$
we deduce that
\beno
\sum_{n\neq 0}e^{in\theta}[r^2(rV_n)''+r(rV_n)'-(n^2+r^2U_e(r))rV_n]=0,
\eeno
hence there holds $r(rV_n)''+(rV_n)'=(n^2+r^2U_e(r))V_n$.

 Thus, by \eqref{Estimate of second linearized Euler equation-1} we deduce that
 \begin{align}\label{identity of Euler-1}
&\bar{v}_e^{(1)}r\partial_r\bar{u}_e^{(1)}(\theta,r)+\bar{u}_e^{(1)}\bar{v}_e^{(1)}(\theta,r)\nonumber\\[5pt]
=&(ra'(r)+a(r))\bar{v}_e^{(1)}(\theta,r)+\bar{v}_e^{(1)}(\theta,r)\Big(\sum_{n\neq 0}ine^{in\theta}V_n-r^2U_e(r)\sum_{n\neq 0}\frac{e^{in\theta}}{in}V_n\Big)\nonumber\\
=&(ra'(r)+a(r))\bar{v}_e^{(1)}(\theta,r)+\bar{v}_e^{(1)}(\theta,r)\partial_\theta \bar{v}_e^{(1)}(\theta,r)
-\frac{r^2U_e(r)}{2}\partial_\theta\Big(\sum_{n\neq 0}\frac{e^{in\theta}}{in}V_n\Big)^2,
\end{align}
hence \eqref{necessary condition for the solvability-2out-modi} can be obtained.

Then, we can  follow the line of the construction of $(u_e^{(1)}, v_e^{(1)}, p_e^{(1)})$ to construct $(u_e^{(2)}, v_e^{(2)}, p_e^{(2)})$: eliminating pressure $p_e^{(2)}$ in the equation (\ref{outer-2 order equation}) and obtaining the elliptic equation for $rv_e^{(2)}$:
\begin{align*}
&-u_e(r)\triangle(rv_e^{(2)})+\Big(u''_e(r)+\frac{u'_e(r)}{r}-\frac{u_e(r)}{r^2}\Big)(rv_e^{(2)})\\[5pt]
=&-r\partial_r\big(\bar{u}_e^{(1)}\partial_\theta \bar{u}_e^{(1)}+\bar{v}_e^{(1)}r\partial_r\bar{u}_e^{(1)}+\bar{u}_e^{(1)}\bar{v}_e^{(1)}\big)+\partial_\theta\big(\bar{u}_e^{(1)}\partial_\theta \bar{v}_e^{(1)}+\bar{v}_e^{(1)}r\partial_r \bar{v}_e^{(1)}-(\bar{u}_e^{(1)})^2\big)\\[5pt]
&+r\partial_r\Big(ru''_e(r)+u'_e(r)-\frac{u_e(r)}{r}+rF_u\Big)-\partial_\theta(rF_v).
\end{align*}
 This elliptic equation with boundary condition (\ref{boundary condition for second Euler}) is solvable by noticing (\ref{second ODE for leading Euler flow}) and (\ref{necessary condition for the solvability-2out-modi}). Then $u_e^{(2)}$ can be obtained from the divergence-free condition; finally, construct pressure $p_e^{(2)}$ from the first equation in  (\ref{outer-2 order equation}). Thus, $(u_e^{(2)}, v_e^{(2)},p_e^{(2)})$ satisfies the equation (\ref{outer-2 order equation}) and (\ref{Estimate of second linearized Euler equation}).\\
\end{proof}

\begin{Remark}
In this remark, we show that the solvability of (\ref{outer-2 order equation}) also determine the form of $u_e(r)$. In fact, integrating the first equation of (\ref{outer-2 order equation}) with respect to $\theta$ in $[0,2\pi]$, notice $\int_0^{2\pi}v_e^{(2)}(\theta,r)d\theta=0$ and (\ref{necessary condition for the solvability-2out-modi}), we deduce that
$$u''_e(r)+\frac{u'_e(r)}{r}-\frac{u_e(r)}{r^2}+\bar{F}_u=0,$$
this is the equation in (\ref{second ODE for leading Euler flow}).
\end{Remark}

Let $A_1(r)$ be a smooth function such that
\begin{align}\label{corrector of first order Euler equation}
\left\{
\begin{array}{ll}
rA''_1(r)+A'_1(r)-\frac{A_1(r)}{r}\\[5pt]
\quad \quad \quad =\frac{1}{2\pi}\int_0^{2\pi}\Big(v^{(1)}_e\partial_r(ru^{(2)}_e)+v^{(2)}_e\partial_r(r\bar{u}^{(1)}_e)-\Big(r\partial_{rr}\bar{u}^{(1)}_e
 + \partial_r\bar{u}^{(1)}_e-\frac{\bar{u}^{(1)}_e}{r}\Big)\Big)d\theta,\\[10pt]
 A_1(r_0)=A_1(1)=0,
 \end{array}
 \right.
\end{align}
then $\|\partial_r^kA_1(r)\|_\infty\leq C(k)\eta.$

Set
\begin{align}\label{modify Euler}
\tilde{u}_e^{(1)}=\bar{u}_e^{(1)}+A_1(r),\ \tilde{v}_e^{(1)}=\bar{v}_e^{(1)},\ \tilde{p}_e^{(1)}=\bar{p}_e^{(1)}+\int_{r_0}^r\frac{2u_e(s)}{s}A_1(s)ds,
\end{align}
then $(\tilde{u}_e^{(1)},\tilde{v}_e^{(1)}, \tilde{p}_e^{(1)})$ is also a solution of (\ref{outer-1 order equation}) with the boundary condition (\ref{outer-1 order-bc}), and there holds
 \begin{align}\label{estimate on modified first Euler}
 \|\partial_\theta^j\partial_r^k(\tilde{u}^{(1)}_e,\tilde{v}_e^{(1)})\|_\infty\leq C(j,k)\eta, \ \forall k,j\geq 0.
 \end{align}
Thus, by the same argument as Proposition \ref{solvability of second Euler equation}, we deduce that the following linearized Euler system
\begin{eqnarray}
\left \{
\begin {array}{ll}
u_e(r) \partial_\theta u_e^{(2)}+rv_e^{(2)}u'_e(r)+u_e(r)v_e^{(2)}+\partial_\theta p_e^{(2)}+\tilde{u}_e^{(1)}\partial_\theta \tilde{u}_e^{(1)}\\[5pt]
\quad \quad \quad \quad \quad +\tilde{v}_e^{(1)}r\partial_r\tilde{u}_e^{(1)}+\tilde{u}_e^{(1)}\tilde{v}_e^{(1)}=ru''_e(r)+u'_e(r)-\frac{u_e(r)}{r}+rF_\theta,\\[5pt]
u_e(r) \partial_\theta v_e^{(2)}-2u_e(r)u_e^{(2)}+r\partial_rp_e^{(2)}+\tilde{u}_e^{(1)}\partial_\theta \tilde{v}_e^{(1)}+\tilde{v}_e^{(1)}r\partial_r \tilde{v}_e^{(1)}-(\tilde{u}_e^{(1)})^2=rF_r,\\[7pt]
\partial_\theta u_e^{(2)}+r\partial_rv_e^{(2)}+v_e^{(2)}=0\label{outer-2 order equation-2}
\end{array}
\right.
\end{eqnarray}
 with the boundary conditions
\begin{align}\label{boundary condition for second Euler equation}
 v_e^{(2)}|_{r=1}=-v_p^{(2)}|_{Y=0},\ v_e^{(2)}|_{r=r_0}=-\hat{v}_p^{(2)}|_{Z=0}, \ v_e^{(2)}(\theta,r)=v_e^{(2)}(\theta+2\pi,r),
\end{align}
has a solution $(\bar{u}_e^{(2)}, \bar{v}_e^{(2)}, \bar{p}_e^{(2)})$ which satisfies
\begin{align}\label{Estimate of modified second linearized Euler equation}
\|\partial_\theta^j\partial_r^k(\bar{u}^{(2)}_e,\bar{v}_e^{(2)})\|_\infty\leq C(j,k).
\end{align}

\subsubsection{Linearized Prandtl equations for  $(u_p^{(2)},v_p^{(3)})$, $(\hat{u}_p^{(2)},\hat{v}_p^{(3)})$ and their solvabilities}
\indent

Let
\begin{align*}
&u^\varepsilon(\theta,r)=u_e(r)+u_p^{(0)}(\theta,Y)+\varepsilon\big[\tilde{u}_e^{(1)}(\theta,r)+\tilde{u}_p^{(1)}(\theta,Y)\big]
+\varepsilon^2\big[\bar{u}_e^{(2)}(\theta,r)+u_p^{(2)}(\theta,Y)\big]+\text{h.o.t.},\\[5pt]
&v^\varepsilon(\theta,r)=\varepsilon\big[\tilde{v}_e^{(1)}(\theta,r)+v_p^{(1)}(\theta,Y)\big]
+\varepsilon^2\big[\bar{v}_e^{(2)}(\theta,r)+v_p^{(2)}(\theta,Y)\big]+\varepsilon^3[v_e^{(3)}(\theta,r)+v_p^{(3)}(\theta,Y)]+\text{h.o.t.},\\[5pt]
&p^\varepsilon(\theta,r)=p_e(r)+\varepsilon\big[\tilde{p}_e^{(1)}(\theta,r)+p_p^{(1)}(\theta,Y)\big]
+\varepsilon\big[\bar{p}_e^{(2)}(\theta,r)+p_p^{(2)}(\theta,Y)\big]+\varepsilon^3p_p^{(3)}(\theta,Y)+\text{h.o.t.},
\end{align*}
with the boundary conditions
\begin{align*}
 \bar{u}_e^{(2)}(\theta,1)+u_p^{(2)}(\theta,0)=0,\ v_e^{(3)}(\theta,1)+v_p^{(3)}(\theta,0)=0, \ \lim_{Y\rightarrow -\infty}(\partial_Yu_p^{(2)},v_p^{(3)})(\theta,Y)=(0,0),
\end{align*}
we obtain the following linearized steady Prandtl equations for $(u_p^{(2)},v_p^{(3)})$
\begin{eqnarray}
\left \{
\begin {array}{ll}
\big(u_e(1)+u_p^{(0)}\big)\partial_\theta u_p^{(2)}+\big(v_e^{(1)}(\theta,1)+ v_p^{(1)}\big)\partial_Yu_p^{(2)}+(v_p^{(3)}-v_p^{(3)}(\theta,0))\partial_{Y}u_p^{(0)}\\[5pt]
\quad \quad \quad \quad +(u_p^{(2)}+\bar{u}_e^{(2)}(\theta,1))\partial_\theta u_p^{(0)}-\partial_{YY}u_p^{(2)}=f_2(\theta,Y),\\[5pt]
\partial_\theta u_p^{(2)}+\partial_Yv_p^{(3)}+\partial_Y(Yv_p^{(2)})=0,\\[5pt]
u_p^{(2)}(\theta,Y)=u_p^{(2)}(\theta+2\pi,Y),\ v_p^{(3)}(\theta,Y)=v_p^{(3)}(\theta+2\pi,Y),\\[5pt]
u_p^{(2)}\big|_{Y=0}=-\bar{u}_e^{(2)}\big|_{r=1},\  \lim\limits_{Y\rightarrow -\infty}\partial_Yu_p^{(2)}(\theta,Y)= \lim\limits_{Y\rightarrow -\infty}v_p^{(3)}(\theta,Y)=0
\label{second linearized prandtl problem near 1}
\end{array}
\right.
\end{eqnarray}
here
{\small\begin{align*}
f_2(\theta,Y)=&-\partial_\theta p_p^{(2)}+Y\partial_{YY}u_p^{(1)}+\partial_Yu_p^{(1)}+\partial_{\theta\theta}u_p^{(0)}-u_p^{(0)}
-\tilde{u}_p^{(1)}\partial_\theta u_p^{(1)}-v_p^{(2)}\partial_Y u_p^{(1)}-\sum_{i+j=2}v_p^{(i)}Y\partial_Y u_p^{(j)}\\[5pt]
&-\sum_{k=0}^2\sum_{i+j=2-k, (k,j)\neq (0,2)}\Big(\frac{\partial_r^k\tilde{u}_e^{(i)}(\theta,1)}{k!}Y^k \partial_\theta \tilde{u}_p^{(j)}+\tilde{u}_p^{(j)}\frac{\partial_r^k\partial_\theta \tilde{u}_e^{(i)}(\theta,1)}{k!}Y^k\Big)+u_e^{(2)}(\theta,1)\partial_\theta u_p^{(0)}\\[5pt]
&-\sum_{k=0}^1\sum_{i+j=2-k}\Big(\frac{\partial_r^k\tilde{v}_e^{(i)}(\theta,1)}{k!}Y^{k+1}\partial_Y \tilde{u}_p^{(j)}+v_p^{(i)}\frac{\partial_r^k(r\partial_r \tilde{u}_e^{(j)})(\theta,1)}{k!}Y^k\Big)\\[5pt]
&-\sum_{k=0}^2\sum_{i+j=3-k, (k,j)\neq (0,2),(0,0)}\frac{\partial_r^k\tilde{v}_e^{(i)}(\theta,1)}{k!}Y^k \partial_Y \tilde{u}_p^{(j)},
\end{align*}}
where $\tilde{u}_p^{(0)}=u_p^{(0)}, \ \tilde{u}_e^{(0)}=u_e(r),\ \tilde{u}_e^{(2)}=\bar{u}_e^{(2)}, \ \tilde{v}_e^{(2)}=\bar{v}_e^{(2)}.$\\

Similarly, let
\begin{align*}
&u^\varepsilon(\theta,r)=u_e(r)+\widehat{u}_p^{(0)}(\theta,Z)+\varepsilon\big[\tilde{u}_e^{(1)}(\theta,r)+(\widehat{u}_p^{(1)}(\theta,Z)-\hat{A}_1)\big]
+\varepsilon^2\big[\bar{u}_e^{(2)}(\theta,r)+\widehat{u}_p^{(2)}(\theta,Z)\big]+\text{h.o.t.},\\[5pt]
&v^\varepsilon(\theta,r)=\varepsilon\big[\tilde{v}_e^{(1)}(\theta,r)+\widehat{v}_p^{(1)}(\theta,Z)\big]+
\varepsilon^2\big[\bar{v}_e^{(2)}(\theta,r)+\widehat{v}_p^{(2)}(\theta,Z)\big]+\varepsilon^3[v_e^{(3)}(\theta,r)+\widehat{v}_p^{(3)}(\theta,Z)]+\text{h.o.t.},\\[5pt]
&p^\varepsilon(\theta,r)=p_e(r)+\varepsilon\big[\tilde{p}_e^{(1)}(\theta,r)+\widehat{p}_p^{(1)}(\theta,Z)\big]+
\varepsilon^2\big[\tilde{p}_e^{(2)}(\theta,r)+\widehat{p}_p^{(2)}(\theta,Z)\big]+\varepsilon\widehat{p}_p^{(3)}(\theta,Z)+\text{h.o.t.},
\end{align*}
with the following boundary conditions
\begin{align*}
 \bar{u}_e^{(2)}(\theta,r_0)+\hat{u}_p^{(2)}(\theta,0)=0, \ v_e^{(3)}(\theta,r_0)+\widehat{v}_p^{(3)}(\theta,0)=0,\ \lim_{Z\rightarrow +\infty}(\partial_Z\hat{u}_p^{(2)},\hat{v}_p^{(3)})(\theta,Z)=(0,0),
\end{align*}
we obtain the following linearized Prandlt equations for $(\hat{u}_p^{(2)},\hat{v}_p^{(3)})$
\begin{eqnarray}
\left \{
\begin {array}{ll}
\big(u_e(r_0)+\hat{u}_p^{(0)}\big)\partial_\theta \hat{u}_p^{(2)}+\big(v_e^{(1)}(\theta,r_0)+ \hat{v}_p^{(1)}\big)r_0\partial_Z\hat{u}_p^{(2)}+(\hat{u}_p^{(2)}+\bar{u}_e^{(2)}(\theta,r_0))\partial_\theta \hat{u}_p^{(0)}\\[5pt]
\quad \quad \quad \quad \quad \quad \quad \quad \quad +(\hat{v}_p^{(3)}-\hat{v}_p^{(3)}(\theta,0))r_0\partial_Z\hat{u}_p^{(0)}-r_0\partial_{ZZ}\hat{u}_p^{(2)}=\hat{f}_2(\theta,Z),\\[5pt]
\partial_\theta \hat{u}_p^{(2)}+r_0\partial_Z\hat{v}_p^{(3)}+\partial_Z(Z\hat{v}_p^{(2)})=0,\\[5pt]
\hat{u}_p^{(2)}(\theta,Z)=\hat{u}_p^{(2)}(\theta+2\pi,Z),\ \hat{v}_p^{(3)}(\theta,Z)=\hat{v}_p^{(3)}(\theta+2\pi,Z),\\[5pt]
\hat{u}_p^{(2)}\big|_{Z=0}=-u_e^{(2)}\big|_{r=r_0},\  \lim\limits_{Z\rightarrow +\infty}(\partial_Z\hat{u}_p^{(2)},\hat{v}_p^{(3)}(\theta,Z))=(0,0).
\label{second linearized prandtl problem another boundary}
\end{array}
\right.
\end{eqnarray}
The expressions of functions $\hat{f}_2(\theta,Z)$ are similar to $f_2(\theta,Y)$, we omit the details here.

\begin{proposition}\label{decay estimates of higher order linearized Prandtl} There exists $\eta_0>0$ such that for any $\eta\in(0,\eta_0)$, the equations (\ref{second linearized prandtl problem near 1}) have a unique solution $(u_p^{(2)},v_p^{(3)})$  and the equations (\ref{second linearized prandtl problem another boundary}) have a unique solution $(\hat{u}_p^{(2)},\hat{v}_p^{(3)})$ which satisfies
\begin{align}\label{decay behavior-prandtl-2}
\begin{aligned}
&\sum_{j+k\leq m}\int_{-\infty}^0\int_0^{2\pi}\Big|\partial_\theta^j\partial_Y^k (u_p^{(2)}-A_{2\infty},v_p^{(3)} )\Big|^2\big<Y\big>^{2l}d\theta dY\leq C(m,l),\ \forall m,l\geq 0, \\
&\sum_{j+k\leq m}\int_0^{+\infty}\int_0^{2\pi}\Big|\partial_\theta^j\partial_Z^k (\hat{u}_p^{(2)}-\hat{A}_{2\infty},\hat{v}_p^{(3)})\Big|^2\big<Z\big>^{2l}d\theta dZ\leq C(m,l),\ \forall m,l\geq 0,\\
& \int_0^{2\pi}v_p^{(3)}(\theta, Y)d\theta=\int_0^{2\pi}\hat{v}_p^{(3)}(\theta, Z)d\theta=0, \ \forall \ Y\leq 0,\ Z\geq 0
\end{aligned}
\end{align}
where
\begin{align*}
A_{2\infty}:=\lim_{Y\rightarrow -\infty}u_p^{(2)}(\theta,Y),\ \hat{A}_{2\infty}:=\lim_{Z\rightarrow +\infty}\hat{u}_p^{(2)}(\theta,Z).
\end{align*}
and satisfies $|A_{2\infty}|+|\hat{A}_{2\infty}|\leq C$.
\end{proposition}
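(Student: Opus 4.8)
The plan is to follow, almost verbatim, the strategy already carried out for the first-order linearized Prandtl system in Proposition \ref{decay estimates of linearized Prandtl}, since \eqref{second linearized prandtl problem near 1} has exactly the same principal transport--diffusion structure, namely $\big(u_e(1)+u_p^{(0)}\big)\partial_\theta+\big(v_e^{(1)}(\theta,1)+v_p^{(1)}\big)\partial_Y-\partial_{YY}$, and differs only through a more complicated forcing $f_2$ and a different boundary datum $-\bar{u}_e^{(2)}\big|_{r=1}$. First I would homogenize the problem with the same cut-off $\chi(Y)\in C_c^\infty((-\infty,0])$ satisfying $\chi(0)=1$ and $\int\chi=0$: setting $u:=u_p^{(2)}+\bar{u}_e^{(2)}(\theta,1)\chi(Y)$ and the analogous correction for $v_p^{(3)}$ built from $Yv_p^{(2)}$ and $\partial_\theta\bar{u}_e^{(2)}(\theta,1)$, the boundary conditions become homogeneous and the divergence relation $\partial_\theta u_p^{(2)}+\partial_Y v_p^{(3)}+\partial_Y(Yv_p^{(2)})=0$ collapses to $\partial_\theta u+\partial_Y v=0$. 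The system then reduces to the canonical form
\begin{align*}
\bar u\,\partial_\theta u+\bar v\,\partial_Y u+u\,\partial_\theta\bar u+v\,\partial_Y\bar u-\partial_{YY}u=\tilde f_2,\qquad \partial_\theta u+\partial_Y v=0,
\end{align*}
with $\bar u=u_e(1)+u_p^{(0)}$, $\bar v=v_e^{(1)}(\theta,1)+v_p^{(1)}$, and $\tilde f_2$ gathering $f_2$ together with all correction terms (the $O(\eta^2)$ contribution $\bar u_e^{(2)}(\theta,1)\partial_\theta u_p^{(0)}$ being absorbed into the forcing).

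The core engine is then unchanged. I would solve the $\gamma$-regularized elliptic problem obtained by adding $-\gamma\partial_{\theta\theta}u^\gamma$, run the basic energy estimate (testing against $u^\gamma$) to obtain $\|\partial_Y u^\gamma\|_2^2+\gamma\|\partial_\theta u^\gamma\|_2^2\lesssim \eta\|\partial_\theta u^\gamma\|_2^2+\|Y\tilde f_2\|_2^2$, and then the positivity estimate (testing against $\partial_\theta u^\gamma$), which exploits $\bar u\ge \alpha-C\eta>0$ to gain $\alpha\|\partial_\theta u^\gamma\|_2^2\lesssim \eta\|\partial_Y u^\gamma\|_2^2+\|\tilde f_2\|_2^2$; combining the two for small $\eta$ closes the $\dot{H}^1_0$ bound uniformly in $\gamma$, and the second-derivative bound is read off the equation. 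Passing to the limit $\gamma\to0$ yields $(u,v)$, and uniqueness follows by applying the same two estimates to the difference of two solutions. For the weighted decay I would again switch to the von Mises variable $\psi=\int_0^Y\bar u\,dz$, recast the equation as $\partial_\theta\tilde u-\partial_\psi(a\partial_\psi\tilde u)+b\tilde u+c\tilde v=F$ with $a=\bar u$, $b=\partial_\theta\bar u/\bar u$, $c=\partial_Y\bar u/\bar u$, and establish \eqref{weight estimate in new variable} by induction on the weight exponent $l$ and derivative order $m$, using the fast decay of $b,c$. The existence of $A_{2\infty}=\lim_{Y\to-\infty}u_p^{(2)}$ with $|A_{2\infty}|\le C(\delta)\eta$, and the zero-average $\int_0^{2\pi}v_p^{(3)}\,d\theta=0$, then follow from the Hardy inequality and the divergence structure exactly as for $A_{1\infty}$. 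The analysis of \eqref{second linearized prandtl problem another boundary} near $r=r_0$ is identical up to the factor $r_0$ and the orientation $Z\ge0$.

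The one genuinely new piece of work, and the main obstacle, is verifying the forcing bound $\sum_{j+k\le m}\|\langle Y\rangle^{l}\partial_\theta^j\partial_Y^k\tilde f_2\|_2\le C(\delta,m,l)\eta$ for all $m,l$. Unlike the first-order forcing $f_1$, the term $f_2$ is a long sum of products of the previously constructed Prandtl profiles $u_p^{(0)},v_p^{(1)},\tilde u_p^{(1)},v_p^{(2)},p_p^{(1)},p_p^{(2)}$ and the Taylor-expanded Euler profiles $\tilde u_e^{(i)},\tilde v_e^{(i)},\bar u_e^{(2)},\bar v_e^{(2)}$, several of which carry explicit polynomial factors $Y^k$. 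The key point to check is that every polynomially-growing factor multiplies a Prandtl profile (or a $Y$-derivative of one) that decays faster than any polynomial by \eqref{decay behavior-prandtl}, \eqref{decay behavior-prandtl-1} and their higher-order analogues, so that each product is controlled in every weighted $L^2$ norm, while the Euler factors contribute only bounded coefficients of size $O(\eta)$ through \eqref{estimate on modified first Euler} and \eqref{Estimate of modified second linearized Euler equation}. Once these forcing bounds are secured, the scheme above runs without modification and delivers \eqref{decay behavior-prandtl-2}. A secondary bookkeeping issue is confirming that subtracting the constants $A_{2\infty},\hat A_{2\infty}$ before constructing the next pressures and correctors preserves all matching and divergence-free conditions, precisely as the subtraction of $A_{1\infty}$ was handled at first order.
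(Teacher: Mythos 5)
Your proposal is correct and is essentially the paper's own argument: the paper proves this proposition simply by declaring the proof to be the same as that of Proposition \ref{decay estimates of linearized Prandtl} once one notes that $f_2(\theta,Y)$ decays very fast as $Y\rightarrow-\infty$, which is precisely the scheme (homogenization by a cut-off, $\gamma$-regularized problem, energy and positivity estimates, von Mises weighted estimates, Hardy inequality for $A_{2\infty}$, and the divergence relation for the zero average of $v_p^{(3)}$) that you spell out. Your identification of the weighted forcing bounds on $f_2$ and the absorption of the known term $\bar{u}_e^{(2)}(\theta,1)\partial_\theta u_p^{(0)}$ into the forcing as the only genuinely new steps matches exactly what the paper leaves implicit.
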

The proof is same with Proposition \ref{decay estimates of linearized Prandtl} by noticing that $f_2(\theta,Y)$  decays very fast as $Y\rightarrow -\infty$, we omit the details.

We construct the pressure $p_p^{(3)}(\theta,Y)$ by considering the equation
\begin{align}\label{equation of second pressure}
\partial_Yp_p^{(3)}(\theta, Y)=g_2(\theta,Y), \ \lim_{Y\rightarrow -\infty}p_p^{(3)}(\theta,Y)=0,
\end{align}
where
{\small\begin{align*}
g_2(\theta,Y)=&\partial_{YY}v_p^{(2)}+Y\partial_{YY}v_p^{(1)}+\partial_Yv_p^{(1)}-2\partial_\theta u_p^{(0)}-Y\partial_Yp_p^{(2)}-\sum_{i+j=3} v_p^{(i)}\partial_Yv_p^{(j)}\\[5pt]
&-\sum_{i+j=2}\Big(\tilde{u}_p^{(i)}\partial_\theta v_p^{(j)}+v_p^{(i)}Y\partial_Yv_p^{(j)}-\tilde{u}_p^{(i)}\tilde{u}_p^{(j)}+\tilde{v}_e^{(i)}(\theta,1)Y\partial_Yv_p^{(j)}
+v_p^{(i)}\partial_r\tilde{v}_e^{(j)}(\theta,1)\Big)\\[5pt]
&-\sum_{k=0}^1\sum_{i+j=2-k}\Big(\frac{\partial_r^k\tilde{u}_e^{(i)}(\theta,1)}{k!}Y^k\partial_\theta v_p^{(j)}+\frac{\partial_r^k\partial_\theta \tilde{v}_e^{(j)}(\theta,1)}{k!}Y^k\partial_\theta \tilde{u}_p^{(i)}\Big)\\[5pt]
&-\sum_{k=0}^2\sum_{i+j=2-k}\Big(\frac{\partial_r^k\tilde{u}_e^{(i)}(\theta,1)}{k!}Y^k\tilde{u}_p^{(j)}+\frac{\partial_r^k \tilde{u}_e^{(j)}(\theta,1)}{k!}Y^k \tilde{u}_p^{(i)}\Big),
\end{align*}}
where $\tilde{u}_p^{(0)}=u_p^{(0)},  \ \tilde{u}_e^{(0)}=u_e(r),\ \tilde{u}_e^{(2)}=\bar{u}_e^{(2)}+A_{2\infty}, \ \tilde{v}_e^{(2)}=\bar{v}_e^{(2)}$, and here and below $\tilde{u}_p^{(2)}=u_p^{(2)}-A_{2\infty}$.  $g_2(\theta,Y)$ can be derived by the same argument as $g_1(\theta,Y)$.
Moreover, notice that $g_2(\theta,Y)$ decay fast as $Y\rightarrow -\infty$, we can obtain $p_p^{(3)}$ by solving (\ref{equation of second pressure}) and $p_p^{(3)}$ also decay fast as $Y\rightarrow -\infty$.

Similarly, we can construct $\hat{p}^{(3)}_p(\theta,Z)$ and $\hat{p}^{(3)}_p(\theta,Z)$ which also decay fast as $Z\rightarrow +\infty$.

\subsubsection{Linearized Euler system for $(u_e^{(3)},v_e^{(3)},p_e^{(3)})$ and its solvability}
\indent

Let
\begin{align*}
\tilde{u}_e^{(2)}(\theta,r):&=\bar{u}_e^{(2)}(\theta,r)+\chi(r)A_{2\infty}+(1-\chi(r))\hat{A}_{2\infty}, \ \tilde{v}_e^{(2)}(\theta,r)=\bar{v}_e^{(2)}(\theta,r),\\[5pt]
\tilde{p}_e^{(2)}(\theta,r):&=\bar{p}_e^{(2)}(\theta,r)+\int_{r_0}^r\frac{2u_e(s)}{s}[\chi(s)A_{2\infty}+(1-\chi(s))\hat{A}_{2\infty}]ds,
\end{align*}
then $(\tilde{u}_e^{(2)},\tilde{v}_e^{(2)},\tilde{p}_e^{(2)})$ also satisfies the linearized Euler equations (\ref{outer-2 order equation-2}) with the boundary conditions (\ref{boundary condition for second Euler equation}), and there holds
 \begin{align}\label{estimate on modified second Euler}
 \|\partial_\theta^j\partial_r^k(\tilde{u}^{(2)}_e,\tilde{v}_e^{(2)})\|_\infty\leq C(j,k), \ \forall k,j\geq 0.
 \end{align}
Putting
\begin{align*}
&u^{\varepsilon}(\theta,r)=u_e(r)+\sum_{i=1}^3\varepsilon^{i} \tilde{u}_e^{(i)}(\theta,r)+\text{h.o.t.},\\[5pt]  &v^{\varepsilon}(\theta,r)=\sum_{i=1}^3\varepsilon^{i} \tilde{v}_e^{(i)}(\theta,r)+\text{h.o.t.}, \\[5pt]
&p^{\varepsilon}(\theta,r)=p_e(r)+\sum_{i=1}^3\varepsilon^{i} \tilde{p}_e^{(i)}(\theta,r)+\text{h.o.t.},
\end{align*}
into the Navier-Stokes equations (\ref{NS-curvilnear}), we find that $(\tilde{u}_e^{(3)},\tilde{v}_e^{(3)},\tilde{p}_e^{(3)})$ satisfies the following linearized Euler equations in $\Omega$
\begin{eqnarray}
\left \{
\begin {array}{ll}
u_e(r) \partial_\theta \tilde{u}_e^{(3)}+r\tilde{v}_e^{(3)}u'_e(r)+u_e(r)\tilde{v}_e^{(3)}+\partial_\theta \tilde{p}_e^{(3)}+f_e(\theta,r)=0,\\[5pt]
u_e(r) \partial_\theta \tilde{v}_e^{(3)}-2u_e(r)\tilde{u}_e^{(3)}+r\partial_r\tilde{p}_e^{(3)}+g_e(\theta,r)=0,\\[7pt]
\partial_\theta \tilde{u}_e^{(3)}+\partial_r(r\tilde{v}_e^{(3)})=0,\label{outer-3 order equation}
\end{array}
\right.
\end{eqnarray}
equipped with the following boundary conditions
\begin{align}
 \tilde{v}_e^{(3)}|_{r=1}=-v_p^{(3)}|_{Y=0},\ \tilde{v}_e^{(3)}|_{r=r_0}=-\hat{v}_p^{(3)}|_{Z=0}, \ \tilde{v}_e^{(3)}(\theta,r)=\tilde{v}_e^{(3)}(\theta+2\pi,r),\nonumber
\end{align}
where
\begin{align*}
f_e(\theta,r)=&\tilde{u}_e^{(1)}\partial_\theta \tilde{u}_e^{(2)}+\tilde{u}_e^{(2)}\partial_\theta \tilde{u}_e^{(1)}+\tilde{v}_e^{(1)}r\partial_r\tilde{u}_e^{(2)}+\tilde{v}_e^{(2)}r\partial_r\tilde{u}_e^{(1)}
+\tilde{u}_e^{(1)}\tilde{v}_e^{(2)}+\tilde{u}_e^{(2)}\tilde{v}_e^{(1)}\\
&-\Big(\frac{\partial_{\theta\theta}\tilde{u}_e^{(1)}}{r}+r\partial_{rr}\tilde{u}_e^{(1)}+\partial_r\tilde{u}_e^{(1)}
-\frac{2}{r}\partial_\theta \tilde{v}_e^{(1)}-\frac{\tilde{u}_e^{(1)}}{r}\Big),\\[5pt]
g_e(\theta,r)=&\tilde{u}_e^{(1)}\partial_\theta \tilde{v}_e^{(2)}+\tilde{u}_e^{(2)}\partial_\theta \tilde{v}_e^{(1)}+\tilde{v}_e^{(1)}r\partial_r \tilde{v}_e^{(2)}+\tilde{v}_e^{(2)}r\partial_r \tilde{v}_e^{(1)}-2\tilde{u}_e^{(1)}\tilde{u}_e^{(2)}\\[5pt]
&-\Big(\frac{\partial_{\theta\theta}\tilde{v}_e^{(1)}}{r}+r\partial_{rr}\tilde{v}_e^{(1)}+\partial_r\tilde{v}_e^{(1)}
-\frac{2}{r}\partial_\theta \tilde{u}_e^{(1)}+\frac{\tilde{v}_e^{(1)}}{r}\Big).
\end{align*}

\begin{proposition}\label{Estimate of third linearized Euler equation}
The linearized Euler equations  (\ref{outer-3 order equation}) have a solution $(\tilde{u}_e^{(3)}, \tilde{v}_e^{(3)}, \tilde{p}_e^{(3)})$ which satisfies
\begin{align}\label{Estimate of third linearized Euler equation}
\|\partial^k_\theta\partial^j_r(\tilde{u}_e^{(3)},\tilde{v}_e^{(3)})\|_2\leq C(j,k),  \ \forall j,k\geq 0.
\end{align}
\end{proposition}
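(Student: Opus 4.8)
The plan is to solve (\ref{outer-3 order equation}) by exactly the scheme used for the second order system in Proposition \ref{solvability of second Euler equation}. First I would eliminate the pressure $\tilde{p}_e^{(3)}$ from the first two equations, which (using the divergence free condition) yields a single scalar elliptic equation for $r\tilde{v}_e^{(3)}$ of the form
\begin{align*}
-u_e(r)\triangle(r\tilde{v}_e^{(3)})+\Big(u_e''(r)+\frac{u_e'(r)}{r}-\frac{u_e(r)}{r^2}\Big)(r\tilde{v}_e^{(3)})=r\partial_r f_e-\partial_\theta g_e,
\end{align*}
with boundary data $r\tilde{v}_e^{(3)}|_{r=1}=-v_p^{(3)}|_{Y=0}$ and $r\tilde{v}_e^{(3)}|_{r=r_0}=-r_0\hat{v}_p^{(3)}|_{Z=0}$. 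Dividing by $u_e$ and testing against $r\tilde{v}_e^{(3)}/u_e$ exactly as in the treatment of (\ref{Euler equation normal}), the Poincar\'e inequality (available because $\int_0^{2\pi}\tilde{v}_e^{(3)}\,d\theta=0$, which follows from the zero-average of $v_p^{(3)},\hat{v}_p^{(3)}$ in (\ref{decay behavior-prandtl-2})) gives uniqueness and hence solvability. Then $\tilde{u}_e^{(3)}$ is recovered from $\partial_\theta\tilde{u}_e^{(3)}=-\partial_r(r\tilde{v}_e^{(3)})$, and $\tilde{p}_e^{(3)}$ is reconstructed from the first equation as for $p_e^{(1)}$.

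The one genuinely new point is to verify the compatibility condition (\ref{necessary condition for the solvability-3out}), namely $\frac{d}{dr}\int_0^{2\pi}f_e(\theta,r)\,d\theta=0$, which is what makes the zeroth Fourier mode of the elliptic equation consistent with $\int_0^{2\pi}\tilde{v}_e^{(3)}\,d\theta=0$. I would follow the verification of (\ref{necessary condition for the solvability-2out-modi}). Integrating $f_e$ in $\theta$, the symmetric convective pair $\tilde{u}_e^{(1)}\partial_\theta\tilde{u}_e^{(2)}+\tilde{u}_e^{(2)}\partial_\theta\tilde{u}_e^{(1)}=\partial_\theta(\tilde{u}_e^{(1)}\tilde{u}_e^{(2)})$ and the pure $\theta$-derivatives $\partial_{\theta\theta}\tilde{u}_e^{(1)}/r$ and $\partial_\theta\tilde{v}_e^{(1)}/r$ drop out, and after the regrouping $\tilde{v}_e^{(1)}r\partial_r\tilde{u}_e^{(2)}+\tilde{u}_e^{(2)}\tilde{v}_e^{(1)}=\tilde{v}_e^{(1)}\partial_r(r\tilde{u}_e^{(2)})$ (and symmetrically for the $(2)\leftrightarrow(1)$ pair) one is left with
\begin{align*}
\int_0^{2\pi}f_e\,d\theta=\int_0^{2\pi}\Big(\tilde{v}_e^{(1)}\partial_r(r\tilde{u}_e^{(2)})+\tilde{v}_e^{(2)}\partial_r(r\tilde{u}_e^{(1)})-r\partial_{rr}\tilde{u}_e^{(1)}-\partial_r\tilde{u}_e^{(1)}+\frac{\tilde{u}_e^{(1)}}{r}\Big)d\theta.
\end{align*}
The corrector $A_1(r)$ in (\ref{corrector of first order Euler equation}) was introduced precisely so that the radial operator $r\partial_{rr}+\partial_r-\cdot/r$ applied to it absorbs the zeroth-mode of the diffusion and convective contributions; together with the fact that $\tilde{v}_e^{(1)},\tilde{v}_e^{(2)}$ have zero $\theta$-average (so their products with radial functions integrate to zero), this reduces matters to the genuinely $\theta$-dependent cross terms.

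To close the compatibility I would expand $\tilde{v}_e^{(1)}=\sum_{n\neq 0}V_n^{(1)}(r)e^{in\theta}$ and $\tilde{v}_e^{(2)}=\sum_{n\neq 0}V_n^{(2)}(r)e^{in\theta}$, express $\tilde{u}_e^{(1)},\tilde{u}_e^{(2)}$ through the divergence-free relation as in Proposition \ref{solvability of second Euler equation}, and substitute the elliptic equations satisfied by $r\tilde{v}_e^{(1)}$ and $r\tilde{v}_e^{(2)}$ to rewrite each cross term as a perfect $\theta$-derivative plus a radial multiple of a zero-average function, in the spirit of (\ref{identity of Euler-1}). \textbf{The main obstacle is this last computation}: unlike the second order case, $r\tilde{v}_e^{(2)}$ solves an \emph{inhomogeneous} elliptic equation, so its substitution produces additional forcing terms, and one must check that the extra radial contributions are exactly those cancelled by $A_1$ and by the diffusion term. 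Keeping track of the several correctors (the function $A_1$, and the constants $A_{2\infty},\hat{A}_{2\infty}$ spliced in through the cutoff $\chi$) without losing the exact cancellation is the delicate part. Once (\ref{necessary condition for the solvability-3out}) is established, the elliptic solve of the first paragraph produces $(\tilde{u}_e^{(3)},\tilde{v}_e^{(3)},\tilde{p}_e^{(3)})$, and the estimate (\ref{Estimate of third linearized Euler equation}) follows from the standard elliptic estimates together with the bounds (\ref{estimate on modified first Euler}), (\ref{Estimate of modified second linearized Euler equation}) and (\ref{estimate on modified second Euler}) on the lower order profiles, which guarantee that $f_e$, $g_e$ and all their derivatives are $O(\eta)$ in the relevant norms.
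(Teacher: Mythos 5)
Your overall scheme agrees with the paper's: eliminate $\tilde{p}_e^{(3)}$ to get a scalar elliptic equation for $r\tilde{v}_e^{(3)}$, solve it by the uniqueness/Poincar\'e argument of Proposition \ref{solvability of second Euler equation}, recover $\tilde{u}_e^{(3)}$ from the divergence-free condition and $\tilde{p}_e^{(3)}$ from the first equation, and you correctly identify that the entire content of the proposition is the compatibility condition (\ref{necessary condition for the solvability-3out}), i.e.\ the identity (\ref{identity goal}). But that identity is precisely the step you do not prove. Your proposal sketches a strategy (Fourier expansion, substitution of the elliptic equations ``in the spirit of (\ref{identity of Euler-1})'') and then declares the resulting computation to be ``the main obstacle'' and ``the delicate part''; naming the obstacle is not overcoming it. Moreover, the direct route you sketch is genuinely harder than the second-order case for the reason you yourself point out: $r\tilde{v}_e^{(2)}$ solves an \emph{inhomogeneous} equation, so substituting it produces forcing contributions that are not perfect $\theta$-derivatives term by term, and you supply no mechanism by which these cancel against the $A_1$-terms and the diffusion term. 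This is a genuine gap, not a routine verification.

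The paper closes this gap with a device that never manipulates the inhomogeneous forcing directly: it works with the \emph{difference} of the two second-order solutions. By the definition (\ref{corrector of first order Euler equation}) of $A_1$, the identity (\ref{identity goal}) is equivalent to showing that $\int_0^{2\pi}\big(v^{(1)}_e\partial_r(ru^{(2)}_e)+v^{(2)}_e\partial_r(r\bar{u}^{(1)}_e)\big)d\theta$ is unchanged when the barred/unmodified quantities are replaced by the tilded ones. Since $(\bar{u}_e^{(1)},v_e^{(1)})$ and $(\tilde{u}_e^{(1)},\tilde{v}_e^{(1)})$ both solve the \emph{homogeneous} system (\ref{outer-1 order equation}) with the same normal component $\tilde{v}_e^{(1)}=v_e^{(1)}$, the argument behind (\ref{identity of Euler-1}) applies to each pair and yields (\ref{first identity}). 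Next, subtracting the curl-eliminated equations coming from (\ref{outer-2 order equation}) and (\ref{outer-2 order equation-2}), the terms quadratic in the first-order profiles cancel exactly (because $\tilde{v}_e^{(1)}=v_e^{(1)}$ and $\tilde{u}_e^{(1)}-\bar{u}_e^{(1)}=A_1(r)$ is radial), leaving for $rv_e^{(2)}-r\tilde{v}_e^{(2)}$ an equation whose right-hand side is $v_e^{(1)}$ times a radial function built from $A_1$; the same argument applied to this difference gives (\ref{second identity}). Finally, adding (\ref{first identity}) and (\ref{second identity}) turns the total discrepancy into $\int_0^{2\pi}\partial_\theta\big[v_e^{(1)}(v_e^{(2)}-\tilde{v}_e^{(2)})\big]d\theta=0$. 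This cancellation-by-differencing is exactly the missing idea in your proposal; without it (or an equally explicit substitute for the computation you flag), the proof is incomplete.
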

\begin{proof}
We only need to check $\int_0^{2\pi}f_e(\theta,r)d\theta=0$, that is,
\begin{align}\label{identity goal}
\int_0^{2\pi}\Big(\tilde{v}^{(1)}_e\partial_r(r\tilde{u}^{(2)}_e)+\tilde{v}^{(2)}_e\partial_r(r\tilde{u}^{(1)}_e)
-\Big(r\partial_{rr}\tilde{u}^{(1)}_e
 + \partial_r\tilde{u}^{(1)}_e-\frac{\tilde{u}^{(1)}_e}{r}\Big)\Big)d\theta=0, \ \forall r\in [r_0,1].
\end{align}
By (\ref{corrector of first order Euler equation}), we only need to prove
\begin{align*}
&\int_0^{2\pi}\Big(v^{(1)}_e\partial_r(ru^{(2)}_e)+v^{(2)}_e\partial_r(r\bar{u}^{(1)}_e)\Big)(\theta,r)d\theta\\
=&\int_0^{2\pi}\Big(\tilde{v}^{(1)}_e\partial_r(r\tilde{u}^{(2)}_e)+\tilde{v}^{(2)}_e\partial_r(r\tilde{u}^{(1)}_e)\Big)(\theta,r)d\theta, \ \forall r\in [r_0,1].
\end{align*}
Notice that $v_e^{(1)}=\tilde{v}_e^{(1)}$, $\int_0^{2\pi}v^{(2)}_e d\theta=\int_0^{2\pi}\tilde{v}^{(2)}_e d\theta=0$ and $\tilde{u}^{(1)}_e=\bar{u}^{(1)}_e+A_1(r)$, thus we only need to prove
\begin{align*}
\int_0^{2\pi}\Big(v^{(1)}_e\partial_r(ru^{(2)}_e-r\tilde{u}^{(2)}_e)+(v^{(2)}_e-\tilde{v}^{(2)}_e)\partial_r(r\bar{u}^{(1)}_e)\Big)(\theta,r)d\theta
=0, \ \forall r\in [r_0,1].
\end{align*}
For simplicity, we write $[v^{(1)}, u^{(1)}]=[v^{(1)}_e, \bar{u}^{(1)}_e]$, and $[v^{(2)}, u^{(2)}]=[v^{(2)}_e-\tilde{v}^{(2)}_e, u^{(2)}_e-\tilde{u}^{(2)}_e]$. Notice that $v^{(1)}$ satisfies equations (\ref{Euler equation normal}), i.e.
\begin{align*}
-r\Delta(rv^{(1)})+Brv^{(1)}=0,
\end{align*}
where $B=\frac{r}{u_e(r)}\Big(u''_e(r)+\frac{u'_e(r)}{r}-\frac{u_e(r)}{r^2}\Big)$ is a radial function. By equations (\ref{outer-2 order equation}) and (\ref{outer-2 order equation-2}), $v^{(2)}$ satisfies the following equation:
\begin{align*}
-r\Delta(rv^{(2)})+Brv^{(2)}=f^{(2)},
\end{align*}
where
\begin{align*}
f^{(2)}=&\frac{r}{u_e}\Big[r^2v^{(1)}\Delta A_1-r A_1\Delta(rv^{(1)})-\frac{A_1}{r}v^{(1)}\Big]\\
=&\frac{r}{u_e}\Big[r^2v^{(1)}\Delta A_1-rA_1B v^{(1)}-\frac{A_1}{r}v^{(1)}\Big].
\end{align*}
We set $I[h]=\sum\limits_{n\neq 0}\frac{e^{in\theta}}{in}h_n(r)$ for $h$ satisfying $\int_{0}^{2\pi}hd\theta=0$. Since $u^{(2)}_\theta=-(rv^{(2)})_r$, there holds
\begin{align*}
\Big(ru^{(2)}-\int_0^{2\pi}ru^{(2)}d\theta\Big)_r&=-I\Big[\big(r(rv^{(2)})_r\big)_r\Big]\\
           &=-I\Big[r\Delta(rv^{(2)})-\frac{v^{(2)}_{\theta\theta}}{r}\Big]
           =I[f^{(2)}]+\frac{v^{(2)}_\theta}{r}-rBI[v^{(2)}].
\end{align*}
Similarly,
\begin{align*}
\Big(ru^{(1)}-\int_0^{2\pi}ru^{(1)}d\theta\Big)_r=\frac{v^{(1)}_\theta}{r}-rBI[v^{(1)}].
\end{align*}
Thus we have
\begin{align*}
&\int_0^{2\pi}\Big(v^{(1)}_e\partial_r(ru^{(2)}_e-r\tilde{u}^{(2)}_e)+(v^{(2)}_e-\tilde{v}^{(2)}_e)\partial_r(r\bar{u}^{(1)}_e)\Big)(\theta,r)d\theta\\
=&\int_0^{2\pi}\Big(v^{(1)}\partial_r(ru^{(2)})-v^{(2)}\partial_r(r u^{(1)})\Big)d\theta\\
=&\int_0^{2\pi}\Big(v^{(1)}I[f^{(2)}]+\frac{v^{(1)}v^{(2)}_\theta+v^{(2)}v^{(1)}_\theta}{r}-rBv^{(1)}I[v^{(2)}]-rBv^{(2)}I[v^{(1)}]\Big)d\theta\\
=&\int_0^{2\pi}v^{(1)}I[f^{(2)}]d\theta.
\end{align*}
Moreover, notice that $f^{(2)}$ is a radial function times $v^{(1)}$, we deduce that
\begin{align*}
\int_0^{2\pi}v^{(1)}I[f^{(2)}]d\theta=0,
\end{align*}
this complete the proof.

\end{proof}

\subsubsection{Linearized Prandtl equations for $(u_p^{(3)},v_p^{(4)})$, $(\hat{u}_p^{(3)},\hat{v}_p^{(4)})$ and their solvabilities}

\indent

Let
\begin{align*}
&u^\varepsilon(\theta,r)=u_e(r)+u_p^{(0)}(\theta,Y)+\sum_{i=1}^2\varepsilon^i\big[\tilde{u}_e^{(i)}(\theta,r)+\tilde{u}_p^{(i)}(\theta,Y)\big]
+\varepsilon^3[\tilde{u}_e^{(3)}(\theta,r)+ u_p^{(3)}(\theta,Y)\big]+\text{h.o.t.},\\[5pt]
&v^\varepsilon(\theta,r)=\sum_{i=1}^3\varepsilon^i\big[\tilde{v}_e^{(i)}(\theta,r)+v_p^{(i)}(\theta,Y)\big]
+\varepsilon^4v_p^{(4)}(\theta,Z)+\text{h.o.t.},\\[5pt]
&p^\varepsilon(\theta,r)=p_e(r)+\sum_{i=1}^3\varepsilon^i\big[\tilde{p}_e^{(i)}(\theta,r)+p_p^{(i)}(\theta,Y)\big]
+\varepsilon^4p_p^{(4)}(\theta,Y)+\text{h.o.t.},
\end{align*}
with the boundary conditions
\begin{align*}
 \tilde{u}_e^{(3)}(\theta,1)+u_p^{(3)}(\theta,0)=0, \ \lim_{Y\rightarrow \infty}\partial_Yu_p^{(3)}(\theta,Y)=v_p^{(4)}(\theta,0)=0,
\end{align*}
we obtain the following linearized Prandtl problem for $(u_p^{(3)},v_p^{(4)})$
\begin{eqnarray}
\left \{
\begin {array}{ll}
\big(u_e(1)+u_p^{(0)}\big)\partial_\theta u_p^{(3)}+\big(v_e^{(1)}(\theta,1)+ v_p^{(1)}\big)\partial_Yu_p^{(3)}+(u_p^{(3)}+\tilde{u}_e^{(3)}(\theta,1))\partial_\theta u_p^{(0)} \\[5pt]
\quad \quad \quad \quad v_p^{(4)}\partial_{Y}u_p^{(0)} -\partial_{YY}u_p^{(3)}=f_3(\theta,Y)\\[5pt]
\partial_\theta u_p^{(3)}+\partial_Yv_p^{(4)}+\partial_Y(Yv_p^{(3)})=0,\\[5pt]
u_p^{(3)}(\theta,Y)=u_p^{(3)}(\theta+2\pi,Y),\quad v_p^{(4)}(\theta,Y)=v_p^{(4)}(\theta+2\pi,Y),\\[5pt]
u_p^{(3)}\big|_{Y=0}=-\tilde{u}_e^{(3)}\big|_{r=1},\ \ \lim\limits_{Y\rightarrow \infty}\partial_Yu_p^{(3)}(\theta,Y)=v_p^{(4)}(\theta,0)=0
\label{third linearized prandtl problem near 1}
\end{array}
\right.
\end{eqnarray}
and
\begin{align}\label{fourth pressure}
\partial_Yp_p^{(4)}(\theta, Y)=g_3(\theta,Y), \ p_p^{(4)}(\theta,0)=0,
\end{align}
where
{\small \begin{align*}
f_3(\theta,Y)=&-\partial_\theta p_p^{(3)}+Y\partial_{YY}u_p^{(3)}+\partial_Yu_p^{(3)}+\sum_{k=0}^1(-1)^k\frac{Y^k\partial_{\theta\theta}\tilde{u}_p^{(1-k)}}{k!}+2\partial_\theta v_p^{(2)}-2Y\partial_\theta v_p^{(1)}\\[5pt]
&-\sum_{k=0}^1(-1)^k\frac{Y^k\tilde{u}_p^{(1-k)}}{k!}-\sum_{i+j=3, 1\leq i\leq 2}\tilde{u}_p^{(i)}\partial_\theta\tilde{u}_p^{(j)}-\sum_{i+j=3 }[v_p^{(i)}Y\partial_Y\tilde{u}_p^{(j)}+\tilde{u}_p^{(i)}v_p^{(j)}]\\[5pt]
&-\sum_{k=0}^3\sum_{i+j=3-k, (k,j)\neq (0,3)}\Big(\frac{\partial_r^k\tilde{u}_e^{(i)}(\theta,1)}{k!}Y^k\partial_\theta\tilde{u}_p^{(j)}
+\tilde{u}_p^{(i)}\frac{\partial_r^k\partial_\theta\tilde{u}_e^{(j)}(\theta,1)}{k!}Y^k\Big)\\[5pt]
&-\sum_{k=0}^3\sum_{i+j=3-k}\Big(\frac{\partial_r^k\tilde{v}_e^{(i)}(\theta,1)}{k!}Y^{k+1}\partial_Y\tilde{u}_p^{(j)}
+v_p^{(i)}\frac{\partial_r^k(r\partial_r\tilde{u}_e^{(j)})(\theta,1)}{k!}Y^k\Big)\\[5pt]
&-\sum_{k=0}^3\sum_{i+j=4-k, (k,j)\neq (0,3),(0,0),i\leq 3}\frac{\partial_r^k\tilde{v}_e^{(i)}(\theta,1)}{k!}Y^k\partial_Y\tilde{u}_p^{(j)}\\[5pt]
&-\sum_{k=0}^3\sum_{i+j=3-k}\Big(\frac{\partial_r^k\tilde{u}_e^{(i)}(\theta,1)}{k!}Y^{k}v_p^{(j)}
+\tilde{u}_p^{(i)}\frac{\partial_r^k\tilde{v}_e^{(j)}(\theta,1)}{k!}Y^k\Big),\\
g_3(\theta,Y)=&\partial_{YY}v_p^{(3)}+Y\partial_{YY}v_p^{(2)}+\partial_{Y}v_p^{(2)}
+\partial_{\theta\theta}v_p^{(1)}-\sum_{k=0}^1(-1)^k\frac{Y^k}{k!}\partial_\theta \tilde{u}_p^{(2-k)}-v_p^{(1)}-Y\partial_Yp_p^{(3)}\\[5pt]
&-\sum_{i+j=3}[\tilde{u}_p^{(i)}\partial_\theta v_p^{(j)}+v_p^{(i)}Y\partial_Y v_p^{(j)}-\tilde{u}_p^{(i)}\tilde{u}_p^{(j)}]-\sum_{i+j=4} v_p^{(i)}Y\partial_Y v_p^{(j)}-\sum_{k=0}^2\sum_{i+j=4-k}\frac{\partial_r^k\tilde{v}_e^{(i)}(\theta,1)}{k!}Y^{k}\partial_Y v_p^{(j)}\\[5pt]
&-\sum_{k=0}^3\sum_{i+j=3-k}\Big(\frac{\partial_r^k\tilde{u}_e^{(i)}(\theta,1)}{k!}Y^{k}\partial_\theta v_p^{(j)}
+\tilde{u}_p^{(i)}\frac{\partial_r^k\partial_\theta \tilde{v}_e^{(j)}(\theta,1)}{k!}Y^k\Big)\\[5pt]
&+\sum_{k=0}^3\sum_{i+j=3-k}\Big(\frac{\partial_r^k\tilde{u}_e^{(i)}(\theta,1)}{k!}Y^{k}\tilde{u}_p^{(j)}
+\tilde{u}_p^{(i)}\frac{\partial_r^k\tilde{u}_e^{(j)}(\theta,1)}{k!}Y^k\Big)\\[5pt]
&-\sum_{k=0}^1\sum_{i+j=3-k}\Big(\frac{\partial_r^k\tilde{v}_e^{(i)}(\theta,1)}{k!}Y^{k+1}\partial_Y v_p^{(j)}
+v_p^{(i)}\frac{\partial_r^k(r\partial_r \tilde{v}_e^{(j)})(\theta,1)}{k!}Y^k\Big)
\end{align*}}
with $\tilde{u}_p^{(0)}=u_p^{(0)},\ \tilde{u}^{(0)}_e=u_e(r), \ \tilde{u}_p^{(3)}=u_p^{(3)}.$

Similarly, let
{\small \begin{align*}
&u^\varepsilon(\theta,r)=u_e(r)+\widehat{u}_p^{(0)}(\theta,Z)+\sum_{i=1}^2\varepsilon^i\big[\tilde{u}_e^{(i)}(\theta,r)
+(\widehat{u}_p^{(i)}(\theta,Z)-\hat{A}_{i\infty})\big]
+\varepsilon^3\big[\tilde{u}_e^{(3)}(\theta,r)+\widehat{u}_p^{(3)}(\theta,Z)\big]+\text{h.o.t.},\\[5pt]
&v^\varepsilon(\theta,r)=\sum_{i=1}^3\varepsilon^i\big[\tilde{v}_e^{(i)}(\theta,r)+\widehat{v}_p^{(i)}(\theta,Z)\big]+
\varepsilon^4\widehat{v}_p^{(4)}(\theta,Z)+\text{h.o.t.},\\[5pt]
&p^\varepsilon(\theta,r)=p_e(r)+\sum_{i=1}^3\varepsilon\big[\tilde{p}_e^{(i)}(\theta,r)+\widehat{p}_p^{(i)}(\theta,Z)\big]+
\varepsilon^4\widehat{p}_p^{(4)}(\theta,Z)+\text{h.o.t.},
\end{align*}}
with the following boundary conditions
\begin{align*}
 \tilde{u}_e^{(3)}(\theta,r_0)+\hat{u}_p^{(3)}(\theta,0)=0,\ \lim_{Z\rightarrow+\infty}\partial_Z\hat{u}_p^{(3)}(\theta,Z)=\hat{v}_p^{(4)}(\theta,0)=0,
\end{align*}
we obtain the following linearized Prandlt equations for $(\hat{u}_p^{(3)},\hat{v}_p^{(4)})$
\begin{eqnarray}
\left \{
\begin {array}{ll}
\big(u_e(r_0)+\hat{u}_p^{(0)}\big)\partial_\theta \hat{u}_p^{(3)}+\big(v_e^{(1)}(\theta,r_0)+ \hat{v}_p^{(1)}\big)r_0\partial_Z\hat{u}_p^{(3)}+\hat{u}_p^{(3)}\partial_\theta \hat{u}_p^{(0)}\\[5pt]
\quad \quad \quad \quad \quad \quad \quad \quad \quad +\hat{v}_p^{(4)}r_0\partial_Z\hat{u}_p^{(0)}-r_0\partial_{ZZ}\hat{u}_p^{(3)}=\hat{f}_3(\theta,Z),\\[5pt]
\partial_\theta \hat{u}_p^{(3)}+r_0\partial_Z\hat{v}_p^{(4)}+\partial_Z(Z\hat{v}_p^{(3)})=0,\\[5pt]
\hat{u}_p^{(3)}(\theta,Z)=\hat{u}_p^{(3)}(\theta+2\pi,Z),\ \hat{v}_p^{(4)}(\theta,Z)=\hat{v}_p^{(4)}(\theta+2\pi,Z),\\[5pt]
\hat{u}_p^{(3)}\big|_{Z=0}=-\tilde{u}_e^{(3)}\big|_{r=r_0},\ \lim\limits_{Z\rightarrow+\infty}\partial_Z\hat{u}_p^{(3)}(\theta,Z)=\hat{v}_p^{(4)}(\theta,0)=0
\label{third linearized prandtl problem another boundary}
\end{array}
\right.
\end{eqnarray}
and the pressure $\hat{p}_p^{(4)}$ satisfies
\begin{align}\label{fourth pressure another}
\partial_Z\hat{p}_p^{(4)}(\theta, Z)=\hat{g}_3(\theta,Z), \ \hat{p}_p^{(4)}(\theta,0)=0.
\end{align}
The functions $(\hat{f}_3(\theta,Z),\hat{g}_3(\theta,Z))$ are similar to $(f_3(\theta,Y),g_3(\theta,Y))$, we omit the details here.

\begin{proposition}There exists $\eta_0>0$ such that for any $\eta\in(0,\eta_0)$, the equations (\ref{third linearized prandtl problem near 1}) have a unique solution $(u_p^{(3)},v_p^{(4)})$ and the equations (\ref{third linearized prandtl problem another boundary}) have a unique solution $(\hat{u}_p^{(3)},\hat{v}_p^{(4)})$ which satisfy
\begin{align}\label{decay behavior-prandtl-3}
\begin{aligned}
&\sum_{0<j+k\leq m}\int_{-\infty}^0\int_0^{2\pi}\big|\partial_\theta^j\partial_Y^k (u_p^{(3)},v_p^{(4)} )\big|^2\big<Y\big>^{2l}d\theta dY\leq C(m,l), \ \forall \ m,l\geq 0, \\
&\sum_{0<j+k\leq m}\int_0^{+\infty}\int_0^{2\pi}\big|\partial_\theta^j\partial_Z^k (\hat{u}_p^{(3)},\hat{v}_p^{(4)})\big|^2\big<Z\big>^{2l}d\theta dZ\leq C(m,l),\ \forall \ m,l\geq 0,\\
& \int_0^{2\pi}v_p^{(4)}(\theta, Y)d\theta=\int_0^{2\pi}\hat{v}_p^{(4)}(\theta, Z)d\theta=0, \ \forall \ Y\leq 0,\ Z\geq 0
\end{aligned}
\end{align}
and
\begin{align*}
\|(u_p^{(3)},v_p^{(4)})\|_\infty+\|(\hat{u}_p^{(3)},\hat{v}_p^{(4)})\|_\infty\leq C.
\end{align*}
\end{proposition}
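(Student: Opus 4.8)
The strategy is to mirror the proof of Proposition~\ref{decay estimates of linearized Prandtl} line by line, since equations \eqref{third linearized prandtl problem near 1} and \eqref{third linearized prandtl problem another boundary} are again linearized steady Prandtl systems with the \emph{strictly positive} convection coefficient $u_e(1)+u_p^{(0)}$ (resp.\ $u_e(r_0)+\hat u_p^{(0)}$), the same transport structure in $(\theta,Y)$, and forcing terms $f_3,g_3$ (resp.\ $\hat f_3,\hat g_3$) that decay rapidly as $Y\to-\infty$ (resp.\ $Z\to+\infty$). I would treat the boundary $r=1$ in detail; the computation near $r=r_0$ is identical after the substitutions $Y\mapsto Z$ and $\partial_{YY}\mapsto r_0\partial_{ZZ}$.

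First I would homogenize the inhomogeneous boundary value $u_p^{(3)}|_{Y=0}=-\tilde u_e^{(3)}|_{r=1}$ and eliminate the term $\partial_Y(Yv_p^{(3)})$ from the divergence relation exactly as in the passage leading to \eqref{new linearized Prandtl equation}: using a cutoff function with value $1$ at $Y=0$ and vanishing integral, I absorb these into a new right-hand side $\tilde f$, reducing to a system of the same shape as \eqref{new linearized Prandtl equation} with a $2\pi$-periodic, fast-decaying $\tilde f$. The only inputs are that $\tilde u_e^{(3)}$ and its trace are $O(\eta)$ and that $v_p^{(3)}$ decays fast, which are furnished by Proposition~\ref{Estimate of third linearized Euler equation} and Proposition~\ref{decay estimates of higher order linearized Prandtl}.

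Existence would then follow from the regularized elliptic problem obtained by adding $-\gamma\partial_{\theta\theta}$, as in \eqref{appro linear prandtl}. Testing against $u^\gamma$ controls $\|\partial_Y u^\gamma\|_2^2+\gamma\|\partial_\theta u^\gamma\|_2^2$ up to an $O(\eta)$-absorbable term, while testing against $\partial_\theta u^\gamma$ and using $u_e(1)+u_p^{(0)}\ge \alpha/2$ produces the coercive quantity $\alpha\|\partial_\theta u^\gamma\|_2^2$, again with errors small for small $\eta$; together with the second-derivative bound read off from the equation, this yields the $\gamma$-uniform estimate of the type \eqref{closed prandtl}--\eqref{second derivative estimate}, and sending $\gamma\to0$ gives a solution in $\dot{H}^1_0$ satisfying the analogue of \eqref{new linear prandtl estimate}. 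The weighted decay \eqref{decay behavior-prandtl-3} is obtained through the von Mises variable $\psi=\int_0^Y(u_e(1)+u_p^{(0)})\,dz$, which recasts the system as the degenerate elliptic equation \eqref{linear prandtl in new variable}, after which I run the same double induction on the weight exponent $l$ and derivative order $m$, relying on the Hardy and Poincar\'e inequalities together with the zero-average identity $\int_0^{2\pi}v_p^{(4)}\,d\theta=0$ coming from the divergence-free structure.

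The only genuinely new step---and the main obstacle---is verifying that $f_3,g_3$ (and $\hat f_3,\hat g_3$) satisfy $\big\|\partial_\theta^j\partial_Y^k f_3\,\big<Y\big>^{l}\big\|_2\le C(\delta,m,l)\eta$ for all $m,l$. This is a bookkeeping matter rather than a new difficulty: $f_3$ is a finite sum of products of the already-constructed profiles $u_p^{(i)},v_p^{(i)}$ with $i\le2$, their traces $\partial_r^k\tilde u_e^{(i)}(\theta,1)$, and polynomial prefactors $Y^k$. Each Prandtl factor decays faster than any prescribed power of $\big<Y\big>$ by \eqref{decay behavior-prandtl}, \eqref{decay behavior-prandtl-1} and \eqref{decay behavior-prandtl-2}, so the polynomial weights $Y^k$ are harmless and every product still carries at least one power of $\eta$. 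Granting these forcing bounds, the limit $\lim_{Y\to-\infty}u_p^{(3)}$ exists and is $O(\eta)$ via the Hardy inequality applied to $\partial_Y u_p^{(3)}$, whence the stated bound $\|(u_p^{(3)},v_p^{(4)})\|_\infty\le C(\delta)\eta$ follows from the weighted $H^2$ control by the anisotropic Sobolev embedding, completing the argument.
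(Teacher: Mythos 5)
Your proposal is correct and follows essentially the same route as the paper, which itself disposes of this proposition by noting that the proof is identical to that of Proposition \ref{decay estimates of linearized Prandtl} (homogenization of the boundary data, the $-\gamma\partial_{\theta\theta}$ regularization, von Mises variables with induction on weights, and the fast decay of the forcing terms). Your additional remarks—checking the $O(\eta)$ weighted bounds on $f_3,g_3$ as the only new bookkeeping, and extracting the constant limit of $u_p^{(3)}$ at $Y\to-\infty$ via Hardy's inequality to get the $L^\infty$ bound—are exactly the details the paper omits.
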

The proof is same with Proposition \ref{decay estimates of linearized Prandtl}, we omit the details. Moreover, we can construct $(p_p^{(4)},\hat{p}_p^{(4)} )$ by solving the equations (\ref{fourth pressure}) and \eqref{fourth pressure another}.

\subsection{Approximate solutions}\label{Approximate solutions}

\indent

In this subsection, we construct an approximate solution of Navier-Stokes equations (\ref{NS-curvilnear}).
Set
\begin{align*}
\tilde{u}_p^a(\theta,r):=&\chi(r)\Big(u_p^{(0)}(\theta,Y)+\varepsilon \tilde{u}_p^{(1)}(\theta,Y)
+\varepsilon^2\tilde{u}_p^{(2)}(\theta,Y)+\varepsilon^3u_p^{(3)}(\theta,Y)\Big)\\[5pt]
\quad \quad \quad &+(1-\chi(r))\Big(\hat{u}_p^{(0)}(\theta,Z)+\varepsilon\big[\hat{u}_p^{(1)}(\theta,Z)-\hat{A}_{1\infty}\big]
+\varepsilon^2[\hat{u}_p^{(2)}(\theta,Z)-\hat{A}_{2\infty}]+\varepsilon^3\hat{u}_p^{(3)}(\theta,Z)\Big)\\[5pt]
:=&\chi(r)u_p^a+(1-\chi(r))\hat{u}_p^a,\\[5pt]
\tilde{v}_p^a(\theta,r):=&\chi(r)\Big(\sum_{i=1}^4\varepsilon^i v_p^{(i)}(\theta,Y)\Big)
+(1-\chi(r))\Big(\sum_{i=1}^4\varepsilon^i \hat{v}_p^{(i)}(\theta,Z)\Big)\\[5pt]
:=&\chi(r)v_p^a+(1-\chi(r))\hat{v}_p^a,\\[5pt]
\tilde{p}_p^a(\theta,r):=&\chi^2(r)\Big(\sum_{i=1}^4\varepsilon^i p_p^{(i)}(\theta,Y)\Big)+(1-\chi(r))^2\Big(\sum_{i=1}^4\varepsilon^i \hat{p}_p^{(i)}(\theta,Z)\Big)\\[5pt]
:=&\chi^2(r)p_p^a+(1-\chi(r))^2\hat{p}_p^a,
\end{align*}
and
\begin{align*}
u_e^a(\theta,r):=&u_e(r)+\sum_{i=1}^3\varepsilon^i \tilde{u}_e^{(i)}(\theta,r), \\[5pt]
 v_e^a(\theta,r):=&\sum_{i=1}^3\varepsilon^i \tilde{v}_e^{(i)}(\theta,r),\\[5pt]
 p_e^a(\theta,r):=&p_e(r)+\sum_{i=1}^3\varepsilon^i \tilde{p}_e^{(i)}(\theta,r).
\end{align*}

We construct an approximate solution $(u^a,v^a,p^a)$
\begin{align}
u^a(\theta,r):&=u_e^a(\theta,r)+\tilde{u}_p^a(\theta,r)+\varepsilon^4h(\theta,r),\label{approximate solution-1}\\[5pt]
v^a(\theta,r):&=v_e^a(\theta,r)+\tilde{v}_p^a(\theta,r),\label{approximate solution-2}\\[5pt]
p^a(\theta,r):&=p_e^a(\theta,r)+\tilde{p}_p^a(\theta,r),\label{approximate solution-3}
\end{align}
where the corrector $h(\theta,r)$ will be given in Appendix B which satisfies
\begin{align*}
h(\theta, 1)=0, \ h(\theta,r_0)=0, \ \|\partial_\theta^j\partial_r^kh\|_2\leq C(j,k)\varepsilon^{-k}
\end{align*}
and makes $(u^a, v^a)$ be divergence-free
\begin{align*}
u^a_\theta+(rv^a)_r=0.
\end{align*}
Moreover, $(u^a, v^a)$  satisfies the following boundary conditions
\begin{align*}
 u^a(\theta+2\pi,r)&=u^a(\theta,r), \ v^a(\theta+2\pi,r)=v^a(\theta,r),\\[5pt]
u^a(\theta, 1)&=\alpha+\eta f(\theta), \ v^a(\theta,1)=0,\\[5pt]
u^a(\theta, r_0)&=\beta+\eta g(\theta),\ v^a(\theta, r_0)=0.
\end{align*}
By collecting the estimates (\ref{estimate on modified first Euler}), (\ref{estimate on modified second Euler}), (\ref{Estimate of third linearized Euler equation}),  we deduce that
\begin{align}\label{estimate on Euler parts}
&\|u^a_e-u_e(r)\|_\infty+\|(\partial_r(u^a_e-u_e(r)),\partial_\theta u^a_e)\|_\infty\leq C\varepsilon(\eta+\varepsilon), \nonumber\\[5pt] &\|v^a_e\|_\infty+\|(\partial_rv^a_e, \partial_\theta v^a_e)\|_\infty\leq C \varepsilon(\eta+\varepsilon),
\end{align}
and collecting the estimates (\ref{decay behavior-prandtl}), (\ref{decay behavior-prandtl-1}), (\ref{decay behavior-prandtl-2}), (\ref{decay behavior-prandtl-3}) one has
\begin{align}\label{estimate on Prandtl parts}
\|(Y^j\partial_Y^ku_p^a, Y^j\partial_\theta^ku_p^a)\|_\infty\leq C (\eta+\varepsilon), \ \|(Y^j\partial_Y^kv_p^a,Y^j\partial_\theta^kv_p^a)\|_\infty\leq C \varepsilon(\eta+\varepsilon),   \ \forall j\leq 2, k\leq 2.
\end{align}
Similarly, there holds
\begin{align*}
\|(Z^j\partial_Z^k\hat{u}_p^a, Z^j\partial_\theta^k\hat{u}_p^a)\|_\infty\leq C (\eta+\varepsilon),  \ \|(Z^j\partial_Z^k\hat{v}_p^a,Z^j\partial_\theta^k\hat{v}_p^a)\|_\infty\leq C \varepsilon(\eta+\varepsilon),   \ \forall j\leq 2, k\leq 2.
\end{align*}
By (\ref{Taylor-Couette flow}), (\ref{approximate solution-1}), (\ref{estimate on Euler parts}) and (\ref{estimate on Prandtl parts}), we deduce that there exists $\varepsilon_0>0, \eta_0>0$ such that for any $\varepsilon\in (0,\varepsilon_0), \eta\in (0,\eta_0)$, there holds
\begin{align}\label{positive lower bound}
u^a(\theta,r)\geq d_0>0, \ \forall (\theta, r)\in \Omega,
\end{align}
where $d_0$ is a fixed number.

Finally, set
 \begin{align*}
 R_u^a:&=u^au^a_\theta+v^aru^a_r+u^av^a+p^a_\theta-\varepsilon^2\Big( ru^a_{rr}
+\frac{u^a_{\theta\theta}}{r}+2\frac{v^a_{\theta}}{r}+u^a_r-\frac{u^a}{r}\Big)-\varepsilon^2rF_u,\\[5pt]
R_v^a:&=u^av^a_\theta+v^arv^a_r-(u^a)^2+rp^a_r-\varepsilon^2 \Big(rv^a_{rr}+\frac{v^a_{\theta\theta}}{r}-2\frac{u^a_{\theta}}{r}+v^a_r-\frac{v^a}{r}\Big)-\varepsilon^2rF_v,
 \end{align*}
 then there holds
 \begin{align*}
 \|R_u^a\|_2+\|\partial_\theta R_u^a\|_2\leq C\varepsilon^4, \  \|R_v^a\|_2+\|\partial_\theta R_v^a\|_2\leq C\varepsilon^4
 \end{align*}
and $(u^a, v^a, p^a)$ satisfies
{\small\begin{eqnarray}\label{app equation}
\left\{
\begin{array}{lll}
u^au^a_\theta+v^aru^a_r+u^av^a+p^a_{\theta}-\varepsilon^2\big( ru^a_{rr}
+\frac{u^a_{\theta\theta}}{r}+2\frac{v^a_{\theta}}{r}+u^a_r-\frac{u^a}{r}\big)=\varepsilon^2rF_u+R_u^a,\ (\theta,r)\in \Omega,\\[5pt]
u^av^a_\theta+v^arv^a_r-(u^a)^2+rp^a_r-\varepsilon^2\big( rv^a_{rr}+\frac{v^a_{\theta\theta}}{r}-2\frac{u^a_{\theta}}{r}+v^a_r-\frac{v^a}{r}\big)=\varepsilon^2rF_v+R_v^a,\ (\theta,r)\in \Omega,\\[5pt]
 u^a_\theta+(rv^a)_r=0, \ (\theta,r)\in \Omega, \\[5pt]
 u^a(\theta+2\pi,r)=u^a(\theta,r), \ v^a(\theta+2\pi,r)=v^a(\theta,r), \ (\theta,r)\in \Omega,\\[5pt]
u^a(\theta,1)=\alpha+f(\theta)\eta,\ v^a(\theta,1)=0, \ \theta\in [0,2\pi], \\[5pt]
 u^a(\theta,r_0)=\beta+g(\theta)\eta,\ v^a(\theta,r_0)=0, \ \theta\in [0,2\pi] .
\end{array}
\right.
\end{eqnarray}}

\section{Linear stability estimates of error equations}

\indent
In this section, we derive the error equations and establish the linear stability estimate of the error equations.

\subsection{Error equations}
\indent

Set the error
\beno
u:=u^\varepsilon-u^a,\ v:=v^\varepsilon-v^a,\ p:=p^\varepsilon-p^a,
\eeno
then there hold
{\small \begin{eqnarray}\label{error equation}
\left\{
\begin{array}{lll}
u^au_\theta+uu^a_\theta+uu_\theta+v^aru_r+vru^a_r+vru_r+v^au+vu^a+vu+p_\theta-\varepsilon^2\big( ru_{rr}
+\frac{u_{\theta\theta}}{r}+2\frac{v_{\theta}}{r}+u_r-\frac{u}{r}\big)=R_u^a,\\[5pt]
u^av_\theta+uv^a_\theta+uv_\theta+v^arv_r+vrv^a_r+vrv_r-(u^2+2u u^a)+rp_r-\varepsilon^2 \big( rv_{rr}+\frac{v_{\theta\theta}}{r}-2\frac{u_{\theta}}{r}+v_r-\frac{v}{r}\big)=R_v^a,\\[5pt]
u_\theta+(rv)_r=0,  \\[5pt]
u(\theta+2\pi,r)=u(\theta,r), \ v(\theta+2\pi,r)=v(\theta,r), \\[5pt]
u(\theta,1)=0,\ v(\theta,1)=0, \\[5pt]
 u(\theta,r_0)=0,\ v(\theta,r_0)=0.
\end{array}
\right.
\end{eqnarray}}

We set
\begin{align*}
S_u:&=u^au_\theta+v^aru_r+uu^a_\theta+vru^a_r+v^au+vu^a,\\[5pt]
S_v:&=u^av_\theta+v^arv_r+uv^a_\theta+vrv^a_r-2uu^a,
\end{align*}
then the error equations become
\begin{align}\label{e:error equation}
\left\{
\begin{array}{lll}
-\varepsilon^2\big(r u_{rr}
+\frac{u_{\theta\theta}}{r}+2\frac{v_{\theta}}{r}+u_r-\frac{u}{r}\big)+p_\theta+S_u=R_u,\\[5pt]
-\varepsilon^2\big( rv_{rr}+\frac{v_{\theta\theta}}{r}-2\frac{u_{\theta}}{r}+v_r-\frac{v}{r}\big)+rp_r+S_v=R_v,\\[5pt]
u_\theta+(rv)_r=0,  \\[5pt]
u(\theta,r)=u(\theta+2\pi,r),\  v(\theta,r)=v(\theta+2\pi,r), \\[5pt]
u(\theta,1)=0,\ v(\theta,1)=0,\\[5pt]
u(\theta,r_0)=0,\ v(\theta,r_0)=0,
 \end{array}
\right.
\end{align}
where
\beno
R_u:=R_u^a-uu_\theta-vru_r-vu,\ R_v:=R_v^a-uv_\theta-vrv_r+u^2.
\eeno
\subsection{Linear stability estimate}\label{section:Linear stability estimate}
\indent

In this part, we consider the following linear system in $\Omega$
 \begin{align}\label{linear error equation}
\left\{
\begin{array}{lll}
-\varepsilon^2\big(r u_{rr}
+\frac{u_{\theta\theta}}{r}+2\frac{v_{\theta}}{r}+u_r-\frac{u}{r}\big)+p_\theta+S_u=F_1,\\[5pt]
-\varepsilon^2\big( rv_{rr}+\frac{v_{\theta\theta}}{r}-2\frac{u_{\theta}}{r}+v_r-\frac{v}{r}\big)+rp_r+S_v=F_2,\\[5pt]
u_\theta+(rv)_r=0,  \\[5pt]
u(\theta,r)=u(\theta+2\pi,r),\  v(\theta,r)=v(\theta+2\pi,r), \\[5pt]
u(\theta,1)=0,\ v(\theta,1)=0,\\[5pt]
u(\theta,r_0)=0,\ v(\theta,r_0)=0,
 \end{array}
\right.
\end{align}
and establish its linear stability estimate (\ref{e:linear stability estimates for linear equation}).

Firstly, due to the third equation in \eqref{linear error equation} and the boundary condition of $v$, we easily deduce that
\begin{align*}
\int_0^{2\pi}v(\theta,r)d\theta=0,\ \forall \ r\in [r_0,1].
\end{align*}
Hence, there holds
\begin{align*}
\int_0^{2\pi}v^2(\theta,r)d\theta\leq \int_0^{2\pi}v_\theta^2(\theta,r)d\theta,  \ \forall \ r\in [r_0,1].
\end{align*}

The following stability estimate is a direct result of the basic energy estimate (\ref{e:basic Energy estimate}) and  positivity estimate (\ref{e:positivity estimate}).
\begin{proposition}
 Let $(u,v)$ be a smooth solution of (\ref{linear error equation}), then there exist $\varepsilon_0>0, \eta_0>0$ such that for any $\varepsilon\in (0,\varepsilon_0), \eta\in(0,\eta_0)$, there holds
\begin{align}\label{e:linear stability estimates for linear equation}
\varepsilon^2\|u_r\|_2^2+\|(u_\theta, v_\theta)\|_2^2\leq C\|(F_1, F_2)\|_2^2+C\Big|\int_{r_0}^{1}\int_0^{2\pi}\big(uF_1 +vF_2 \big)d\theta dr\Big|.
\end{align}
\end{proposition}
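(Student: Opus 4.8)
The plan is to obtain \eqref{e:linear stability estimates for linear equation} by combining two independent multiplier identities, in the spirit of Step 2 of the Introduction: a \emph{positivity estimate}, which produces the coercive quantity $\|(u_\theta,v_\theta)\|_2^2$ and the forcing contribution $C(\delta)\|(F_u,F_v)\|_2^2$, and a \emph{basic energy estimate}, which recovers the radial derivative $\varepsilon^2\|u_r\|_2^2$, controls the zero frequency mode $u_0(r):=\tfrac1{2\pi}\int_0^{2\pi}u\,d\theta$, and produces the term $C|\int(uF_u+vF_v)|$. Two structural facts drive everything: $u^a\ge d_0>0$ by \eqref{positive lower bound}, and $u^a=u_e(r)+\tilde u_p^a+\cdots$ with the Prandtl part obeying the fast-decay bounds \eqref{estimate on Prandtl parts}; the first gives meaning to the weight $1/u^a$, the second makes all boundary-layer corrections $O(\eta)$.

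First I would establish the positivity estimate. Testing the first equation of \eqref{linear error equation} against $-\big(\tfrac{r^2v}{u^a}\big)_r$ and the second against $\big(\tfrac{rv}{u^a}\big)_\theta$ and adding, the pressure contribution cancels after one integration by parts, thanks to $u_\theta+(rv)_r=0$ and the boundary conditions $v|_{r=1}=v|_{r=r_0}=0$. As in the displayed computation of the Introduction, the convective terms reduce to
\[
\int_{r_0}^{1}\int_0^{2\pi}\big(r u_\theta^2+r v_\theta^2\big)\,d\theta\,dr
+\int_{r_0}^{1}\int_0^{2\pi}\frac{1}{u^a}\Big(r u^a_{rr}+u^a_r-\frac{u^a}{r}\Big)(rv)^2\,d\theta\,dr .
\]
The second integral is the only obstruction to coercivity. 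Splitting $u^a\approx u_e(r)+u_p^{(0)}$, the boundary-layer part is tamed by writing $\partial_r=\varepsilon^{-1}\partial_Y$, extracting the Hardy weight $(r-1)^{-2}$, and using $\big\|\tfrac{rv}{r-1}\big\|_2\le C\|(rv)_r\|_2=C\|u_\theta\|_2$ (the divergence-free identity) together with $\|Y^2\partial_Y^k u_p^{(0)}\|_\infty\lesssim\eta$; this bounds it by $C\eta\|u_\theta\|_2^2$. The $\varepsilon^2$ diffusion terms, tested against the smooth multiplier $rv/u^a$, are absorbed after integration by parts, and the forcing is handled by Cauchy--Schwarz and Young to yield $C(\delta)\|(F_u,F_v)\|_2^2$. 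The net outcome is $\|(u_\theta,v_\theta)\|_2^2\le C(\delta)\|(F_u,F_v)\|_2^2$ modulo terms absorbable for small $\varepsilon,\eta$.

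Next I would run the basic energy estimate. Testing the first equation against $u_0(r)$ converts the diffusion into $\varepsilon^2\int(r|u_0'|^2+u_0^2/r)$, the radial Euler convection vanishes because the base flow is radial and $\int_0^{2\pi}v\,d\theta=0$, and the Prandtl convection is again controlled by Hardy and the $O(\eta)$ smallness, giving $\varepsilon^2\|u_0'\|_2\lesssim\eta\|(u_\theta,v_\theta)\|_2$. Combining this with the standard energy identity (test against $(u,v)$, where the pressure again cancels by divergence-freeness) supplies the full $\varepsilon^2\|u_r\|_2^2$ on the left and produces exactly $|\int(uF_u+vF_v)|$ on the right, after the residual convective products are estimated by the coercive quantity and the already-controlled $u_0$. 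Finally I would add the two estimates: since $v$ has zero $\theta$-average, Poincar\'e gives $\|v\|_2\le\|v_\theta\|_2$; the $O(\eta)$ cross terms are absorbed for $\eta$ small and the $O(\varepsilon^2)$ diffusion remainders for $\varepsilon$ small, which closes \eqref{e:linear stability estimates for linear equation}.

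\textbf{Main obstacle.} The crux is the positivity estimate, and within it the curvature integral $\int\tfrac1{u^a}(ru^a_{rr}+u^a_r-u^a/r)(rv)^2$. When the base flow is the pure Taylor--Couette profile $ar+b/r$ this integrand vanishes identically, because $ar+b/r$ lies in the kernel of $r\partial_{rr}+\partial_r-\tfrac1r$, and coercivity is immediate. For the genuinely generalized flow the $r\ln r$ component contributes $r\partial_{rr}+\partial_r-\tfrac1r=2$ on $r\ln r$, so the Euler part of the curvature integral is $O(\delta)$ rather than $O(\eta)$ and does \emph{not} vanish; establishing coercivity for the whole range $\delta\in[0,1]$ therefore requires exploiting the strict positivity $u^a\ge d_0$ and the non-negativity of $\tilde u_e$ more carefully than a crude bound allows, rather than treating $1/u^a$ as a harmless weight. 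This, and the boundary-layer Hardy estimate against the fast Prandtl decay, are the delicate points I expect to absorb most of the work.
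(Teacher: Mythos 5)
Your overall architecture is exactly the paper's: prove a positivity estimate with the multipliers $-\big(\tfrac{r^2v}{u^a}\big)_r$, $\big(\tfrac{rv}{u^a}\big)_\theta$ giving $\|(u_\theta,v_\theta)\|_2^2\leq C(\delta)\varepsilon^2(\eta+\e)\|u_r\|_2^2+C\|(F_u,F_v)\|_2^2$, prove a basic energy estimate with the multiplier $(u,v)$ giving $\varepsilon^2\|u_r\|_2^2\leq C(\delta)\|(u_\theta,v_\theta)\|_2^2+C\big|\int(uF_u+vF_v)\big|$, and combine the two, absorbing the small cross terms for $\varepsilon,\eta$ small. The Hardy-inequality treatment of the Prandtl part and the use of $\int_0^{2\pi}v\,d\theta=0$ are also as in the paper.

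However, there is a genuine gap, and it sits exactly where you flag your ``main obstacle.'' For $\delta>0$ the curvature integral does not vanish: since $r\partial_{rr}+\partial_r-\tfrac1r$ annihilates $a_0r+\tfrac{b_0}{r}$ but sends $r\ln r$ to $2$, the Euler contribution is $\int\tfrac{2\delta\tilde c_0}{u_e}(rv)^2\,d\theta\,dr$, which is order one in $\eta,\varepsilon$ and of either sign (depending on $\mathrm{sgn}\,\tilde c_0$). You correctly observe that a crude bound fails, but you supply no mechanism, so your claimed conclusion of the positivity step is not established for $\delta\in(0,1]$. The paper's resolution is a specific cancellation: set $g=\tfrac{rv}{u_e}$. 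Using $u_\theta=-(rv)_r=-(u_eg)_r$, $rv_\theta=u_eg_\theta$, and integrating by parts in $r$ (the boundary terms vanish because $g=0$ at $r=r_0$ and $r=1$), one obtains the identity
\begin{align*}
\int_0^{2\pi}\!\!\int_{r_0}^1 r\big(u_\theta^2+v_\theta^2\big)\,dr\,d\theta
+\int_0^{2\pi}\!\!\int_{r_0}^1\frac{1}{u_e}\Big(r\partial_{rr}u_e+\partial_r u_e-\frac{u_e}{r}\Big)(rv)^2\,dr\,d\theta
=\int_0^{2\pi}\!\!\int_{r_0}^1\Big(ru_e^2g_r^2+\frac{u_e^2}{r}g_\theta^2-\frac{u_e^2}{r}g^2\Big)\,dr\,d\theta,
\end{align*}
valid for an arbitrary radial $u_e$; the curvature term is eliminated exactly rather than estimated. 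The remaining negative term $-\int\frac{u_e^2}{r}g^2$ is then absorbed by splitting the positive terms and applying two Poincar\'e inequalities: in $\theta$, since $\int_0^{2\pi}g\,d\theta=0$ (inherited from $\int_0^{2\pi}v\,d\theta=0$), and in $r$, since $g$ vanishes at both endpoints; together with the strict positivity and boundedness of $u_e$ this yields the lower bound $\widetilde{C}\int r(u_\theta^2+v_\theta^2)$, uniformly in $\delta\in[0,1]$. Without this change of variables (or an equivalent argument), the positivity estimate—and hence the proposition—remains unproven for the generalized flows that are the whole point of the annulus setting.
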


\subsubsection{Basic energy estimate}
\indent

In this subsection, we establish the following basic energy estimate.

 \begin{lemma}\label{basic energy estimate}
 Let $(u,v)$ be a smooth solution of (\ref{linear error equation}), then there exist $\varepsilon_0>0, \eta_0>0$ such that for any $\varepsilon\in (0,\varepsilon_0), \eta\in(0,\eta_0)$, there holds
\begin{align}\label{e:basic Energy estimate}
\varepsilon^2\|u_r\|_2^2\leq C\|(u_\theta,v_\theta)\|_2^2+C\Big|\int_{r_0}^{1}\int_0^{2\pi}\big[u F_1+vF_2 \big]d\theta dr\Big|.
\end{align}
\end{lemma}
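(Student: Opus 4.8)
The plan is to test the two momentum equations in \eqref{linear error equation} against the solution itself: multiply the first equation by $u$, the second by $v$, integrate over $\Omega$ with the measure $d\theta\,dr$, and add. Two structural cancellations get the estimate started. The pressure disappears, since integration by parts together with the incompressibility $u_\theta+(rv)_r=0$ (and $u=v=0$ on both circles) gives $\int_\Omega(p_\theta u+rp_r v)\,d\theta dr=-\int_\Omega p\,(u_\theta+(rv)_r)\,d\theta dr=0$. The viscous terms are coercive: integrating by parts yields
\[
-\varepsilon^2\!\int_\Omega (Lu)\,u-\varepsilon^2\!\int_\Omega (L'v)\,v=\varepsilon^2\!\int_\Omega\Big(ru_r^2+rv_r^2+\tfrac{u_\theta^2+v_\theta^2+u^2+v^2}{r}\Big)-4\varepsilon^2\!\int_\Omega\tfrac{u v_\theta}{r},
\]
where $Lu,\,L'v$ denote the two viscous operators in \eqref{linear error equation}. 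The last mixed term is $O(\varepsilon^2)$ and is absorbed by Cauchy--Schwarz into the positive contributions and into $\|(u_\theta,v_\theta)\|_2^2$; since $r\ge r_0>0$, the left side dominates $c\,\varepsilon^2\|u_r\|_2^2$, the target quantity, while the forcing contributes exactly $\int_\Omega(uF_u+vF_v)$, matching \eqref{e:basic Energy estimate}.

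Everything then reduces to the convective contribution $\int_\Omega(S_u u+S_v v)$. I would split $(S_u,S_v)$ into the transport part $u^a\partial_\theta+v^a r\partial_r$ acting on $(u,v)$ and a reaction part. The transport part is skew-symmetric and vanishes: using $\partial_r(rv^a)=-u^a_\theta$ and the wall values of $u,v,v^a$, one gets $\int_\Omega\big(u^a\tfrac12(u^2+v^2)_\theta+v^a r\tfrac12(u^2+v^2)_r\big)=0$. After combining the $vu^a$ term of $S_u$ with the $-2uu^a$ term of $S_v$, the reaction part is $\int_\Omega\big(u^a_\theta u^2+v^a u^2+rv^a_r v^2+v^a_\theta uv+(ru^a_r-u^a)uv\big)$. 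The only genuinely $O(1)$ piece is $\int_\Omega(ru^a_r-u^a)uv$, whose Euler contribution has coefficient $ru_e'(r)-u_e(r)$, a function of $r$ alone. Writing $u=u_0(r)+u_{\neq}$ with $u_0:=\tfrac1{2\pi}\int_0^{2\pi}u\,d\theta$, its zero-frequency part is annihilated by $\int_0^{2\pi}v\,d\theta=0$, so this term equals $\int_\Omega(ru_e'-u_e)u_{\neq}v$, bounded by $C\|u_\theta\|_2\|v_\theta\|_2$ via Poincar\'e. The coefficients $u^a_\theta$ and $v^a$ have zero $\theta$-average (for $v^a$ this is \eqref{ve1zeroav} and its analogues), so their $u_0^2$ contributions drop and what survives pairs $\|u_0'\|_2\le\|u_r\|_2$ against $\|u_\theta\|_2$ or $\|v_\theta\|_2$ with the small prefactors from \eqref{estimate on Euler parts}.

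\textbf{The main obstacle} is that the boundary-layer part of the reaction coefficients carries a radial derivative of a Prandtl profile, e.g.\ $\partial_r u_p^{(0)}=\varepsilon^{-1}\partial_Y u_p^{(0)}$, so a term such as $\int_\Omega r\,\partial_r u_p^{(0)}\,uv$ looks like size $\varepsilon^{-1}$ and cannot be beaten directly by the $O(\varepsilon^2)$ viscous coercivity. The device I would use is that $u$ and $v$ both vanish at the wall where the layer concentrates: writing $uv=(r-1)^2\frac{u}{r-1}\frac{v}{r-1}$ and $(r-1)^2=\varepsilon^2Y^2$ converts $\varepsilon^{-1}$ into $\varepsilon$, since $\varepsilon^{-1}(r-1)^2=\varepsilon Y^2$ and the decay bound $\|\langle Y\rangle^2\partial_Y u_p^{(0)}\|_\infty\lesssim\eta$ from \eqref{decay behavior-prandtl} controls $Y^2\partial_Y u_p^{(0)}$. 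Hardy's inequality then gives $\big\|\tfrac{u}{r-1}\big\|_2\lesssim\|u_r\|_2$ and $\big\|\tfrac{v}{r-1}\big\|_2\lesssim\|(rv)_r\|_2=\|u_\theta\|_2$, so the term is $\lesssim\varepsilon\eta\,\|u_r\|_2\|u_\theta\|_2$, which Young's inequality splits into $\lambda\varepsilon^2\|u_r\|_2^2+C\lambda^{-1}\eta^2\|u_\theta\|_2^2$ with no inverse power of $\varepsilon$. The $\theta$-derivative couplings are handled the same way, using one wall-vanishing factor together with the layer localization of $\partial_\theta u_p^{(0)}$ to extract a single power of $\varepsilon$, and the layer at $r=r_0$ is treated symmetrically with $Z=(r-r_0)/\varepsilon$. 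Choosing $\lambda$ small to reabsorb all $\varepsilon^2\|u_r\|_2^2$ contributions on the left, and then $\eta_0,\varepsilon_0$ small, leaves precisely the right-hand side $C(\delta)\|(u_\theta,v_\theta)\|_2^2+C\big|\int_\Omega(uF_u+vF_v)\big|$ of \eqref{e:basic Energy estimate}.
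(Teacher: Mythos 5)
Your proposal is correct and follows essentially the same route as the paper's proof: testing with $(u,v)$, the pressure cancellation via incompressibility, the skew-symmetry of the transport part $u^a\partial_\theta+v^ar\partial_r$, viscous coercivity, and—crucially—the same Hardy-inequality device (one wall-vanishing factor $(r-1)=\varepsilon Y$ or $(r-r_0)=\varepsilon Z$ paired with the weighted decay $\|\langle Y\rangle^2\partial_Yu_p^{(0)}\|_\infty\lesssim\eta$) that converts the apparently $O(\varepsilon^{-1})$ boundary-layer reaction terms into $O(\varepsilon\eta)$ quantities absorbed by Young's inequality, together with the zero-mode treatment using $\int_0^{2\pi}v\,d\theta=0$. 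The only cosmetic differences are that the paper completes squares in the viscous cross terms and handles $\int u^2u^a_\theta$ by first integrating by parts in $\theta$, where you instead use the decomposition $u=u_0+u_{\neq}$ and the zero $\theta$-average of the coefficients.
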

\begin{proof}
Multiplying the first equation in (\ref{e:error equation}) by $u$, the second equation  in (\ref{e:error equation}) by $v$, adding them  together and integrating in $\Omega$, we obtain that
\begin{align*}
&\underbrace{-\varepsilon^2\int_{r_0}^{1}\int_0^{2\pi}\Big[\big(ru_{rr}
+\frac{u_{\theta\theta}}{r}+2\frac{v_{\theta}}{r}+u_r-\frac{u}{r}\big)u+ \big(rv_{rr}+\frac{v_{\theta\theta}}{r}-2\frac{u_{\theta}}{r}+v_r-\frac{v}{r}\big)v\Big]d\theta dr}_{diffusion \quad term}\\
&+\underbrace{\int_{r_0}^{1}\int_0^{2\pi}(p_\theta u+p_r rv)d\theta dr}_{pressure \quad term}+\underbrace{\int_{r_0}^{1}\int_0^{2\pi}\big(u S_u +vS_v \big)d\theta dr}_{linear \quad term}\\
=&\int_{r_0}^{1}\int_0^{2\pi}\big[u F_u +vF_v \big]d\theta dr.
\end{align*}

Next we deal with them term by term. \\
{\bf The diffusion term:} Integrating by parts, we obtain
\begin{align}\label{e:diffusion estimate in basic energy estimate}
&-\varepsilon^2\int_{r_0}^{1}\int_0^{2\pi}\Big[\big(ru_{rr}
+\frac{u_{\theta\theta}}{r}+2\frac{v_{\theta}}{r}+u_r-\frac{u}{r}\big)u+ \big(rv_{rr}+\frac{v_{\theta\theta}}{r}-2\frac{u_{\theta}}{r}+v_r-\frac{v}{r}\big)v\Big]d\theta dr\nonumber\\
=&\varepsilon^2\int_{r_0}^{1}\int_0^{2\pi}\bigg(\frac{ u_\theta^2+v_\theta^2}{r}+\frac{ u^2+v^2}{r}+2\frac{u_\theta v-uv_\theta}{r}\bigg)d\theta dr
+\varepsilon^2\int_{r_0}^{1}\int_0^{2\pi}r\big(u_r^2+v_r^2\big)d\theta dr\nonumber\\
= &\varepsilon^2\int_{r_0}^{1}\int_0^{2\pi}\frac{ (u_\theta+v)^2+(v_\theta-u)^2}{r}d\theta dr+\varepsilon^2\int_{r_0}^{1}\int_0^{2\pi}r\big(u_r^2+v_r^2\big)d\theta dr\geq r_0\varepsilon^2\|u_r\|_2^2.
\end{align}
{\bf Pressure term:} Integrating by parts and using the divergence-free condition, we deduce that
\begin{align}\label{e:pressure estimate in basic energy estimate}
\int_{r_0}^{1}\int_0^{2\pi}\big(p_\theta u+p_r (rv)\big)d\theta dr=0.
\end{align}
{\bf Linear term:} Integrating by parts and using the divergence-free condition, we obtain
\begin{align}
&\int_{r_0}^{1}\int_0^{2\pi}\big(uS_u +vS_v \big)d\theta dr\nonumber\\
=&\int_{r_0}^{1}\int_0^{2\pi}\big[u^au_\theta u+rv^au_ru+uu^a_\theta u+vu^a_rru+(v^au+vu^a)u\big] d\theta dr\nonumber\\
&+\int_{r_0}^{1}\int_0^{2\pi}\big[u^av_\theta v+rv^av_rv+uv^a_\theta v+vv^a_r rv-2uu^av\big] d\theta dr\nonumber\\
=&\int_{r_0}^{1}\int_0^{2\pi}\big[u^2u^a_\theta +v^2 r v^a_r+v^au^2+uvv_\theta^a +uv(ru^a_r-u^a)\big] d\theta dr,\label{energy estimate linear term}
\end{align}
where we used
$$\int_{r_0}^{1}\int_0^{2\pi}(u^au_\theta u+rv^au_ru)d\theta dr=\int_{r_0}^{1}\int_0^{2\pi}(u^av_\theta v+rv^av_rv)d\theta dr=0.$$

Next, we divide into four steps to estimate \eqref{energy estimate linear term} term by term.

{\bf 1)The term $u^2u^a_\theta$:} We divide it into the Euler part and Prandtl part. Notice that $\partial_\theta u_e=0$,
integrating by parts and using (\ref{estimate on Euler parts}), we obtain
\beno
&&\Big|\int_{r_0}^{1}\int_0^{2\pi} u^2\partial_\theta u^a_e d\theta dr\Big|=\Big|\int_{r_0}^{1}\int_0^{2\pi} u^2\partial_\theta (u^a_e-u_e)d\theta dr\Big|\\
&=&\Big|\int_{r_0}^{1}\int_0^{2\pi} 2uu_\theta(u^a_e-u_e)d\theta dr\Big|\leq C \varepsilon(\eta+\varepsilon)\|u_r\|_2\|u_\theta\|_2.
\eeno
Moreover, integrating by parts, using hardy inequality and (\ref{estimate on Prandtl parts}), we have
\begin{align*}
&\Big|\int_{r_0}^{1}\int_0^{2\pi} u^2\partial_\theta u^a_p d\theta dr\Big|=\Big|\int_{r_0}^{1}\int_0^{2\pi} 2uu_\theta u^a_p d\theta dr\Big|\\
=&\varepsilon\Big|\int_{r_0}^{1}\int_0^{2\pi} \frac{2u}{r-1}u_\theta Yu^a_p d\theta dr\Big|\leq C\varepsilon(\eta+\varepsilon)\|u_r\|_2\|u_\theta\|_2.
\end{align*}
Similarly, there holds
\begin{align*}
&\Big|\int_{r_0}^{1}\int_0^{2\pi} u^2\partial_\theta \hat{u}^a_p d\theta dr\Big|=\Big|\int_{r_0}^{1}\int_0^{2\pi} 2uu_\theta \hat{u}^a_p d\theta dr\Big|\\
=&\varepsilon\Big|\int_{r_0}^{1}\int_0^{2\pi} \frac{2u}{r-r_0}u_\theta Zu^a_p d\theta dr\Big|\leq C\varepsilon(\eta+\varepsilon)\|u_r\|_2\|u_\theta\|_2.
\end{align*}

Thus, we obtain
\beno
\Big|\int_{r_0}^{1}\int_0^{2\pi} u^2u_\theta^a d\theta dr\Big|\leq  C\varepsilon(\eta+\varepsilon)\|u_r\|_2\|u_\theta\|_2.
\eeno

{\bf 2)The term $v^2rv^a_r$ and $uvv^a_\theta$:} Combining (\ref{estimate on Euler parts}) and (\ref{estimate on Prandtl parts}), we get
\begin{align*}
\|v^a_r\|_\infty \leq C(\eta+\varepsilon).
\end{align*}
Thus
\beno
\Big|\int_{r_0}^{1}\int_0^{2\pi}v^2 r v^a_r d\theta dr\Big|=\Big|\int_{r_0}^{1}\int_0^{2\pi}(rv)^2 \frac{v^a_r}{r} d\theta dr\Big|\leq C(\eta+\varepsilon)
\int_{r_0}^{1}\int_0^{2\pi}u_\theta^2 d\theta dr.
\eeno
The same argument gives that
\beno
\Big|\int_{r_0}^{1}\int_0^{2\pi}v_\theta^a uv d\theta dr\Big|\leq C\varepsilon(\eta+\varepsilon) \|v_\theta\|_2\|u_r\|_2.
\eeno

{\bf 3)The term $v^au^2$:}
We decompose $u(\theta,r)=u_0(r)+\tilde{u}(\theta,r)$,  where $u_0(r)=\frac{1}{2\pi}\int_0^{2\pi}u(r,\theta)d\theta$, and notice that
$\int_0^{2\pi}v^a(r,\theta)d\theta=0, \ \forall r\in [r_0,1]$,  we obtain
\beno
\Big|\int_{r_0}^{1}\int_0^{2\pi}v^au^2 d\theta dr \Big|=\Big|\int_{r_0}^{1}\int_0^{2\pi}v^a(2u_0\tilde{u}+\tilde{u}^2)d\theta dr \Big|\leq
C\varepsilon(\eta+\varepsilon) (\|u_\theta\|_2\|u_r\|_2+ \|u_\theta\|_2^2),
\eeno
where we used $\|\tilde{u}\|_2\leq c\|u_\theta\|_2$.

{\bf 4)The term $uv(ru^a_r-u^a)$:} Finally, we deal with the remainder two terms which involve $u,v$ together
\beno
\int_{r_0}^{1}\int_0^{2\pi}\big[uvru^a_r-uvu^a\big] d\theta dr.
\eeno

We first consider the leading Euler flow $u_e(r)$. We decompose $u(\theta,r)=u_0(r)+\tilde{u}(\theta,r)$ as above, then
\begin{align*}
\Big|\int_{r_0}^{1}\int_0^{2\pi}uvru'_e d\theta dr\Big|=&\Big|\int_{r_0}^{1}\int_0^{2\pi}(u_0+\tilde{u})vru'_e d\theta dr\Big|\\
=&\Big|\int_{r_0}^{1}\int_0^{2\pi}\tilde{u}vru'_e d\theta dr\Big|\leq C \|u_\theta\|_2 \|v_\theta\|_2,
\end{align*}
where we used
$$\int_0^{2\pi}v(\theta,r)d\theta=0, \ \forall r\in [r_0, 1].$$

Moreover, by (\ref{estimate on Euler parts}), it's easy to obtain that
\beno
\Big|\int_{r_0}^{1}\int_0^{2\pi}uvr\partial_r(u_e^a-u_e)d\theta dr\Big|\leq C\varepsilon(\eta+\varepsilon) \|v_\theta\|_2\|u_r\|_2.
\eeno
Thus, we obtain that
\beno
\Big|\int_{r_0}^{1}\int_0^{2\pi}uvr\partial_ru_e^ad\theta dr\Big|\leq C\varepsilon(\eta+\varepsilon)\|v_\theta\|_2\|u_r\|_2+C \|u_\theta\|_2 \|v_\theta\|_2.
\eeno
The same argument gives
\beno
\Big|\int_{r_0}^{1}\int_0^{2\pi}v uu_e^a d\theta dr\Big|\leq C\varepsilon(\eta+\varepsilon) \|v_\theta\|_2\|u_r\|_2+C \|u_\theta\|_2 \|v_\theta\|_2.
\eeno

For the Prandtl part, using the Hardy inequality, divergence-free condition and (\ref{estimate on Prandtl parts}), we deduce that
\begin{align*}
\Big|\int_{r_0}^{1}\int_0^{2\pi}uvr\partial_ru_p^a d\theta dr\Big|=&\Big|\int_{r_0}^{1}\int_0^{2\pi}\frac{urv}{\varepsilon}\partial_Yu_p^a d\theta dr\Big|\\
=&\varepsilon\Big|\int_{r_0}^{1}\int_0^{2\pi}\frac{rv u}{(r-1)^2}Y^2\partial_Yu_p^a d\theta dr\Big|\\
\leq& C\varepsilon(\eta+\varepsilon)\Big(\int_{r_0}^{1}\int_0^{2\pi}\frac{u^2}{(r-1)^2}d\theta dr\Big)^{\frac12}\Big(\int_{r_0}^{1}\int_0^{2\pi}\frac{(rv)^2}{(r-1)^2}d\theta dr\Big)^{\frac12}\\
\leq& C\varepsilon(\eta+\varepsilon) \|u_\theta\|_2 \|u_r\|_2.
\end{align*}
The same argument gives
\beno
\Big|\int_{r_0}^{1}\int_0^{2\pi}uvr\partial_r\hat{u}_p^a d\theta dr\Big|\leq C\varepsilon(\eta+\varepsilon) \|u_\theta\|_2 \|u_r\|_2.
\eeno

Recalling that $\tilde{u}_p^a=\chi(r)u_p^a+(1-\chi(r))\hat{u}^a_p$ and the fast decay of $(u_p^a, \hat{u}^a_p)$ away from the boundary, we deduce that
\beno
\Big|\int_{r_0}^{1}\int_0^{2\pi}uvr\partial_r
\tilde{u}_p^a d\theta dr\Big|\leq \varepsilon(\eta+\varepsilon) \|u_\theta\|_2 \|u_r\|_2.
\eeno

Moreover, by the same argument, we can obtain
\beno
\Big|\int_{r_0}^{1}\int_0^{2\pi}uvu_p^a d\theta dr\Big|\leq C\varepsilon(\eta+\varepsilon) \|v_\theta\|_2 \|u_r\|_2,\\[5pt]
\Big|\int_{r_0}^{1}\int_0^{2\pi}uv\hat{u}_p^a d\theta dr\Big|\leq C\varepsilon(\eta+\varepsilon) \|v_\theta\|_2 \|u_r\|_2.
\eeno

Summing these estimates together, we obtain
\beno
\Big|\int_{r_0}^{1}\int_0^{2\pi}\big[uvru^a_r-v u u^a \big] d\theta dr\Big| \leq C\varepsilon(\eta+\varepsilon) (\|u_\theta\|_2+\|v_\theta\|_2) \|u_r\|_2+C \|u_\theta\|_2 \|v_\theta\|_2.
\eeno

Finally, summing the above four steps we obtain
\begin{align}\label{e:linear term in energy estimate}
\Big|\int_{r_0}^{1}\int_0^{2\pi}\big(uS_u +vS_v \big)d\theta dr\Big|&\leq C\varepsilon(\eta+\varepsilon) \|(u_\theta,v_\theta)\|_2\|u_r\|_2+C \|u_\theta\|_2 \|v_\theta\|_2+C\varepsilon(\eta+\varepsilon) \|u_\theta\|_2^2\nonumber\\&
\leq \frac{r_0\varepsilon^2}{4} \|u_r\|_2+C  \|(u_\theta,v_\theta)\|_2.
\end{align}

Based on the estimates (\ref{e:diffusion estimate in basic energy estimate}), (\ref{e:pressure estimate in basic energy estimate}) and (\ref{e:linear term in energy estimate}), we obtain (\ref{e:basic Energy estimate}).
\end{proof}

\subsubsection{Positivity estimate}
\indent

In this subsection, we establish the following positivity estimate.
\begin{lemma}
Let  $(u,v)$ be a smooth solution of (\ref{linear error equation}), then there exist $\varepsilon_0>0, \eta_0>0$ such that for any $\varepsilon\in(0,\varepsilon_0), \eta\in (0,\eta_0)$, there holds
\begin{align}\label{e:positivity estimate}
\|(u_\theta,v_\theta)\|^2_2\leq C\varepsilon^2(\eta+\e)\|u_r\|_2^2
+C\|(F_1, F_2)\|_2^2.
\end{align}
\end{lemma}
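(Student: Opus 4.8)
The plan is to test the two momentum equations in \eqref{linear error equation} against the divergence-free multiplier $(\phi,\psi):=\big(-\big(\tfrac{r^2v}{u^a}\big)_r,\big(\tfrac{rv}{u^a}\big)_\theta\big)$, integrate over $\Omega$, and add. Writing $w:=\tfrac{rv}{u^a}$ one has $(\phi,\psi)=(-(rw)_r,\,w_\theta)$, and since $\phi_\theta+(r\psi)_r=-(rw)_{r\theta}+(rw_\theta)_r=0$, integrating by parts and using $v|_{r=r_0}=v|_{r=1}=0$ (so that $\psi$ vanishes on both circles) makes the pressure contribution $\int_{r_0}^1\int_0^{2\pi}(p_\theta\phi+rp_r\psi)\,d\theta dr$ vanish identically. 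The multiplier is admissible because $u^a\ge d_0>0$ by \eqref{positive lower bound}, and it is tailored so that the convective terms $S_u,S_v$ generate a coercive quadratic form in $(u_\theta,v_\theta)$.

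First I would discard the genuinely lower-order pieces of $S_u,S_v$: the contributions carrying $u^a_\theta$, $v^a_\theta$, $v^a_r$ come with a prefactor $\lesssim\eta$ or $\varepsilon\eta$ by \eqref{estimate on Euler parts}--\eqref{estimate on Prandtl parts} and, after integration by parts, are absorbed into $C(\delta)\varepsilon^2(\eta+\varepsilon)\|u_r\|_2^2+C(\delta)\eta\|(u_\theta,v_\theta)\|_2^2$. For the principal part, repeatedly integrating by parts and invoking the incompressibility relation $u_\theta=-(rv)_r$ together with $\int_0^{2\pi}v\,d\theta=0$ yields, as in the computation following \eqref{simplied equation},
\[
-\int_{r_0}^{1}\int_0^{2\pi} S_u\Big(\tfrac{r^2v}{u^a}\Big)_r\,d\theta dr+\int_{r_0}^{1}\int_0^{2\pi} S_v\Big(\tfrac{rv}{u^a}\Big)_\theta\,d\theta dr\approx\int_{r_0}^{1}\int_0^{2\pi}\big(ru_\theta^2+rv_\theta^2\big)\,d\theta dr+\int_{r_0}^{1}\int_0^{2\pi}\frac{\mathcal{K}[u^a]}{u^a}(rv)^2\,d\theta dr,
\]
where $\mathcal{K}[w]:=rw_{rr}+w_r-\tfrac{w}{r}$. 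The whole problem is thus reduced to showing that the last ``curvature'' integral cannot overwhelm the coercive quantity $\|(u_\theta,v_\theta)\|_2^2$.

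The hard part is controlling $\int\frac{\mathcal{K}[u^a]}{u^a}(rv)^2$. Splitting $u^a$ into its Euler part $u_{e,\delta}$ and the Prandtl correctors, the boundary-layer piece is handled as in the introduction: writing $\partial_r=\varepsilon^{-1}\partial_Y$, using the weighted bounds \eqref{estimate on Prandtl parts} (which convert the $\varepsilon^{-1}$ into bounded $Y$-weights) and the Hardy inequality $\|\tfrac{rv}{r-1}\|_2\lesssim\|(rv)_r\|_2=\|u_\theta\|_2$, one obtains a bound $\lesssim\eta\|u_\theta\|_2^2$. The genuinely new difficulty is the Euler part: for $u_{e,\delta}=a_0r+\tfrac{b_0}{r}+\delta\tilde{u}_e$ in \eqref{Taylor-Couette flow} one computes $\mathcal{K}[u_{e,\delta}]=2\delta\tilde{c}_0\neq0$ (only the $r\ln r$ mode survives), so, unlike the Taylor--Couette case, this term does not vanish. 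I would treat it by the substitution $g:=\tfrac{rv}{u_{e,\delta}}$, which vanishes on $r=r_0,1$ and has zero $\theta$-average, transforming the coercive part and the curvature term into
\[
\int_{r_0}^{1}\int_0^{2\pi}\Big(r\big((u_{e,\delta}g)_r\big)^2+\frac{u_{e,\delta}^2}{r}g_\theta^2+2\delta\tilde{c}_0\,u_{e,\delta}\,g^2\Big)\,d\theta dr,
\]
a form analogous to the one used in the solvability of \eqref{Euler equation normal}. Coercivity then follows by combining the angular Poincar\'e inequality $\int_0^{2\pi}g_\theta^2\,d\theta\ge\int_0^{2\pi}g^2\,d\theta$ with the radial term $r((u_{e,\delta}g)_r)^2\ge\tfrac12 ru_{e,\delta}^2g_r^2-Cg^2$ and the radial Hardy/Poincar\'e inequality (valid since $g|_{r=r_0,1}=0$), using the sign constraint forced by $\tilde{u}_e\ge0$ together with $u_{e,\delta}\ge d_0$ so that the negative potential $2\delta\tilde{c}_0u_{e,\delta}g^2$ is dominated. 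This step, which leaves a positive multiple of $\|(u_\theta,v_\theta)\|_2^2$ (with constant depending on $\delta$ and $d_0$), is where I expect the main obstacle to lie.

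Finally, the diffusion and forcing terms are moved to the right-hand side. The viscous contribution is $O(\varepsilon^2)$; since $\phi,\psi$ involve $1/u^a$, whose $\theta$-oscillation is $\lesssim\eta$ and whose radial derivatives across the Prandtl layer carry the extra $\varepsilon$-smallness, integration by parts bounds it by $C(\delta)\varepsilon^2(\eta+\varepsilon)\|u_r\|_2^2$ plus a small multiple of $\|(u_\theta,v_\theta)\|_2^2$. The forcing term $\int_{r_0}^1\int_0^{2\pi}(F_u\phi+F_v\psi)\,d\theta dr$ is estimated by $\|(F_u,F_v)\|_2$ times $\|(\phi,\psi)\|_2\lesssim\|(u_\theta,v_\theta)\|_2$ (again via $u_\theta=-(rv)_r$ and $\int_0^{2\pi}v\,d\theta=0$), and absorbed by Young's inequality into $C\|(F_u,F_v)\|_2^2$ plus a small multiple of the coercive quantity. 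Collecting all terms and moving the small multiples to the left yields \eqref{e:positivity estimate}.
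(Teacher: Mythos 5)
Your overall strategy coincides with the paper's: the same multiplier $\big(-(\tfrac{r^2v}{u^a})_r,(\tfrac{rv}{u^a})_\theta\big)$, vanishing of the pressure term, absorption of the $u^a_\theta$, $v^a_\theta$, $v^a_r$ contributions and of the Prandtl part of the curvature integral via Hardy and the weighted bounds, and finally the substitution $g=\tfrac{rv}{u_{e,\delta}}$. However, there is a genuine gap at the decisive coercivity step. After the substitution you arrive at the correct quantity
\begin{align*}
Q[g]=\int_{r_0}^{1}\int_0^{2\pi}\Big(r\big((u_{e,\delta}g)_r\big)^2+\frac{u_{e,\delta}^2}{r}g_\theta^2+2\delta\tilde{c}_0\,u_{e,\delta}\,g^2\Big)\,d\theta dr,
\end{align*}
but you then propose the lossy bound $r((u_{e,\delta}g)_r)^2\ge\tfrac12 ru_{e,\delta}^2g_r^2-Cg^2$ (with $C\sim \|ru_{e,\delta}'^{\,2}\|_\infty$) and ask Poincar\'{e}/Hardy to dominate both this loss and the potential $2\delta\tilde{c}_0u_{e,\delta}g^2$. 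Note that $\tilde{u}_e\ge0$ with zero boundary values forces $\tilde{c}_0\le 0$ (maximum principle applied to $r\tilde u_e''+\tilde u_e'-\tilde u_e/r=2\tilde c_0$), so the potential is genuinely negative; the sign constraint works against you, not for you. The negative terms you must absorb are of size $\big(\|ru_{e,\delta}'^{\,2}\|_\infty+2\delta|\tilde{c}_0|\,\|u_{e,\delta}\|_\infty\big)\|g\|_2^2$, while the Poincar\'{e}/Hardy gains are controlled only by $d_0$ and $r_0$ (the minimum of $u_{e,\delta}$ sits near the boundary, where $\tilde u_e=0$, so $d_0\approx\min(\alpha,\beta)$ stays fixed). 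Replacing $\tilde u_e$ by $M\tilde u_e$ with $M$ large keeps every hypothesis of the lemma ($\tilde u_e\ge 0$, zero boundary values, $u_{e,\delta}>0$) but makes your lower bound negative, so your chain of inequalities proves nothing for such admissible data, even though the conclusion \eqref{e:positivity estimate} is still true there.

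The repair is to keep the exact algebraic structure rather than Young-type bounds, which is what the paper does. Expand $r((u_{e,\delta}g)_r)^2$ and integrate the cross term by parts:
\begin{align*}
\int_{r_0}^{1} 2ru_{e,\delta}u_{e,\delta}'\,g\,g_r\,dr=-\int_{r_0}^{1}\big(u_{e,\delta}'u_{e,\delta}+ru_{e,\delta}''u_{e,\delta}+r(u_{e,\delta}')^2\big)g^2\,dr,
\end{align*}
and observe that, since $2\delta\tilde{c}_0=ru_{e,\delta}''+u_{e,\delta}'-\tfrac{u_{e,\delta}}{r}$, one has $2\delta\tilde{c}_0u_{e,\delta}g^2=u_{e,\delta}(ru_{e,\delta}''+u_{e,\delta}')g^2-\tfrac{u_{e,\delta}^2}{r}g^2$. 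The terms involving $ru_{e,\delta}''+u_{e,\delta}'$ then cancel identically, leaving
\begin{align*}
Q[g]=\int_{r_0}^{1}\int_0^{2\pi}\Big(ru_{e,\delta}^2g_r^2+\frac{u_{e,\delta}^2}{r}g_\theta^2-\frac{u_{e,\delta}^2}{r}g^2\Big)\,d\theta dr,
\end{align*}
in which the only negative term has coefficient exactly matching the angular Poincar\'{e} inequality $\int_0^{2\pi}g_\theta^2\,d\theta\ge\int_0^{2\pi}g^2\,d\theta$ (valid since $g$ has zero angular mean, as $u_{e,\delta}$ is radial and $\int_0^{2\pi}v\,d\theta=0$). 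Splitting fractions of the $g_r^2$ and $g_\theta^2$ terms, as in the paper and in the uniqueness argument for \eqref{Euler equation normal}, then yields $Q[g]\ge\widetilde{C}\|(u_\theta,v_\theta)\|_2^2$ for any strictly positive background of the form \eqref{Taylor-Couette flow}, with no size restriction on $(\tilde{a}_0,\tilde{b}_0,\tilde{c}_0)$. With this one step replaced, the rest of your argument (lower-order terms, diffusion, forcing, absorption by Young) goes through as you describe.
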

\begin{proof}
Multipling the first equation by $-\big(\frac{r^2v}{u^a}\big)_r$ and the second equation by $\big(\frac{r v}{u^a}\big)_\theta$, integrating in $\Omega$ and summing two terms together, we arrive at
\begin{align}\label{p:positivity estimate}
&-\int_{r_0}^{1}\int_0^{2\pi}\Big[-\varepsilon^2\big(r u_{rr}
+\frac{u_{\theta\theta}}{r}+2\frac{v_{\theta}}{r}+u_r-\frac{u}{r}\big)
+p_\theta+S_u\Big]\Big(\frac{r^2v}{u^a}\Big)_rd\theta dr\nonumber\\
&+\int_{r_0}^{1}\int_0^{2\pi}\Big[-\varepsilon^2 \big(rv_{rr}+\frac{v_{\theta\theta}}{r}-2\frac{u_{\theta}}{r}+v_r
-\frac{v}{r}\big)+rp_r+S_v\Big]\Big(\frac{rv}{u^a}\Big)_\theta d\theta dr\nonumber\\
=&\int_{r_0}^{1}\int_0^{2\pi}\Big[-F_1\Big(\frac{r^2v}{u^a}\Big)_r+F_2\Big(\frac{rv}{u^a}\Big)_\theta\Big]d\theta dr.
\end{align}
Firstly, we choose $\varepsilon_0>0, \eta_0>0$ such that (\ref{positive lower bound}) holds.

{\bf Positivity term:} We first deal with these terms which are related to $S_u, S_v$.
Recall that
\begin{align*}
S_u:&=u^au_\theta+v^aru_r+uu^a_\theta+vru^a_r+v^au+vu^a,\\[5pt]
S_v:&=u^av_\theta+v^arv_r+uv^a_\theta+vrv^a_r-2uu^a,
\end{align*}
we first handle the following terms
\beno
&&-\int_{r_0}^{1}\int_0^{2\pi}\big(u^au_\theta+vru^a_r+vu^a\big)\Big(\frac{r^2v}{u^a}\Big)_rd\theta dr\\
&&\quad \quad +\int_{r_0}^{1}\int_0^{2\pi}\big(u^av_\theta-2uu^a\big)\Big(\frac{rv}{u^a}\Big)_\theta d\theta dr:=I_1+I_2.
\eeno
By (\ref{estimate on Euler parts}),(\ref{estimate on Prandtl parts}) and (\ref{positive lower bound}), we deduce that
\begin{align*}
I_2:=&\int_{r_0}^{1}\int_0^{2\pi}\Big(rv_\theta^2-2ru v_\theta-\frac{vv_\theta r u_\theta^a}{u^a}+\frac{2uvr  u_\theta^a}{u^a}\Big)d\theta dr\\
=&\int_{r_0}^{1}\int_0^{2\pi}\Big(rv_\theta^2-\frac{vv_\theta r u_\theta^a}{u^a}+\frac{2uv r u_\theta^a}{u^a}\Big)d\theta dr\\
\geq& (1-C\eta-C\varepsilon)\int_{r_0}^{1}\int_0^{2\pi}rv_\theta^2d\theta dr+\int_{r_0}^{1}\int_0^{2\pi}\frac{2uvr  u_\theta^a}{u^a}d\theta dr,
\end{align*}
where we used
\beno
\int_{r_0}^{1}\int_0^{2\pi}-2ru v_\theta drd\theta=\int_{r_0}^{1}\int_0^{2\pi}2u_\theta rv drd\theta=-\int_{r_0}^{1}\int_0^{2\pi}2(rv)_r rv drd\theta=0.
\eeno

Moreover, by the Hardy inequality, (\ref{estimate on Euler parts}), (\ref{estimate on Prandtl parts}) and (\ref{positive lower bound}), there holds
\begin{align*}
&\Big|\int_{r_0}^{1}\int_0^{2\pi}\frac{2uvr  u_\theta^a}{u^a}d\theta dr\Big|\\[5pt]
\leq&\varepsilon\Big|\int_{r_0}^{1}\int_0^{2\pi}\frac{2uvr Y\partial_\theta u_p^a}{(r-1)u^a}d\theta dr\Big|+\varepsilon\Big|\int_{r_0}^{1}\int_0^{2\pi}\frac{2uvr Z\partial_\theta \hat{u}_p^a}{(r-r_0)u^a}d\theta dr\Big|+\Big|\int_{r_0}^{1}\int_0^{2\pi}\frac{2uvr \partial_\theta u_e^a}{u^a}d\theta dr\Big|\\
\leq& C\varepsilon(\eta+\varepsilon) \|u_r\|_2 \|rv_\theta\|_2+\Big|\int_{r_0}^{1}\int_0^{2\pi}\frac{2uvr \partial_\theta u_e^a}{u^a}d\theta dr\Big|\\
\leq& C(\eta+\varepsilon) \int_{r_0}^{1}\int_0^{2\pi}rv_\theta^2d\theta dr+C\varepsilon^2(\eta+\varepsilon)\|u_r\|_2^2+\Big|\int_{r_0}^{1}\int_0^{2\pi}\frac{2uv \partial_\theta u_e^a}{u^a}d\theta dr\Big|.
\end{align*}
By (\ref{estimate on Euler parts}), it is easy to get
\beno
\Big|\int_{r_0}^{1}\int_0^{2\pi}\frac{2uv \partial_\theta u_e^a}{u^a}d\theta dr\Big|\leq C\varepsilon(\eta+\varepsilon)\|u_r\|_2\|v_\theta\|_2.
\eeno
Thus,
\begin{align*}
I_2\geq \big(1-C\eta-C\varepsilon\big) \int_{r_0}^{1}\int_0^{2\pi}rv_\theta^2d\theta dr-C\varepsilon^2(\eta+\varepsilon)\|u_r\|_2^2.
\end{align*}

For $I_1$, we first decompose it
\begin{align*}
I_1=&-\int_{r_0}^{1}\int_0^{2\pi}\Big[u^au_\theta+rvu^a_r+vu^a\Big]\Big(-\frac{r}{u^a}u_\theta+\Big(\frac{r}{u^a}\Big)_rrv\Big)d\theta dr\\
=&\int_{r_0}^{1}\int_0^{2\pi}ru^2_\theta d\theta dr+\int_{r_0}^{1}\int_0^{2\pi}r^2v\frac{u^a_r}{u^a}u_\theta d\theta dr+\int_{r_0}^{1}\int_0^{2\pi}rvu_\theta d\theta dr\\
&-\int_{r_0}^{1}\int_0^{2\pi}u^a\Big(\frac{r}{u^a}\Big)_r u_\theta rv d\theta dr-\int_{r_0}^{1}\int_0^{2\pi}u^a_r\Big(\frac{r}{u^a}\Big)_r (rv)^2 d\theta dr\\
&-\int_{r_0}^{1}\int_0^{2\pi}\frac{u^a}{r}\Big(\frac{r}{u^a}\Big)_r (rv)^2 d\theta dr
:=\sum_{i=1}^6 I_{1i}.
\end{align*}
Then, integrating by parts and using the divergence-free condition, we deduce that
\begin{align*}
I_{12}=&\int_{r_0}^{1}\int_0^{2\pi}r^2v\frac{u^a_r}{u^a}u_\theta d\theta dr\\
=&-\int_{r_0}^{1}\int_0^{2\pi}r^2v\frac{u^a_r}{u^a}(rv)_rd\theta dr=\frac12\int_{r_0}^{1}\int_0^{2\pi}\Big(\frac{ru^a_r}{u^a}\Big)_r(rv)^2 d\theta dr,\\
I_{13}=&\int_{r_0}^{1}\int_0^{2\pi}rvu_\theta d\theta dr=-\int_{r_0}^{1}\int_0^{2\pi}rv(rv)_rd\theta dr=0,\\
I_{14}=&-\int_{r_0}^{1}\int_0^{2\pi}u^a\Big(\frac{r}{u^a}\Big)_r u_\theta rv d\theta dr\\
=&\int_{r_0}^{1}\int_0^{2\pi}u^a\Big(\frac{r}{u^a}\Big)_r (rv)_r rv d\theta dr=-\frac12\int_{r_0}^{1}\int_0^{2\pi}\Big[u^a\Big(\frac{r}{u^a}\Big)_r\Big]_r  (rv)^2 d\theta dr.
\end{align*}

Finally, summing the terms $I_{11},\cdots, I_{16}$, we obtain
\begin{align*}
I_1=&\int_{r_0}^{1}\int_0^{2\pi}ru^2_\theta d\theta dr\\
&+\int_{r_0}^{1}\int_0^{2\pi}\Big[\frac12\Big(\frac{ru^a_r}{u^a}\Big)_r
-\frac12\Big[u^a\Big(\frac{r}{u^a}\Big)_r\Big]_r-u^a_r\Big(\frac{r}{u^a}\Big)_r-\frac{u^a}{r}\Big(\frac{r}{u^a}\Big)_r\Big](rv)^2d\theta dr.
\end{align*}
Direct computation gives
\begin{align}
&\frac12\Big(\frac{ru^a_r}{u^a}\Big)_r
-\frac12\Big[u^a\Big(\frac{r}{u^a}\Big)_r\Big]_r-u^a_r\Big(\frac{r}{u^a}\Big)_r-\frac{u^a}{r}\Big(\frac{r}{u^a}\Big)_r\nonumber\\[5pt]
=&\Big(\frac{ru^a_r}{u^a}\Big)_r-\Big(u^a_r+\frac{u^a}{r}\Big)\Big(\frac{r}{u^a}\Big)_r\nonumber\\[5pt]
=&\frac{u^a(u^a_r+ru^a_{rr})-r(u^a_r)^2}{(u^a)^2}-\frac{r}{(u^a)^2}\Big[\Big(\frac{u^a}{r}\Big)^2-(u^a_r)^2\Big]\nonumber\\
=&\frac{1}{u^a}\Big(ru^a_{rr}+u^a_r-\frac{u^a}{r}\Big).\nonumber
\end{align}
Notice that $u^a=u^a_e+\tilde u_p^a$, there holds
\begin{align*}
I_1=&\int_{r_0}^{1}\int_0^{2\pi}ru^2_\theta d\theta dr+\int_{r_0}^{1}\int_0^{2\pi}\frac{1}{u^a}\Big(ru^a_{rr}+u^a_r-\frac{u^a}{r}\Big)(rv)^2d\theta dr
\\=&\int_{r_0}^{1}\int_0^{2\pi}ru^2_\theta d\theta dr+\int_{r_0}^{1}\int_0^{2\pi}\frac{1}{u^a}\Big(\partial_r u^a_e+r\partial{rr}u^a_e-\frac{u^a_e}{r}\Big)(rv)^2d\theta dr\\
&+\int_{r_0}^{1}\int_0^{2\pi}\frac{1}{u^a}\Big(\partial_r\tilde{u}^a_p+r\partial{rr}\tilde{u}^a_p-\frac{\tilde{u}_p^a}{r}\Big)(rv)^2d\theta dr
\\=&\int_{r_0}^{1}\int_0^{2\pi}ru^2_\theta d\theta dr+\int_{r_0}^{1}\int_0^{2\pi}\frac{1}{u_e}\Big(\partial_r u^a_e+r\partial{rr}u^a_e-\frac{u^a_e}{r}\Big)(rv)^2d\theta dr\\
&+\int_{r_0}^{1}\int_0^{2\pi}\Big(\frac{1}{u^a}-\frac{1}{u_e}\Big)\Big(\partial_r u^a_e+r\partial{rr}u^a_e-\frac{u^a_e}{r}\Big)(rv)^2d\theta dr\\
&+\int_{r_0}^{1}\int_0^{2\pi}\frac{1}{u^a}\Big(\partial_r\tilde{u}^a_p+r\partial{rr}\tilde{u}^a_p-\frac{\tilde{u}_p^a}{r}\Big)(rv)^2d\theta dr.
\end{align*}

By (\ref{estimate on Euler parts}), (\ref{estimate on Prandtl parts}) and (\ref{positive lower bound}), we deduce that $|\frac{1}{u^a}-\frac{1}{u_e}|\leq C(\eta+\varepsilon)$, hence
\beno
\Big|\int_{r_0}^{1}\int_0^{2\pi}\Big(\frac{1}{u^a}-\frac{1}{u_e}\Big)\Big(\partial_r u^a_e+r\partial{rr}u^a_e-\frac{u^a_e}{r}\Big)(rv)^2d\theta dr\Big|\leq C(\eta+\varepsilon) \|rv\|_2^2\leq C(\eta+\varepsilon) \|u_\theta\|_2^2.
\eeno

Moreover, by the Hardy inequality, (\ref{estimate on Prandtl parts}) and (\ref{positive lower bound}), we deduce that
\begin{align*}
&\Big|\int_{r_0}^{1}\int_0^{2\pi}\Big[\frac{1}{u^a}\Big(\partial_ru^a_p+r\partial{rr}u^a_p-\frac{u_p^a}{r}\Big)(rv)^2d\theta dr\Big|\\
=&\Big|\int_{r_0}^{1}\int_0^{2\pi}\Big[\frac{(r-1)^2}{u^a}\Big(\partial_ru^a_p+r\partial_{rr}u^a_p
-\frac{u_p^a}{r}\Big)\frac{(rv)^2}{(r-1)^2}d\theta dr\Big|\\
=&\Big|\int_{r_0}^{1}\int_0^{2\pi}\Big[\frac{Y^2}{u^a}\Big(\varepsilon\partial_Yu^a_p+r\partial_{YY}u^a_p
-\varepsilon^2\frac{u_p^a}{r}\Big)\frac{(rv)^2}{(r-1)^2}d\theta dr\Big|\leq C(\eta+\varepsilon)\|u_\theta\|_2^2.
\end{align*}
Similarly, there holds
\begin{align*}
&\Big|\int_{r_0}^{1}\int_0^{2\pi}\Big[\frac{1}{u^a}\Big(\partial_r\hat{u}^a_p+r\partial{rr}\hat{u}^a_p-\frac{\hat{u}_p^a}{r}\Big)(rv)^2d\theta dr\Big|\\
=&\Big|\int_{r_0}^{1}\int_0^{2\pi}\Big[\frac{(r-r_0)^2}{u^a}\Big(\partial_r\hat{u}^a_p+r\partial_{rr}\hat{u}^a_p
-\frac{\hat{u}_p^a}{r}\Big)\frac{(rv)^2}{(r-r_0)^2}d\theta dr\Big|\leq C(\eta+\varepsilon) \|u_\theta\|_2^2.
\end{align*}
Recall that $\tilde{u}_p^a=\chi(r)u_p^a+(1-\chi(r))\hat{u}^a_p$ and the fast decay of $(u_p^a, \hat{u}^a_p)$ away from the boundary, we deduce that
\beno
\Big|\int_{r_0}^{1}\int_0^{2\pi}\Big[\frac{1}{u^a}\Big(\partial_r\tilde{u}^a_p+r\partial{rr}\tilde{u}^a_p-\frac{\tilde{u}_p^a}{r}\Big)(rv)^2d\theta dr\Big|\leq C(\eta+\varepsilon) \|u_\theta\|_2^2.
\eeno
Thus, we deduce that
\begin{align*}
I_1\geq &\int_{r_0}^{1}\int_0^{2\pi}ru^2_\theta d\theta dr+\int_{r_0}^{1}\int_0^{2\pi}\frac{1}{u_e}\Big(\partial_r u^a_e+r\partial{rr}u^a_e-\frac{u^a_e}{r}\Big)(rv)^2d\theta dr-C(\eta+\varepsilon) \|u_\theta\|_2^2\\
\geq & \int_{r_0}^{1}\int_0^{2\pi}ru^2_\theta d\theta dr+\int_{r_0}^{1}\int_0^{2\pi}\frac{1}{u_e}\Big(\partial_r u_e+r\partial{rr}u_e-\frac{u_e}{r}\Big)(rv)^2d\theta dr-C(\eta+\e) \|u_\theta\|_2^2,
\end{align*}
where we used the fact $u_e^a(\theta,r):=u_e(r)+\sum_{i=1}^3\varepsilon^i \tilde{u}_e^{(i)}(\theta,r)$.

Finally, summing the estimates for $I_1$ and $I_2$, we obtain
\begin{align*}
-&\int_{r_0}^{1}\int_0^{2\pi}\big(u^au_\theta+vru^a_r+vu^a\big)\Big(\frac{r^2v}{u^a}\Big)_rd\theta dr
+\int_{r_0}^{1}\int_0^{2\pi}\big(u^av_\theta-2uu^a\big)\Big(\frac{rv}{u^a}\Big)_\theta d\theta dr\\
&\geq  \Big(1-C\eta-C\e\Big)\int_{r_0}^{1}\int_0^{2\pi}r(u^2_\theta+v^2_\theta) drd\theta\\
&\quad +\int_{r_0}^{1}\int_0^{2\pi}\frac{1}{u_e}\Big(\partial_r u_e+r\partial{rr}u_e-\frac{u_e}{r}\Big)(rv)^2d\theta dr
-C\varepsilon^2(\eta+\varepsilon)\|u_r\|_2^2.
\end{align*}

Next, we deal with the remainder terms in $S_u$
\begin{align*}
&-\int_{r_0}^{1}\int_0^{2\pi}\Big[v^aru_r+uu^a_\theta+v^au\Big]\Big(\frac{r^2v}{u^a}\Big)_rd\theta dr\\
=&-\int_{r_0}^{1}\int_0^{2\pi}\Big[rv^au_r+uu^a_\theta+ v^au\Big]\Big(\Big(\frac{r}{u^a}\Big)_r rv-\frac{r}{u^a}u_\theta\Big)d\theta dr\\
=&-\int_{r_0}^{1}\int_0^{2\pi}rv^au_r\Big(\Big(\frac{r}{u^a}\Big)_r rv-\frac{r}{u^a}u_\theta\Big)d\theta dr
-\int_{r_0}^{1}\int_0^{2\pi}uu^a_\theta\Big(\Big(\frac{r}{u^a}\Big)_r rv-\frac{r}{u^a}u_\theta\Big)d\theta dr\\
&-\int_{r_0}^{1}\int_0^{2\pi} v^au\Big(\Big(\frac{r}{u^a}\Big)_r rv-\frac{r}{u^a}u_\theta\Big)d\theta dr:=I^u_1+I^u_2+I^u_3
\end{align*}
and the remainder terms in $S_v$
\beno
&&\int_{r_0}^{1}\int_0^{2\pi}\Big[v^arv_r+uv^a_\theta+vrv^a_r\Big]\Big(\frac{r v}{u^a}\Big)_\theta d\theta dr\\
&=&\int_{r_0}^{1}\int_0^{2\pi}\Big[rv^av_r+uv^a_\theta+rv_r^av\Big]\Big(\frac{r v_\theta}{u^a}-\frac{rv  u_\theta^a}{(u^a)^2}\Big)d\theta dr\\
&=&\int_{r_0}^{1}\int_0^{2\pi}rv^av_r\Big(\frac{r v_\theta}{u^a}-\frac{rv  u_\theta^a}{(u^a)^2}\Big)d\theta dr+
\int_{r_0}^{1}\int_0^{2\pi}uv^a_\theta\Big(\frac{r v_\theta}{u^a}-\frac{rv  u_\theta^a}{(u^a)^2}\Big)d\theta dr\\
&&+\int_{r_0}^{1}\int_0^{2\pi}rv_r^av\Big(\frac{r v_\theta}{u^a}-\frac{rv  u_\theta^a}{(u^a)^2}\Big)d\theta dr:=I^v_1+I^v_2+I^v_3.
\eeno
{\bf The remainder terms in $S_u$:}  By (\ref{estimate on Euler parts}), (\ref{estimate on Prandtl parts}) and (\ref{positive lower bound}), we have
\begin{align*}
\Big|\int_{r_0}^{1}\int_0^{2\pi}rv^au_r\frac{r u_\theta}{u^a} d\theta dr\Big|\leq& C\varepsilon(\eta+\varepsilon)\|u_r\|_2 \|u_\theta\|_2, \\
\Big|\int_{r_0}^{1}\int_0^{2\pi}rv^au_r\Big(\frac{r}{u^a}\Big)_r rvd\theta dr\Big|=&\Big|\int_{r_0}^{1}\int_0^{2\pi}\Big(\frac{r^2vv^au_r}{u^a}-\frac{r^2vv^au_r ru^a_r}{(u^a)^2}\Big)d\theta dr\Big|\\
\leq&C\varepsilon(\eta+\varepsilon)\|u_r\|_2 \|u_\theta\|_2+\Big|\int_{r_0}^{1}\int_0^{2\pi}\frac{r^2vv^au_r ru^a_r}{(u^a)^2}d\theta dr\Big|.
\end{align*}
Moreover, by the Hardy inequality, (\ref{estimate on Euler parts}), (\ref{estimate on Prandtl parts}) and (\ref{positive lower bound}), we deduce
\begin{align*}
\Big|\int_{r_0}^{1}\int_0^{2\pi}\frac{r^2vv^au_r r\partial_ru^a_p}{(u^a)^2}d\theta dr\Big|=&\Big|\int_{r_0}^{1}\int_0^{2\pi}\frac{r^2vv^au_r r(r-1)\partial_ru^a_p}{(r-1)(u^a)^2}d\theta dr\Big|\\
=&\Big|\int_{r_0}^{1}\int_0^{2\pi}\frac{r^2vv^au_r rY\partial_Yu^a_p}{(r-1)(u^a)^2}d\theta dr\Big|\leq C\varepsilon(\eta+\varepsilon)\|u_r\|_2 \|u_\theta\|_2,\\
\Big|\int_{r_0}^{1}\int_0^{2\pi}\frac{r^2vv^au_r r\partial_r\hat{u}^a_p}{(u^a)^2}d\theta dr\Big|=&\Big|\int_{r_0}^{1}\int_0^{2\pi}\frac{r^2vv^au_r r(r-r_0)\partial_r\hat{u}^a_p}{(r-r_0)(u^a)^2}d\theta dr\Big|\leq C\varepsilon(\eta+\varepsilon)\|u_r\|_2 \|u_\theta\|_2,\\
\Big|\int_{r_0}^{1}\int_0^{2\pi}\frac{r^2vv^au_r r\partial_ru^a_e}{(u^a)^2}d\theta dr\Big|\leq& C\varepsilon(\eta+\varepsilon)\|u_r\|_2 \|u_\theta\|_2,
\end{align*}
thus, there holds
\begin{align*}
\Big|\int_{r_0}^{1}\int_0^{2\pi}\frac{r^2vv^au_r r\partial_ru^a}{(u^a)^2}d\theta dr\Big|\leq C\varepsilon(\eta+\varepsilon)\|u_r\|_2 \|u_\theta\|_2.
\end{align*}

Finally, we obtain
\beno
|I^u_1|\leq (\eta+\varepsilon)\int_{r_0}^{1}\int_0^{2\pi}ru_\theta^2 drd\theta+ C\varepsilon^2 (\eta+\varepsilon)\int_{r_0}^{1}\int_0^{2\pi}u_r^2 drd\theta .
\eeno
The same argument gives
\beno
|I^u_3|\leq (\eta+\varepsilon)\int_{r_0}^{1}\int_0^{2\pi}ru_\theta^2 drd\theta+ C\varepsilon^2 (\eta+\varepsilon)\int_{r_0}^{1}\int_0^{2\pi}u_r^2 drd\theta .
\eeno

For $I^u_2$, the same argument as above gives
\beno
\Big|\int_{r_0}^{1}\int_0^{2\pi}uu^a_\theta\frac{r u_\theta}{u^a}d\theta dr\Big|\leq C\varepsilon(\eta+\varepsilon)\|u_r\|_2 \|u_\theta\|_2.
\eeno

Moreover, direct computation gives
\beno
\int_{r_0}^{1}\int_0^{2\pi}uu^a_\theta\Big(\frac{r}{u^a}\Big)_r rvd\theta dr=\int_{r_0}^{1}\int_0^{2\pi}
\Big(\frac{uu^a_\theta rv}{u^a}-\frac{uu^a_\theta rv \cdot ru^a_r}{(u^a)^2}\Big)d\theta dr.
\eeno
The same argument as above gives
\beno
\Big|\int_{r_0}^{1}\int_0^{2\pi}\frac{uu^a_\theta rv}{u^a}d\theta dr\Big|\leq C\varepsilon(\eta+\varepsilon)\|u_r\|_2 \|u_\theta\|_2.
\eeno
By the Hardy inequality, (\ref{estimate on Euler parts}), (\ref{estimate on Prandtl parts}) and (\ref{positive lower bound}), we deduce that
\begin{align*}
\Big|\int_{r_0}^{1}\int_0^{2\pi}
\frac{uu^a_\theta rv\cdot r\partial_ru^a_p}{(u^a)^2}d\theta dr\Big|=&\Big|\int_{r_0}^{1}\int_0^{2\pi}\frac{u}{r-1}u^a_\theta \frac{rv}{r-1} \frac{r(r-1)^2\partial_ru^a_p}{(u^a)^2}d\theta dr\Big|\\
=&\varepsilon\Big|\int_{r_0}^{1}\int_0^{2\pi}\frac{u}{r-1}u^a_\theta \frac{rv}{r-1} \frac{rY^2\partial_Yu^a_p}{(u^a)^2}d\theta dr\Big|\leq C\varepsilon(\eta+\varepsilon)\|u_r\|_2 \|u_\theta\|_2,\\
\Big|\int_{r_0}^{1}\int_0^{2\pi}
\frac{uu^a_\theta rv \cdot r\partial_r\hat{u}^a_p}{(u^a)^2}d\theta dr\Big|=&\Big|\int_{r_0}^{1}\int_0^{2\pi}\frac{u}{r-r_0}u^a_\theta \frac{rv}{r-r_0} \frac{r(r-r_0)^2\partial_r\hat{u}^a_p}{(u^a)^2}d\theta dr\Big|\\[5pt]
\leq& C\varepsilon(\eta+\varepsilon)\|u_r\|_2 \|u_\theta\|_2.
\end{align*}

Furthermore, using the above argument and noting the fact $\|\partial_\theta u_e^a\|_2\leq C\varepsilon (\eta+\varepsilon)$,  there holds
\beno
\Big|\int_{r_0}^{1}\int_0^{2\pi}
\frac{uu^a_\theta rv\cdot r\partial_ru^a_e}{(u^a)^2}d\theta dr\Big|
\leq C\varepsilon(\eta+\varepsilon)\|u_r\|_2 \|u_\theta\|_2.
\eeno

Finally, we obtain
\beno
|I^u_2|\leq (\eta+\varepsilon)\int_{r_0}^{1}\int_0^{2\pi}r(u_\theta^2+v^2_\theta)drd\theta+ C\varepsilon^2 (\eta+\varepsilon)\int_{r_0}^{1}\int_0^{2\pi}u_r^2 drd\theta .
\eeno

Thus, there holds
\begin{align}
&\Big|\int_{r_0}^{1}\int_0^{2\pi}\Big[v^aru_r+uu^a_\theta+v^au\Big]\Big(\frac{r^2v}{u^a}\Big)_rd\theta dr\Big|\nonumber\\
\leq& C(\eta+\varepsilon)\int_{r_0}^{1}\int_0^{2\pi}r(u_\theta^2+v^2_\theta)drd\theta+ C\varepsilon^2(\eta+\varepsilon)\int_{r_0}^{1}\int_0^{2\pi}u_r^2 drd\theta.\label{remainder terms in Su}
\end{align}

{\bf The remainder terms in $S_v$:}
Using $v^a=O(\varepsilon(\eta+\varepsilon))$, we deduce that
\beno
|I_1^v|\leq C\varepsilon (\eta+\varepsilon) \|u_\theta\|_2 \|v_\theta\|_2, \ |I_2^v|\leq C\varepsilon (\eta+\varepsilon) \|u_r\|_2 \|v_\theta\|_2.
\eeno

For $I^v_3$, using the Hardy inequality, (\ref{estimate on Euler parts}), (\ref{estimate on Prandtl parts}) and (\ref{positive lower bound}), we deduce
\begin{align*}
\Big|\int_{r_0}^{1}\int_0^{2\pi}r\partial_rv_e^av\frac{r v_\theta}{u^a}d\theta dr\Big|\leq& C\varepsilon (\eta+\varepsilon) \|u_\theta\|_2 \|v_\theta\|_2,\\
\Big|\int_{r_0}^{1}\int_0^{2\pi}r\partial_rv_p^av\frac{r v_\theta}{u^a}d\theta dr\Big|
=&\Big|\int_{r_0}^{1}\int_0^{2\pi}(r-1)\partial_rv_p^a\frac{rv}{r-1}\frac{r v_\theta}{u^a}d\theta dr\Big|\\
=&\Big|\int_{r_0}^{1}\int_0^{2\pi}Y\partial_Yv_p^a\frac{rv}{r-1}\frac{r v_\theta}{u^a}d\theta dr\Big|\leq C\varepsilon (\eta+\varepsilon) \|u_\theta\|_2 \|v_\theta\|_2,\\
\Big|\int_{r_0}^{1}\int_0^{2\pi}r\partial_r\hat{v}_p^av\frac{r v_\theta}{u^a}d\theta dr\Big|
=&\Big|\int_{r_0}^{1}\int_0^{2\pi}(r-r_0)\partial_rv_p^a\frac{rv}{r-r_0}\frac{r v_\theta}{u^a}d\theta dr\Big|\leq C\varepsilon (\eta+\varepsilon)\|u_\theta\|_2 \|v_\theta\|_2,
\end{align*}
hence there holds
\begin{align*}
\Big|\int_{r_0}^{1}\int_0^{2\pi}rv_r^av\frac{r v_\theta}{u^a}d\theta dr\Big|\leq C\varepsilon (\eta+\varepsilon) \|u_\theta\|_2\|v_\theta\|_2.
\end{align*}
The same argument gives
\beno
\Big|\int_{r_0}^{1}\int_0^{2\pi}rv_r^av\frac{rv  u_\theta^a}{(u^a)^2}d\theta dr\Big|\leq C\varepsilon(\eta+\varepsilon) \|u_\theta\|_2^2.
\eeno

Collecting these estimates, we get
\beno
|I_3^v|\leq C\varepsilon (\eta+\varepsilon) \|u_\theta\|_2(\|u_\theta\|_2+\|v_\theta\|_2).
\eeno
Thus, we obtain
 \begin{align}
&\Big|\int_{r_0}^{1}\int_0^{2\pi}\Big[v^arv_r+uv^a_\theta+vrv^a_r\Big]\Big(\frac{r v}{u^a}\Big)_\theta d\theta dr\Big|\nonumber\\
\leq&C(\eta+\varepsilon)\int_{r_0}^{1}\int_0^{2\pi}r(u_\theta^2+v^2_\theta)drd\theta+ C\varepsilon^2 (\eta+\varepsilon)\int_{r_0}^{1}\int_0^{2\pi}u_r^2 drd\theta.\label{remainder terms in Sv}
\end{align}

Finally, collecting \eqref{remainder terms in Su} and \eqref{remainder terms in Sv}, we can choose $\varepsilon_0>0, \eta_0>0$ such that for any $\varepsilon\in (0, \varepsilon_0), \eta\in (0,\eta_0)$, there holds
\begin{align}\label{e:linear term in positivity estimate}
&-\int_{r_0}^{1}\int_0^{2\pi}S_u \Big(\frac{r^2v}{u^a}\Big)_rd\theta dr+\int_{r_0}^{1}\int_0^{2\pi}S_v\Big(\frac{rv}{u^a}\Big)_\theta d\theta dr\nonumber\\[5pt]
\quad \quad \geq &\Big(1-C\eta-C\e\Big)\int_{r_0}^{1}\int_0^{2\pi}r(u^2_\theta+v^2_\theta)drd\theta\\\nonumber
&+\int_{r_0}^{1}\int_0^{2\pi}\frac{1}{u_e}\Big(\partial_r u_e+r\partial{rr}u_e-\frac{u_e}{r}\Big)(rv)^2d\theta dr-C\varepsilon^2(\eta+\varepsilon)\int_{r_0}^{1}\int_0^{2\pi}u_r^2 drd\theta.
\end{align}
{\bf Pressure estimate:} Integrating by parts, we deduce that
\begin{align}\label{e:pressure estimate in positivity estimate}
&-\int_{r_0}^{1}\int_0^{2\pi}p_\theta\Big(\frac{r^2v}{u^a}\Big)_rd\theta dr
+\int_{r_0}^{1}\int_0^{2\pi}rp_r\Big(\frac{rv}{u^a}\Big)_\theta d\theta dr\nonumber\\
=&\int_{r_0}^{1}\int_0^{2\pi}p_{r\theta}\frac{r^2v}{u^a}d\theta dr
-\int_{r_0}^{1}\int_0^{2\pi}p_{r\theta}\frac{r^2v}{u^a}d\theta dr=0.
\end{align}
{\bf Diffusion term:}
Finally, we deal with the diffusion term:
\begin{align}
&-\int_{r_0}^{1}\int_0^{2\pi}\Big[-\varepsilon^2\big(r u_{rr}
+\frac{u_{\theta\theta}}{r}+2\frac{v_{\theta}}{r}+u_r-\frac{u}{r}\big)
\Big]\Big(\frac{r^2v}{u^a}\Big)_rd\theta dr\nonumber\\
&+\int_{r_0}^{1}\int_0^{2\pi}\Big[-\varepsilon^2 \big(rv_{rr}+\frac{v_{\theta\theta}}{r}-2\frac{u_{\theta}}{r}+v_r
-\frac{v}{r}\big)\Big]\Big(\frac{rv}{u^a}\Big)_\theta d\theta dr\nonumber\\
=&\varepsilon^2\int_0^{2\pi}\int_{r_0}^1 \Big(\frac{1}{r} u_{\theta\theta}+ r u_{rr}+  u_{r}+\frac{2}{r} v_\theta- \frac{1}{r} u\Big)\frac{ (r^2v)_r}{u^a}drd\theta\nonumber\\
&-\varepsilon^2\int_0^{2\pi}\int_{r_0}^1 \Big(\frac{1}{r} u_{\theta\theta}+ r u_{rr}+  u_{r}+\frac{2}{r} v_\theta- \frac{1}{r} u\Big)\frac{r^2v u_r^a}{(u^a)^2}drd\theta\nonumber\\
&-\varepsilon^2\int_0^{2\pi}\int_{r_0}^1 \Big(\frac{1}{r} v_{\theta\theta}+ r v_{rr}+  v_{r}-\frac{2}{r} u_\theta- \frac{1}{r} v\Big)\frac{rv_\theta}{u^a}drd\theta\nonumber\\
&+\varepsilon^2\int_0^{2\pi}\int_{r_0}^1 \Big(\frac{1}{r} v_{\theta\theta}+ r v_{rr}+  v_{r}-\frac{2}{r} u_\theta- \frac{1}{r} v\Big)\frac{rv u_\theta^a}{(u^a)^2}drd\theta:=\sum_{i=1}^4I_i.\label{diffusion term}
\end{align}
We first compute $I_1$ term by term. By the divergence-free condition and (\ref{positive lower bound}), we have
\begin{align}
\Big|\int_0^{2\pi}\int_{r_0}^1 \frac{1}{r} u_{\theta\theta}\frac{(r^2v)_r}{u^a}drd\theta\Big|
=&\Big|\int_0^{2\pi}\int_{r_0}^1 \frac{1}{r} u_{\theta\theta}\frac{r(rv)_r+rv}{u^a}drd\theta\Big|\nonumber\\
=&\Big|\int_0^{2\pi}\int_{r_0}^1 u_{\theta\theta}\frac{-u_\theta+v}{u^a}drd\theta\Big|\nonumber\\
=&\Big|\int_0^{2\pi}\int_{r_0}^1\Big[u_\theta^2 \Big(\frac{1}{2u^a}\Big)_\theta-u_\theta\Big(\frac{v_\theta}{u^a}-\frac{vu^a_\theta}{(u^a)^2}\Big)\Big]drd\theta\Big|\nonumber\\
\leq& C\int_0^{2\pi}\int_{r_0}^1(u_\theta^2+v_\theta^2)drd\theta\nonumber
\end{align}
and
\begin{align}
\Big|\int_0^{2\pi}\int_{r_0}^1  ru_{rr}\frac{(r^2v)_r}{u^a}drd\theta\Big|=&\Big|\int_0^{2\pi}\int_{r_0}^1 ru_{rr}\frac{r(rv)_r+rv}{u^a}drd\theta\Big|\nonumber\\
=&\Big|\int_0^{2\pi}\int_{r_0}^1\frac{-r^2u_{rr}u_\theta+ r^2u_{rr}v}{u^a}drd\theta\Big|\nonumber\\
=&\Big|\int_0^{2\pi}\int_{r_0}^1\Big[u_{r}\Big(\frac{r^2u_\theta}{u^a}\Big)_r-u_{r}\Big(\frac{ r^2v}{u^a}\Big)_r\Big]drd\theta\Big|\nonumber\\
=&\Big|\int_0^{2\pi}\int_{r_0}^1\Big[u_{r}\Big(\frac{r^2u_{r\theta}+2ru_\theta}{u^a}-\frac{r^2u_\theta u^a_r}{(u^a)^2}\Big)\nonumber\\
&-u_{r}\Big(\frac{ rv+r(rv)_r}{u^a}-\frac{r^2v u^a_r}{(u^a)^2}\Big)\Big]drd\theta\Big|\nonumber\\
\leq &\frac{\eta}{ \varepsilon^2}\|\sqrt{r}u_\theta\|_2^2+C\eta\|u_r\|_2^2+\Big|\int_0^{2\pi}\int_{r_0}^1\frac{r^2u_{r\theta}u_{r}}{u^a}drd\theta\Big|\nonumber\\
\leq &\frac{\eta}{ \varepsilon^2}\|\sqrt{r}u_\theta\|_2^2+C\eta\|u_r\|_2^2.\nonumber
\end{align}

The other low order terms can be easily estimated as follows
\begin{align}
&\Big|\int_0^{2\pi}\int_{r_0}^1 \Big(u_{r}+\frac{2}{r} v_\theta- \frac{1}{r} u\Big)\frac{(r^2v)_r}{u^a}drd\theta\Big|\nonumber\\
\leq& \frac{1}{\varepsilon}\int_0^{2\pi}\int_{r_0}^1r(u_\theta^2+v_\theta^2)drd\theta+C\varepsilon\int_0^{2\pi}\int_{r_0}^1u_r^2drd\theta\nonumber.
\end{align}
Thus,
\beno
|I_1|\leq C(\eta+\varepsilon)\int_0^{2\pi}\int_{r_0}^1r(u_\theta^2+v_\theta^2)drd\theta+C\varepsilon^2(\eta+\varepsilon)\int_0^{2\pi}\int_{r_0}^1u_r^2drd\theta.
\eeno

By the same argument as above, we can deduce the following estimate for $I_2$:
\begin{align}
|I_2|&=\varepsilon^2\Big|\int_0^{2\pi}\int_{r_0}^1 \Big(\frac{1}{r} u_{\theta\theta}+ r u_{rr}+  u_{r}+\frac{2}{r} v_\theta- \frac{1}{r} u\Big)\frac{r^2v\partial_r u^a}{(u^a)^2}drd\theta\Big|\nonumber\\
&\leq C(\eta+\varepsilon)\int_0^{2\pi}\int_{r_0}^1r(u_\theta^2+v_\theta^2)drd\theta+C\varepsilon^2\int_0^{2\pi}\int_{r_0}^1u_r^2drd\theta.\nonumber
\end{align}
Summing the estiamtes of $I_1$ and $I_2$, we obtain that
\begin{align}
|I_1|+|I_2|\leq C(\eta+\varepsilon)\|\sqrt{r}(u_\theta, v_\theta)\|_2^2+C\varepsilon^2(\eta+\varepsilon)\|u_r\|_2^2.\nonumber
\end{align}

For $I_3$, the high order terms can be estimated by integrating by parts:
\begin{align}
\Big|\int_0^{2\pi}\int_{r_0}^1 \Big(\frac{1}{r} v_{\theta\theta}+ r v_{rr}\Big)\frac{r v_\theta}{u^a}drd\theta\Big|\leq \frac{C}{ \varepsilon}\|\sqrt{r}(u_\theta, v_\theta)\|_2^2.\nonumber
\end{align}
The lower order terms can be estimated by the divergence-free condition and (\ref{positive lower bound})
\begin{align}
\Big|\int_0^{2\pi}\int_{r_0}^1 \Big(v_{r}-\frac{2}{r} u_\theta- \frac{1}{r} v\Big)\frac{r\partial_\theta v}{u^a}drd\theta\Big|\leq
C\int_0^{2\pi}\int_{r_0}^1r( u_\theta^2+v_\theta^2)drd\theta.\nonumber
\end{align}
Thus, we obtain
\begin{align}
|I_3|\leq C\varepsilon\|\sqrt{r}(u_\theta, v_\theta)\|_2^2.\nonumber
\end{align}

For $I_4$, the same argument as above gives that
\begin{align}
|I_4|\leq C \varepsilon^2\|\sqrt{r}(u_\theta, v_\theta)\|_2^2..\nonumber
\end{align}

Finally, we can choose $\varepsilon_0>0, \eta_0>0$ such that for any $\varepsilon\in (0, \varepsilon_0), \eta\in (0,\eta_0)$, there holds
\begin{align}
|I_3|+|I_4|\leq C\varepsilon\|\sqrt{r}(u_\theta, v_\theta)\|_2^2.\nonumber
\end{align}

Hence, we deduce that there exist $\varepsilon_0>0, \eta_0>0$ such that for any $\varepsilon\in (0,\varepsilon_0), \eta\in (0,\eta_0)$, the diffusive term \eqref{diffusion term} can be controlled by
\begin{align}
C(\eta+\varepsilon)\int_0^{2\pi}\int_{r_0}^1r(u_\theta^2+v_\theta^2)drd\theta+C\varepsilon^2(\eta+\varepsilon)\|u_r\|_2^2\nonumber
\end{align}
which combining with the estimates (\ref{e:linear term in positivity estimate}) and (\ref{e:pressure estimate in positivity estimate}) implies that there exist $\varepsilon_0>0, \eta_0>0$ such that for any $\varepsilon\in (0,\varepsilon_0), \eta\in (0,\eta_0)$, the lift hand side of $(\ref{p:positivity estimate})$ has a lower bound
\begin{align}\label{e:left hand in positivity estimate old}
&\Big(1-C\eta-C\e\Big)\int_0^{2\pi}\int_{r_0}^1r(u_\theta^2+v_\theta^2)drd\theta\nonumber\\
&+\int_{r_0}^{1}\int_0^{2\pi}\frac{1}{u_e}\Big(\partial_r u_e+r\partial{rr}u_e-\frac{u_e}{r}\Big)(rv)^2d\theta dr-C\varepsilon^2(\eta+\e)\|u_r\|_2^2.
\end{align}
Let $g=\frac{rv}{u_e}$, then
\begin{align*}
&\int_0^{2\pi}\int_{r_0}^1r(u_\theta^2+v_\theta^2)drd\theta=\int_0^{2\pi}\int_{r_0}^1 \Big(r[(u_eg)_r]^2+\frac{u_e^2}{r}g_\theta^2\Big) drd\theta\\
=&\int_0^{2\pi}\int_{r_0}^1 \Big(ru^2_eg_r^2+\frac{u_e^2}{r}g_\theta^2\Big) drd\theta-\int_0^{2\pi}\int_{r_0}^1 u_e\Big(\partial_r u_e+r\partial{rr}u_e\Big)g^2 drd\theta.
\end{align*}
Thus,
\begin{align*}
&\Big(1-C\eta-C\e\Big)\int_0^{2\pi}\int_{r_0}^1r(u_\theta^2+v_\theta^2)drd\theta+\int_{r_0}^{1}\int_0^{2\pi}\frac{1}{u_e}\Big(\partial_r u_e+r\partial{rr}u_e-\frac{u_e}{r}\Big)(rv)^2d\theta dr\\
=&\int_0^{2\pi}\int_{r_0}^1 \Big(ru^2_eg_r^2+\frac{u_e^2}{r}g_\theta^2-\frac{u_e^2}{r}g^2\Big) drd\theta-C(\eta+\e)\int_0^{2\pi}\int_{r_0}^1r(u_\theta^2+v_\theta^2)drd\theta.
\end{align*}
By the Poincar\'{e} inequality,
\begin{align*}
\int_0^{2\pi}\int_{r_0}^1 \frac{u_e^2}{r}g_\theta^2 drd\theta&\geq\int_0^{2\pi}\int_{r_0}^1 \frac{u_e^2}{r}g^2 drd\theta,\\
C\int_0^{2\pi}\int_{r_0}^1 ru^2_eg_r^2 drd\theta&\geq \int_0^{2\pi}\int_{r_0}^1 \frac{u_e^2}{r}g^2 drd\theta,
\end{align*}
here we have used the fact that $u_e$ has a positive lower bound and upper bound in the second inequality.

Thus
\begin{align*}
&\int_0^{2\pi}\int_{r_0}^1 \Big(ru^2_eg_r^2+\frac{u_e^2}{r}g_\theta^2-\frac{u_e^2}{r}g^2\Big) drd\theta\\
\geq &\int_0^{2\pi}\int_{r_0}^1 \Big(\frac12 ru^2_eg_r^2+\frac{1}{2C}\frac{u_e^2}{r}g^2+\frac{1}{2C}\frac{u_e^2}{r}g_\theta^2+\Big(1-\frac{1}{2C}\Big)\frac{u_e^2}{r}g-\frac{u_e^2}{r}g^2\Big)drd\theta\\
=&\int_0^{2\pi}\int_{r_0}^1 \Big(\frac12 ru^2_eg_r^2+\frac{1}{2C}\frac{u_e^2}{r}g_\theta^2\Big)drd\theta\\
\geq &\widetilde{C}\int_0^{2\pi}\int_{r_0}^1r(u_\theta^2+v_\theta^2)drd\theta,
\end{align*}
where we used
\begin{align*}
|u_\theta|^2=|(rv)_r|^2=|(gu_e)_r|^2\leq C(g^2+g^2_r).
\end{align*}
So (\ref{e:left hand in positivity estimate old}) has a lower bound
\begin{align}\label{e:left hand in positivity estimate}
&\Big(\widetilde{C}-C\eta-C\e\Big)\int_0^{2\pi}\int_{r_0}^1r(u_\theta^2+v_\theta^2)drd\theta-C\varepsilon^2(\eta+\e)\|u_r\|_2^2.
\end{align}

Thanks to the Young inequality and Poincar\'{e} inequality, the right hand side of $(\ref{p:positivity estimate})$ can be directly estimated as follows:
\begin{align}\label{e:right hand in positivity estimate}
&\Big|\int_{r_0}^{1}\int_0^{2\pi}\Big[F_1\Big(\frac{r^2v}{u^a}\Big)_r+F_2\Big(\frac{rv}{u^a}\Big)_\theta\Big]d\theta dr\Big|\nonumber\\
=&\Big|\int_{r_0}^{1}\int_0^{2\pi}\Big[F_1\Big(\frac{r(rv)_r+rv}{u^a}-\frac{r^2vu^a_r}{(u^a)^2}\Big)+F_2\Big(\frac{r v_\theta}{u^a}-\frac{rv u^a_\theta}{(u^a)^2}\Big)\Big]d\theta dr\Big|\nonumber\\
\leq &\frac{\widetilde{C}}{4}\int_0^{2\pi}\int_{r_0}^1r(u_\theta^2+v_\theta^2)drd\theta+C\|(F_1, F_2)\|_2^2.
\end{align}

Finally, collecting the estimate (\ref{e:left hand in positivity estimate}) and (\ref{e:right hand in positivity estimate}) together, we obtain (\ref{e:positivity estimate}), this complete the proof of this lemma.
\end{proof}


\section{Existence of error equations}

To deal with the nonlinear term and close the estimate, we need to estimate $\|(u,v)\|_\infty$. To this end, we consider the following ``$H^2$ estimate" for a Stokes system.
\subsection{``$H^2$ estimate" for Stokes system}
\indent

We consider the Stoke equations
\begin{align}\label{e:stoke error equation}
\left\{
\begin{array}{lll}
-\varepsilon^2 \big(ru_{rr}
+\frac{u_{\theta\theta}}{r}+2\frac{v_{\theta}}{r}+u_r-\frac{u}{r}\big)+p_\theta=f_u,\\[5pt]
-\varepsilon^2\big(rv_{rr}+\frac{v_{\theta\theta}}{r}-2\frac{u_{\theta}}{r}+v_r-\frac{v}{r}\big)+rp_r=g_v,\\[5pt]
u_\theta+\partial_r(rv)=0,  \\[5pt]
 u(\theta,r)=u(\theta+2\pi,r),\ v(\theta,r)=v(\theta+2\pi,r),\\[5pt]
u(\theta,1)=0,\ v(\theta,1)=0, \\[5pt]
u(\theta,r_0)=0, \ v(\theta,r_0)=0.
\end{array}
\right.
\end{align}
\begin{lemma}
Let $(u,v)$ be a smooth solution of (\ref{e:stoke error equation}), then we have
\begin{align}\label{s:Stokes estiamate}
&\int_{r_0}^{1}\int_0^{2\pi} [(u_{\theta\theta}+v_\theta)^2+(v_{\theta\theta}-u_\theta)^2]d\theta dr+\int_{r_0}^{1}\int_0^{2\pi}\big(u_{r\theta}^2+v_{r\theta}^2\big)d\theta dr\nonumber\\
\leq&\frac{C}{\varepsilon^2}\Big|\int_{r_0}^1\int_0^{2\pi}\big[f_u u_{\theta\theta}+g_v v_{\theta\theta}\big]drd\theta\Big|.
\end{align}
\end{lemma}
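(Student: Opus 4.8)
The plan is to exploit the fact that all coefficients of the Stokes operator in \eqref{e:stoke error equation} are independent of $\theta$, so that differentiating the system once in $\theta$ produces the very same operator acting on $(u_\theta,v_\theta)$. Concretely, I would set $U:=u_\theta$, $V:=v_\theta$, $\tilde p:=p_\theta$ and differentiate the first two equations of \eqref{e:stoke error equation} in $\theta$; since $r$ and $1/r$ do not depend on $\theta$, the pair $(U,V)$ solves a Stokes system of exactly the same form, with forcing $(\partial_\theta f_u,\partial_\theta g_v)$, the divergence-free relation $U_\theta+(rV)_r=0$, periodicity in $\theta$, and homogeneous Dirichlet data $U=V=0$ on $r=r_0$ and $r=1$ (these follow by differentiating $u(\theta,\cdot)=v(\theta,\cdot)=0$ on the two circles in $\theta$). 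This reduces the claimed bound to the energy identity for the $(U,V)$-system tested against $(U,V)$ itself, which is precisely the computation already carried out in the basic energy estimate. Equivalently, one may test the original system directly with the multipliers $(-u_{\theta\theta},-v_{\theta\theta})$; the two viewpoints are the same after one integration by parts in $\theta$.

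I would then multiply the differentiated first equation by $U$, the differentiated second by $V$, integrate over $\Omega$ and sum. The diffusion contribution, after integrating by parts in $r$ (the boundary terms vanish because $U=V=0$ at $r=r_0,1$) and completing the square exactly as in \eqref{e:diffusion estimate in basic energy estimate}, equals
\begin{align*}
\varepsilon^2\int_{r_0}^1\int_0^{2\pi}\Big[\frac{(U_\theta+V)^2+(V_\theta-U)^2}{r}+r\big(U_r^2+V_r^2\big)\Big]\,d\theta\,dr.
\end{align*}
Since $U_\theta+V=u_{\theta\theta}+v_\theta$, $V_\theta-U=v_{\theta\theta}-u_\theta$ and $U_r=u_{r\theta}$, $V_r=v_{r\theta}$, this is exactly a weighted version of the quantity to be estimated. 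The pressure term $\int_{r_0}^1\int_0^{2\pi}\big(\tilde p_\theta U+r\tilde p_r V\big)\,d\theta\,dr$ vanishes by the same argument as in \eqref{e:pressure estimate in basic energy estimate}: integrating by parts in $\theta$ and in $r$ and invoking the incompressibility $U_\theta+(rV)_r=0$ together with $rV=0$ on the boundary collapses it to $-\int \tilde p\,(U_\theta+(rV)_r)\,d\theta\,dr=0$.

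Finally, the forcing term $\int_{r_0}^1\int_0^{2\pi}\big(\partial_\theta f_u\,U+\partial_\theta g_v\,V\big)\,d\theta\,dr$ is integrated by parts once in $\theta$ (no boundary contribution by periodicity) to become $-\int_{r_0}^1\int_0^{2\pi}\big(f_u u_{\theta\theta}+g_v v_{\theta\theta}\big)\,d\theta\,dr$, the expression on the right of \eqref{s:Stokes estiamate}. Combining the three computations gives the exact identity
\begin{align*}
\varepsilon^2\int_{r_0}^1\int_0^{2\pi}\Big[\frac{(u_{\theta\theta}+v_\theta)^2+(v_{\theta\theta}-u_\theta)^2}{r}+r\big(u_{r\theta}^2+v_{r\theta}^2\big)\Big]\,d\theta\,dr=-\int_{r_0}^1\int_0^{2\pi}\big(f_u u_{\theta\theta}+g_v v_{\theta\theta}\big)\,d\theta\,dr;
\end{align*}
since $r_0\le r\le1$ one has $1/r\ge1$ and $r\ge r_0$, so dropping the weights and taking absolute values yields \eqref{s:Stokes estiamate} with $C=1/\min(1,r_0)$. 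The argument is essentially routine once the $\theta$-differentiation is made; the only points demanding care are the exact cancellation of the pressure (which hinges on the incompressibility of the differentiated system and the vanishing of $v_\theta$, hence $rV$, on both circles) and the verification that every boundary term from the $r$-integration by parts in the diffusion identity vanishes. I do not anticipate a genuine obstacle, as the differentiated system inherits all the structural features already used in the basic energy estimate.
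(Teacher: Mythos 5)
Your proposal is correct and is essentially the paper's own argument: the paper tests the system directly with the multipliers $(u_{\theta\theta},v_{\theta\theta})$, which, as you yourself observe, coincides with your $\theta$-differentiated formulation tested against $(u_\theta,v_\theta)$ after one integration by parts in $\theta$. The three ingredients — the sign-definite diffusion identity, the cancellation of the pressure via incompressibility and the vanishing of $v_\theta$ on both circles, and the forcing term reducing to $-\int (f_u u_{\theta\theta}+g_v v_{\theta\theta})$ — are exactly those in the paper, including the final step of dropping the weights $1/r\geq 1$ and $r\geq r_0$.
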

\begin{proof}
Multipling $u_{\theta\theta}$ for the first equation and $v_{\theta\theta}$ for the second equation in (\ref{e:stoke error equation}), integrating in $\Omega$ and summing two terms together, we arrive at
\begin{align*}
&\int_{r_0}^1\int_0^{2\pi}\Big[-\varepsilon^2 \big(ru_{rr}
+\frac{u_{\theta\theta}}{r}+2\frac{v_{\theta}}{r}+u_r-\frac{u}{r}\big)+p_\theta\Big]u_{\theta\theta}drd\theta\\
&+\int_{r_0}^1\int_0^{2\pi}\Big[-\varepsilon^2\big(rv_{rr}+\frac{v_{\theta\theta}}{r}
-2\frac{u_{\theta}}{r}+v_r-\frac{v}{r}\big)+rp_r\Big]v_{\theta\theta}drd\theta\\
=&\int_{r_0}^1\int_0^{2\pi}\big[f_u u_{\theta\theta}+g_v v_{\theta\theta}\big]drd\theta.
\end{align*}
Integrating by parts and using the divergence-free condition, we deduce
\beno
\int_{r_0}^1\int_0^{2\pi}\big[p_\theta u_{\theta\theta}+p_r rv_{\theta\theta}\big]drd\theta=0.
\eeno
Then, we compute the diffusive term. Integrating by parts, we obtain
\beno
&&\int_{r_0}^1\int_0^{2\pi}-\varepsilon^2 u_{rr}r u_{\theta\theta}drd\theta=-\int_{r_0}^1\int_0^{2\pi}\varepsilon^2 u_{r\theta}(r u_{\theta})_rdrd\theta
=-\varepsilon^2\int_{r_0}^1\int_0^{2\pi} r u_{r\theta}^2drd\theta,\\
&&\int_{r_0}^1\int_0^{2\pi}-\varepsilon^2 v_{rr}r v_{\theta\theta}drd\theta=-\int_{r_0}^1\int_0^{2\pi}\varepsilon^2 v_{r\theta}(r v_{\theta})_rdrd\theta
=-\varepsilon^2\int_{r_0}^1\int_0^{2\pi} r v_{r\theta}^2drd\theta,
\eeno
and
\beno
&&\int_{r_0}^1\int_0^{2\pi}\varepsilon^2u_r u_{\theta\theta}drd\theta=\int_{r_0}^1\int_0^{2\pi}\varepsilon^2 v_r v_{\theta\theta}drd\theta=0,\\
&&\int_{r_0}^1\int_0^{2\pi}\varepsilon^2\frac{u}{r} u_{\theta\theta}drd\theta+ \int_{r_0}^1\int_0^{2\pi}\varepsilon^2 \frac{v}{r} v_{\theta\theta}drd\theta=-\varepsilon^2\int_{r_0}^1\int_0^{2\pi}\frac{u_\theta^2+v^2_\theta}{r}drd\theta.
\eeno
Summing all terms, we obtain
\beno
&&\int_{r_0}^1\int_0^{2\pi}\Big[-\varepsilon^2 \big(ru_{rr}
+\frac{u_{\theta\theta}}{r}+2\frac{v_{\theta}}{r}+u_r-\frac{u}{r}\big)+p_\theta\Big]u_{\theta\theta}drd\theta\\
&&+\int_{r_0}^1\int_0^{2\pi}\Big[-\varepsilon^2\big(rv_{rr}+\frac{v_{\theta\theta}}{r}
-2\frac{u_{\theta}}{r}+v_r-\frac{v}{r}\big)+rp_r\Big]v_{\theta\theta}drd\theta\\
&=&-\varepsilon^2\int_{r_0}^{1}\int_0^{2\pi}\frac{ (u_{\theta\theta}+v_\theta)^2+(v_{\theta\theta}-u_\theta)^2}{r}d\theta dr-\varepsilon^2\int_{r_0}^{1}\int_0^{2\pi}r\big(u_{r\theta}^2+v_{r\theta}^2\big)d\theta dr.
\eeno

Thus we obtain (\ref{s:Stokes estiamate}) and then
 complete the proof of this lemma.
\end{proof}

\subsection{Anisotropic Sobolev embedding}
\indent

To obtain the $L^\infty$ estimate, we need the following anisotropic Sobolev embedding.
\begin{lemma}
Let $(u,v)$ be smooth function and satisfy $(u,v)|_{r=1}=(u,v)|_{r=r_0}=(0,0)$, then
\begin{align}\label{s:Sobolev embedding}
\|(u,v)\|_\infty \leq C(\|(u_\theta,v_\theta)\|_2+\|(u_r,v_r)\|_2+\|(u_{r\theta},v_{r\theta})\|_2).
\end{align}
\end{lemma}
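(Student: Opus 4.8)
The plan is to prove the estimate for a single scalar component $w\in\{u,v\}$ and then sum the two resulting bounds, since the right-hand side of (\ref{s:Sobolev embedding}) is precisely the sum of the corresponding quantities for $u$ and $v$. The key structural feature is that $w$ vanishes on both circles $r=r_0$ and $r=1$ but is only periodic (not vanishing) in $\theta$; this forces the two coordinate directions to be handled by different one-dimensional devices, and the mixed derivative $w_{r\theta}$ is exactly what couples them.

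First I would exploit the Dirichlet condition in $r$. Writing $w^2(\theta,r)=\int_{r_0}^{r}\partial_s\big(w^2\big)(\theta,s)\,ds$ and using $w(\theta,r_0)=0$ gives the pointwise bound $\sup_r w^2(\theta,r)\le P(\theta):=2\int_{r_0}^{1}|w\,w_r|(\theta,s)\,ds$, so that $\|w\|_{L^\infty(\Omega)}^2\le \sup_\theta P(\theta)$. The function $P$ is $2\pi$-periodic and Lipschitz in $\theta$, which lets me treat the periodic direction: averaging the identity $P(\theta)-P(\theta')=\int_{\theta'}^{\theta}P'(s)\,ds$ over $\theta'\in[0,2\pi]$ yields $\sup_\theta P(\theta)\le \frac{1}{2\pi}\int_0^{2\pi}P\,d\theta+\int_0^{2\pi}|P'|\,d\theta$. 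The mean term is controlled by Cauchy--Schwarz by $C\|w\|_2\|w_r\|_2$, while $|P'(\theta)|\le 2\int_{r_0}^1\big(|w_\theta w_r|+|w\,w_{r\theta}|\big)\,ds$ (using $|\partial_\theta|f||\le|\partial_\theta f|$ a.e.), which integrates, again by Cauchy--Schwarz, to $C\big(\|w_\theta\|_2\|w_r\|_2+\|w\|_2\|w_{r\theta}\|_2\big)$.

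Combining these, I would obtain $\|w\|_{L^\infty}^2\le C\big(\|w\|_2\|w_r\|_2+\|w_\theta\|_2\|w_r\|_2+\|w\|_2\|w_{r\theta}\|_2\big)$. To reach the clean form of (\ref{s:Sobolev embedding}), which contains no bare $\|w\|_2$, I would invoke the Poincar\'{e} inequality in the $r$-variable (available because $w$ vanishes at $r=r_0$), giving $\|w\|_2\le C\|w_r\|_2$, and then apply Young's inequality to every product. This produces $\|w\|_{L^\infty}^2\le C\big(\|w_\theta\|_2^2+\|w_r\|_2^2+\|w_{r\theta}\|_2^2\big)$; taking square roots and summing over $w=u,v$ gives the claim.

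The computations are all elementary, so there is no deep obstacle; the only point demanding care is the periodic direction. Since one cannot integrate from a boundary as in the $r$-direction, the averaging trick (equivalently, splitting $w$ into its $\theta$-mean and its oscillation and using the fundamental theorem of calculus on the oscillation) is essential, and one must justify the manipulations of the absolute values inside $P$ and $P'$ via the a.e. bound $|\partial_\theta|f||\le|\partial_\theta f|$ rather than assuming differentiability of $|w\,w_r|$. Everything else reduces to Cauchy--Schwarz, Poincar\'{e}, and Young.
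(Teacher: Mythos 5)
Your proof is correct, but it takes a genuinely different route from the paper's. The paper argues on the Fourier side: expanding $u(\theta,r)=\sum_k u_k(r)e^{ik\theta}$, applying the one-dimensional Agmon inequality $\|u_k\|_\infty\le\sqrt{2}\,\|u_k\|_2^{1/2}\|u_k'\|_2^{1/2}$ (valid since $u_k(r_0)=0$) to each mode, summing the nonzero modes via Cauchy--Schwarz against the convergent weight $\sum_{k\neq0}|k|^{-2}$ to get $\sum_{k\neq0}\|u_k\|_\infty\le C\|u_\theta\|_2^{1/2}\|u_{r\theta}\|_2^{1/2}$, and bounding the zero mode separately by $|u_0(r)|\le C\|u_r\|_2$. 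You instead work entirely in physical variables: the fundamental theorem of calculus in $r$ (using the Dirichlet condition at $r=r_0$) reduces the problem to controlling $\sup_\theta P(\theta)$ with $P(\theta)=2\int_{r_0}^1|w\,w_r|\,ds$, and the periodic direction is handled by the mean-plus-total-variation bound $\sup_\theta P\le\frac{1}{2\pi}\int P+\int|P'|$. Both are complete arguments with the same ingredients in spirit (a 1D embedding in $r$ coupled to the $\theta$-direction through $w_{r\theta}$). The Fourier route buys a cleaner separation of the zero mode from the oscillatory part and never produces a bare $\|w\|_2$, so no Poincar\'e inequality is needed; your route is more elementary and avoids mode summation, at the price of invoking Poincar\'e in $r$ (legitimate here, thanks to the boundary condition) and of the technical point you correctly flag, namely that $|w\,w_r|$ need not be differentiable where it vanishes, so one must use the a.e.\ bound $|\partial_\theta|f||\le|\partial_\theta f|$ or, even more simply, the reverse triangle inequality $\big||f|(\theta)-|f|(\theta')\big|\le|f(\theta)-f(\theta')|$ before applying the fundamental theorem of calculus, which bypasses differentiating the absolute value altogether.
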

\begin{proof}
In fact, let
\beno
u(\theta,r)=\sum_{k\in Z}u_k(r)e^{ik\theta}, \ \forall r\in [r_0,1],
\eeno
thus
\beno
|u(\theta,r)|\leq\sum_{k\in Z}|u_k(r)|, \ \forall (\theta,r)\in [0,2\pi]\times[r_0,1].
\eeno
Notice that $u(\theta,r_0)=0, \ \forall \theta\in [0,2\pi]$ gives $u_k(\theta,r_0)=0,\ \forall \theta\in [0,2\pi], k\in Z$, hence
\beno
\|u_k\|_{\infty}\leq \sqrt{2}\|u_k\|^{\frac12}_2\|u'_k\|^{\frac12}_2.
\eeno
Thus,
\beno
\sum_{k\neq 0}\|u_k\|_\infty\leq \sqrt{2}\sum_{k\neq 0}\|u_k\|^{\frac12}_2\|u'_k\|^{\frac12}_2\leq C\Big(\sum_{k\neq 0}|k|^2\|u_k\|^2_2\Big)^{\frac14}\Big(\sum_{k\neq 0}|k|^2\|u'_k\|^2_2\Big)^{\frac14}.
\eeno

Moreover, it's easy to get
\beno
u_\theta(\theta,r)=\sum_{k\in Z}iku_k(r)e^{ik\theta}, \quad u_{r\theta}(\theta,r)=\sum_{k\in Z}iku'_k(r)e^{ik\theta}, \ \forall r\in [r_0,1].
\eeno
Thus,
\beno
\|u_\theta\|_2^2=\sum_{k\in Z}|k|^2\|u_k\|_2^2, \quad \|u_{r\theta}\|_2^2=\sum_{k\in Z}|k|^2\|u'_k\|_2^2.
\eeno
Hence, we obtain
\beno
\sum_{k\neq 0}\|u_k\|_\infty\leq C\|u_\theta\|^{\frac12}_2\|u_{r\theta}\|_2^{\frac12}.
\eeno
Moreover, due to $u_0(r)=\frac{1}{2\pi}\int_0^{2\pi}u(\theta,r)d\theta$ and $u(\theta,r_0)=0,\ \forall \theta\in [0,2\pi]$, it's easy to get
\beno
|u_0(r)|\leq C\|u_r\|_2, \quad \forall r\in [r_0,1].
\eeno

Finally, we obtain
\beno
\|u\|_{\infty}\leq C(\|u_\theta\|_2+\|u_r\|_2+\|u_{r\theta}\|_2).
\eeno

Similarly we can obtain the same result for $v$ and hence  we complete the proof of this lemma.
\end{proof}

\subsection{Existence for the error equations}
\indent

We apply the contraction mapping theorem to prove the existence for the error equations (\ref{e:error equation}).

\begin{proposition}\label{existence and error estimate of error equation}
There exist $\varepsilon_0>0,\eta_0>0$ such that for any $\varepsilon\in (0,\varepsilon_0), \eta\in (0,\eta_0)$, the error equations (\ref{e:error equation}) have a unique solution $(u,v)$ which satisfies
$$\|(u,v)\|_\infty\leq C\varepsilon.$$
\end{proposition}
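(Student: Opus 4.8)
The plan is to realize $(u,v)$ as the fixed point of a map that freezes the quadratic terms of (\ref{e:error equation}) and inverts the linear operator of (\ref{linear error equation}) using the stability estimate (\ref{e:linear stability estimates for linear equation}), the Stokes estimate (\ref{s:Stokes estiamate}) and the embedding (\ref{s:Sobolev embedding}). First I would fix a pair $(\bar u,\bar v)$, form the forcing
\begin{align*}
F_u&:=R_u^a-\bar u\bar u_\theta-\bar v r\bar u_r-\bar v\bar u,\\
F_v&:=R_v^a-\bar u\bar v_\theta-\bar v r\bar v_r+\bar u^2,
\end{align*}
and let $\mathcal{T}(\bar u,\bar v):=(u,v)$ be the divergence-free, boundary-vanishing solution of (\ref{linear error equation}) with this right-hand side; existence of the linear solve follows from the a priori estimates of Section 3 via a standard Galerkin approximation, so the whole issue is the nonlinear iteration. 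I would run the iteration in the space $X$ of $2\pi$-periodic divergence-free fields vanishing on $\partial\Omega$ with
\begin{align*}
\|(u,v)\|_X:=\|(u_\theta,v_\theta)\|_2+\|(u_{\theta\theta},v_{\theta\theta})\|_2+\|(u_r,v_r)\|_2+\|(u_{r\theta},v_{r\theta})\|_2,
\end{align*}
which by (\ref{s:Sobolev embedding}) (applied to the fields and to their $\theta$-derivatives) controls $\|(u,v)\|_\infty$ and $\|(u_\theta,v_\theta)\|_\infty$; consequently the quadratic map $(\bar u,\bar v)\mapsto(F_u-R_u^a,F_v-R_v^a)$ is a bounded bilinear operation on $X$. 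The ball $B_\rho=\{\|(u,v)\|_X\le\rho\}$ will be used with $\rho$ a suitable power of $\varepsilon$ (for instance $\rho=\varepsilon^2$).

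Next I would show $\mathcal{T}(B_\rho)\subset B_\rho$. The stability estimate (\ref{e:linear stability estimates for linear equation}) controls $\|(u_\theta,v_\theta)\|_2+\varepsilon\|u_r\|_2$ by $\|(F_u,F_v)\|_2$ and the pairing $\int(uF_u+vF_v)$; applying the same estimate to the system differentiated once in $\theta$ — whose commutators carry the small factors $\partial_\theta u^a=O(\eta)$ and $\partial_\theta v^a,v^a=O(\varepsilon\eta)$ and are therefore absorbable — controls $\|(u_{\theta\theta},v_{\theta\theta})\|_2+\varepsilon\|u_{r\theta}\|_2$. The divergence-free relation $u_\theta+(rv)_r=0$ upgrades these to $\|v_r\|_2\lesssim\|(u_\theta,v_\theta)\|_2$, and the Stokes estimate (\ref{s:Stokes estiamate}), used after moving $S_u,S_v$ into the forcing, removes the remaining factor $\varepsilon$ and yields $\|(u_{r\theta},v_{r\theta})\|_2$ without loss. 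Since $\|R_u^a\|_2+\|\partial_\theta R_u^a\|_2\lesssim\varepsilon^4$ (and likewise for $R_v^a$), the base forcing produces an output of size $\lesssim\varepsilon^{5/2}$, while the quadratic forcing, being bilinear on $B_\rho$, contributes $\lesssim\varepsilon^{-3/2}\rho^2$; for $\rho=\varepsilon^2$ and $\varepsilon$ small this is $\le\rho$, and in particular $\|(u,v)\|_\infty\le C\|(u,v)\|_X\le C\varepsilon$.

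For contraction I would subtract the equations for two inputs: $\mathcal{T}(\bar u_1,\bar v_1)-\mathcal{T}(\bar u_2,\bar v_2)$ solves (\ref{linear error equation}) with forcing equal to the difference of the quadratic terms, which is bounded in the relevant norms by $C\rho\,\|(\bar u_1-\bar u_2,\bar v_1-\bar v_2)\|_X$. Rerunning the three a priori estimates on the difference gives a Lipschitz constant $\lesssim\varepsilon^{-3/2}\rho$, which is $<1$ once $\rho=\varepsilon^2$ and $\varepsilon$ is small; the Banach fixed-point theorem then yields a unique $(u,v)\in B_\rho$ solving (\ref{e:error equation}), and the self-mapping bound gives $\|(u,v)\|_\infty\le C\varepsilon$. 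Uniqueness beyond $B_\rho$, within the class of $L^\infty$-small solutions, follows from the same difference estimate.

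The hard part will be the Stokes step, where the prefactor $\varepsilon^{-2}$ in (\ref{s:Stokes estiamate}) dangerously amplifies every convective contribution: once $S_u,S_v$ are treated as forcing, terms such as $\varepsilon^{-2}\int v^a r u_r\,u_{\theta\theta}$, $\varepsilon^{-2}\int u^a u_\theta\,u_{\theta\theta}$ and their $v$-analogues must be integrated by parts in $\theta$ and then either absorbed into the left-hand side $\|(u_{r\theta},v_{r\theta})\|_2^2$ or charged to the already-controlled $\|(u_\theta,v_\theta)\|_2$. This is possible only because the approximate solution satisfies the smallness $v^a,\partial_\theta u^a=O(\varepsilon\eta)$ coming from (\ref{estimate on Prandtl parts}) and because the Prandtl layers obey the weighted Hardy-type bounds that convert the singular factors $1/(r-1)$ and $1/(r-r_0)$ into the scaled variables $Y,Z$ — exactly the mechanism used in the positivity estimate of Section 3. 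Tracking these cancellations, together with the $\theta$-differentiated stability estimate, is the delicate point, and it is what forces the joint smallness of $\varepsilon$ and $\eta$.
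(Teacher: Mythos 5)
Your skeleton (freeze the quadratic terms, invert the linearized system via the stability estimate (\ref{e:linear stability estimates for linear equation}), upgrade through the Stokes estimate (\ref{s:Stokes estiamate}) and the embedding (\ref{s:Sobolev embedding}), close by Banach fixed point) is the same as the paper's, but your functional setting fails at precisely the step you flag as delicate. The Stokes step is \emph{not} ``without loss'': once $S_{\bar u},S_{\bar v}$ are moved into the forcing, pairings such as $\varepsilon^{-2}\int ru^a_r\bar v\,\bar u_{\theta\theta}$ cannot be closed for free, because $\partial_r u^a=O(\eta/\varepsilon)$ in the boundary layer; after integration by parts one is left with $\varepsilon^{-2}\int ru^a_r\bar v_\theta\bar u_\theta$, and the best available bound (this is exactly the paper's estimate (\ref{Higher order tangential estimate})) carries a term $C(\delta)\varepsilon^{-3}\|(\bar u_\theta,\bar v_\theta,\varepsilon\bar u_r)\|_2^2$. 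Since the first-order output is only $O(\varepsilon^{3})$ (from $\|(R^a_u,R^a_v)\|_2\lesssim\varepsilon^4$ and the $\varepsilon^{-1}$ in (\ref{e:H1 estimate for linearized equation})), this yields $\|(\bar u_{r\theta},\bar v_{r\theta})\|_2=O(\varepsilon^{3/2})$, not the $O(\varepsilon^{5/2})$ you claim, so your ball $\{\|\cdot\|_X\le\varepsilon^2\}$ is not invariant. Worse, no single radius $\rho$ can work: the nonlinear feedback in the Stokes step forces $\varepsilon^{-2}\rho^2\lesssim\rho$, i.e.\ $\rho\lesssim\varepsilon^2$, while the second-order output is $\gtrsim\varepsilon^{3/2}$, so one would need $\varepsilon^{3/2}\lesssim\rho\lesssim\varepsilon^2$, impossible for small $\varepsilon$. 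This incompatibility of scales is exactly why the paper keeps second-order derivatives \emph{out} of the iteration norm and works instead with the anisotropic quantity $\|(u,v)\|_Y^2=\|(u_\theta,v_\theta,\varepsilon u_r)\|_2^2+\kappa\varepsilon^3\|(u,v)\|_\infty^2$, in which $\|(u_{r\theta},v_{r\theta})\|_2$ is only an intermediate quantity of size $\varepsilon^{3/2}$ feeding the $L^\infty$ bound.

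Two of your supporting claims are also false as stated, and they are what your scheme leans on. Applying (\ref{s:Sobolev embedding}) to $(u_\theta,v_\theta)$ bounds $\|(u_\theta,v_\theta)\|_\infty$ by $\|(u_{\theta\theta},v_{\theta\theta})\|_2+\|(u_{r\theta},v_{r\theta})\|_2+\|(u_{r\theta\theta},v_{r\theta\theta})\|_2$; the third-order mixed derivative is not in $X$, so $X$ does not control $\|(u_\theta,v_\theta)\|_\infty$. Relatedly, your $\theta$-differentiated stability estimate requires $\|\partial_\theta(v\,r u_r)\|_2$, hence $\|v_\theta\, r u_r\|_2$, for the input pair, and this is not a bounded bilinear expression on $X$ either: bounding it needs $\|v_\theta\|_\infty$ (the missing third derivative) or $\|u_r\|_{L^4}$ (which needs $u_{rr}$). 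The paper sidesteps both problems by never differentiating the quadratic forcing at all: in the Stokes pairing it integrates by parts using $u_\theta+(rv)_r=0$ (e.g.\ $\int rvu_r\bar u_{\theta\theta}$ is rewritten so that only $\|(u,v)\|_\infty\|(u_\theta,v_\theta)\|_2$ of the input appears, at the price of the $\varepsilon^{-2}$ factors absorbed by Young's inequality). If you repair your argument along these lines — drop $u_{\theta\theta},v_{\theta\theta},u_{r\theta},v_{r\theta}$ from the iteration norm, and adopt the weighted $L^\infty$ term — you recover the paper's proof; as written, the self-map and contraction steps do not close.
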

\begin{proof}
For each smooth function $(u,v)$ which satisfies
\begin{align}\label{iterative conditon}
\left\{
\begin{array}{ll}
 u_\theta+(rv)_r=0,\\[5pt]
u(\theta,r)=u(\theta+2\pi,r),\ v(\theta,r)=v(\theta+2\pi,r),\\[5pt]
u(\theta,1)=0,\ u(\theta,r_0)=0,\\[5pt]
 v(\theta,1)=0, \ v(\theta,r_0)=0,
\end{array}
\right.
\end{align}
we consider the following linear problem:
\begin{align}
\left\{
\begin{array}{lll}
-\varepsilon^2 \big(r\bar{u}_{rr}
+\frac{\bar{u}_{\theta\theta}}{r}+2\frac{\bar{v}_{\theta}}{r}+\bar{u}_r-\frac{\bar{u}}{r}\big)+\bar{p}_\theta+
S_{\bar{u}}=R_u,\\[7pt]
-\varepsilon^2 \big(r\bar{v}_{rr}+\frac{\bar{v}_{\theta\theta}}{r}-2\frac{\bar{u}_{\theta}}{r}+\bar{v}_r-\frac{\bar{v}}{r}\big)
+r\bar{p}_r+S_{\bar{v}}=R_v,\\[7pt]
\partial_\theta \bar{u}+\partial_r(r\bar{v})=0,  \\[7pt]
 \bar{u}(\theta,r)=\bar{u}(\theta+2\pi,r),\  \bar{v}(\theta,r)=\bar{v}(\theta+2\pi,r), \\[7pt]
\bar{u}(\theta,1)=0,\ \bar{v}(\theta,1)=0, \\[7pt]
 \bar{u}(\theta,r_0)=0,\ \bar{v}(\theta,r_0)=0,
\end{array}
\right.\nonumber
\end{align}
where
\begin{align*}
S_{\bar{u}}:=&u^a\bar{u}_\theta+v^ar\bar{u}_r+\bar{u}u^a_\theta+\bar{v}ru^a_r+v^a\bar{u}+\bar{v}u^a,\\[5pt]
S_{\bar{v}}:=&u^a\bar{v}_\theta+v^ar\bar{v}_r+\bar{u}v^a_\theta+\bar{v}rv^a_r-2\bar{u}u^a,\\[5pt]
R_u:=& R_u^a-uu_\theta-vru_r-vu,\quad R_v:=R_v^a-uv_\theta-vrv_r-u^2.
\end{align*}

By linear stability estimate (\ref{e:linear stability estimates for linear equation}), we deduce that there exist $\varepsilon_0>0, \eta_0>0$ such that for any $\varepsilon\in (0,\varepsilon_0), \eta\in(0,\eta_0)$, there holds
\begin{align}
&\|(\bar{u}_\theta,\bar{v}_\theta)\|_2^2+\varepsilon^2\|\bar{u}_r\|_2^2\leq C\|(R_u, R_v)\|_2^2+\Big|\int_{r_0}^{1}\int_0^{2\pi}\big(R_u \bar{u}+R_v \bar{v}\big)d\theta dr\Big|.\nonumber
\end{align}
Direct computation gives that
\beno
\|(R_u, R_v)\|_2^2\leq C\|(R^a_u, R^a_v)\|_2^2+C\|(u,v)\|^2_\infty \|(u_r,u_\theta, v_\theta)\|_2^2.
\eeno
Moreover, there hold
\begin{align*}
\Big|\int_{r_0}^{1}\int_0^{2\pi}R_u \bar{u}d\theta dr\Big|=&\Big|\int_{r_0}^{1}\int_0^{2\pi}[R^a_u \bar{u}-uu_\theta \bar{u}-rvu_r\bar{u}-vu\bar{u}]d\theta dr\Big|\\[5pt]
\leq&C\|\bar{u}_r\|_2\|R^a_{u}\|_2+C\|\bar{u}_\theta\|_2\|u\|_2\|u\|_\infty
+C\|\bar{u}_r\|_2\|u_\theta\|_2\|u\|_\infty,\\[5pt]
\Big|\int_{r_0}^{1}\int_0^{2\pi}R_v \bar{v}d\theta dr\Big|=&\Big|\int_{r_0}^{1}\int_0^{2\pi}[R^a_v \bar{v}-uv_\theta \bar{v}-rvv_r\bar{v}-u^2\bar{v}]d\theta dr\Big|\\[5pt]
\leq&C\|\bar{u}_\theta\|_2\|R^a_v\|_2+C\|\bar{u}_\theta\|_2\|v_\theta\|_2\|u\|_\infty
+C\|\bar{u}_\theta\|_2\|u_\theta\|_2\|v\|_\infty\\[10pt]
&+C\|\bar{u}_\theta\|_2\|u_r\|_2\|u\|_\infty.
\end{align*}
Hence,
\begin{align*}
&\Big|\int_{r_0}^{1}\int_0^{2\pi}[\bar{u}R_u+\bar{v}R_v]d\theta dr\Big|\\
\leq&\frac12\|(\bar{u}_\theta,\bar{v}_\theta,\varepsilon \bar{u}_r)\|_2^2
+C\varepsilon^{-2}\|(R^a_u,R_v^a)\|^2_2+C\varepsilon^{-2}\|(u_\theta, v_\theta, \varepsilon u_r)\|_2^2\|(u,v)\|^2_\infty.
\end{align*}
Thus, there holds
\begin{align}\label{e:H1 estimate for linearized equation}
\|(\bar{u}_\theta,\bar{v}_\theta,\varepsilon\bar{u}_r\|_2^2
\leq C\varepsilon^{-2}\|(R^a_u, R^a_v)\|_2^2+C\varepsilon^{-2}\|(u_\theta, v_\theta, \varepsilon u_r)\|_2^2\|(u,v)\|^2_\infty.
\end{align}

Then, we consider the following Stokes problem:
\begin{align}
\left\{
\begin{array}{lll}
-\varepsilon^2\big( r\bar{u}_{rr}
+\frac{\bar{u}_{\theta\theta}}{r}+2\frac{\bar{v}_{\theta}}{r}+\bar{u}_r-\frac{\bar{u}}{r}\big)+\bar{p}_\theta
=R_u- S_{\bar{u}},\\[7pt]
-\varepsilon^2\big( r\bar{v}_{rr}+\frac{\bar{v}_{\theta\theta}}{r}-2\frac{\bar{u}_{\theta}}{r}+\bar{v}_r-\frac{\bar{v}}{r}\big)
+r\bar{p}_r=R_v-S_{\bar{v}},\\[7pt]
\partial_\theta \bar{u}+\partial_r(r\bar{v})=0,  \\[7pt]
 \bar{u}(\theta,r)=\bar{u}(\theta+2\pi,r),\  \bar{v}(\theta,r)=\bar{v}(\theta+2\pi,r),\\[7pt]
\bar{u}(\theta,1)=0,\ \bar{v}(\theta,1)=0, \\[7pt]
 \bar{u}(\theta,r_0)=0,\ \bar{v}(\theta,r_0)=0.
\end{array}
\right.\nonumber
\end{align}
By the Stoke estimate (\ref{s:Stokes estiamate}), we deduce
\beno
&&\|(\bar{u}_{\theta\theta}+\bar{v}_\theta,\bar{v}_{\theta\theta}-\bar{u}_\theta)\|_2^2
+\|(\bar{u}_{r\theta},\bar{v}_{r\theta})\|_2^2\\[5pt]
 &\leq& \frac{C}{\varepsilon^2}\Big|\int_{r_0}^1\int_0^{2\pi}\Big[(R_u-S_{\bar{u}}) \bar{u}_{\theta\theta}+(R_v-S_{\bar{v}}) \bar{v}_{\theta\theta}\Big]drd\theta\Big|.
\eeno

We compute the right hand term by terms. \\

{\bf 1)The term $\int_{r_0}^{1}\int_0^{2\pi}R_u \bar{u}_{\theta\theta}d\theta dr$ and $\int_{r_0}^{1}\int_0^{2\pi}R_v \bar{v}_{\theta\theta}d\theta dr$:} First, there holds
\begin{align*}
\Big|\int_{r_0}^{1}\int_0^{2\pi}R_u \bar{u}_{\theta\theta}d\theta dr\Big|&=\Big|\int_{r_0}^{1}\int_0^{2\pi}[R^a_u \bar{u}_{\theta\theta}-uu_\theta \bar{u}_{\theta\theta}-rvu_r\bar{u}_{\theta\theta}-vu\bar{u}_{\theta\theta}]d\theta dr\Big|\\[5pt]
&\leq C\|\bar{u}_\theta\|_2\|\partial_\theta R^a_u\|_2+\Big|\int_{r_0}^{1}\int_0^{2\pi}[uu_\theta \bar{u}_{\theta\theta}+rvu_r\bar{u}_{\theta\theta}+vu\bar{u}_{\theta\theta}]d\theta dr\Big|.
\end{align*}

It is easy to obtain
\begin{align*}
\Big|\int_{r_0}^{1}\int_0^{2\pi}[uu_\theta \bar{u}_{\theta\theta}+vu\bar{u}_{\theta\theta}]d\theta dr\Big|\leq \frac{\varepsilon^2}{10C}(\|\bar{u}_{\theta\theta}+\bar{v}_\theta\|_2^2+\|\bar{v}_{r\theta}\|_2^2)
+\frac{C}{\varepsilon^2}\|u\|_\infty^2\|u_\theta\|_2^2.
\end{align*}

Integrating by part, we deduce that
\begin{align*}
&\Big|\int_{r_0}^{1}\int_0^{2\pi}rvu_r\bar{u}_{\theta\theta}d\theta dr\Big|\\
\leq&\Big|\int_{r_0}^{1}\int_0^{2\pi}[(rv)_{\theta r}u\bar{u}_\theta+(rv)_{\theta }u\bar{u}_{\theta r}+(rv)_{r}u_\theta\bar{u}_\theta+rvu_\theta\bar{u}_{\theta r}]d\theta dr\Big|\\
\leq& \frac{\varepsilon^2}{10C}\|\bar{u}_{r\theta}\|_2^2+\frac{C}{\varepsilon^2}\|(u,v)\|_\infty^2\|(u_\theta,v_\theta)\|_2^2
+\Big|\int_{r_0}^{1}\int_0^{2\pi}[u_{\theta \theta}u\bar{u}_\theta+u_\theta^2\bar{u}_\theta]d\theta dr\Big|\\
\leq& \frac{\varepsilon^2}{10C}\|\bar{u}_{r\theta}\|_2^2+\frac{C}{\varepsilon^2}\|(u,v)\|_\infty^2\|(u_\theta,v_\theta)\|_2^2
+\Big|\int_{r_0}^{1}\int_0^{2\pi}u u_\theta\bar{u}_{\theta\theta}d\theta dr\Big|\\
\leq& \frac{\varepsilon^2}{10C}\|\bar{u}_{\theta\theta}+\bar{v}_\theta\|_2^2+ \frac{\varepsilon^2}{10C}\|(\bar{u}_{r\theta},\bar{v}_{r\theta})\|_2^2+\frac{C}{\varepsilon^2}\|(u,v)\|_\infty^2\|(u_\theta,v_\theta)\|_2^2.
\end{align*}

Thus, we obtain
\begin{align*}
\Big|\int_{r_0}^{1}\int_0^{2\pi}R_u \bar{u}_{\theta\theta}d\theta dr\Big|\leq& C\|\bar{u}_\theta\|_2\|\partial_\theta R^a_u\|_2\\ &+\frac{\varepsilon^2}{10C}\|\bar{u}_{\theta\theta}+\bar{v}_\theta\|_2^2+ \frac{\varepsilon^2}{10C}\|(\bar{u}_{r\theta},\bar{v}_{r\theta})\|_2^2+\frac{C}{\varepsilon^2}\|(u,v)\|_\infty^2\|(u_\theta,v_\theta)\|_2^2.
\end{align*}
Similarly,
\begin{align*}
\Big|\int_{r_0}^{1}\int_0^{2\pi}R_v \bar{v}_{\theta\theta}d\theta dr\Big|\leq& C\|\bar{v}_\theta\|_2\|\partial_\theta R^a_u\|_2\\ &+\frac{\varepsilon^2}{10C}\|\bar{v}_{\theta\theta}-\bar{u}_\theta\|_2^2+ \frac{\varepsilon^2}{10C}\|(\bar{u}_{r\theta},\bar{v}_{r\theta})\|_2^2+\frac{C}{\varepsilon^2}\|(u,v)\|_\infty^2\|(u_\theta,v_\theta)\|_2^2.
\end{align*}
{\bf 2)The term $\int_{r_0}^{1}\int_0^{2\pi}S_{\bar{u}}\bar{u}_{\theta\theta}d\theta dr$ and $\int_{r_0}^{1}\int_0^{2\pi}S_{\bar{v}}\bar{v}_{\theta\theta}d\theta dr$:}\\

First, there holds
 \beno
\Big| \int_{r_0}^{1}\int_0^{2\pi}S_{\bar{u}}\bar{u}_{\theta\theta}d\theta dr\Big|=\Big| \int_{r_0}^{1}\int_0^{2\pi}[u^a\bar{u}_\theta+v^ar\bar{u}_r+u^a_\theta\bar{u}+ru^a_r\bar{v}+v^a\bar{u}+u^a\bar{v}]\bar{u}_{\theta\theta}d\theta dr\Big|.
 \eeno

 Integrating by parts, we deduce that
\begin{align*}
\Big|\int_{r_0}^{1}\int_0^{2\pi}u^a\bar{u}_\theta\bar{u}_{\theta\theta}d\theta dr\Big|=&\frac12\Big|\int_{r_0}^{1}\int_0^{2\pi}u^a_\theta\bar{u}^2_\theta d\theta dr\Big|\leq C(\eta+\varepsilon) \|\bar{u}_\theta\|_2^2,\\
\Big|\int_{r_0}^{1}\int_0^{2\pi}rv^a\bar{u}_r\bar{u}_{\theta\theta}d\theta dr\Big|=&\Big|\int_{r_0}^{1}\int_0^{2\pi}[(rv^a)_\theta \bar{u}_r+rv^a\bar{u}_{r\theta}]\bar{u}_\theta d\theta dr\Big|\\[5pt]
\leq& C\varepsilon(\eta+\varepsilon) \|\bar{u}_\theta\|_2\|(\bar{u}_r,\bar{u}_{r\theta})\|_2,\\[5pt]
\Big|\int_{r_0}^{1}\int_0^{2\pi}u^a_\theta\bar{u}\bar{u}_{\theta\theta}d\theta dr\Big|=&\Big|\int_{r_0}^{1}\int_0^{2\pi}[u^a_\theta \bar{u}^2_\theta+u^a_{\theta\theta}\bar{u}_{\theta}\bar{u}] d\theta dr\Big|\\[5pt]
\leq &C(\eta+\varepsilon) \|\bar{u}_\theta\|^2_2+C\varepsilon(\eta+\varepsilon) \|\bar{u}_\theta\|_2\|\bar{u}_r\|_2,\\[5pt]
\Big|\int_{r_0}^{1}\int_0^{2\pi}ru^a_r\bar{v}\bar{u}_{\theta\theta}d\theta dr\Big|=&\Big|\int_{r_0}^{1}\int_0^{2\pi}[(ru_r^a)_\theta \bar{v} \bar{u}_\theta+ru^a_r\bar{v}_{\theta}\bar{u}_\theta] d\theta dr\Big|\\[5pt]
\leq& C(\eta+\varepsilon) \|\bar{u}_\theta\|^2_2+C\varepsilon^{-1}\eta \|\bar{u}_\theta\|_2\|\bar{v}_\theta\|_2,\\[5pt]
\Big|\int_{r_0}^{1}\int_0^{2\pi}v^a\bar{u}\bar{u}_{\theta\theta}d\theta dr\Big|=&\Big|\int_{r_0}^{1}\int_0^{2\pi}[v^a_\theta \bar{u} \bar{u}_\theta+v^a\bar{u}^2_\theta] d\theta dr\Big|\\[5pt]
\leq& C\varepsilon(\eta+\varepsilon) \|\bar{u}_\theta\|_2\|\bar{u}_r\|_2+C\varepsilon (\eta+\varepsilon) \|\bar{u}_\theta\|_2^2,\\[5pt]
\Big|\int_{r_0}^{1}\int_0^{2\pi}u^a\bar{v}\bar{u}_{\theta\theta}d\theta dr\Big|=&\Big|\int_{r_0}^{1}\int_0^{2\pi}[u^a_\theta \bar{v} \bar{u}_\theta+u^a\bar{v}_\theta\bar{u}_\theta] d\theta dr\Big|\leq C(\|\bar{u}_\theta\|_2\|\bar{v}_\theta\|_2+ \|\bar{u}_\theta\|_2^2).
\end{align*}
Thus, there exist $\varepsilon_0>0, \eta_0>0$ such that for any $\varepsilon\in (0,\varepsilon_0), \eta\in(0,\eta_0)$, there holds
\beno
\Big| \int_{r_0}^{1}\int_0^{2\pi}S_{\bar{u}}\bar{u}_{\theta\theta}d\theta dr\Big|\leq\frac{\varepsilon^2}{10C}\|\bar{u}_{r\theta}\|_2^2+C\varepsilon^{-1}\|(\bar{u}_\theta,\bar{v}_\theta,\varepsilon \bar{u}_r)\|^2_2.
\eeno
Similarly, we can obtain
\beno
\Big| \int_{r_0}^{1}\int_0^{2\pi}S_{\bar{v}}\bar{v}_{\theta\theta}d\theta dr\Big|\leq\frac{\varepsilon^2}{10C}\|\bar{u}_{r\theta}\|_2^2+C\varepsilon^{-1}\|(\bar{u}_\theta,\bar{v}_\theta,\varepsilon \bar{u}_r)\|^2_2.
\eeno
Thus, we deduce that
\begin{align}\label{Higher order tangential estimate}
\|(\bar{u}_{r\theta}, \bar{v}_{r\theta})\|_2^2\leq C\varepsilon^{-2}\|(\partial_\theta R_u^a, \partial_\theta R_v^a)\|_2^2+ C\varepsilon^{-3}\|(\bar{u}_\theta,\bar{v}_\theta,\varepsilon \bar{u}_r)\|^2_2+C\varepsilon^{-4}\|(u,v)\|_\infty^2\|(u_\theta,v_\theta)\|_2^2.
\end{align}

Finally, combining (\ref{s:Sobolev embedding}),(\ref{e:H1 estimate for linearized equation}) and (\ref{Higher order tangential estimate}), we obtain the following $L^\infty$ estimate: there exist $\varepsilon_0>0, \eta_0>0$ such that for any $\varepsilon\in (0,\varepsilon_0), \eta\in(0,\eta_0)$, there holds
\begin{align}\label{e:L infinity estimate}
\|(\bar{u},\bar{v})\|_\infty^2
\leq& C\varepsilon^{-4}\|(R_u^a, R_v^a, \partial_\theta R_u^a, \partial_\theta R_v^a)\|_2^2+C\varepsilon^{-3}\|(\bar{u}_\theta,\bar{v}_\theta,\varepsilon\bar{u}_r)\|_2^2\nonumber\\[5pt]
 &\quad \quad + C\varepsilon^{-4}\|(u_\theta,v_\theta,\varepsilon u_r)\|_2^2\|(u,v)\|_\infty^2.
\end{align}

Set
\beno
\|(u,v)\|_Y^2:=\|(u_\theta,v_\theta,\varepsilon u_r)\|_2^2+ \kappa \varepsilon^3\|(u,v)\|_\infty^2,
\eeno
where $\kappa\ll 1$ is a fixed positive number, and combining the estimates (\ref{e:H1 estimate for linearized equation}) and (\ref{e:L infinity estimate}), we arrive at
\begin{align}\label{e:iterative estimate}
\|(\bar{u},\bar{v})\|_Y^2\leq C\varepsilon^{-2}\|(R^a_u, R^a_v, \partial_\theta R_u^a, \partial_\theta R_v^a)\|_2^2+C\varepsilon^{-5}\|(u, v)\|_Y^4.
\end{align}

Let $Y=\{(u,v)\in C^\infty: (u,v) \ satisfes \ (\ref{iterative conditon})\ and \ \|(u,v)\|_Y<+\infty\}$.
Thus, due to
\beno
\|(R_u^a, R_v^a,\partial_\theta R_u^a, \partial_\theta R_v^a)\|_2\leq C\varepsilon^4,
\eeno
there exist $\varepsilon_0>0, \eta_0>0$ such that for any $\varepsilon\in (0,\varepsilon_0), \eta\in(0,\eta_0)$, the operator
\beno
(u,v)\mapsto (\bar{u},\bar{v})
\eeno
maps the ball $\{(u,v): \|(u,v)\|^2_Y\leq 2C^3\varepsilon^6\}$  in $Y$ into itself.

Moreover, for every two pairs
$(u_1, v_1)$ and $(u_2, v_2)$ in the ball, we have
\begin{align}\label{e:contraction estimate}
\|(\bar{u}_1-\bar{u}_2, \bar{v}_1-\bar{v}_2)\|^2_Y\leq C\varepsilon^{-5}(\|(u_1,v_1)\|_Y^2+\|(u_2,v_2)\|_Y^2)\|(u_1-u_2, v_1-v_2)\|_Y^2.
\end{align}
In fact, set
\beno
\bar{U}:=\bar{u}_1-\bar{u}_2,\quad \bar{V}:=\bar{v}_1-\bar{v}_2,\quad \bar{P}:=\bar{p}_1-\bar{p}_2,
\eeno
then we have
\begin{align}
\left\{
\begin{array}{lll}
-\varepsilon^2\big(r \bar{U}_{rr}
+\frac{\bar{U}_{\theta\theta}}{r}+2\frac{\bar{V}_{\theta}}{r}+\bar{U}_r-\frac{\bar{U}}{r}\big)+\bar{P}_\theta+
S_{\bar{U}}=R_U,\\[7pt]
-\varepsilon^2\big( r\bar{V}_{rr}+\frac{\bar{V}_{\theta\theta}}{r}-2\frac{\bar{U}_{\theta}}{r}+\bar{V}_r-\frac{\bar{V}}{r}\big)
+r\bar{P}_r+S_{\bar{V}}=R_V,\\[7pt]
\partial_\theta \bar{U}+\partial_r(r\bar{V})=0,  \\[7pt]
 \bar{U}(\theta,r)=\bar{U}(\theta+2\pi,r),\  \bar{V}(\theta,r)=\bar{V}(\theta+2\pi,r),\\[7pt]
\bar{U}(\theta,1)=0,\  \bar{V}(\theta,1)=0,\\[7pt]
 \bar{U}(\theta,r_0)=0,\ \bar{V}(\theta,r_0)=0,
\end{array}
\right.\nonumber
\end{align}
where
\begin{align*}
R_U \triangleq R_{u_1}-R_{u_2}=&u_2u_{2\theta}+v_2ru_{2r}+v_2u_2-u_1u_{1\theta}-v_1ru_{1r}-v_1u_1\\[5pt]
=&(u_2-u_1)\partial_\theta u_2+u_1\partial_\theta(u_2-u_1)\\[5pt]
&+(v_2-v_1)r\partial_r u_2+v_1r\partial_r(u_2-u_1)+(v_2-v_1)u_2+v_1(u_2-u_1),\\[5pt]
R_V \triangleq R_{v_1}-R_{v_2}=&u_2v_{2\theta}+v_2rv_{2r}-u_2^2-u_1v_{1\theta}-v_1rv_{1r}+u^2_1\\[5pt]
=&(u_2-u_1)\partial_\theta v_2+u_1\partial_\theta(v_2-v_1)\\[5pt]
&+(v_2-v_1)r\partial_r v_2+v_1r\partial_r(v_2-v_1)+(u_1+u_2)(u_1-u_2).
\end{align*}

Thus, following the estimate (\ref{e:iterative estimate}) line by line, we obtain (\ref{e:contraction estimate}).
Hence, there exist $\varepsilon_0>0,\eta_0>0$ such that for any $\varepsilon\in (0,\varepsilon_0), \eta\in (0,\eta_0)$, the operator
\beno
(u,v)\mapsto (\bar{u},\bar{v})
\eeno
maps the ball $\{(u,v): \|(u,v)\|^2_Y\leq 2C^3\varepsilon^6\}$ in $Y$ into itself and is a contraction mapping. This complete the proof of this proposition.
\end{proof}

Now we can give the proof of Theorem \ref{main theorem}.
\begin{proof}
Combining Proposition \ref{existence and error estimate of error equation} and the approximate solution (\ref{approximate solution-1})-(\ref{approximate solution-3}) of the Navier-Stokes equations (\ref{NS-curvilnear}), we easily obtain Theorem \ref{main theorem}.
\end{proof}

\section{Appendix}

{\bf Appendix A:  Constant coefficient periodic PDE}

In this appendix we give a brief argument to solve the following problem which also appeared  in Appendix A in \cite{FGLT},
\begin{eqnarray}\label{periodic heat equation}
\left \{
\begin {array}{ll}
(Q_0)_\theta=u_{e}(1)(Q_0)_{\psi\psi},\\[5pt]
Q_0(\theta,\psi)=Q_0(\theta+2\pi,\psi),\\[5pt]
Q_0\big|_{\psi=0}=f_{\eta}(\theta),\  Q_0\big|_{\psi\rightarrow-\infty}=0,\label{Q0-0}
\end{array}
\right.
\end{eqnarray}
where
\begin{align*}
f_{\eta}(\theta)=\alpha^2+2\alpha\eta f(\theta)+\eta^2 f^2(\theta)-u_{e}^2(1)=2\alpha\eta f(\theta)+\eta^2 f^2(\theta)-\frac{\eta^2}{2\pi} \int_0^{2\pi}f^2(\theta)d\theta.
\end{align*}

Let  $Q_0(\theta,\psi)=\sum\limits_{k\in\mathbb{Z}}e^{ik\theta}Q_{0k}(\psi)$  and substitute it into (\ref{periodic heat equation}), we obtain
\begin{eqnarray*}
\left \{
\begin {array}{ll}
ikQ_{0k}=u_{e}(1)Q_{0k}'',\\[5pt]
Q_{0k}\big|_{\psi=0}=\widehat{f_{\eta}}(k)=\frac{1}{2\pi}\int_0^{2\pi}e^{-ik\theta}f_{\eta}(\theta) d\theta,
\\[5pt]
Q_{0k}\big|_{\psi\rightarrow-\infty}=0.
\end{array}
\right.
\end{eqnarray*}

It is easy to get
\begin{align*}
Q_{0k}(\psi)=\widehat{f_{\eta}}(k)e^{\alpha_k\psi}
\end{align*}
with
$\alpha_k=\sqrt{\frac{|k|}{2u_e(1)}}(1+\text{sgn}k\cdot i)$. Then
\begin{align*}
Q_0(\theta,\psi)=\sum\limits_{k\in\mathbb{Z}}e^{ik\theta}\widehat{f_{\eta}}(k)e^{\alpha_k\psi}\in X
\end{align*}
and
\begin{align}
\|Q_0\|_X\leq C\eta.\label{norm of q0}
\end{align}

{\bf Appendix B: Construction of corrector $h(\theta,r)$}

In this section, we give a construction of corrector $h(\theta,r)$ defined in subsection \ref{Approximate solutions}. Firstly, we give a simple lemma which is similar to Lemma 6.1 in Appendix B in \cite{FGLT}.

\begin{Lemma}\label{corector equation}
Assume that $K(\theta,r)$ is a $2\pi$-periodic smooth function which satisfies
\beno
\int_0^{2\pi}K(\theta,r)d\theta=0, \ \forall r\in [r_0,1]; \quad K(\theta, r_0)=K(\theta, 1)=0,
\eeno
then there exists a $2\pi$-periodic function $h(\theta,r)$ such that
\begin{align}\label{corrector $h$}
&\partial_\theta h(\theta,r)=K(\theta,r); \ h(\theta, r_0)=h(\theta, 1)=0;\nonumber\\
&\int_0^{2\pi}h(\theta,r)d\theta=0, \ \|\partial_\theta^j\partial_r^kh\|_2\leq C\|\partial_\theta^j\partial_r^kK\|_2.
\end{align}
\end{Lemma}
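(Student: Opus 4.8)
The plan is to construct $h$ explicitly by inverting $\partial_\theta$ mode-by-mode in the Fourier variable $\theta$, and then to read off all four required properties together with the weighted estimates directly from the formula. I would work with the Fourier expansion in $\theta$, since the division by $in$ makes both the construction and the estimate transparent in a single stroke.

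Concretely, I would write $K(\theta,r)=\sum_{n\in\mathbb{Z}}K_n(r)e^{in\theta}$ with $K_n(r)=\frac{1}{2\pi}\int_0^{2\pi}e^{-in\theta}K(\theta,r)\,d\theta$. The hypothesis $\int_0^{2\pi}K(\theta,r)\,d\theta=0$ says precisely that the zero mode vanishes, $K_0(r)\equiv 0$, so I may define
\begin{align*}
h(\theta,r):=\sum_{n\neq 0}\frac{K_n(r)}{in}e^{in\theta}.
\end{align*}
Then $\partial_\theta h=\sum_{n\neq 0}K_n(r)e^{in\theta}=K$, which is the first desired identity, and since the expansion of $h$ carries no $n=0$ term, $\int_0^{2\pi}h(\theta,r)\,d\theta=0$ holds automatically. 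For the boundary conditions I use $K(\theta,r_0)=K(\theta,1)=0$: taking Fourier coefficients at the fixed radii gives $K_n(r_0)=K_n(1)=0$ for every $n$, hence every term of $h$ vanishes there and $h(\theta,r_0)=h(\theta,1)=0$.

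For the estimate I differentiate term-by-term, obtaining $\partial_\theta^j\partial_r^k h=\sum_{n\neq 0}(in)^{j-1}\partial_r^k K_n(r)e^{in\theta}$, so that by Parseval
\begin{align*}
\|\partial_\theta^j\partial_r^k h\|_2^2=2\pi\sum_{n\neq 0}|n|^{2(j-1)}\|\partial_r^k K_n\|_{L^2(r_0,1)}^2,
\qquad
\|\partial_\theta^j\partial_r^k K\|_2^2=2\pi\sum_{n\neq 0}|n|^{2j}\|\partial_r^k K_n\|_{L^2(r_0,1)}^2.
\end{align*}
Because $|n|\geq 1$ for $n\neq 0$, one has $|n|^{2(j-1)}\leq|n|^{2j}$ term-by-term, and the claimed bound follows at once, uniformly in $j,k$ (indeed with $C=1$).

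The only real content is that the three hypotheses on $K$ line up exactly with the three structural requirements on $h$: the zero $\theta$-average kills the $n=0$ mode, which both legitimizes the division by $in$ and forces $\int_0^{2\pi}h\,d\theta=0$, while the vanishing of $K$ on the two radial boundaries transfers to $h$ coefficientwise. I do not expect any genuine obstacle; the only minor point to be careful about is justifying the term-by-term differentiation and the Parseval identity, which is immediate from the smoothness of $K$ and the decay of its coefficients. An equivalent route avoiding Fourier series is to set $H(\theta,r)=\int_0^\theta K(s,r)\,ds$ (which is $2\pi$-periodic precisely because $K$ has zero $\theta$-average, and vanishes at $r=r_0,1$ because $K$ does), subtract its $\theta$-mean to enforce $\int_0^{2\pi}h\,d\theta=0$, and then derive the estimate from the Poincar\'e inequality in $\theta$, after observing that $\partial_\theta^j\partial_r^k h$ has zero $\theta$-average for every $j,k$.
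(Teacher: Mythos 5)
Your proposal is correct and is essentially identical to the paper's own proof: the paper also expands $K$ in Fourier modes in $\theta$, defines $h=\sum_{n\neq 0}\frac{K_n(r)}{in}e^{in\theta}$, and notes that the stated properties follow directly. You merely spell out the Parseval estimate and boundary-condition transfer that the paper leaves as "easy to justify."
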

\begin{proof}
Let
\beno
K(\theta,r)=\sum_{n\neq 0}K_n(r)e^{in\theta}, \quad K_n(r_0)=K_n(1)=0.
\eeno
Set
\beno
h(\theta,r)=\sum_{n\neq 0}\frac{K_n(r)}{in}e^{in\theta}.
\eeno
It's easy to justify that $h(\theta,r)$ satisfies (\ref{corrector $h$}) which complete the proof.
\end{proof}

Next, we construct the corrector $h(\theta,r)$ by the above lemma.
Direct computation gives
\begin{align*}
u^a_\theta+(rv^a)_r=&\varepsilon^4\partial_\theta h(\theta,r)+K(\theta,r),
\end{align*}
where
\begin{align*}
K(\theta,r)=&\varepsilon^4\chi(r)[Y\partial_Yv_p^{(4)}(\theta,Y)+v_p^{(4)}(\theta,Y)]\\[7pt]
&+\varepsilon^4(1-\chi(r))[Z\partial_Z\hat{v}_p^{(4)}(\theta,Z)+\hat{v}_p^{(4)}(\theta,Z)]\\[5pt]
&+r\chi'(r)\Big(\sum_{i=1}^4\varepsilon^i v_p^{(i)}(\theta,Y)-\sum_{i=1}^4\varepsilon^i \hat{v}_p^{(i)}(\theta,Z)\Big).
\end{align*}

Notice that $\chi'(r)=0,\ r\in [r_0,r_1]\cup [r_2,1]$ and the property of $(v_p^{(i)},  \hat{v}_p^{(i)})$, we know that $K(\theta,r)=O(\varepsilon^4)$ and
\beno
K(\theta,1)=0,\ K(\theta,r_0)=0.
\eeno

Moreover, notice that
\beno
\int_0^{2\pi}v_p^{(i)}(\theta,Y)d\theta=0, \ \forall \ Y\leq 0; \quad \int_0^{2\pi}\hat{v}_p^{(i)}(\theta,Z)d\theta=0, \ \forall \ Z\geq 0,\quad i=1,\cdot\cdot\cdot,4,
\eeno
we deduce that
\beno
\int_0^{2\pi}K(\theta,r)d\theta=0, \ \forall r\in [r_0,1].
\eeno

Thus, we can choose $h(\theta,r)$ by Lemma \ref{corector equation} such that
\beno
\varepsilon^4\partial_\theta h(\theta,r)+K(\theta,r)=0, \ h(\theta, 1)=0, \ h(\theta,r_0)=0, \ \|\partial_\theta^j\partial_r^kh\|_2\leq C\varepsilon^{-k}.\\
\eeno

{\bf Appendix C: Generalized Prandtl-Batchelor theory in an annulus.}

In this appendix, we derive a generalized Prandtl-Batchelor theory for the forced steady Navier-Stokes equations on a two-dimensional
annulus. For Prandtl-Batchelor theory on a simply connected
domain, we refer to \cite{B, K1998, K2000} or Appendix C in \cite{FGLT}.

Consider the two-dimensional forced Navier-Stokes equations (\ref{ns}).
Let $w^\varepsilon=u^\varepsilon_y-v^\varepsilon_x$ be the vorticity, then it's easy to obtain that
\begin{align*}
(v^\varepsilon,-u^\varepsilon)^T w^\varepsilon+\nabla\Big(p^\varepsilon+\frac12|\mathbf{u}^\varepsilon|^2\Big)=\varepsilon^2(\triangle\mathbf{u}^\varepsilon +\mathbf{F}).
\end{align*}
Let $\Phi^\varepsilon(x,y)$ be the stream function which satisfies $\partial_y\Phi^\varepsilon=u^\varepsilon, -\partial_x\Phi^\varepsilon=v^\varepsilon$. Assuming that
\begin{align*}
\Gamma^\varepsilon_c:=\{(x,y)|\Phi^\varepsilon(x,y)=c\}
\end{align*}
is a closed curve for any $c\in \Phi^\varepsilon(\Omega)$.
By integrating the Navier-Stokes equations over the closed curve $\Gamma^\varepsilon_c$ we deduce that
\begin{align*}
\int_{\Gamma^\varepsilon_c}\Big((v^\varepsilon,-u^\varepsilon)^T  w^\varepsilon+\nabla\Big(p^\varepsilon+\frac12|\mathbf{u}^\varepsilon|^2\Big)\Big)d\vec{\tau}=0,
\end{align*}
thus we obtain that
\begin{align*}
\int_{\Gamma^\varepsilon_c}(\triangle\mathbf{u}^\varepsilon +\mathbf{F})d\vec{\tau}=0.
\end{align*}
Assume that $\mathbf{u}^\varepsilon\rightarrow \mathbf{u}_e$ in some strong sense and $\mathbf{u}_e$ is the solution of forced steady Euler equations, then there holds
\begin{align}\label{Necessary condition in the velocity formulation}
\int_{\Gamma_c^e}(\triangle\mathbf{u}_e+\mathbf{F})d\vec{\tau}=0,
\end{align}
where $\Gamma_c^e$ is the stream line of the Euler flow.

If the Euler flow $\mathbf{u}_e$ has no stagnation point in the annulus, then it's a rotating shear flow $\mathbf{u}_e=u_e(r)\vec{e}_\theta$ and the stream line is circle, see \cite{HN2}. From this and (\ref{Necessary condition in the velocity formulation}), we deduce that
\begin{align}\label{Necessary condition for velocity}
u''_e(r)+\frac{1}{r}u'_e(r)-\frac{1}{r^2}u_e(r)=-\bar{F}_u(r),
\end{align}
where $F_u(\theta,r)=\mathbf{F}\cdot \vec{e}_\theta$ and $\bar{F}_u(r):=\frac{1}{2\pi}\int_0^{2\pi}F_u(\theta,r)d\theta.$\\

Similarly, for a finite channel $\{(x,y):x\in\lbrack0,2\pi],\ y_{0}\leq y\leq y_{1}\}$,
if the inviscid limit of Navier-Stokes flow is a shear flow $(u_e(y),0)^T$,
then there also holds
$$u''_{e}(y)+\frac{1}{2\pi}\int_0^{2\pi}F_1(x,y)dx=0,$$
where $F_1$ is the first component of $\mathbf{F}$.

\section*{Acknowledgments}

M.Fei is partially supported by NSF of China under Grant No.11871075, 11971357 and 12271004. Z.Lin is partially supported by the NSF grants DMS-1715201 and DMS-2007457. T.Tao is partially supported by the NSF of China under Grant 11901349. C.Gao and T.Tao are deeply grateful to professor L.Zhang for very valuable suggestion.

\end{document}